\documentclass[12pt]{amsart}
\textwidth 6.5in 
\oddsidemargin 0in 
\evensidemargin 0in  
\setlength{\textheight}{9.2in}
\voffset -1.8cm 
\addtolength{\headheight}{3.2pt}

\usepackage{hyperref}

\usepackage{amsmath,amsfonts,amssymb}
\usepackage{mathrsfs}
\usepackage{verbatim}
\usepackage{amsthm}
\usepackage{mathrsfs}

\newtheorem{theorem}{Theorem}[section]
 \newtheorem{corollary}[theorem]{Corollary}
 \newtheorem{lemma}[theorem]{Lemma}
 \newtheorem{proposition}[theorem]{Proposition}
 \theoremstyle{definition}
 \newtheorem{definition}[theorem]{Definition}
 \theoremstyle{remark}
 \newtheorem{remark}[theorem]{Remark}
  \newtheorem{ex}[theorem]{Example}
 \numberwithin{equation}{section}

\def \bC {\mathbb C}

\def \bN {\mathbb N}

\def \bR {\mathbb R}

\def \bT {\mathbb T}

\def \bZ {\mathbb Z}

\def \cD {\mathcal D}

\def \cF {\mathcal F}

\def \cH {\mathcal H}

\def \cL {\mathcal L}
\def \cM {\mathcal M}

\def \fg {\mathfrak g}

\def \ft {\mathfrak t}

\def \fz {\mathfrak z}

\def \fU {\mathfrak U}

\def\Gh{{\widehat{G}}}
\def \sL{\mathscr L}
\def\Rep{{{\rm Rep}\,}}
\def \RepG {\text{\rm Rep}(G)}
\def \FundG {\text{\rm Fund}(G)}
\def \Fund {\text{\rm Fund}}

\def \spec {\text{\rm Spec}}
\def \tr {\text{\rm Tr}}
\def \id {\text{\rm I}}
\def\Op{{{\rm Op}}}

\def\supp{{{\rm supp}}}

\begin{document}

\title[Differential structure on $\Gh$]
{Differential structure\\ 
 on the dual of a compact lie group}
\author[V. Fischer]
{V\'eronique Fischer}
\address
{Department of Mathematical Sciences\\
University of Bath\\
Claverton Down\\ 
Bath  BA2 7AY, United Kingdom}
\email{v.c.m.fischer@bath.ac.uk}
\subjclass[2010]{43A22, 43A77, 22E15,12H05}
\keywords{harmonic analysis on compact Lie groups,
differential calculus over a non-commutative algebra,
homogeneous Sobolev spaces}

\date{June 2018}

\maketitle

\begin{abstract}
In this paper we define difference operators and homogeneous Sobolev-type spaces on the dual of a compact Lie group. As an application and to show that this defines a relevant differential structure, we state and prove multiplier theorems of H\"ormander, Mihlin and Marcinkiewicz types together with the sharpness in the Sobolev exponent for the one of H\"ormander type. 
 \end{abstract}

\makeatletter
\renewcommand\l@subsection{\@tocline{2}{0pt}{3pc}{5pc}{}}
\makeatother

\tableofcontents

\section{Introduction}
\label{sec_intro}

The notion of differential structure on e.g. smooth manifolds 
relies on fundamental ideas from geometry, analysis and algebra.
It is motivated by questions coming not only from mathematics but also from mathematical and theoretical physics.
A modern instance of these ideas and motivations is to consider  algebras equipped 
with operations that generalise differential calculus, the most interesting cases being non-commutative \cite{Beggs+Majid}.
The first part of this paper is driven by this general quest and proposes an intrinsic differential structure 
on the dual $\Gh$ of a compact Lie group $G$, namely difference operators and homogeneous Sobolev spaces.
The second part is devoted to show how this structure provides the natural framework from an analytical viewpoint.

This structure  has already been alluded to in the previous paper  of the author \cite{monJFA} where intrinsic difference operators were introduced in order to study the pseudo-differential calculus on the group $G$.
The ideas seem to have appeared in the reverse chronological order of what has happened on $\bR^n$ and on manifolds: there, the standard pseudodifferential theory (on which micro-local and semi-classical analysis relies) emerged in the 60's from the study of singular integrals on $\bR^n$ (Calder\'on-Zygmund theory), especially multiplier problems. We refer to Section \ref{subsec_mult_thm_Rn} for a (personal) historical viewpoint on multiplier problems.

In the second and last part of this paper, we discuss 
Fourier multipliers on compact Lie groups
 with conditions  analogue to Marcinkiewicz, Mihlin and H\"ormander's. 
A multiplier symbol $\sigma$ is now a field over the dual  $\Gh$ of the group $G$;
therefore any two multiplier  symbols (and a fortiori operators) may not necessarily commute.
The first study in this context goes back to 1971
with Coifman and Weiss' monograph \cite{coifman+weiss}
where they  developed  the Calder\'on-Zygmund theory in the setting of spaces of homogeneous types and as an application studied the Fourier multipliers of $SU(2)$, see also \cite{coifman+weiss1,coifman+weiss2}. 
Until the recent investigations of Ruzhansky and Wirth \cite{ruzhansky+wirth}, 
the research into Fourier multipliers on compact Lie groups had been  focused on  central multipliers  \cite{clerc,stanton,strichartz,vretare, weiss, weiss_SU}.
To the author's knowledge, the rest of the literature 
 on the Fourier $L^p$-multiplier problem on Lie groups  `in the H\"ormander sense' is restricted to the motion group
 (Rubin in 1976 \cite{rubin}),
to the Heisenberg group stemming from the work of de Michele and Mauceri in 1979 \cite{dM+M}, and to the graded nilpotent Lie groups with   the recent work of the author with Ruzhansky \cite{fischer+ruzhansky}.

Our multiplier theorems will be given in Sections \ref{subsec_multG}
and  \ref{subsec_marcinkiewicz}. More precisely we show a result `of H\"ormander type' and from it, we deduce 
theorems of Mihlin and of Marcinkiewicz types.
The proof is classical and relies on the Calder\'on-Zygmund theory adapted 
to the setting of spaces of homogeneous type as in \cite{coifman+weiss}.
Our approach uses the differential structures on the unitary dual $\Gh$ of the group developed in the first part of the paper, 
in particular homogeneous Sobolev spaces $\dot H^s(\Gh)$.
We reformulate the H\"ormander condition using these 
and show that it is  indeed a hypothesis for an $L^p$-multiplier theorem, 
see Sections \ref{subsec_Sigmas}  and \ref{subsec_multG} respectively.
Furthermore this hypothesis is sharp in the exponent $s$ of the Sobolev space, the critical value being half the topological dimension of the group.

The ideas developed in this paper can be of help in classical problems of harmonic analysis on Lie groups:  for example,
Marcinkiewicz conditions for products of groups
(to obtain generalisation of Marcinkiewicz' \cite{marc} $L^p$-multiplier result for double trigonometric series - still an open problem as formulated in \cite[p.63]{stein_topics}), 
case of homogeneous domains such as the real and complex spheres, 
applications to the study of sub-laplacians, 
$L^p-L^q$-boundedness of Fourier symbols under a Lizorkin-type condition,
estimates of the $L^p-L^q$-bounds of particular operators such as spectral projectors.
Furthermore, the objects developed in this paper may be of interest in modern non-commutative harmonic analysis (e.g. questions studied in \cite{junge+mei+parcet}),
and also in areas where mathematics and theoretical physics intersect such as quantum Riemannian geometry (see e.g. \cite{Beggs+Majid}).

\smallskip

The paper is organised as follows.
In Section \ref{sec_alg}, we define the differential structures on algebras of fields over $\Gh$.
In Section \ref{sec_Fmult}, we recall the interpretation of fields over $\Gh$ as  Fourier multipliers.
In Section \ref{sec_dotHsGh}, we define the homogeneous Sobolev spaces on $\Gh$, 
which allow us to state and prove our multiplier theorems
in Section \ref{sec_mult_thm}.
The function of the Laplace-Beltrami operator will be studied
in Section \ref{sec_f(L)indotHs}, 
and used to state our H\"ormander condition and prove its sharpness.

\medskip

\noindent\textbf{Notation:}
$\bN_0=\{0,1,2,\ldots\}$ denotes the set of non-negative integers
and 
$\bN=\{1,2,\ldots\}$ the set of positive integers.
If $\cH_1$ and $\cH_2$ are two Hilbert spaces, we denote by $\sL(\cH_1,\cH_2)$ the Banach space of the bounded operators from $\cH_1$ to $\cH_2$. If $\cH_1=\cH_2=\cH$ then we write $\sL(\cH_1,\cH_2)=\sL(\cH)$.
We may allow ourselves to write $A\asymp B$ when the quantity $A$ and $B$ are equivalent in the sense that there exists a constant such that $C^{-1}A\leq B\leq C A$.

\section{Algebraic structures on $\Gh$}
\label{sec_alg}

In this section
we develop the differential structure on the unitary dual of a (connected) compact Lie group.
After some necessary preliminaries in Section \ref{subsec_preliminaries}, 
we define the difference operators in Section \ref{subsec_my_diff_op}, 
and the algebras on which it is defined in Section \ref{subsec_Sigma}.
We give a precise definition of the fundamental difference operators in Section \ref{subsec_fund}
and  we obtain a structure of differential algebra on $\Gh$ in Section \ref{subsec_structure}.
In the last section, we study the annihilators of the difference operators. 

\subsection{Preliminaries}
\label{subsec_preliminaries}

In this paper, $G$ always denotes a connected compact Lie group 
and $n$ is its dimension.
Its Lie algebra $\fg$ is the tangent space of $G$ at 
the neutral element $e_G$.
We refer to classical textbooks, e.g.
 \cite{knapp_bk} and \cite{hall} for pedagogical presentationa and proofs of their properties.

The complexification of the real Lie algebra $\fg$ is the complex Lie algebra $\fg_\bC:=\bC\otimes \fg$.
 We denote by $\fU(\fg_\bC)$ its universal enveloping algebra. 
Having fixed a basis $(X_j)_{j=1}^n$ for $\fg$, 
a basis of $\fU(\fg_\bC)$ is given by
 $$
X^\alpha:= X_1^{\alpha_1}\ldots X_n^{\alpha_n},
\quad \alpha=(\alpha_1,\ldots, \alpha_n)\in \bN_0^n,
$$
with the usual convention $X^0=\id$.
For $d\in \bN_0$, 
we also denote by $\fU_d(\fg_\bC)$ the subspace of $\fU(\fg_\bC)$
of elements of degree $\leq d$, 
that is, the vector space of the complex linear combination of  $X^\alpha$, 
$|\alpha|\leq d$. 

A representation of $G$ is any continuous group homomorphism $\pi$ from $G$ to the set of automorphisms of a finite dimensional complex space. 
We will denote this space $\cH_\pi$.
Note that continuity implies smoothness.
If $\pi$ is a representation of the group $G$, then 
we keep the same notation $\pi$ for the corresponding infinitesimal representation of $\fg$ and of $\fU(\fg_\bC)$
given via
\begin{equation}
	\label{eq_pi(X)}
\pi(X)=\partial_{t=0} \pi(\exp_G(t X)).
\end{equation}

Two representations $\pi_1$ and $\pi_2$ of $G$ are equivalent when there exists a map $U:\cH_{\pi_1}\to \cH_{\pi_2}$ intertwining the representations, that is,
such that $\pi_2U =U\pi_1$.  
A representation of $G$ is irreducible when the only sub-spaces invariant under $G$ are trivial.
The dual of the group $G$, denoted by $\Gh$, is the set of  irreducible representations of $G$
modulo equivalence.
We also consider the set $\RepG$ of the equivalence classes of  representations  modulo  equivalence.
When possible, we will often identify a representation of $G$ and its class in $\Gh$ or $\RepG$. 
As a topological set, $\Gh$  is discrete and countable.

The tensor product of two representations $\pi_1,\pi_2\in \RepG$ is denoted as $\pi_1\otimes \pi_2\in \RepG$.
For  $\pi\in \RepG$ and $n\in \bN$, we write $\pi^{\otimes n} = \pi\otimes \ldots\otimes \pi\in \RepG$ for the tensor product of $n$ copies of $\pi$. We adopt the convention that $\pi^{\otimes 0} = 1_\Gh$ is the trivial representation.

\subsection{The algebra $\Sigma$}
\label{subsec_Sigma}

We denote by $\Sigma=\Sigma(G)$ the set of fields of operators over 
$\Gh$ on the Hilbert space $\oplus_{\pi\in \Gh} \cH_\pi$:
$$
\Sigma=\Sigma(G)=\left\{\sigma = \{\sigma(\pi) \in \sL(\cH_\pi) : \pi\in \Gh\}\right\}.
$$
For a general definition of fields of operators, see e.g. \cite[Part II Ch 2]{dixmierVnA}.
One checks easily that $\Sigma$ is an algebra (over $\bC$) for the product of linear mappings.

Since any representation $\pi\in \RepG$ may be written as a finite direct sum 
$\pi= \oplus_j \tau_j$ 
of $\tau_j \in \Gh$, 
any field of operator over $\oplus_{\pi\in \Gh} \cH_\pi$ may be naturally extended over $\oplus_{\pi\in \RepG} \cH_\pi$ 
via 
$\sigma(\pi) := \oplus_j \sigma(\tau_j)$.
We will often identify an element $\sigma\in \Sigma$ with its natural extension as a collection over $\RepG$:
\begin{equation}
\label{eq_identification_sigma_Gh_RepG}
\sigma=\{\sigma(\pi), \pi\in \Gh\}
\ \sim  \
\sigma=\{\sigma(\pi), \pi\in \RepG\}.
\end{equation}

Let us recall that the elements of $\fU(\fg_\bC)$ also provide elements of $\Sigma$:

\begin{ex}
\label{ex_PiX_sigma}
If $X\in \fg$, then $\cF_G(X):= \{\pi(X) : \pi \in \Gh\} \in \Sigma$.
In this case again, the extension to a collection over $\RepG$ is straightforward as it coincides with the definition of $\pi(X)$ for $\pi\in \RepG$, see \eqref{eq_pi(X)}. 

More generally, for any $Y\in \fU(\fg_\bC)$,   
$\cF_G(Y):= \{\pi(Y) : \pi \in \Gh\} \in \Sigma$. Furthermore, 
$\cF_G(Y_1Y_2)=\cF_G(Y_1Y_2)$ for any $Y_1,Y_2\in \fU(\fg_\bC)$.
\end{ex}

In Section \ref{subsec_PWthm}, we will see more generally that the group Fourier transform $\cF_G(f)=\widehat f$
of a distribution $f\in \cD'(G)$ defines an element of $\Sigma$.

\subsection{Difference operators}
\label{subsec_my_diff_op}

Here we recall and explain further the intrinsic definition of difference operators 
given in  \cite[Section 3.2]{monJFA}. 

For any representation $\tau,\pi\in \RepG$ and $\sigma\in\Sigma(G)$,
we define  the linear mapping $\Delta_\tau \sigma (\pi)$ on $\cH_\tau\otimes \cH_\pi$ via:
\begin{equation}
\label{eq_def_Deltatau}
\Delta_\tau \sigma (\pi) := \sigma(\tau\otimes \pi) - 
\sigma(\id_\tau\otimes \pi).
\end{equation}
Hence $\Delta_\tau \sigma$ is a field over $\Gh$ 
of operators on the Hilbert space $\oplus_{\pi\in \Gh} \cH_{\pi}\otimes \cH_\tau$;
note that this Hilbert space may be described as 
$$
\oplus_{\pi\in \Gh} \cH_{\pi}\otimes \cH_\tau 
\simeq L^2(G) \otimes \cH_\tau, 
$$
or equivalently, the Hilbert space of square-integrable $\cH_\tau$-valued functions on $G$. 

\begin{ex}
\label{ex_diff_op_torus}
The dual of  the torus $\bT =\bR/2\pi\bZ$,
is  $\widehat \bT=\{ e_\ell, \ell\in\bZ\}$ 
where $e_\ell(x)=e^{i \ell x}$, $x\in \bT$.
Since $e_\ell \otimes e_m = e_{\ell+m}$, we see:
 $$
 \Delta_{e_\ell} \sigma(e_m) = \sigma(e_{\ell+m}) - \sigma(e_m).
 $$
 Consequently, identifying $\widehat \bT$ with the lattice $\bZ$, 
 the difference operators are the usual
discrete (forward or backward) difference operator.
\end{ex}

\begin{ex}
\label{ex_Diffsigma=1}
Let us consider $\id:=\{\id_\pi, \pi\in \Gh\}\in \Sigma$.
Then $\Delta_\tau \id=0$ for any $\tau\in \RepG$.
\end{ex}

For any $\sigma\in \Sigma$ and $\tau\in \Rep G$,
$\Delta_\tau \sigma$ is in 
$\Sigma_\tau$ which is the set defined as follows:

\begin{definition}
For any $\tau\in \Rep G$,
 $\Sigma_\tau$ denotes the set of fields over $\Gh$ on the Hilbert space $\oplus_{\pi\in \Gh} \cH_\pi\otimes \cH_\tau$:
 $$
\Sigma_\tau
=\left\{\omega=\{\omega(\pi)\in \cH_{\pi}\otimes \cH_\tau : \pi\in \Gh\}\right\}.
$$ 
\end{definition}

One checks easily that $\Sigma_\tau$ is an algebra (over $\bC$) for the product of linear mappings.
Example \ref{ex_diff_op_torus} motivates us to observe that if $\tau$ is of dimension 1, then $\Sigma_\tau = \Sigma$.

As for $\Sigma$, we extend by linearity
a field $\sigma\in\Sigma_\tau$ to a field over $\RepG$, see \eqref{eq_identification_sigma_Gh_RepG}
and the preceding paragraph.
We now  observe that Definition \ref{eq_def_Deltatau} also makes sense for fields $\sigma \in \Sigma_{\varphi}$ for some $\varphi\in \Rep G$. Note that the resulting object is then valued in 
$\Sigma_{\varphi\otimes \tau}$. We have obtained the following definition:
\begin{definition}
For any $\tau,\varphi\in \RepG$,
the difference operator $\Delta_\tau:\Sigma_{\varphi} \to \Sigma_{\varphi\otimes \tau} $ is the operator defined via \eqref{eq_def_Deltatau}.	
\end{definition}

 The  difference operators $\Delta_\tau$
 satisfy the following:
 \begin{lemma}[Leibniz property]
For any $\sigma_1,\sigma_2\in \Sigma_\varphi(G)$ and $\varphi,\tau,\pi\in \RepG$, we have:
 \begin{equation}
\label{eq_Leibniz}
\Delta_\tau (\sigma_1\sigma_2)(\pi)=
\Delta_\tau (\sigma_1) (\pi)\ \sigma_2(\id_\tau\otimes\pi)
+
\sigma_1 (\tau\otimes\pi)\ \Delta_\tau (\sigma_2)(\pi),
\end{equation}
 \end{lemma}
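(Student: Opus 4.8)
The plan is to verify the Leibniz identity \eqref{eq_Leibniz} directly from the definition \eqref{eq_def_Deltatau} of $\Delta_\tau$ by expanding both sides and matching terms, the key observation being that $\Sigma_\varphi$ is an algebra for pointwise composition, so $(\sigma_1\sigma_2)(\rho)=\sigma_1(\rho)\sigma_2(\rho)$ for every $\rho\in\RepG$. First I would write out the left-hand side:
\begin{equation*}
\Delta_\tau(\sigma_1\sigma_2)(\pi)=(\sigma_1\sigma_2)(\tau\otimes\pi)-(\sigma_1\sigma_2)(\id_\tau\otimes\pi)
=\sigma_1(\tau\otimes\pi)\sigma_2(\tau\otimes\pi)-\sigma_1(\id_\tau\otimes\pi)\sigma_2(\id_\tau\otimes\pi).
\end{equation*}
Then the standard telescoping trick: insert and subtract the mixed term $\sigma_1(\tau\otimes\pi)\sigma_2(\id_\tau\otimes\pi)$, which regroups the right-hand side as
\begin{equation*}
\bigl(\sigma_1(\tau\otimes\pi)-\sigma_1(\id_\tau\otimes\pi)\bigr)\sigma_2(\id_\tau\otimes\pi)
+\sigma_1(\tau\otimes\pi)\bigl(\sigma_2(\tau\otimes\pi)-\sigma_2(\id_\tau\otimes\pi)\bigr),
\end{equation*}
and recognising the two parenthesised differences as $\Delta_\tau(\sigma_1)(\pi)$ and $\Delta_\tau(\sigma_2)(\pi)$ respectively gives exactly \eqref{eq_Leibniz}.

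The one point that genuinely needs care — and is really the only obstacle — is checking that the compositions are between operators on matching Hilbert spaces, i.e.\ that each product written above is well defined. For $\sigma_i\in\Sigma_\varphi$, the value $\sigma_i(\rho)$ acts on $\cH_\rho\otimes\cH_\varphi$; here $\rho$ is either $\tau\otimes\pi$ or $\id_\tau\otimes\pi$, and in both cases the underlying space is $\cH_\tau\otimes\cH_\pi\otimes\cH_\varphi$ (using $\cH_{\id_\tau}=\cH_\tau$), so $\Delta_\tau(\sigma_i)(\pi)$, $\sigma_i(\tau\otimes\pi)$ and $\sigma_i(\id_\tau\otimes\pi)$ all live in $\sL(\cH_\tau\otimes\cH_\pi\otimes\cH_\varphi)$ and may be freely composed; the resulting field lies in $\Sigma_{\varphi\otimes\tau}$ as already noted. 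I would also remark that the extension-by-linearity convention makes the identity hold over all of $\RepG$, not just over $\Gh$, since decomposing $\pi=\oplus_k\tau_k$ into irreducibles turns every term into a block-diagonal operator and the identity holds block by block. No deeper input is required: the Leibniz property is a formal consequence of $\Delta_\tau$ being a difference of the two algebra homomorphisms $\sigma\mapsto\sigma(\tau\otimes\,\cdot\,)$ and $\sigma\mapsto\sigma(\id_\tau\otimes\,\cdot\,)$.
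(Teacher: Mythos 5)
Your proof is correct and follows essentially the same route as the paper's: expand $\Delta_\tau(\sigma_1\sigma_2)(\pi)$ from the definition and telescope by adding and subtracting a mixed term, together with the (routine but worthwhile) check that all operators act on the same space $\cH_\tau\otimes\cH_\pi\otimes\cH_\varphi$. If anything, your choice of cross term $\sigma_1(\tau\otimes\pi)\sigma_2(\id_\tau\otimes\pi)$ yields exactly the regrouping stated in \eqref{eq_Leibniz}, whereas the paper's displayed computation produces the other (equally valid) variant $\Delta_\tau(\sigma_1)(\pi)\,\sigma_2(\tau\otimes\pi)+\sigma_1(\id_\tau\otimes\pi)\,\Delta_\tau(\sigma_2)(\pi)$.
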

The proof is identical to the one given in  \cite[Section 4.1]{monJFA}:
\begin{proof}
We have:	
\begin{align*}
&\Delta_\tau (\sigma_1\sigma_2)(\pi)
=
(\sigma_1\sigma_2)(\tau\otimes\pi)	
-
(\sigma_1\sigma_2)(\id_\tau\otimes\pi)\\
&\quad=
\left(\sigma_1(\tau\otimes\pi) - \sigma_1(\id_\tau\otimes\pi)\right)
\sigma_2
(\tau\otimes\pi)	
+
\sigma_1(\id_\tau\otimes\pi)
\left(\sigma_2(\tau\otimes\pi)-
\sigma_2(\id_\tau\otimes\pi)\right).	
\end{align*}
Thus \eqref{eq_Leibniz} follows.
\end{proof}

Our definition of difference operators  allows us to compose difference operators.
Naturally, up to the order of tensors, difference operators commute.

\begin{ex}
\label{ex_Diffsigma=piX}
If  $\tau,\tau_1,\pi\in \RepG$ and $X\in \fg_\bC$, 
we have
$$
\Delta_{\tau}\Delta_{\tau_1} \cF_G(X) =0.
$$
 \end{ex}

\begin{proof}
We compute easily
$$
 (\tau_1\otimes\pi)(X)
 =
 \partial_{t=0} (\tau_1\otimes\pi)(e^{tX})
 =
 \tau_1(X)\otimes\id_\pi
 +
 \id_{\tau_1}\otimes\pi(X).
 $$
Hence we obtain readily
\begin{equation}
\label{eq_taupiX}
\forall \tau_1,\pi\in \RepG\qquad
\forall X\in \fg_\bC\qquad
 (\tau_1\otimes\pi)(X)
 =
 \tau_1(X)\otimes\id_\pi
 +
 \id_{\tau_1}\otimes\pi(X).
\end{equation}
 Therefore for any $X\in \fg_\bC$ and $\tau_1,\pi\in \RepG$
\begin{align*}
 \Delta_{\tau_1} \cF_G(X) (\pi)
 &=
 \cF_G(X) (\tau_1\otimes\pi)
 -
 \cF_G(X) (\id_{\tau_1}\otimes\pi)
 =
 (\tau_1\otimes\pi)(X) - (\id_{\tau_1}\otimes\pi)(X) 
\\ &=
 \tau_1(X) \otimes \id_\pi.
\end{align*}
Since $\Delta_{\tau_1} \cF_G(X) (\pi)$ is constant in $\pi$, the result follows.
\end{proof}

Using Examples \ref{ex_Diffsigma=1} and \ref{ex_Diffsigma=piX} 
with the Leibniz property \eqref{eq_Leibniz} recursively, 
we obtain:
\begin{ex}
\label{ex_Diffsigma=piXbeta}
For any $\beta\in \bN_0^n$, 
any $\tau_1,\ldots, \tau_m\in \RepG$ with $m>|\beta|$, and any $\pi\in \RepG$,
$$
\Delta_{\tau_1}\ldots\Delta_{\tau_m}
\cF_G(X^\beta)=0.
$$
Hence $\Delta_{\tau_1}\ldots\Delta_{\tau_m} \cF_G(Y)=0$ for any $Y\in \fU_{m-1} (\fg_\bC)$.
 \end{ex}

\subsection{Fundamental difference operators}
\label{subsec_fund}

The connected compact Lie group $G$  admits a finite set of fundamental representations:
$$
\FundG \subset \Gh \subset \RepG,
$$
in the sense that any representation in $\Gh$ will occur in a tensor products of representations in $\FundG$.
See below for a constructive proof of this via the highest weight theory.

\begin{definition}[\cite{monJFA}]
The \emph{fundamental difference operators} 
are the difference operators 
$$
\Delta_\varphi, \quad \varphi\in \FundG, 
$$
corresponding to fundamental representations.
\end{definition}

We will denote by $\varphi_1,\ldots, \varphi_f$ the fundamental representations of $G$:
$$
\FundG=\{\varphi_1,\ldots, \varphi_f\}.
$$

For $\alpha\in \bN_0^f$, we will write:
$$
\varphi^{\otimes \alpha} := \varphi_1^{\otimes \alpha_1} \otimes \ldots\otimes \varphi_f^{\otimes \alpha_f}
\qquad\mbox{and}\qquad \Delta^\alpha=\Delta_{\varphi_1}^{\alpha_1}\ldots \Delta_{\varphi_f}^{\alpha_f}.
$$

\medskip

\subsubsection*{Construction of the fundamental representations}
If the compact connected Lie group $G$ is semisimple and simply connected, then its fundamental representations are the ones  whose highest weights are the fundamental weights of its Lie algebra.
This can be generalised to any compact (connected) Lie group $G$ without further hypotheses in the following way.
For background material, we refer to \cite{knapp_bk} (especially Chapters IV and V) and \cite{hall} (especially Section 12).
We will also need the following well-known properties and conventions about lattices:
\begin{lemma}
Let $V$ ba a real vector space of finite dimension $n$.
\begin{enumerate}
\item
A lattice $\Gamma$  of $V$ is a discrete subgroup of $(V,+)$ or equivalently a set of the form 
$\oplus_{j=1}^n \bZ e_j$ for a basis $(e_j)_{j=1}^n$ of $V$.
We may call $(e_j)_{j=1}^n$ a basis for $\Gamma$.
\item 
If $\Gamma$ is a lattice of $V$, then 
$$
\Gamma^*:=\{\lambda \in V^* \ : \  \lambda(v)\in \bZ \ \mbox{for all}\ v\in \Gamma \}
$$
is a lattice of the dual $V^*$ of $V$; we may call $\Gamma^*$ the dual lattice.
\end{enumerate}
\end{lemma}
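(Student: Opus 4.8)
The plan is to establish the two items in order, using item~(1) to produce an explicit $\bZ$-basis of $\Gamma$ in the proof of item~(2).

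For item~(1), one implication is immediate: if $\Gamma=\bigoplus_{j=1}^n\bZ e_j$ for a basis $(e_j)_{j=1}^n$ of $V$, then $\Gamma$ is discrete, because in the linear coordinates attached to $(e_j)$ it is exactly $\bZ^n\subset\bR^n$, whose distinct points lie at distance $\ge 1$; thus $0$ is isolated in $\Gamma$, and then so is every point of $\Gamma$ since $\Gamma$ is a subgroup. The substantive direction is the structure theorem for discrete subgroups: every discrete subgroup of $(V,+)$ has a $\bZ$-basis consisting of $\bR$-linearly independent vectors, and it spans $V$ over $\bR$ precisely when this basis has $n$ elements --- the situation meant by the stated ``equivalence'', so one should read the convention here as: a lattice is a discrete subgroup of \emph{full rank}. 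I would prove the structure theorem by induction on $k:=\dim_\bR\mathrm{span}_\bR(\Gamma)$. Choose $v_1,\dots,v_k\in\Gamma$ forming an $\bR$-basis of $\mathrm{span}_\bR(\Gamma)$, and apply the inductive hypothesis to $\Gamma':=\Gamma\cap\mathrm{span}_\bR(v_1,\dots,v_{k-1})$ --- which is discrete as a subset of $\Gamma$, and of rank $k-1$ --- to get a $\bZ$-basis $f_1,\dots,f_{k-1}$ of $\Gamma'$. Then $(f_1,\dots,f_{k-1},v_k)$ is an $\bR$-basis of $\mathrm{span}_\bR(\Gamma)$, and the bounded region
\[
P=\Big\{\,t_1f_1+\dots+t_{k-1}f_{k-1}+t_kv_k\ :\ 0\le t_i<1\ \text{for}\ i<k,\ 0\le t_k\le 1\,\Big\}
\]
meets $\Gamma$ in a finite set (discreteness) containing a point with $t_k>0$, namely $v_k$. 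Pick $f_k\in\Gamma\cap P$ with $t_k$ minimal and positive; then a division-algorithm argument --- reduce an arbitrary $\gamma\in\Gamma$ modulo $f_k$ to bring its $v_k$-coordinate into $[0,t_k)$, then modulo $f_1,\dots,f_{k-1}$ to bring the remaining coordinates into $[0,1)$ --- returns an element of $\Gamma\cap P$ with $v_k$-coordinate $<t_k$, which must therefore vanish by minimality; hence $\gamma\in\Gamma'+\bZ f_k=\bigoplus_{j=1}^k\bZ f_j$, and $(f_1,\dots,f_k)$ is the desired basis.

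For item~(2), use item~(1) to write $\Gamma=\bigoplus_{j=1}^n\bZ e_j$ with $(e_j)_{j=1}^n$ a basis of $V$ (here $\Gamma$ is a lattice, hence of full rank), and let $(e_j^*)_{j=1}^n$ be the dual basis of $V^*$. I claim $\Gamma^*=\bigoplus_{j=1}^n\bZ e_j^*$, which is a lattice of $V^*$ by item~(1) again. Indeed, if $\lambda=\sum_j c_je_j^*$ with all $c_j\in\bZ$, then for any $v=\sum_i m_ie_i\in\Gamma$ (with $m_i\in\bZ$) we get $\lambda(v)=\sum_i m_ic_i\in\bZ$, so $\lambda\in\Gamma^*$. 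Conversely, every $\lambda\in V^*$ is uniquely $\sum_j c_je_j^*$ with $c_j\in\bR$, and if $\lambda\in\Gamma^*$ then $c_i=\lambda(e_i)\in\bZ$ because $e_i\in\Gamma$; hence $\lambda\in\bigoplus_j\bZ e_j^*$.

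The only genuinely non-routine ingredient is the structure theorem invoked in item~(1); granting it, item~(2) is pure bookkeeping with dual bases. Within the structure theorem, the points needing care are that $\Gamma'$ is again discrete and of rank exactly $k-1$ (so the induction is legitimate), and that the reduction step is carried out correctly against the minimality of $t_k$ --- in particular one must keep $0\le t_k\le 1$ in the description of $P$ so that the reduced element genuinely lies in $\Gamma\cap P$. This, together with the finiteness of $\Gamma\cap P$, is precisely where discreteness is used.
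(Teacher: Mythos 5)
Your proof is correct. The paper itself offers no proof of this lemma --- it is introduced as ``well-known properties and conventions about lattices'' and used without justification --- so there is no argument to compare against; what you give is the standard one (the structure theorem for discrete subgroups by induction on the rank, followed by the dual-basis computation for item~(2)), and the details (finiteness of $\Gamma\cap P$, the minimality/division argument, discreteness and rank of $\Gamma'$) are handled correctly. Your observation that the stated ``equivalence'' in item~(1) only holds under the convention that a lattice is a \emph{full-rank} discrete subgroup is also right, and consistent with the paper's own framing of the statement as partly a convention.
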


Let us set some notation and convention.
Firstly, we consider a compact connected Lie group $G$ of dimension $n$ and centre $Z$.
We denote by $\fg$ its Lie algebra and by $\fz$ the centre of its Lie algebra which is also the Lie algebra of $Z$.
The derived algebra $\fg_{ss}:=[\fg,\fg]$ of $\fg$ is semi-simple 
 and we have 
 $\fg = \fz \oplus \fg_{ss}$.
 We denote by $G_{ss}=\exp \fg_{ss}$ the closed connected subgroup of $G$ whose Lie algebra is $\fg_{ss}$.
Its centre $Z_{ss}:=Z\cap G_{ss}:=\{z_\ell,\ell=1 ,\ldots, |Z_{ss}|\}$ of $G_{ss}$ is finite. 
The centre of $G$ can be written as the direct product $Z=(Z)_0 \times Z_{ss} $ where $(Z)_0=\exp \fz$ is the connected component of $Z$ at the neutral element.

Secondly, we choose a maximal torus $T$ of $G$.
Its Lie algebra $\ft$ decomposes as $\ft= \ft_\fz \oplus \ft_{ss}$
where $\ft_{\fz}:=\ft \cap \fz$ and $\ft_{ss}:=\ft \cap \fg_{ss}$.
The kernel of the homomorphism  $\exp_G :\ft \to T$ between abelian groups is a lattice $\Gamma$  of  $\ft$.
In general,  the lattices $\Gamma\cap \ft_{\fz}$ and $\Gamma \cap \ft_{ss}$  are different from the projections of $\Gamma$ onto $\ft_{\fz}$ and $\ft_{ss}$.
Indeed $Z_{ss} \subset T\cap G_{ss}$,
so for each $\ell=1,\ldots,|Z_{ss}|$, there exists  $H_{ss,\ell}\in \ft_{ss}$, $H_{\fz,\ell}\in \ft_{\fz}$ such that 
$z_\ell=\exp H_{ss,\ell}=\exp H_{\fz,\ell}$.
Consequently $|Z_{ss}|H_{ss,\ell}\in \Gamma_{ss}$ and $|Z_{ss}|H_{\fz,\ell} \in \Gamma_{\fz}$, and we have
$$
\Gamma = \sum_{\ell=1}^{|Z_{ss}|}\bZ( H_{ss,\ell} - H_{\fz,\ell}) + \Gamma_{ss}   +\Gamma_{\fz}.
$$

Thirdly, the set of algebraic elements on $\ft$ is 
$\Lambda_0:=\ft_\fz ^* \bigoplus \oplus_{j=1}^{d_{\ft_{ss}}} \bZ \omega_{ss,j}$
where  $(\omega_{ss,j})_{j=1}^{d_{\ft_{ss}}}$ denotes the fundamental weights of the  semisimple Lie algebra $\fg_{ss}$ with respect to its Cartan subalgebra $\ft_{ss}$
having  fixed an ordering on $\ft_{ss}^*$.
The analytical  integral elements on $\ft$ form the  lattice $\Lambda := \Lambda_0 \cap \Gamma ^*$ in $\ft^*$.
We denote by $(\beta_\ell)_{\ell=1}^n$ a basis  of $\Lambda$.
This induces an ordering of $\ft^*$ but also of $\ft_{ss}^*$ and of $\ft^*_{\fz}$.
We may assume that the two orderings on $\ft_{ss}^*$ coincide, so that the $\beta_\ell$'s are dominant.
Let $(\omega_{\fz,k})_{k=1}^{d_{\ft_\fz}}$ be a basis of $(\Gamma\cap \fz)^*$; we may assume that the $\omega_{\fz,k}$'s are positive for the ordering.
Although each  $\omega_{\fz,k}$ may not be in $\Gamma^*$, 
we check easily that 
 $|Z_{ss}|\omega_{\fz,k}$ is in $\Gamma^*$ and in fact in $\Lambda$.
 
 Finally, the theorem of the highest weight for compact connected Lie groups states that
 an irreducible representation on $G$ is characterised up to equivalence by its highest weight which is an analytical integral dominant functional on $\ft$,
 and that any analytical dominant functional $\lambda$ is the highest weight for an irreducible representation $\pi_\lambda$ on $G$. 
 For instance, the linear functional $\tilde \omega_{\fz} := |Z_{ss}| \sum_{k=1}^{d_{\ft_\fz}} 
 \omega_{\fz,k}$ is analytical  dominant; as it is trivial on $\ft_{ss}$, for any $N\in \bZ$, the functional $N\tilde \omega_{\fz}$ is also analytical dominant
 and its corresponding  representation $\pi_{N\tilde \omega_{\fz}}$ is one-dimensional.
 If $\lambda$ is a dominant analytical integral functional, 
 then, for an integer $N\in \bN_0$ large enough, the functional $\tilde \lambda := \lambda +N\tilde \omega_{\fz}$
is positive for the ordering and therefore it is written as a linear combination over $\bN_0$ of $\beta_\ell$, i.e.  $\tilde \lambda = \sum_{\ell=1}^n n_\ell \beta_\ell$ with $n_\ell\in \bN_0$.
Since $\tilde \lambda$ is also dominant analytical
with 
$\pi_{\tilde \lambda} =\pi_{N \tilde \omega_{\fz}}\otimes 
\pi_\lambda$, we have
$\pi_\lambda =\pi_{-N \tilde \omega_{\fz}}\otimes \pi_{\tilde \lambda}$.
Note that  $\pi_\lambda$ occurs in $\pi_{-\tilde \omega_{\fz}}^{\otimes N}\otimes\otimes_{\ell=1}^n \pi_{\beta_\ell}^{\otimes n_\ell}$.
Hence we can choose  $\pi_{\beta_1},\ldots, \pi_{\beta_n}, \bar\pi_{-\tilde \omega_{\fz}}$ as a set of fundamental representations.

 \subsection{The structure of differential algebra}
 \label{subsec_structure}

In this section, we define a structure of differential algebra
for the space of finite linear combinations of elements in $A_a:=\oplus_{|\alpha|=a} \Sigma_{\varphi^{\otimes \alpha}}$
$$
A:=\sum_{a=0}^{+\infty} A_a=
{\sum_{\alpha\in \bN_0^f}}^{\oplus} 
\Sigma_{\varphi^{\otimes \alpha}}.
$$
The construction is of interest of its own, although we will not use it per se in this paper.

As already noted, for each $\tau\in \RepG$, 
$\Sigma_\tau$ is an algebra over $\bC$.
Furthermore, for each $\tau,\varphi\in \RepG$, 
we can define the left and right action of $\Sigma_\varphi$ on $\Sigma_{\tau\otimes \varphi}$ via
$$
\left(\sigma_{\varphi}\cdot \sigma_{\tau\otimes \varphi}\right)(\pi)
=
\left(\id_\tau\otimes \sigma_{\varphi}(\pi) \right)\sigma_{\tau\otimes \varphi}(\pi)
\quad\mbox{and}\quad
\left(\sigma_{\tau\otimes \varphi}
\cdot \sigma_{\varphi}\right)(\pi)
=
\sigma_{\tau\otimes \varphi}(\pi) \
 \sigma_{\varphi}(\tau\otimes\pi), 
$$
for $\sigma_{\varphi}\in \Sigma_{\varphi}$, 
$\sigma_{\tau\otimes \varphi}\in \Sigma_{\tau\otimes \varphi}$. 
Hence $\Sigma_{\tau\otimes \varphi}$ is a bi-module over $\Sigma_\varphi$ and 
the Leibniz formula \eqref{eq_Leibniz}
may be reformulated as: 
$$
\Delta_\tau (\sigma_1,\sigma_2) =
(\Delta_\tau \sigma_1)\cdot \sigma_2 + \sigma_1\cdot (\Delta_\tau \sigma_2),
\qquad \sigma_1,\sigma_2\in \Sigma_\varphi.
$$

\medskip

As each $\Sigma_\tau$ is an algebra over $\bC$, 
the space $A$ has a natural structure of algebra over $\bC$.
Furthermore, the actions defined in the paragraph above equip $A$ with a structure of bi-module over $A_0=\Sigma$
as well as of a differential ring with the $\Delta_\varphi$, $\varphi\in \FundG$ as derivations.

 \subsection{Annihilators of fundamental difference operators}
 \label{subsec_annihilator}
  
  Here, we study the elements of $\Sigma$ which are annihilated by difference operators. 
 More precisely, we obtain the following characterisation:
 
  \begin{proposition}
 \label{prop_annilation}
Let $\sigma\in \Sigma$.
\begin{itemize}
\item[Case $s=1$:] The following are equivalent:
\begin{enumerate}
\item 	$\Delta_\varphi \sigma=0$
for any $\varphi \in \FundG$,
\item $\Delta_\varphi \sigma=0$
for any $\varphi \in \RepG$,
\item $\sigma=\sigma(1_{\Gh})\id$.
\end{enumerate}

\item[Case $s\in \bN$:] For any $s\in \bN$, the following are equivalent:
\begin{enumerate}
\item $\Delta^\alpha\sigma =0$
for all $\alpha \in \bN_0^f$, $|\alpha|=s$, 
\item 
$\Delta_{\tau_1}\ldots\Delta_{\tau_s}\sigma =0$
for all $\tau_1,\ldots, \tau_s\in \RepG$
\item 
$\sigma\in \cF_G(\fU_{s-1}(\fg_\bC))$.
\end{enumerate}
\end{itemize}
\end{proposition}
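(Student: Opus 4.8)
The plan is to prove each case by establishing the cyclic chain of implications, where the only nontrivial arrow is $(3)\Rightarrow(1)$ in the reverse direction, i.e. showing that the vanishing of all top-order differences forces $\sigma$ to lie in $\cF_G(\fU_{s-1}(\fg_\bC))$. For $s=1$: the implication $(2)\Rightarrow(1)$ is trivial since $\FundG\subset\RepG$; the implication $(3)\Rightarrow(2)$ follows from Example \ref{ex_Diffsigma=1} together with the observation that $\Delta_\tau$ is linear and that $\Delta_\tau(\sigma(1_\Gh)\id)=\sigma(1_\Gh)\Delta_\tau\id=0$; so the work is in $(1)\Rightarrow(3)$. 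For general $s$, the implications $(2)\Rightarrow(1)$ (restrict to fundamental $\tau_i$'s, using that composites of fundamental differences in any order give the $\Delta^\alpha$ up to reordering tensor factors) and $(3)\Rightarrow(2)$ (this is exactly Example \ref{ex_Diffsigma=piXbeta}) are the easy directions, so again the heart is $(1)\Rightarrow(3)$.

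For the case $s=1$, assume $\Delta_\varphi\sigma=0$ for all fundamental $\varphi$. First I would show by induction, using the definition \eqref{eq_def_Deltatau} and the fact that every $\pi\in\Gh$ occurs in some tensor product $\varphi^{\otimes\alpha}$ of fundamental representations, that $\sigma(\pi)$ is determined by $\sigma(1_\Gh)$. Concretely, $\Delta_{\varphi}\sigma(\pi)=0$ means $\sigma(\varphi\otimes\pi)=\sigma(\id_{\varphi}\otimes\pi)=\id_{\cH_\varphi}\otimes\sigma(\pi)$ (the last equality because on the trivial summands $\id_{\cH_\varphi}\otimes\pi\simeq \pi\oplus\cdots\oplus\pi$ the field $\sigma$ acts by $\sigma(\pi)$ on each copy). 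Iterating, $\sigma(\varphi^{\otimes\alpha}\otimes 1_\Gh)=\id\otimes\sigma(1_\Gh)$, hence on every irreducible subrepresentation $\pi$ of $\varphi^{\otimes\alpha}$ one gets $\sigma(\pi)=c\,\id_{\cH_\pi}$ with $c=\sigma(1_\Gh)$ the scalar value on the trivial representation (a priori $\sigma(1_\Gh)\in\sL(\cH_{1_\Gh})=\bC$). The only subtlety is bookkeeping of the decomposition of $\varphi^{\otimes\alpha}\otimes\pi$ into isotypic components and checking that intertwiners are respected; I would phrase this via Schur's lemma applied to the commutant.

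For the case $s\in\bN$, I would argue by induction on $s$, reducing to the $s=1$ case. Assume $\Delta_{\tau_1}\cdots\Delta_{\tau_s}\sigma=0$ for all $\tau_i\in\RepG$. Fix $\tau_1=\varphi$ fundamental; then $\Delta_\varphi\sigma\in\Sigma_\varphi$ is annihilated by all products of $s-1$ difference operators, so by the inductive hypothesis (applied in the algebra $\Sigma_\varphi$, whose elements extend over $\RepG$ exactly as for $\Sigma$) we get $\Delta_\varphi\sigma\in\cF_{G,\varphi}(\fU_{s-2}(\fg_\bC))$ in the appropriate sense — i.e. $\Delta_\varphi\sigma(\pi)$ is a polynomial of degree $\le s-2$ in $\pi(X_j)$ with coefficients built from $\varphi(X_j)$'s. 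The task is then to integrate this: to produce $Y\in\fU_{s-1}(\fg_\bC)$ with $\sigma-\cF_G(Y)$ killed by every $\Delta_\varphi$, whence by case $s=1$ it is scalar, i.e. in $\cF_G(\fU_0)$, giving $\sigma\in\cF_G(\fU_{s-1})$.

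\textbf{Main obstacle.} The hard part will be this last integration step: reconstructing the element $Y\in\fU_{s-1}(\fg_\bC)$ from the collection of its fundamental differences, and checking consistency (the "differences" $\Delta_\varphi\sigma$ for varying $\varphi$ must satisfy compatibility relations coming from $\varphi\otimes\varphi'\simeq\varphi'\otimes\varphi$ and from the occurrence of one fundamental representation inside tensor powers of others). Concretely one must verify that the candidate polynomial expression obtained from $\Delta_\varphi\sigma$ is independent of the choice of $\varphi$ and genuinely comes from a single universal-enveloping-algebra element acting through all $\pi\in\Gh$ simultaneously; this is where the structure of $\FundG$ (every irreducible sits inside tensor products of fundamentals) and the formula \eqref{eq_taupiX} for $(\tau\otimes\pi)(X)$ do the real work. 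I expect the cleanest route is an explicit induction producing, from $\Delta_\varphi\sigma(\pi)=\sum \varphi(X_j)\otimes(\text{lower order in }\pi)$-type expansions, the coefficients of $Y$ directly, then invoking Example \ref{ex_Diffsigma=piXbeta} to confirm $\cF_G(Y)$ has the prescribed differences and reducing to $s=1$.
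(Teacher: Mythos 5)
Your decomposition into easy and hard implications matches the paper's, and your case $s=1$ argument (recursion over tensor products of fundamental representations starting from $1_{\Gh}$) is exactly the paper's proof. But for general $s$ your plan stops precisely where the real difficulty begins: you correctly flag the ``integration step'' as the main obstacle, yet you offer only the expectation that ``an explicit induction producing the coefficients of $Y$ directly'' will work. That step is not routine bookkeeping. The crux is the statement isolated in the paper as Lemma \ref{lem_diff2}: if $\sigma\in\Sigma$ satisfies $\sigma(1_{\Gh})=0$ and the additivity $\sigma(\pi_1\otimes\pi_2)=\sigma(\pi_1)\otimes\id_{\pi_2}+\id_{\pi_1}\otimes\sigma(\pi_2)$ for all $\pi_1,\pi_2\in\RepG$, then $\sigma=\cF_G(X)$ for some $X\in\fg_\bC$. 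Both your scheme and the paper's reduce the whole proposition to this via the identity \eqref{eq_Delta_tensor}: the top-order coefficients $M_{\tau,\beta}$ appearing in $\Delta_\tau\sigma(\pi)=\sum_{|\beta|=s_0}M_{\tau,\beta}\otimes\pi(X)^\beta+\text{l.o.t.}$ form fields $\{M_{\tau,\beta}\}_\tau$ satisfying exactly this additivity, and one must know that each such field comes from a single element of $\fg_\bC$ in order to produce the $Y_\beta$'s and subtract $\cF_G(Y_\beta X^\beta)$ from $\sigma$. Without a proof of that implication the induction does not close.

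The paper's proof of that implication is not algebraic: it is deferred to Section \ref{subsec_pf_item_lem_diff2_leibRep} and passes through the interpretation of $\sigma$ as a Fourier multiplier. One first uses the weight estimates of Lemma \ref{lem_lambdapi} to show $\|\sigma(\pi)\|_{\sL(\cH_\pi)}\lesssim(1+\lambda_\pi)^{1/2}$, so that $\Op(\sigma)$ extends continuously $H^1(G)\to L^2(G)$; one then verifies via Peter--Weyl that $\Op(\sigma)$ is a derivation of $\cD(G)$, hence a vector field, hence (by left-invariance) an element of $\fg_\bC$. Some such input --- either this analytic argument or a Tannakian one identifying ``primitive'' natural endomorphisms of the fibre functor with the Lie algebra --- is indispensable; the consistency checks you list (compatibility under $\varphi\otimes\varphi'\simeq\varphi'\otimes\varphi$, occurrence of fundamentals in tensor powers) do not by themselves manufacture the element $X$. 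A secondary, fixable point: rather than invoking the inductive hypothesis ``in the algebra $\Sigma_\varphi$'' (a statement you have not formulated or proved), apply it to each $\cH_\varphi$-matrix entry of $\Delta_\varphi\sigma$, which is an honest element of $\Sigma$; this is how the paper phrases the induction.
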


The case $s=1$ is easily proved:

\begin{proof}[Proof of Proposition \ref{prop_annilation}, case $s=1$]
Let $\sigma\in \Sigma$ be such that $\Delta_\varphi \sigma=0$
for any $\varphi \in \FundG$.
Then 
\begin{equation}
\label{eq_pf_lem_diff_I}
\forall \pi\in \Gh, \quad \forall \varphi\in \FundG\qquad
\sigma(\varphi\otimes \pi)=\sigma(\id_\varphi\otimes \pi).
\end{equation}
Applying \eqref{eq_pf_lem_diff_I} to $\pi=1_{\Gh}$, 
we obtain $\sigma(\varphi)=\sigma(1_{\Gh})\id_\varphi$ for  any $\varphi \in \FundG$.
Using this and applying  \eqref{eq_pf_lem_diff_I}, 
we obtain $\sigma(\varphi\otimes \varphi' )=\sigma(1_{\Gh})\id_{\varphi\otimes \varphi'}$ for  any $\varphi,\varphi' \in \FundG$.
Recursively we obtain that $\sigma$ must be equal to the identity up to the constant $\sigma(1_{\Gh})$ at any representation $\pi$ which can be written as a tensor product of fundamental representations, 
thus by linearity  for any $\pi \in \RepG$.
Hence $\sigma=\sigma(1_{\Gh})\id$.
This shows that (1)$\Rightarrow$(3).

By Example \ref{ex_Diffsigma=1}, 
(3)$\Rightarrow$(2).
We also have 
(2)$\Rightarrow$(1).
This shows the case $s=1$ in Proposition \ref{prop_annilation}.
\end{proof}

We now turn our attention to second-order fundamental difference operators:  

 \begin{lemma}
\label{lem_diff2}
Let $\sigma\in \Sigma$ be such that $\sigma(1_{\Gh})=0$.
The following properties are equivalent:
\begin{enumerate}
\item
 \label{item_lem_diff2_sigma=}
$\sigma \in\cF_G(\fg_\bC)$, 
i.e. there exists   $X\in \fg_\bC$ such that 
$\sigma=\cF_G(X)=\{\pi(X), \pi\in \Gh\}$,
\item 
 \label{item_lem_diff2_leibRep}
for all $\pi_1,\pi_2\in \RepG$,
we have
\begin{equation}
\label{eq_lem_diff2_leibRep}
\sigma(\pi_1\otimes\pi_2)
=
\sigma(\pi_1)\otimes \id_{\pi_2}
+
\id_{\pi_1}\otimes \sigma(\pi_2),
\end{equation}
\item 
 \label{item_lem_diff2_leibRep+Fund}
 the same as \eqref{item_lem_diff2_leibRep}
with  $\pi_1\in \FundG$ and $\pi_2\in \RepG$,
\item 
 \label{item_lem_diff2_Delta2}
for all $\varphi,\varphi' \in \FundG$, 
$\Delta_{\varphi}\Delta_{\varphi'} \sigma=0$.
\end{enumerate}
\end{lemma}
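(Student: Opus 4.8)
The plan is to establish the cycle of implications $(1)\Rightarrow(2)\Rightarrow(3)\Rightarrow(4)\Rightarrow(1)$, which is the natural order given that each step becomes progressively harder to reverse. The first two implications are essentially free: if $\sigma=\cF_G(X)$ for some $X\in\fg_\bC$, then \eqref{eq_taupiX} is precisely \eqref{eq_lem_diff2_leibRep}, so $(1)\Rightarrow(2)$; and $(2)\Rightarrow(3)$ is a trivial restriction. For $(3)\Rightarrow(4)$, I would unwind the definition of $\Delta_{\varphi}\Delta_{\varphi'}\sigma(\pi)$ as an alternating sum of four terms $\sigma(\varphi\otimes\varphi'\otimes\pi)-\sigma(\id_\varphi\otimes\varphi'\otimes\pi)-\sigma(\varphi\otimes\id_{\varphi'}\otimes\pi)+\sigma(\id_\varphi\otimes\id_{\varphi'}\otimes\pi)$, and expand each term using \eqref{eq_lem_diff2_leibRep} (legitimate here since the first factor in each nested tensor is fundamental or a sum of trivials, and the property extends by linearity/additivity): every additive contribution $\sigma(\varphi)\otimes\id$, $\id\otimes\sigma(\varphi')\otimes\id$, $\id\otimes\sigma(\pi)$ appears with matching signs from two of the four terms and cancels. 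One must be slightly careful that \eqref{item_lem_diff2_leibRep+Fund} only asserts the Leibniz rule with a \emph{fundamental} first factor, but since $\varphi'\otimes\pi\in\RepG$ and $\varphi\in\FundG$ one is free to peel off $\varphi$ first; to peel off $\varphi'$ from $\varphi'\otimes\pi$ one again uses the hypothesis with $\varphi'\in\FundG$; and $\id_\varphi\otimes(\cdot)$ is handled using $\sigma(\id_\varphi\otimes\rho)=\id_\varphi\otimes\sigma(\rho)$, which follows from additivity of $\sigma$ over direct sums together with $\sigma(1_\Gh)=0$.

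The substantive step is $(4)\Rightarrow(1)$, and this is where I expect the real work. Starting from $\Delta_\varphi\Delta_{\varphi'}\sigma=0$ for all fundamental $\varphi,\varphi'$, one first recovers that $\Delta_\varphi\sigma$ is, for each fixed $\varphi$, ``constant'' in the precise sense that $\Delta_\varphi\sigma(\pi)=\Delta_\varphi\sigma(1_\Gh)=\sigma(\varphi)\otimes\id_\pi$ for all $\pi$ (here we apply the case $s=1$ reasoning of Proposition \ref{prop_annilation}, or rather its argument, to the field $\Delta_\varphi\sigma\in\Sigma_\varphi$, keeping in mind that in $\Sigma_\varphi$ the annihilation statement forces constancy up to a tensor identity). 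Then I would define, for each fundamental $\varphi$, the operator $X_\varphi:=\sigma(\varphi)\in\sL(\cH_\varphi)$, and more generally build a candidate ``vector field'' by exploiting the fundamental representations: the construction in Section \ref{subsec_fund} exhibits generators $\pi_{\beta_1},\dots,\pi_{\beta_n},\bar\pi_{-\tilde\omega_\fz}$, and the differentials $\pi_{\beta_\ell}(X_j)$ span $\fg_\bC$ acting on these representations. The idea is that the collection $\{\sigma(\varphi):\varphi\in\FundG\}$ together with the additivity relation and the just-established $\sigma(\varphi\otimes\pi)=\sigma(\varphi)\otimes\id_\pi+\id_\varphi\otimes\sigma(\pi)$ (which is $(4)$ read backwards through the constancy of $\Delta_\varphi\sigma$) forces $\sigma$ to be a derivation of $\RepG$, hence by the tensor-generating property of $\FundG$ it is determined by its values on fundamental representations; and a compatible system of derivations on all the $\pi(X)$'s must itself come from a single $X\in\fg_\bC$.

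To make that last point rigorous I would argue as follows: the assignment $\pi\mapsto\sigma(\pi)$ satisfies, by the reconstructed Leibniz rule, exactly the functional equation satisfied by $\pi\mapsto\pi(X)$; now $\pi(X)$ for $\pi$ ranging over a faithful family (in particular the product of all fundamental representations, which is faithful since its tensor powers generate $\Gh$) determines $X$ uniquely, and conversely any such ``additive-multiplicative'' field is realized by some $X$ because one can read $X$ off from the value of $\sigma$ on a single faithful representation $\pi_0$: set $X$ to be the element of $\fg_\bC$ with $\pi_0(X)=\sigma(\pi_0)$ — this requires knowing $\sigma(\pi_0)\in\pi_0(\fg_\bC)$, which one extracts from the derivation property by restricting to the one-parameter subgroups, i.e. differentiating the relation $\sigma(\pi_0\otimes\pi_0)=\sigma(\pi_0)\otimes\id+\id\otimes\sigma(\pi_0)$ against the group structure. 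The main obstacle, concretely, is verifying that the operator $\sigma(\pi)$ so constructed actually lies in the image $\pi(\fg_\bC)$ rather than merely in $\sL(\cH_\pi)$ satisfying the abstract cocycle identity: this is where one genuinely uses that $G$ is a Lie group (so that $\fg_\bC$ is recovered as primitive elements / infinitesimal characters), and I would lean on the highest-weight construction of $\FundG$ from Section \ref{subsec_fund} together with the fact, recorded in Example \ref{ex_Diffsigma=piX}, that $\cF_G(X)$ indeed satisfies $(4)$ — so that $\sigma-\cF_G(X)$ is annihilated by all fundamental $\Delta_\varphi$ and vanishes at $1_\Gh$, whence by the $s=1$ case it is zero. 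In other words, once one produces the right $X$, Proposition \ref{prop_annilation} (case $s=1$) closes the argument.
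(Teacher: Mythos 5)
Your handling of the easy implications is sound and matches the paper: \eqref{item_lem_diff2_sigma=}$\Rightarrow$\eqref{item_lem_diff2_leibRep} via \eqref{eq_taupiX}, the trivial restriction \eqref{item_lem_diff2_leibRep}$\Rightarrow$\eqref{item_lem_diff2_leibRep+Fund}, the four-term cancellation giving \eqref{item_lem_diff2_leibRep+Fund}$\Rightarrow$\eqref{item_lem_diff2_Delta2} (the paper instead goes \eqref{item_lem_diff2_leibRep+Fund}$\Rightarrow$\eqref{item_lem_diff2_leibRep}$\Rightarrow$\eqref{item_lem_diff2_Delta2}, but your direct computation is equivalent), and the recovery of the constancy $\Delta_{\varphi'}\sigma(\pi)=\sigma(\varphi')\otimes\id_\pi$ from \eqref{item_lem_diff2_Delta2} via the $s=1$ case of Proposition \ref{prop_annilation} applied entrywise, evaluated at $\pi=1_{\Gh}$ using $\sigma(1_{\Gh})=0$. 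That much is exactly the paper's argument.

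The gap is in the final step: producing $X\in \fg_\bC$ with $\varphi(X)=\sigma(\varphi)$ for every $\varphi\in\FundG$ (equivalently, showing $\sigma(\pi_0)\in\pi_0(\fg_\bC)$ for a faithful $\pi_0$). This is precisely the content of \eqref{item_lem_diff2_leibRep}$\Rightarrow$\eqref{item_lem_diff2_sigma=}, and your proposal never establishes it. There is nothing to ``differentiate against the group structure'': $\sigma$ is a single field of operators, not a one-parameter family, and the identity $\sigma(\pi_0\otimes\pi_0)=\sigma(\pi_0)\otimes\id+\id\otimes\sigma(\pi_0)$ is an algebraic relation in $\sL(\cH_{\pi_0}\otimes\cH_{\pi_0})$ that does not visibly force $\sigma(\pi_0)$ into the $n$-dimensional subspace $\pi_0(\fg_\bC)$ of $\sL(\cH_{\pi_0})$. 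The appeal to ``$\fg_\bC$ as primitive elements'' is the correct heuristic (Tannakian reconstruction of the Lie algebra), but that is a theorem whose proof is essentially the implication being asked for; invoking it is circular at the level of rigor expected here. The paper closes this step analytically, in Section \ref{subsec_pf_item_lem_diff2_leibRep}: it first derives from the Leibniz identity and Lemma \ref{lem_lambdapi} the bound $\|\sigma(\pi)\|_{\sL(\cH_\pi)}\leq C(1+\lambda_\pi)^{1/2}$, so that $\Op(\sigma)$ extends continuously $H^1(G)\to L^2(G)$; it then uses the Peter--Weyl theorem on matrix coefficients to show $\Op(\sigma)(f_1f_2)=\Op(\sigma)(f_1)\,f_2+f_1\,\Op(\sigma)(f_2)$ for all $f_1,f_2\in\cD(G)$, i.e.\ $\Op(\sigma)$ is a left-invariant derivation of $\cD(G)$, hence a left-invariant vector field, hence $\sigma=\cF_G(X)$ for some $X\in\fg_\bC$. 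Your closing move via Proposition \ref{prop_annilation} (case $s=1$) applied to $\sigma-\cF_G(X)$ is fine once $X$ exists, but the existence of $X$ is the whole difficulty, and some version of the multiplier/derivation argument (or an honest proof of the Tannakian statement) is needed to supply it.
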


We check readily the implications
\eqref{item_lem_diff2_sigma=}
$\Rightarrow$
\eqref{item_lem_diff2_leibRep}
$\Rightarrow$
\eqref{item_lem_diff2_leibRep+Fund},
and \eqref{item_lem_diff2_leibRep}$\Rightarrow$
\eqref{item_lem_diff2_Delta2},
see Example \ref{ex_Diffsigma=piX}, 
especially \eqref{eq_taupiX}.

\begin{proof}[Proof of \eqref{item_lem_diff2_leibRep+Fund}$\Rightarrow$\eqref{item_lem_diff2_leibRep}]
Let $\sigma\in \Sigma$ satisfying $\sigma(1_{\Gh})=0$
and Property \eqref{item_lem_diff2_leibRep+Fund}.
Let $\varphi,\varphi'\in \FundG$ and $\pi\in \RepG$.
Property \eqref{item_lem_diff2_leibRep+Fund} implies
\begin{align*}
\sigma(\varphi\otimes\varphi'\otimes \pi)
&=
\sigma(\varphi)\otimes \id_{\varphi'\otimes \pi}
+\id_{\varphi}\sigma(\varphi'\otimes \pi)
\\
&=
\sigma(\varphi)\otimes \id_{\varphi'}\otimes\id_ \pi
+\id_{\varphi}\otimes \sigma(\varphi') \otimes \id_\pi
+\id_{\varphi}\otimes \id_{\varphi'} \otimes \sigma(\pi)
\\
&=
\sigma(\varphi\otimes \varphi')\otimes \id_\pi
+\id_{\varphi\otimes \varphi'} \otimes \sigma(\pi).
\end{align*}
We recognise \eqref{eq_lem_diff2_leibRep} with $\pi_1=\varphi\otimes \varphi'$.
Therefore  \eqref{eq_lem_diff2_leibRep} holds for any $\pi_2\in \RepG$ 
and $\pi_1$ any fundamental representation or tensor of any two fundamental representations.
Proceeding recursively, we see that \eqref{eq_lem_diff2_leibRep} holds
for $\pi_1$ being any tensors of fundamental representations.
Since any representation $\pi_1\in \Gh$ occurs in the 
decomposition into irreducibles of tensor of fundamental representations, 
this shows that \eqref{eq_lem_diff2_leibRep} holds
for any $\pi_1\in \Gh$, and thus for any $\pi_1\in \RepG$.
\end{proof}

\begin{proof}[Proof of \eqref{item_lem_diff2_Delta2}$\Rightarrow$\eqref{item_lem_diff2_leibRep+Fund}]
Let $\sigma\in \Sigma$ satisfying $\sigma(1_{\Gh})=0$
and Property \eqref{item_lem_diff2_Delta2}.
Let $\varphi'\in \FundG$.
Each entry of $\Delta_{\varphi'}\sigma$ is an element of $\Sigma$ annihilated by all the difference operators $\Delta_{\varphi}$, $\varphi\in \FundG$, 
and,by the case $s=1$ which is already proved, is of the form $c \id$ for some complex constant $c$.
Writing with tensors, this means that there exists $M_{\varphi'}\in \sL(\cH_{\varphi'})$ such that
$$
\forall \pi\in \RepG\qquad
\Delta_{\varphi'}\sigma (\pi)=M_{\varphi'} \otimes \id_\pi.
$$
Applying this to $\pi=1_{\Gh}$, we obtain
$$
M_{\varphi'} =\Delta_{\varphi'}\sigma(1_{\Gh})
=\sigma(\varphi'\otimes 1_{\Gh}) -\sigma(\id_\tau\otimes 1_{\Gh})
=\sigma(\varphi'),
$$
since $\sigma(1_{\Gh})=0$.
Therefore $\Delta_{\varphi'}\sigma (\pi)=\sigma(\varphi') \otimes \id_\pi$
and Property \eqref{item_lem_diff2_leibRep+Fund} holds.
\end{proof}

Hence Lemma \ref{lem_diff2} will be proved when we show the implication \eqref{item_lem_diff2_leibRep}$\Rightarrow$\eqref{item_lem_diff2_sigma=}.
Let us postpone the proof of this implication  
until Section \ref{subsec_pf_item_lem_diff2_leibRep}
and show the general case $s\in \bN$ recursively.
We will need the simple computational remark:
\begin{equation}
\label{eq_Delta_tensor}
\forall \sigma\in \Sigma, \ \pi_1,\pi_2,\pi\in \RepG
\quad
\Delta_{\pi_1\otimes \pi_2}\sigma(\pi)
=
\Delta_{\pi_1}\sigma(\pi_2\otimes \pi) + \id_{\pi_1}\otimes \Delta_{\pi_2}\sigma(\pi).
\end{equation}
which follows from the definition of difference operators.

\begin{proof}[Proof of Proposition \ref{prop_annilation}, general case]
First we observe that 
 (2)$\Rightarrow$(1) and,
by Example \ref{ex_Diffsigma=piXbeta}, (3)$\Rightarrow$(2). So it remains to show (1)$\Rightarrow$(3).
Let us prove this recursively on $s$.
The equivalences for $s=1,2$ are already proved (or assumed).
Let us assume that the equivalences have been established for $s=1,2,\ldots,s_0$.

Let $\sigma\in \Sigma$  annihilated by the difference operators 
$\Delta^\alpha$, $|\alpha|=s_0+1$.
Then by \eqref{eq_Delta_tensor}, it is  annihilated by the difference operators 
$\Delta_\tau\Delta^{\alpha}$
for any $\alpha\in \bN_0^f$, $|\alpha|=s_0$,
and $\tau$ being a fundamental representation or a tensor of two fundamental representations, and recursively any tensor of fundamental representations, therefore any non-trivial representation in fact.
Since $\Delta_{1_{\Gh}}$ is zero on $\Sigma$, 
we can take $\tau$ even trivial.
Then for each $\tau\in \RepG$, 
the $\cH_\tau$-entries of $\Delta_\tau \sigma$ is annihilated by any product of $s_0$ fundamental operators
and therefore we have:
\begin{equation}
\label{eq_Deltasigma=+lot}
\Delta_\tau \sigma(\pi) 
=
\sum_{|\beta|=s_0}M_{\tau,\beta} \otimes \pi(X)^\beta
+ l.o.t.
\end{equation}
where $M_{\tau,\beta}\in \sL(\cH_\tau)$
and 
$l.o.t.$ stand for lower order terms, that is here, 
modulo
$\sL(\cH_\tau)\otimes \pi(\fU_{s_0-1}(\fg_\bC))$.
Note that this writing is unique. 
So  for each $\beta\in \bN_0^n$, 
we have obtained  $\{M_{\tau,\beta}, \tau\in \RepG\} \in \Sigma$
which is 0 at $\tau=1_{\Gh}$.

Furthermore, for any $\tau,\tau'\in \RepG$, we have:
\begin{align*}
\Delta_{\tau\otimes\tau'} \sigma(\pi) 
&=
\sum_{|\beta|=s_0}M_{\tau\otimes \tau',\beta} \otimes \pi(X)^\beta
+ l.o.t.,
\\
\Delta_{\tau} \sigma(\tau'\otimes\pi) 
&=
\sum_{|\beta|=s_0}M_{\tau,\beta} \otimes (\tau'\otimes\pi)(X)^\beta
+ l.o.t.
\\
&=
\sum_{|\beta|=s_0}M_{\tau,\beta} \otimes \id_{\tau'}\otimes\pi(X)^\beta
+ l.o.t.,
\end{align*}
 so that \eqref{eq_Delta_tensor} implies that 
$$
\sum_{|\beta|=s_0}M_{\tau\otimes \tau',\beta} \otimes \pi(X)^\beta
=
\sum_{|\beta|=s_0}(M_{\tau,\beta} \otimes \id_{\tau'}
+\id_{\tau}\otimes M_{\tau',\beta})
\otimes\pi(X)^\beta.
$$
Hence each field $\{M_{\tau,\beta}, \tau\in \RepG\}$
satisfies Property \eqref{item_lem_diff2_leibRep} of Lemma \ref{lem_diff2}.
It has to be of the form $\cF_G(Y_\beta)$ for some $Y_\beta\in \fg_\bC$.
Because of \eqref{eq_Deltasigma=+lot},
$\sigma_1:=\sigma -\sum_{|\beta|=s_0}\cF_G(Y_\beta X^\beta) \in \Sigma$
is such that $\Delta_\tau \sigma$
is of the form 
\eqref{eq_Deltasigma=+lot}
with $s_0$ replaced by $s_0-1$.
Proceeding as above, we obtain recursively on $s_0$ that $\sigma\in \cF_G(\fU_{s_0-1}(\fg_\bC))$.
This concludes the proof of  Proposition \ref{prop_annilation}.
\end{proof}

Proposition \ref{prop_annilation} and Lemma \ref{lem_diff2} 
are proved once we show the implication \eqref{item_lem_diff2_leibRep}$\Rightarrow$\eqref{item_lem_diff2_sigma=}
in Lemma \ref{lem_diff2}.
This will be done in Section \ref{subsec_pf_item_lem_diff2_leibRep}.
Our proof will use the interpretation of $\Sigma$ as Fourier multipliers which we now present.

\section{Interpretation as Fourier multipliers}
\label{sec_Fmult}

Here we recall  the well-known viewpoint on the space $\Sigma$ as the space of symbols for Fourier multiplier operators.
This will be an opportunity to set notation and conventions, especially regarding differential operators such as the Laplace-Beltrami operator.
References include \cite{stein_topics,larsen,clerc,coifman+weiss1,coifman+weiss2,coifman+weiss,dM+M,stanton,strichartz,vretare,weiss_SU}.
Eventually, in Section \ref{subsec_pf_item_lem_diff2_leibRep}, 
we will present
 the missing part in the proof of Lemma \ref{lem_diff2}.

\subsection{The Peter-Weyl theorem}
\label{subsec_PWthm}

For any representation $\pi$ of $G$, one can always find an inner product (often denoted by $(\cdot,\cdot)_{\cH_\pi}$) on $\cH_\pi$ such that the map $\pi(g)$ is unitary on $\cH_\pi$.
If $\pi$ is a representation of $G$,
then its coefficients are all the functions of the form 
$$
(\pi \ u,v): x\mapsto (\pi(x)u,v)_{\cH_\pi},\qquad u,v\in \cH_\pi.
$$
These are smooth functions on $G$.
If a basis $\{e_1,\ldots, e_{d_\pi}\}$ of $\cH_\pi$ is fixed, 
then the matrix coefficients of $\pi$
are the coefficients $\pi_{i,j}=(\pi \ e_i,e_j)_{\cH_\pi}$, $1\leq i,j\leq d_\pi$.
The cofficients of $\pi$
form the finite-dimensional complex vector space
$$
L^2_\pi(G) := \{(\pi \ u,v), u,v\in \cH_\pi\}.
$$
Note that $L^2_\pi(G)$ depends only on the equivalence class of $\pi$.

In this paper, we assume that a Haar measure on $G$ has been chosen and that it is normalised to be a probability measure.
As is customary, for $p\in [1,\infty)$, 
we denote by $L^p(G)$ the usual Banach space of measurable function $f:G\rightarrow \bC$ such that $|f|^p$ is integrable.

\begin{theorem}[Peter-Weyl Theorem]
\label{thm_PW}
The dual $\Gh$ is discrete and countable.
The Hilbert space $L^2(G)$ decomposes as the Hilbert direct sum $\oplus_{\pi\in \Gh} L^2_\pi(G)$.
Moreover, if for each $\pi\in \Gh$, one fixes a realisation as a representation with an orthonormal basis of $\cH_\pi$, 
then the functions $\sqrt d_\pi \pi_{i,j}$, $1\leq i,j \leq d_\pi$, $\pi\in \Gh$, form an orthonormal basis of $G$.
\end{theorem}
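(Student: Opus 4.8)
The plan is to follow the classical route through compact self-adjoint convolution operators together with Schur's orthogonality relations; I indicate only the main steps, all of which are standard (see e.g. \cite{knapp_bk,hall}). First I would introduce convolution on $G$ and, for $f\in C(G)$, the operator $T_f$ on $L^2(G)$ given by $T_f\phi = \phi * f$. Using the normalised Haar measure and the uniform continuity of $f$ on the compact group $G$, one checks that $T_f$ sends the unit ball of $L^2(G)$ to a bounded and equicontinuous family of functions, so by Arzel\`a--Ascoli $T_f$ is compact. Choosing $f$ with $\overline{f(x^{-1})}=f(x)$ makes $T_f$ self-adjoint, and the spectral theorem for compact self-adjoint operators yields an orthogonal decomposition of $L^2(G)$ into $\ker T_f$ and the finite-dimensional eigenspaces $E_\lambda(T_f)$ attached to the nonzero eigenvalues $\lambda$.

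Next, right convolution commutes with the left translations $L_y\phi(x)=\phi(y^{-1}x)$, so each $E_\lambda(T_f)$ is a finite-dimensional, left-invariant (hence $G$-invariant) subspace of $L^2(G)$. Taking an approximate identity $(f_k)_k\subset C(G)$ with $f_k=f_k^*$, one has $T_{f_k}\phi\to\phi$ in $L^2(G)$ for every $\phi$, which forces the closed linear span of all finite-dimensional $G$-invariant subspaces of $L^2(G)$ to be all of $L^2(G)$. Decomposing each such subspace into irreducibles and noting that a finite-dimensional irreducible left-invariant subspace realising $\pi\in\Gh$ consists of coefficient functions of $\pi$ (via evaluation at $e_G$), hence lies in $L^2_\pi(G)$, one concludes that $\sum_{\pi\in\Gh}L^2_\pi(G)$ is dense in $L^2(G)$.

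The orthogonality is then provided by the Schur orthogonality relations, obtained by averaging intertwiners over $G$ against the Haar measure: for unitary realisations with orthonormal bases,
\[
\int_G \pi_{i,j}\,\overline{\pi'_{k,l}} = 0 \quad (\pi\not\simeq\pi'),
\qquad
\int_G \pi_{i,j}\,\overline{\pi_{k,l}} = \frac{1}{d_\pi}\,\delta_{i,k}\,\delta_{j,l}.
\]
These show that the subspaces $L^2_\pi(G)$ are mutually orthogonal and that the functions $\sqrt{d_\pi}\,\pi_{i,j}$ form an orthonormal system, which by the previous paragraph is complete; hence $L^2(G)=\oplus_{\pi\in\Gh}L^2_\pi(G)$ and $\{\sqrt{d_\pi}\,\pi_{i,j}\}$ is an orthonormal basis of $L^2(G)$. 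Since a compact Lie group is second countable, $L^2(G)$ is separable, so this basis is countable and only countably many $\pi$ occur, i.e. $\Gh$ is countable; and $\Gh$ is discrete because the orthogonal projection of $L^2(G)$ onto $L^2_\pi(G)$ is convolution with $d_\pi\overline{\chi_\pi}\in L^1(G)$, which exhibits $\{\pi\}$ as open (and closed) for the Fell topology on $\Gh$.

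I expect the density step to be the crux: proving compactness of $T_f$ carefully and then combining the approximate identity with the spectral theorem to show that coefficient functions exhaust $L^2(G)$. Everything afterwards is bookkeeping with Schur's relations and with separability of $L^2(G)$.
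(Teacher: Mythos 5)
The paper does not prove this statement: it is quoted as the classical Peter--Weyl theorem, with the reader referred to \cite{knapp_bk} and \cite{hall}. Your argument is precisely the standard proof found in those references (compactness of self-adjoint convolution operators, approximate identity, decomposition of finite-dimensional invariant subspaces into coefficient spaces, Schur orthogonality, separability), and it is correct; the only points worth spelling out in a full write-up are that eigenfunctions of $T_f$ for nonzero eigenvalues are continuous (since $\phi=\lambda^{-1}\phi*f$), so that evaluation at $e_G$ makes sense, and the routine bookkeeping that an irreducible left-invariant subspace lands in $L^2_{\pi'}(G)$ for the appropriate (possibly contragredient) class $\pi'$, which does not affect the conclusion since $\Gh$ is stable under taking contragredients.
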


If $f\in \cD'(G)$ is a distribution and $\pi$ is a unitary representation, 
 its group Fourier transform at $\pi$
is denoted by
$$
\pi(f)\equiv \widehat f(\pi)\equiv \cF_G f(\pi) \in \sL(\cH_\pi)
$$
and defined  via
$$
 \pi(f)  = \int_G f(x)\pi(x)^* dx,
\quad \mbox{i.e.}\quad
(\pi(f)u,v)_{\cH_\pi} = \int_G f(x)(u,\pi(x)v)_{\cH_\pi} dx,
$$
since the coefficient functions are smooth.
In particular, we consider the Fourier transform of a function to be defined on $\RepG$ and by restriction on $\Gh$.
Note $\widehat f \in \Sigma$, see Section \ref{subsec_Sigma}.

If $f$ is integrable and $\pi$ unitary, we have 
\begin{equation}
\label{eq_cF_L1}
\|\cF_G \kappa(\pi)\|_{\sL(\cH_\pi)}\leq \|\kappa\|_{L^1(G)}.
\end{equation}

One checks easily that the group Fourier transform maps the convolution  of two distributions $f_1,f_2\in \cD'(G)$
to the matrix product or composition of their group Fourier transforms: 
$$
\cF_G(f_1*f_2) = \widehat f_2\ \widehat f_1.
$$
Recall that the (non-commutative) convolution on $G$ is defined in the usual way:
$$
f*g (x) =\int_G f(y) g(y^{-1}x) dy,
\qquad f,g\in \cD'(G).
$$

The Peter-Weyl theorem yields the \emph{Plancherel formula}:
$$
\int_G|f(x)|^2 dx
=
\sum_{\pi\in \Gh} d_\pi \|\pi(f)\|_{HS(\cH_\pi)}^2,
\quad f\in L^2(G),
$$
and the \emph{Fourier inversion formula}
$$
f(x)
=
\sum_{\pi\in \Gh} d_\pi \tr\left( \pi(x) \pi(f)\right),
\quad f\in \cD(G), \ x\in G.
$$

The finite linear sums of vectors in some $L^2_\pi(G)$, $\pi\in \Gh$ form the vector space:
\begin{equation}
\label{eq_def_L2f}
L^2_{fin}(G) := {\sum_{\pi\in \Gh}}^\perp L^2_\pi(G).
\end{equation}
As each $L^2_\pi(G)$ is a finite dimensional subspace of $\cD(G)$, 
$L^2_{fin}(G)\subset \cD(G)$.
The Peter-Weyl Theorem can be stated equivalently as follows:
$L^2_{fin}(G)$ is dense in $L^2(G)$ and 
\begin{equation}
\label{eq_cq_Peter+Weyl_thm}
d_\pi \widehat \pi (\pi')= \delta_{\pi=\pi'} \id_{\pi'},
\end{equation}
for any two representations $\pi,\pi'\in \Gh$.
Here $\id_{\pi'}$ is the identity operator on $\cH_{\pi'}$
and \eqref{eq_cq_Peter+Weyl_thm} means that, when $\pi$ is realised as a matrix representation, we have 
$d_\pi \widehat \pi_{i,j} (\pi')= 0$ for any $i,j$ satisfying $1\leq i \not= j\leq d_\pi$,
and 
$d_\pi \widehat \pi_{i,1} (\pi')= 1$ for any $i$ satisfying $1\leq i \leq d_\pi$.

\subsection{Fourier multipliers}
\label{subsec_Fmult}

Recall that, on the torus, 
a Fourier multiplier
 corresponding to the symbol $\sigma :\bZ \to \bC$
 is the operator $\Op(\sigma)$ given via
\begin{equation}
\label{eq_Opsigma_T}
\Op(\sigma) \phi (e^{i\theta})
=\sum_{\ell=-\infty}^{+\infty} 
e^{i\ell \theta}
\sigma(\ell) \widehat \phi(\ell),
\end{equation}
where the function $\phi$ admits the Fourier expansion $\phi(e^{i\theta}) = \sum_{\ell=-\infty}^{+\infty} 
e^{i\ell \theta} \widehat \phi(\ell)$.
Clearly $\Op(\sigma)$  is a linear operator well defined 
on the space of functions that have  only a finite number of non-zero Fourier coefficients for instance.
Furthermore the Plancherel formula implies that 
the corresponding multiplier operator  admits a unique extension as a linear operator   bounded on $L^2(\bT)$
if and only if the symbol is a bounded sequence.

All this is easily generalised to the case of an arbitrary compact Lie group in the following way.

\begin{definition}
We denote by $L^2(\Gh:\Sigma)$ the Hilbert space of $\sigma \in \Sigma$ such that
$$   
\|\sigma\|^2_{L^2(\Gh:\Sigma)}:=\sum_{\pi\in \Gh} d_\pi \|\sigma(\pi)\|_{HS(\cH_\pi)}^2
$$
is finite. 
\end{definition}
The Peter-Weyl theorem (Theorem \ref{thm_PW})
implies that the Hilbert space $L^2(\Gh:\Sigma)$ is
 isometrically isomorphic to $L^2(G)$ via the Fourier transform.

\begin{definition}
We denote by $L^2_{fin}(\Gh:\Sigma)$ the space of symbols $\sigma$ such that $\sigma(\pi)=0$ for all $\pi\in \Gh$ but a finite number.
\end{definition}
Naturally $L^2_{fin}(\Gh:\Sigma)$ is the subspace of $L^2(\Gh:\Sigma)$ 
isometric to $L^2_{fin}(G)$ via the Fourier transform, see \eqref{eq_def_L2f},
and $L^2_{fin}(\Gh:\Sigma)$ is a dense  subspace of $L^2(\Gh:\Sigma)$.

\begin{definition}
\label{def_op}
The \emph{Fourier multiplier operator} with symbol $\sigma \in \Sigma$  is the operator $\Op(\sigma)$ defined on $L^2_{fin}(G)$ via
$$
\Op(\sigma) \phi (x)=\sum_{\pi\in \Gh}
d_\pi \tr \left(\pi(x) \sigma(\pi) \widehat \phi(\pi)\right),
\quad
\phi\in L^2_{fin}(G), \ x\in G.
$$
\end{definition}

By definition of $L^2_{fin}(G)$, the summation above is finite
and
this defines a linear  operator $\Op(\sigma):L^2_{fin}(G)\rightarrow L^2_{fin}(G)$ satisfying 
$$
\cF_G\left\{\Op(\sigma) \phi\right\} = \sigma \widehat \phi.
$$
Conversely, 
if  $T$ is a linear operator defined on $L^2_{fin}(G)$
(and with image some complex-valued functions of $x\in G$)
and if $T$ is invariant under left-translations, 
then the symbol is recovered via
\begin{equation}
\label{eq_sigma_T}
\sigma(\pi)=
\pi (x)^* (T \pi)(x), 
\quad \mbox{that is,}\quad
[\sigma(x,\pi)]_{i,j}=
\sum_{k} \overline{\pi_{ki} (x)} (T \pi_{kj})(x) ,
\end{equation}
for any $x\in G$, for instance $x=e_G$;
here one has fixed a matrix realisation of $\pi$ 
but \eqref{eq_sigma_T} is in fact independent of this realisation.
This can be easily checked using \eqref{eq_cq_Peter+Weyl_thm}.
This shows that the quantisation $\Op$ defined above is injective.
Moreover  \eqref{eq_sigma_T} makes sense for any $\pi\in \RepG$
and one checks easily that this coincides with the natural extension of $\sigma$ to a collection over $\RepG$.
In other words, 
the identification \eqref{eq_identification_sigma_Gh_RepG} may be realised via
\begin{equation}
\label{eq_identification_sigma_Gh_RepG1}
\sigma(\pi) := (\Op(\sigma) (\pi))(e_G), 
\quad \pi\in \RepG.
\end{equation}

The considerations above imply
that the space $\Sigma$ stands in bijection with the space of Fourier multipliers.
For this reason, the elements of $\Sigma$ may be called \emph{symbols}.

\medskip

Roughly speaking, any `reasonable' convolution operator may be viewed as a Fourier multiplier:
\begin{ex}
\label{ex_symbol_op}
If $\kappa\in \cD'(G)$, then 
the convolution operator $T_\kappa:\cD(G) \to \cD'(G)$, $T_\kappa (\phi)=\phi*\kappa$
with kernel $\kappa$ is the extension of the  Fourier multiplier
$\Op(\widehat \kappa)$ with symbol $\widehat \kappa$.
Indeed, we have 
$$
\widehat  {T_\kappa\phi} = \widehat \kappa \widehat \phi, \qquad \phi\in \cD(G).
$$
\end{ex}

\noindent\textbf{Convention:}
in this paper, we allow ourselves to keep the same notation for a linear operator $T$ with $T:L^2_{fin}(G)\rightarrow L^2_{fin}(G)$  or $T:\cD(G)\to\cD'(G)$ and any of its possible extension as a continuous operator on a topological  spaces of functions on of $G$
as long as such an extension exists and is unique. 

\medskip

Let us give more concrete examples.

\begin{ex}
\label{ex_cFXbeta}
The identity operator on $\cD(G)$ 
is a Fourier multiplier with kernel  $\delta_{e_G}$
and  symbol  $\widehat \delta_{e_G} =\id$.
More generally
any left-invariant differential operator is a Fourier multiplier
since, 
for any $\beta\in \bN_0^n$,
the operator $X^\beta$ is a Fourier multiplier with kernel  $(X^\beta)^t\delta_{e_G}$
and  symbol 
$$
\cF_G(X)^\beta:=\{\pi(X)^\beta, \pi\in \Gh\}.
$$
\end{ex}
\noindent\textbf{Convention:}
Here, $(X^\beta)^t$ denotes the transpose of $X^\beta$:
$(X^\beta)^t = (-1)^{|\beta|}
X_n^{\beta_n}\ldots X_1^{\beta_1}$.
We have also used the usual identification of  $\fg$ with the space of vector fields on $G$ which are invariant under left translations, 
and consequently of $\fU(\fg_\bC)$ with the Lie algebra of the differential operators on $G$ which are invariant under left translations.

\begin{definition}
We denote by $L^\infty(\Gh:\Sigma)$ the Banach space of $\sigma\in \Sigma$ such that
$$
\|\sigma\|_{L^\infty(\Gh:\Sigma)}
:=
\sup_{\pi\in \Gh} \|\sigma(\pi)\|_{\sL(\cH_\pi)}
=
\sup_{\pi\in \RepG} \|\sigma(\pi)\|_{\sL(\cH_\pi)},
$$
is finite. 
\end{definition}

Recall that
if an operator $T\in \sL(L^2(G))$ is left-invariant, that is, invariant under left-translation: $T(f(x_0\cdot))(x)= (Tf) (x_0x)$, $x,x_0\in G$, $f\in L^2(G)$, 
then the Schwartz kernel theorem implies that 
it is a right convolution operator in the sense that there exists $\kappa\in \cD'(G)$ such that $T=T_\kappa:\phi\mapsto \phi*\kappa$ on $\cD(G)$.
The Peter-Weyl theorem implies that
the Banach space of 
operators which are left-invariant and bounded on $L^2(G)$ is isometric to $L^\infty(\Gh:\Sigma)$.
Indeed,  if  $\sigma\in L^\infty(\Gh:\Sigma)$, then the corresponding Fourier mulitplier $\Op(\sigma)$
is bounded on $L^2(G)$ with operator norm 
\begin{equation}
\label{eq_L2bdd_LinftyGh}
\|\Op(\sigma)\|_{\sL(L^2(G))}=\|\sigma\|_{L^\infty(\Gh:\Sigma)}
\end{equation}
The converse holds easily: if $\Op(\sigma)$
is bounded on $L^2(G)$ then $\|\sigma\|_{L^\infty(\Gh:\Sigma)}$ is finite.
Furthermore, Equation \eqref{eq_L2bdd_LinftyGh} yields
\begin{equation}
\label{eq_Tkappa_sup}
\|T_\kappa\|_{\sL(L^2(G))}
=\sup_{\pi\in \Gh} \|\cF_G \kappa(\pi)\|_{\sL(\cH_\pi)}.
\end{equation}

\subsection{The Laplace-Beltrami operator and its Fourier transform}

The (positive) Laplace-Beltrami operator  is 
$$
\cL:= -X_1^2-\ldots- X_n^2,
$$
where $X_1,\ldots,X_n$ is an orthonormal basis of $\fg$;
here we assume that $\fg$ is equipped with of a scalar product invariant under the adjoint representation of $G$ (this is always possible).

The operator $\cL$ does not depend on a particular choice of such a basis. 
It is invariant under left and right translations
and its group Fourier transform is scalar:
$$
\forall \pi\in \Gh\quad \exists ! \lambda_\pi\in [0,\infty)\quad
\pi(\cL) =  \lambda_\pi \id_\pi.
$$

We keep the same notation for $\cL$ and its self-adjoint extension on $L^2(G)$
having as domain of definition the space of all functions $f\in L^2(G)$ 
such that $\cL f\in L^2(G)$. 
Then $\cL$ is a positive self-adjoint operator on $L^2(G)$.
The Peter-Weyl Theorem
yields an explicit spectral decomposition for $\cL$ and of its spectrum
$\spec(\cL)=\{\lambda_\pi,\pi\in \Gh\}$.

For any function $f:[0,\infty)\to \bC$ 
the spectral multiplier $f(\cL)$ is a well defined linear operator on $L^2_{fin}(G)$ with symbol $f(\widehat \cL):=\{f( \lambda_\pi)\id_\pi, \pi\in \Gh\}$.
If $f(\cL)$ extends to a continuous operator $\cD(G)\to \cD'(G)$, then
by the Schwartz kernel theorem, it admits a distributional convolution kernel which we denote by $f(\cL)\delta_{e_G}\in \cD'(G)$:
$$
f(\cL)\phi = \phi * (f(\cL)\delta_{e_G}),
\quad \phi\in \cD(G).
$$
Naturally the group Fourier transform of this distribution is the symbol of the operator:
$$
\cF(f(\cL)\delta_{e_G})(\pi) = f(\lambda_\pi)\id_\pi, \quad \pi\in \Gh.
$$

The operators $f(\cL)$ and their kernels
have been extensively studied, see e.g. Theorem \ref{thm_kernelf(L)} and its proof for a sample of results and references.
In the context of our argument, we will study these operators in more details 
in Section \ref{sec_f(L)indotHs}.
Furthermore, 
the proof given below of the implication \eqref{item_lem_diff2_leibRep}$\Rightarrow$\eqref{item_lem_diff2_sigma=} in Lemma \ref{lem_diff2}
 will use  some properties of $\lambda_\pi$ 
which come from the very rigid structure of weights and roots of compact Lie groups.

\subsection{Proof of \eqref{item_lem_diff2_leibRep}$\Rightarrow$\eqref{item_lem_diff2_sigma=} in Lemma \ref{lem_diff2}}
\label{subsec_pf_item_lem_diff2_leibRep}

Before starting the proof of the missing implication in Lemma \ref{lem_diff2}, 
let us summarise the properties of the eigenvalues of $\cL$ which will be used in the proof.
We will use the notion of fundamental representations $\varphi_1,\ldots ,\varphi_s$ explained in Section \ref{subsec_fund}
and their corresponding highest weights which we call fundamental weights.

\begin{lemma}
\label{lem_lambdapi}
\begin{enumerate}
\item 
\label{item_lem_lambdapi_eq_omega}
Writing $\omega_\pi $ for 
the highest weight  of a representation $\pi\in \Gh$, 
we have
$ 1+ |\omega_\pi| \asymp (1+\lambda_\pi)^{1/2}$,
in the sense that 
there exists $C>1$ 
such that 
$$
\forall \pi\in \Gh\qquad
C^{-1}(1+ |\omega_\pi|) 
\leq (1+\lambda_\pi)^{1/2}
\leq C(1+ |\omega_\pi|).
$$
\item 
\label{item_lem_lambdapi_eq_s}
The highest weight  of a representation $\pi\in \Gh$ 
can be written as a linear combination
 $\omega_\pi = m_1\omega_1 +\ldots+m_f\omega_f$
 of the fundamental weights  $\omega_1,\ldots,\omega_f$,
 and we have
 $\sum_j m
 _j\asymp 1+ |\omega_\pi|$
in the sense that 
there exists $C>1$ 
such that 
$$
\forall \pi\in \Gh\qquad
C^{-1}(1+ |\omega_\pi|) 
\leq \sum_j m
 _j
\leq C(1+ |\omega_\pi|).
$$
\item
\label{item_lem_lambdapi_eq_lambdarho}

For any $\pi,\rho\in \Gh$, 
if $\rho\in \Gh$ intervenes in the decomposition of $\tau\otimes\pi$ into irreducibles for some $\tau\in \FundG$
then 
$(1+\lambda_\rho) \asymp(1+ \lambda_\pi)$.
Furthermore the supremum 
 over
$\pi,\rho\in \Gh$ and 
 $\tau\in \FundG$ such that
$\rho\subset \tau\otimes\pi$
$$
\sup |\lambda_\rho-\lambda_\pi| <\infty
$$
is finite.
\item
\label{item_lem_lambdapi_eq_lambdarhos}
 Let $s\in \bN$. 
For any $\pi,\rho\in \Gh$, 
if $\rho\in \Gh$ occurs in the decomposition of      $\tau_1\otimes\ldots\otimes \tau_s\otimes\pi$ into irreducibles for some $\tau_1,\ldots,\tau_s\in \FundG$
then 
$(1+\lambda_\rho) \asymp(1+ \lambda_\pi)$.
Furthermore the supremum over
$\pi,\rho\in \Gh$ and 
 $ \tau_1,\ldots,\tau_s\in \FundG$
 such that
 $\rho\subset \tau_1\otimes\ldots\otimes \tau_s\otimes\pi$
$$
\sup |\lambda_\rho-\lambda_\pi| <\infty,
$$
is finite.
\end{enumerate}
\end{lemma}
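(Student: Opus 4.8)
The plan is to prove the four assertions of Lemma \ref{lem_lambdapi} by exploiting the Weyl-type formula for the eigenvalues $\lambda_\pi$ of the Laplace-Beltrami operator in terms of highest weights, together with the finiteness of $\FundG$ and the structure of the weight lattice. Recall that if $\fg$ is equipped with the $\mathrm{Ad}$-invariant inner product used to define $\cL$, then for $\pi \in \Gh$ with highest weight $\omega_\pi$ one has the classical identity $\lambda_\pi = |\omega_\pi + \rho_0|^2 - |\rho_0|^2 = |\omega_\pi|^2 + 2\langle \omega_\pi, \rho_0\rangle$, where $\rho_0$ is the half-sum of the positive roots; this is the only analytic input, and it is standard (see \cite{knapp_bk}). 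Everything else is linear algebra and finiteness.

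First I would prove \eqref{item_lem_lambdapi_eq_omega}: since $\omega_\pi$ ranges over the dominant cone, $\langle \omega_\pi, \rho_0\rangle \geq 0$, so $\lambda_\pi \geq |\omega_\pi|^2$, giving one side; for the other, Cauchy-Schwarz gives $\lambda_\pi \leq |\omega_\pi|^2 + 2|\rho_0|\,|\omega_\pi| \leq (|\omega_\pi| + |\rho_0|)^2$, and then comparing $1 + |\omega_\pi|$ with $(1+\lambda_\pi)^{1/2}$ is an elementary estimate absorbing the constant $|\rho_0|$. Next, \eqref{item_lem_lambdapi_eq_s} is pure finite-dimensional geometry: the highest weight $\omega_\pi$ lies in the lattice generated over $\bZ$ (indeed, in the relevant cone, over $\bN_0$) by the fundamental weights $\omega_1,\dots,\omega_f$; since the $\omega_j$ are a fixed finite family of nonzero vectors and the $m_j$ are nonnegative integers, the map $(m_1,\dots,m_f) \mapsto \sum_j m_j \omega_j$ and its "size" $\sum_j m_j$ are comparable — here one uses that all norms on $\bR^f$ are equivalent and that the $\omega_j$ span (after choosing an ordering so the relevant cone is where the $m_j \geq 0$). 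A small care is needed because of the one-dimensional "central" fundamental representations $\bar\pi_{-\tilde\omega_\fz}$ constructed in Section \ref{subsec_fund}, whose weight can be balanced against $\tilde\omega_\fz$; the estimate should be stated for the genuinely reduced expression, and the constant then depends only on $G$.

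For \eqref{item_lem_lambdapi_eq_lambdarho}, the key observation is that if $\rho \subset \tau \otimes \pi$ with $\tau \in \FundG$, then $\omega_\rho = \omega_\pi + \mu$ where $\mu$ is a weight of $\tau$, hence $|\mu| \leq \max_{\tau \in \FundG} (\text{largest weight norm of }\tau) =: M$, a finite constant depending only on $G$ since $\FundG$ is finite. Then from the identity for $\lambda$, $|\lambda_\rho - \lambda_\pi| = \big| |\omega_\rho + \rho_0|^2 - |\omega_\pi + \rho_0|^2 \big| = |\langle \omega_\rho - \omega_\pi, \omega_\rho + \omega_\pi + 2\rho_0\rangle| = |\langle \mu, \omega_\rho + \omega_\pi + 2\rho_0\rangle|$. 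This is the step where the claimed \emph{uniform} bound $\sup|\lambda_\rho - \lambda_\pi| < \infty$ looks like it could fail, because $\omega_\rho + \omega_\pi$ is unbounded — this is the main obstacle. It is resolved by noting that $\mu$ is not an arbitrary vector of bounded norm but a weight of $\tau$, so $\mu$ is \emph{stable under the relevant Weyl-orbit pairing}; more precisely one uses that $\omega_\rho = \omega_\pi + \mu$ forces, via the dominance of both $\omega_\rho$ and $\omega_\pi$ and standard highest-weight combinatorics, that the inner product $\langle \mu, \omega_\pi\rangle$ is bounded (the increment in $|\omega|^2$ when tensoring by a fixed representation and extracting an irreducible component is controlled by that representation alone — concretely, $\langle \mu, \omega_\pi \rangle \le \langle \mu_{\mathrm{hw}}, \omega_\pi\rangle$ and the complementary inequality from $\omega_\rho$ dominant pin both sides). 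Granting this, $|\lambda_\rho - \lambda_\pi| \leq |\mu|\,|\mu| + 2|\langle \mu, \omega_\pi + \rho_0\rangle| \leq M^2 + 2(C_G)$, finite and independent of $\pi,\rho,\tau$; and then $(1+\lambda_\rho) \asymp (1+\lambda_\pi)$ follows, since a bounded additive perturbation of a nonnegative quantity is multiplicatively comparable after adding $1$. Finally, \eqref{item_lem_lambdapi_eq_lambdarhos} follows from \eqref{item_lem_lambdapi_eq_lambdarho} by a telescoping argument over the $s$ tensor factors: writing a chain $\pi = \rho_0 \subset \tau_s \otimes \pi$, $\rho_1 \subset \tau_{s-1}\otimes\rho_1$ and so on — formally, any irreducible component $\rho$ of $\tau_1\otimes\cdots\otimes\tau_s\otimes\pi$ arises as a component of $\tau_1 \otimes \rho'$ for some component $\rho'$ of $\tau_2\otimes\cdots\otimes\tau_s\otimes\pi$ — one applies \eqref{item_lem_lambdapi_eq_lambdarho} at most $s$ times to get $|\lambda_\rho - \lambda_\pi| \leq s\cdot \sup_{\text{case }(3)}|\lambda_\rho - \lambda_\pi| < \infty$, with $s$ fixed; the multiplicative comparability is then immediate. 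I would organize the write-up so that \eqref{item_lem_lambdapi_eq_omega} and \eqref{item_lem_lambdapi_eq_s} come first as warm-ups, then \eqref{item_lem_lambdapi_eq_lambdarho} as the crux, then \eqref{item_lem_lambdapi_eq_lambdarhos} as a formal corollary.
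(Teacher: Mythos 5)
Your Parts (1), (2) and (4) follow essentially the paper's route: the identity $\lambda_\pi=|\omega_\pi+\delta|^2-|\delta|^2$ (which the paper quotes from Knapp) plus the finiteness of $\FundG$ for (1)--(2), and recursion/telescoping for (4). The details you supply there are correct, and your remark about possible linear dependence among the fundamental weights in Part (2) is a legitimate subtlety that the paper glosses over.

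The genuine gap is in your Part (3), which you yourself identify as the crux. You correctly compute $\lambda_\rho-\lambda_\pi=2\langle\mu,\omega_\pi+\delta\rangle+|\mu|^2$ and correctly observe that the unboundedness of $\omega_\pi$ is the obstacle; but your proposed resolution --- that $\langle\mu,\omega_\pi\rangle$ is uniformly bounded because ``the increment in $|\omega|^2$ when tensoring by a fixed representation and extracting an irreducible component is controlled by that representation alone'' --- is false. Take $G=SU(2)$, $\varphi=\pi_1$ the standard representation and $\pi=\pi_m$ of highest weight $m\omega_1$: then $\pi_{m+1}\subset\varphi\otimes\pi_m$ and $\lambda_{\pi_{m+1}}-\lambda_{\pi_m}=(2m+3)|\omega_1|^2\to\infty$. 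So $\langle\mu,\omega_\pi\rangle$ is unbounded and no highest-weight combinatorics will pin it; the inequality $\langle\mu,\omega_\pi\rangle\le\langle\mu_{\mathrm{hw}},\omega_\pi\rangle$ you invoke bounds it by another unbounded quantity. Consequently the ``furthermore'' assertion $\sup|\lambda_\rho-\lambda_\pi|<\infty$ cannot be obtained along your lines (indeed the $SU(2)$ example shows it fails as literally stated; what is true, and what the later applications in Lemmas \ref{lem_r0} and \ref{lem_Sigmas} actually require, is the uniform bound on $(1+\lambda_\rho)^{1/2}-(1+\lambda_\pi)^{1/2}$, equivalently the multiplicative comparison).

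This failure also takes down your proof of the first assertion of Part (3), because you derive $(1+\lambda_\rho)\asymp(1+\lambda_\pi)$ \emph{from} the additive bound. The correct route --- and the one the paper intends when it says Part (3) ``follows from Part (1)'' --- bypasses the additive claim entirely: $\omega_\rho=\omega_\pi+\mu$ with $|\mu|\le M$, where $M$ is the maximum norm over the finitely many weights of the finitely many $\tau\in\FundG$, gives $1+|\omega_\rho|\asymp 1+|\omega_\pi|$ directly (a bounded additive perturbation of $|\omega_\pi|$, not of $\lambda_\pi$), and then two applications of Part (1) yield $(1+\lambda_\rho)\asymp(1+\lambda_\pi)$. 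You should restructure Part (3) so that the multiplicative equivalence is proved this way, independently of any additive control on $\lambda_\rho-\lambda_\pi$.
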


\begin{proof}[Proof of Lemma \ref{lem_lambdapi}]
The $\cL$-eigenvalue $\lambda_\pi$ on $L^2_\pi(\Gh)$
 can be written in terms of the weight $\omega_\pi$ of $\pi\in \Gh$
 as
$\lambda_\pi=|\omega_\pi + \delta|^2 -|\delta|^2$,
where $\delta$ denotes the half sum of the positive roots;
see \cite[Proposition 5.28]{knapp_bk} for the semi-simple case, 
which extends readily to the general case.
This implies  Part \eqref{item_lem_lambdapi_eq_omega}
and Part \eqref{item_lem_lambdapi_eq_s}
since there are only a finite number of fundamental representations.

Let us prove Part \eqref{item_lem_lambdapi_eq_lambdarho}.
Let $\pi,\rho\in \Gh$ and $\tau\in \FundG$
such that $\rho\in \Gh$ occurs in the decomposition of $\tau\otimes \pi$
into irreducibles.
By \cite[Proposition 9.72]{knapp_bk}, $\omega_\rho = \omega_\pi + \mu$
for some weight $\mu$ of $\tau$.
The number of fundamental representations and of their weights are finite, 
so Part \eqref{item_lem_lambdapi_eq_lambdarho}
 follows from Part \eqref{item_lem_lambdapi_eq_omega}.
Part \eqref{item_lem_lambdapi_eq_lambdarhos}
is proved recursively from Part \eqref{item_lem_lambdapi_eq_lambdarho}.
\end{proof}

We can now start 
the proof of the implication \eqref{item_lem_diff2_leibRep}$\Rightarrow$\eqref{item_lem_diff2_sigma=} in Lemma \ref{lem_diff2}.
Let $\sigma\in \Sigma$ satisfying $\sigma(1_{\Gh})=0$
and Property \eqref{item_lem_diff2_leibRep} of Lemma \ref{lem_diff2}.

Let us show that $\Op(\sigma):L^2_{fin}(G)\rightarrow \cD(G)$ extends uniquely to a continuous linear operator $H^1(G)\rightarrow L^2(G)$.
Let us recall \cite[Section 2.3]{monJFA}
that the Sobolev spaces $H^s(G)$, $s\geq0$, on $G$ may be defined via local maps and the Euclidean Sobolev spaces on $\bR^n$, 
or globally as the closure of $\cD(G)$ for the norm: 
$$
\|\phi\|_{H^s}:= \|(\id+\cL)^{s/2} \phi\|_{L^2(G)}=\|\phi\|_{H^s}.
$$
Furthermore the space $L^2_{fin}(G)$ is dense in each Hilbert space $H^s$ and in the topological vector space $\cD(G) = \cap_{s\geq 0}H^s = \cap_{s\in \bN}H^s $.
Hence it suffices to show that 
\begin{equation}
\label{eq_bdsigma_(1+lambda)}
\exists C>0
\quad\forall \pi\in\Gh
\quad
\|\sigma(\pi)\|_{\sL(\cH_\pi)}
\leq C (1+\lambda_\pi)^{1/2}.
\end{equation}

Let $\pi\in \Gh$.
Denoting by $\omega_j$ the highest weight of each $\varphi_j\in \FundG$,
we can write the highest weight of $\pi$ as
 $\omega_\pi = m_1 \omega_1+\ldots+m_f\omega_f$
and the representation  $\pi$ occurs in the decomposition of $\varphi^{\otimes m}$
where $m=(m_1,\ldots,m_f)\in \bN_0^f$.
Therefore, we have:
$$
\|\sigma(\pi)\|_{\sL(\cH_\pi)}
\leq
\|\sigma(\varphi^{\otimes m})\|_{\sL(\cH_{\varphi^{\otimes m}})}.
$$
Applying \eqref{eq_lem_diff2_leibRep} recursively and taking the operator norm yield:
\begin{align*}
\|\sigma(\varphi^{\otimes m})\|_{\sL(\cH_{\varphi^{\otimes m}})}
&\leq
m_1 \|\sigma(\varphi_1)\|_{\sL(\cH_{\varphi_1})}
+\ldots+
m_f \|\sigma(\varphi_f)\|_{\sL(\cH_{\varphi_f})}
\\&\leq 
\sum_j m_j \max_{\varphi\in \Fund G} \|\sigma(\varphi)\|_{\sL(\cH_{\varphi})}.
\end{align*}
By Lemma \ref{lem_lambdapi} Parts \eqref{item_lem_lambdapi_eq_omega}
and 
\eqref{item_lem_lambdapi_eq_s},
$\sum_j m_j \asymp (1+\lambda_\pi)^{1/2}$.
We have therefore obtained \eqref{eq_bdsigma_(1+lambda)},
and $\Op(\sigma)$ extends uniquely in a continuous linear operator $H^1(G)\rightarrow L^2(G)$.

Let us show that 
\begin{equation}
\label{eq_opsigmaf1f2}
\forall f_1,f_2\in \cD(G)\qquad
\Op(\sigma)(f_1f_2) (x)=
\Op(\sigma)(f_1)(x) \ f_2(x)
+
f_1(x) \ \Op(\sigma)(f_2)(x).
\end{equation}
We have just proved that the linear operator $\Op(\sigma):H^1(G)\rightarrow L^2(G)$ is continuous.
As $L^2_{fin}(G)$ is a dense subspace of $\cD(G)$ for the $H^1(G)$-norm, it suffices to prove \eqref{eq_opsigmaf1f2} for $f_1,f_2\in L^2_{fin}(G)$, 
and in fact for  $f_1\in L^2_{\pi_1}$ and $f_2\in L^2_{\pi_2}$
for some $\pi_1,\pi_2\in \Gh$, 
and furthermore for  $f_1(x)=\langle\pi_1(x) v_1,w_1\rangle$,
$f_2(x)=\langle\pi_2(x) v_2,w_2\rangle$
for some unit vector $v_1,w_1\in \cH_{\pi_1}$, $v_2,w_2\in \cH_{\pi_2}$.
Let us assume that $f_1,f_2$ are such functions.

The Peter-Weyl theorem (Theorem \ref{thm_PW}) 
implies  readily
\begin{equation}
\label{eq_lem_diff2_opf1}
\widehat f_1 (\pi) :v\longmapsto \delta_{\pi=\pi_1} \frac 1{d_{\pi_1}} 
\langle v, v_1\rangle w_1 \, ,
\quad\mbox{and}\quad
\Op(\sigma)(f_1)(x)
=
\langle  \pi_1(x)\sigma(\pi_1) w_1 , v_1\rangle  \, .
\end{equation}
We decompose $\pi_1\otimes \pi_2=\sum_\rho  \rho $ into a finite sum of $\rho\in \Gh$.
We also decompose the vectors $v_1\otimes v_2=\sum_\rho a_\rho v_\rho$
and $w_1\otimes w_2=\sum_\rho  b_\rho w_\rho$
with $v_\rho,w_\rho$ unit vectors of $\cH_\rho$ and $a_\rho,b_\rho\in \bC$.
We have $f_1f_2=\sum_\rho g_\rho$
with $g_\rho(x)=a_\rho b_\rho \langle \rho(x) v_\rho,w_\rho\rangle$, so 
$$
\Op(\sigma)(f_1f_2)(x) =\sum_\rho \Op(\sigma)(g_\rho)(x)
=\sum_\rho  a_\rho b_\rho
\langle \rho(x)\sigma(\rho) w_\rho , v_\rho\rangle \, ,
$$
having used the computations in \eqref{eq_lem_diff2_opf1}
with $g_\rho$ instead of $f_1$.
Using tensor notation, this can be summarised as
$$
\Op(\sigma)(f_1f_2)(x) =
\langle (\pi_1\otimes \pi_2)(x)\sigma(\pi_1\otimes\pi_2) w_1\otimes w_2
 , v_1\otimes v_2\rangle \, .
$$

We now use $(\pi_1\otimes \pi_2)(x)=\pi_1(x)\otimes \pi_2(x)$
and \eqref{eq_lem_diff2_leibRep} to obtain
\begin{align*}
\Op(\sigma)(f_1f_2)(x) 
&=
\langle \pi_1(x)\sigma(\pi_1)w_1,v_1\rangle
\langle \pi_2(x)\sigma(\pi_2)w_2,v_2\rangle
\\
&\qquad+
\langle \pi_1(x)\sigma(\pi_1)w_1,v_1\rangle
\langle \pi_2(x)\sigma(\pi_2)w_2,v_2\rangle.
\end{align*}
Thanks to \eqref{eq_lem_diff2_opf1},
we recognise the right-hand side of \eqref{eq_opsigmaf1f2} for our choice of functions $f_1,f_2$ above.

This shows that \eqref{eq_opsigmaf1f2} holds for any $f_1,f_2\in \cD(G)$.
In other words, $\Op(\sigma)$ is a derivation of $\cD(G)$.
Therefore, it is a vector field on $G$. 
As the operator $\Op(\sigma)$ is left-invariant, 
it must coincide with some left-invariant vector field  $X$ identified with an element of $\fg_\bC$.
This shows Property \eqref{item_lem_diff2_sigma=}.
 This concludes the proof of Lemma \ref{lem_diff2}.  

\section{Homogeneous Sobolev spaces on $\Gh$}
\label{sec_dotHsGh}

In this section, we define homogeneous Sobolev-type spaces on $\Gh$.
The motivations behind their definitions is not only their formal appeal but also the fact that they have already appeared indirectly for instance in various works of Coifman and Weiss on $SU(2)$ in the 70's, see Remark \ref{remCW}. 
Beside the definitions, basic properties and characterisations, 
we  discuss weak Leibniz estimates, see
 Section \ref{subsec_weak_Leib}.
In Section \ref{sec_f(L)indotHs}, we will discuss the  example of functions of $\widehat \cL$.

\subsection{First definition}
\label{subsec_def_dotHs}

Before defining the homogeneous Sobolev spaces, 
we recall \cite{dixmierVnA}
that on the space of measurable fields of bounded operators over a standard set one can define naturally two important subspaces: the Banach space (in fact, von Neumann algebra) of fields with bounded essential supremum, and the Banach  space (in fact Hilbert space) of square-integrable fields of Hilbert-Schmidt operators. Concretely in the case of $\Sigma_\tau$, this leads to the Banach space $L^\infty(\Gh:\Sigma_\tau)$ given by the norm 
$$
\|\sigma\|_{L^\infty(\Gh:\Sigma_\tau)}
:=
\sup_{\pi \in \Gh} \|\sigma(\pi)\|_{\sL(\cH_\tau\otimes \cH_\pi)}, \qquad \sigma\in \Sigma_\tau, 
$$
and to the Hilbert space $L^2(\Gh:\Sigma_\tau)$ given by the norm
$$
\|\sigma\|_{L^2(\Gh:\Sigma_\tau)}^2
:=
\sum_{\pi\in \Gh} d_\pi \|\sigma(\pi)\|_{HS(\cH_\tau\otimes \cH_\pi)}^2\qquad \sigma\in \Sigma_\tau.
$$
Naturally, in the case $\tau=1_{\Gh}$, we recover the Banach spaces 
$L^\infty(\Gh:\Sigma)$
and $L^2(\Gh:\Sigma)$
defined in Section \ref{subsec_Fmult}.

\begin{definition}
\label{def_dotHsinN}
Let $s\in \bN$.
The homogeneous Sobolev space $\dot H^s(\Gh)$ 
on $\Gh$ 
is the set of  $\sigma\in \Sigma$ such that
$\Delta^\alpha \sigma \in L^2(\Gh, \Sigma_{\varphi^{\otimes \alpha}})$ for every $\alpha\in \bN_0^f$, $|\alpha|=s$.
In this case, the quantity
\begin{equation}
\label{eq_def_homsobnorm}
\|\sigma\|_{\dot H^s(\Gh)}^2 := 
\sum_{|\alpha|=s} \|\Delta^\alpha \sigma \|_{L^2(\Gh, \Sigma_{\varphi^{\otimes \alpha}})}^2,
\end{equation}
is called the homogeneous Sobolev norm.
\end{definition}

\begin{remark}
Note that we will define an equivalent homogeneous Sobolev norm in the case $s=2,3,\ldots$ for which we may use the same notation as in Definition \ref{def_dotHsinN}.
See Definition \ref{def_dotHs} and Remark \ref{rem_def_dotHs}.
\end{remark}

For instance, 
the homogeneous Sobolev space $\dot H^1(\Gh)$ on $\Gh$ 
is the set of symbol $\sigma\in \Sigma$
such that
the following quantity is finite:
$$
\|\sigma\|_{\dot H^1(\Gh)} ^2
:= 
 \sum_{\varphi\in \FundG} 
 \sum_{\pi\in \Gh} d_\pi 
 \| \Delta_\varphi \sigma (\pi)\|_{HS(\cH_\pi \otimes \cH_\varphi)}^2.
$$
The map $\sigma\mapsto \|\sigma\|_{\dot H^1(\Gh)}$ is a semi-norm 
or in other words a non-definite norm
since by Proposition \ref{prop_annilation},
we have for any $\sigma\in \Sigma$, 
$$
\|\sigma\|_{\dot H^1(\Gh)}=0\quad
\Longleftrightarrow \quad \sigma=\sigma(1_{\Gh}) \id.
$$

We can define the associated non-definite Hilbert product
$$
(\sigma_1,\sigma_2)_{\dot H^1(\Gh)}
=
 \sum_{\varphi\in \FundG} 
 \sum_{\pi\in \Gh} d_\pi 
 \tr_{\cH_\pi \otimes \cH_\varphi}
 \left(
 \Delta_\varphi \sigma_1 (\pi)
 \
 (\Delta_\varphi \sigma_2 (\pi))^*
 \right).
 $$
 
More generally, for $s\in \bN$, 
the homogeneous Sobolev space $\dot H^s(\Gh)$ on $\Gh$ 
is the space of $\sigma\in \Sigma$ such that
the semi-norm 
$$
 \|\sigma\|_{\dot H^s(\Gh)}
 =
 \sqrt{
 \sum_{|\alpha|=s} 
\sum_{\pi\in \Gh} d_\pi \|\Delta^\alpha \sigma(\pi) \|_{HS(\cH_{\varphi^{\otimes \alpha}}\otimes \cH_\pi)}^2}
$$
is finite.
By Proposition \ref{prop_annilation}
we have for any $\sigma\in \dot H^s(\Gh)$:
$$
\|\sigma\|_{\dot H^s(\Gh)} =0
\ \Longleftrightarrow \
\sigma\in \cF_G(\fU_{s-1}(\fg_\bC)).
$$
We can define the non-definite Hilbert inner product of $\sigma_1,\sigma_2\in \dot H^s(\Gh)$ via:
$$
 (\sigma_1,\sigma_2)_{\dot H^s(\Gh)}
 =
  \sum_{|\alpha|=s} 
\sum_{\pi\in \Gh} 
d_\pi 
\tr_{HS(\cH_{\varphi^{\otimes \alpha}}\otimes \cH_\pi)}
\left(\Delta^\alpha \sigma_1(\pi)
\ 
(\Delta^\alpha \sigma_2(\pi))^*
\right).
$$

\subsection{First properties of $\dot H^s(\Gh)$, $s\in \bN$}
\label{subsec_prop_sint}

This section is devoted to the proof of the following theorem, as well as its consequences:

\begin{theorem}
\label{thm_dotHs_q1}
\begin{itemize}
\item Case $s=1$:
The homogeneous Sobolev space $\dot H^1(\Gh)$ quotiented by the kernel $\bC \id$ 
of its seminorm
is a Hilbert space isometrically isomorphic to $L^2(q_1)$, 
where $q_1$ is the function 
defined via:
$$
q_1(x):=\sum_{\varphi\in \FundG}\|\varphi(x)-\id\|_{HS(\cH_\varphi)}^2 
=\sum_{\varphi\in \FundG} \sum_{1\leq i,j\leq d_\varphi}
|[\varphi -\id]_{i,j}(x)|^2.
$$
\item Case $s\in \bN$:
More generally, for any $s\in \bN$, 
the homogeneous Sobolev space $\dot H^s(\Gh)$ quotiented by the kernel $\cF_G(\fU_{s-1}(\fg_\bC))$
of its seminorm
is a Hilbert space isometrically isomorphic to $L^2(q_s)$ where 
$$
q_s(x)=\sum_{|\alpha|=s}\sum_{i,j} 
\Pi_{\ell=1}^f |[\varphi_\ell(x)-\id]_{i_{1,\ell},j_{1,\ell}}\ldots 
[\varphi_\ell(x)-\id]_{i_{\alpha_\ell,\ell},j_{\alpha_\ell,\ell}} |^2,
$$
the sum $\sum_{i,j}$ being a shorthand for  $i_{1,\ell},j_{1,\ell},\ldots, i_{\alpha_\ell,\ell},j_{\alpha_\ell,\ell}$.
\end{itemize}
\end{theorem}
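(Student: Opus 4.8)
The core observation is that the difference operators $\Delta^\alpha$ act on a symbol $\sigma=\widehat f$ exactly as multiplication of the kernel $f$ by the matrix-coefficient functions $[\varphi_\ell(x)-\id]_{i,j}$. The plan is to make this precise and then transport the $L^2$-geometry of $\Sigma$ to the $L^2$-geometry of the weighted space $L^2(q_s)$.

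First I would record the elementary computation that for $\tau\in\RepG$ and $f\in L^2_{fin}(G)$, the symbol $\Delta_\tau\widehat f(\pi)=\widehat f(\tau\otimes\pi)-\widehat f(\id_\tau\otimes\pi)$, acting on $\cH_\tau\otimes\cH_\pi$, coincides with the group Fourier transform at $\pi$ of the $\cH_\tau\otimes\cH_\tau$-valued function $x\mapsto(\tau(x)-\id_\tau)f(x)$; equivalently, in matrix coordinates, the $(i,k)$-block of $\Delta_\tau\widehat f$ is $\widehat{[\tau-\id]_{i,k}\,f}$. This follows from $(\tau\otimes\pi)(x)=\tau(x)\otimes\pi(x)$ and the definition of $\cF_G$. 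Iterating, for a multi-index $\alpha\in\bN_0^f$ the operator $\Delta^\alpha\widehat f$ has, in the obvious block decomposition of $\cH_{\varphi^{\otimes\alpha}}\otimes\cH_\pi$, entries that are the Fourier transforms of products of the form $\Pi_\ell\,[\varphi_\ell-\id]_{i_{1,\ell},j_{1,\ell}}\cdots[\varphi_\ell-\id]_{i_{\alpha_\ell,\ell},j_{\alpha_\ell,\ell}}\,f$. Call these product functions $p_{\alpha,i,j}(x)$; then $q_s=\sum_{|\alpha|=s}\sum_{i,j}|p_{\alpha,i,j}|^2$.

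Next I would invoke the Plancherel formula in the form already stated: for a symbol $\tau\in\Sigma_{\varphi^{\otimes\alpha}}$ arising as $\cF_G$ of an $\cH_{\varphi^{\otimes\alpha}}$-valued function $g$, one has $\|\tau\|_{L^2(\Gh:\Sigma_{\varphi^{\otimes\alpha}})}^2=\|g\|_{L^2(G)}^2$ (applied componentwise in the finitely many matrix entries). Applying this to $\Delta^\alpha\widehat f$ gives $\|\Delta^\alpha\widehat f\|_{L^2(\Gh:\Sigma_{\varphi^{\otimes\alpha}})}^2=\sum_{i,j}\|p_{\alpha,i,j}\,\|_{L^2(G)}^2=\int_G|f(x)|^2\big(\sum_{i,j}|p_{\alpha,i,j}(x)/f(x)|^2\big)$ — more cleanly, since $p_{\alpha,i,j}=P_{\alpha,i,j}\cdot f$ with $P_{\alpha,i,j}$ the product of the matrix coefficients of the $\varphi_\ell-\id$, summing over $|\alpha|=s$ yields $\|\widehat f\|_{\dot H^s(\Gh)}^2=\int_G|f(x)|^2 q_s(x)\,dx=\|f\|_{L^2(q_s)}^2$. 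This identity, valid first on the dense subspace $L^2_{fin}(G)\leftrightarrow L^2_{fin}(\Gh:\Sigma)$, is the isometry. Then I would close up: define the map $\widehat f\mapsto f$ from the seminormed space $\dot H^s(\Gh)$ to $L^2(q_s)$, check it descends to the quotient by the seminorm kernel (which is exactly $\cF_G(\fU_{s-1}(\fg_\bC))$ by Proposition \ref{prop_annilation}, consistently with the functions $P_{\alpha,i,j}$ vanishing to appropriate order — note $q_s$ vanishes at $e_G$ to order $2s$ so polynomials of degree $<s$ in the $X_j$ are killed), show density of the image, and complete.

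\textbf{Main obstacle.} The genuinely delicate point is completeness: one must argue that every $\sigma\in\dot H^s(\Gh)$ (a priori just a field of operators with $\Delta^\alpha\sigma\in L^2$) is of the form $\widehat f$ for some $f\in L^2(q_s)$, rather than merely that $L^2_{fin}$-symbols are dense. The clean route is to show $L^2(q_s)$ is complete (it is an $L^2$-space of a nonnegative weight, standard) and that the isometry defined on the dense subspace $L^2_{fin}$ has dense image, hence extends to a surjective isometry onto all of $L^2(q_s)$; then pull back to conclude $\dot H^s(\Gh)/\ker$ is complete and every element is represented by an $f$. Verifying that the image is dense amounts to showing that if $f\in L^2(q_s)$ is orthogonal in $L^2(q_s)$ to all of $L^2_{fin}(G)$ then $q_s f=0$, i.e. $f$ is supported in the zero set of $q_s$; one checks $\{q_s=0\}=\{e_G\}$ using that the $\varphi_\ell-\id$ vanish simultaneously only at $e_G$ (the fundamental representations separate points since every irreducible occurs in their tensor products, so $\cap_\ell\ker(\varphi_\ell-\id)$ is a closed normal subgroup on which all of $\Gh$ is trivial, forcing it to be $\{e_G\}$), and a point has measure zero. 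The secondary bookkeeping nuisance — keeping track of the block/index structure of $\cH_{\varphi^{\otimes\alpha}}\otimes\cH_\pi$ so that the Plancherel sum over matrix entries matches the combinatorial sum $\sum_{i,j}$ defining $q_s$ — is routine but must be written carefully.
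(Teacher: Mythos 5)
Your core computation is exactly the paper's: the entries of $\Delta^\alpha\widehat f$ in the block decomposition of $\cH_{\varphi^{\otimes\alpha}}\otimes\cH_\pi$ are the Fourier transforms of $P_{\alpha,i,j}\,f$, and Plancherel then gives $\|\widehat f\|_{\dot H^s(\Gh)}=\|f\|_{L^2(q_s)}$ on $L^2_{fin}(G)$; your identification of the kernel via Proposition \ref{prop_annilation} and of $\{q_s=0\}=\{e_G\}$ is also what the paper does (Proposition \ref{prop_q1}, Lemma \ref{lem_dotHs_L2}).

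The gap is in the step you yourself flag as the main obstacle. Extending the isometry from $L^2_{fin}(\Gh:\Sigma)/\ker$ to its abstract completion and noting the image is dense in $L^2(q_s)$ gives you a surjective isometry from \emph{that completion} onto $L^2(q_s)$; it does not show that the completion coincides with $\dot H^s(\Gh)/\ker$, which is a concretely defined space (all fields $\sigma$ with $\Delta^\alpha\sigma\in L^2$). Your phrase ``then pull back to conclude $\dot H^s(\Gh)/\ker$ is complete and every element is represented by an $f$'' is precisely the assertion that needs a proof and does not have one: a priori an arbitrary $\sigma\in\dot H^s(\Gh)$ only gives you, via Plancherel applied entrywise to $\Delta^\alpha\sigma$, a family of $L^2(G)$-functions $g_{\alpha,i,j}$, and you must still produce a single $f$ with $g_{\alpha,i,j}=P_{\alpha,i,j}f$ (and show $L^2_{fin}$ is dense in $\dot H^s(\Gh)$ for the seminorm). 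The paper closes this by duality: for $\sigma\in\dot H^s(\Gh)$ the linear form $\phi\mapsto(\widehat\phi,\sigma)_{\dot H^s(\Gh)}$ on $\cD(G)$ is bounded by $\|\phi\|_{L^2(q_s)}\|\sigma\|_{\dot H^s(\Gh)}$ (Cauchy--Schwarz plus the isometry already proved on $L^2_{fin}$), so the Riesz representation theorem yields a unique $f\in L^2(q_s)$ representing $\sigma$, and the norm equality $\|f\|_{L^2(q_s)}=\|\sigma\|_{\dot H^s(\Gh)}$ is then obtained by testing against the spectral truncations $\widehat\phi_\ell=\sigma\,1_{\lambda_\pi\le\ell}$. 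Some such argument (either the duality one, or a direct proof that these truncations are Cauchy in the $\dot H^s$ seminorm, which itself needs Lemma \ref{lem_lambdapi} to control how $\Delta^\alpha$ interacts with the spectral cut-off) must be supplied for your proof to be complete.
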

	
As is customary, for $p\in [1,+\infty)$ and  $\omega:G\rightarrow\bC$ a non-negative measurable function,
$L^p(\omega(x)dx)$ denotes the Banach space of measurable function $f:G\rightarrow \bC$ such that $\omega(x) |f(x)|^p$ is integrable.

\medskip

The proof of Theorem \ref{thm_dotHs_q1} will follow from the statements of this section.
Let us first study the function $q_1$ and the case $s=1$:

\begin{proposition}
\label{prop_q1}
\begin{enumerate}
\item 
	The function $q_1$ is smooth, non-negative, vanishes only at $e_G$ and is equivalent to $|x|^2$ in the sense that
$$
\exists C>0\quad\forall x\in G\qquad
C|x|^2\leq q_1(x)\leq C^{-1} |x|^2.
$$
Consequently, $L^2(q_1)=L^2(|x|^{2} dx)$.
\item 
The group Fourier transform of a smooth function $f\in \cD(G)$ is in $\dot H^1(\Gh)$ 
and we have:
$$
\|\widehat f\|_{\dot H^1(\Gh)} = \|f\|_{L^2(q_1)}
$$
\item 
The group Fourier transform of a distribution   $f\in \cD'(G)$ is in $\dot H^1(\Gh)$ 
if and only if $f\in L^2(|x|^2)$. 
Conversely, for every $\sigma\in \dot H^1(\Gh)$
there exists a unique function $f$ locally integrable on $G\backslash\{e_G\}$ satisfying $f\in L^2(q_1)$ 
with $\|f\|_{L^2(q_1)}=\|\sigma\|_{\dot H^1(\Gh)}$ 
and
$$
\forall \phi\in \cD(G)
\qquad
(\phi,f)_{L^2(q_1)} = 
(\widehat \phi,\sigma)_{\dot H^1(\Gh)}.
$$
\end{enumerate}
\end{proposition}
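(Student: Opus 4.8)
\textbf{Proof proposal for Proposition \ref{prop_q1}.}

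The plan is to prove the three parts essentially in order, with Part (2) serving as the computational heart from which Part (3) follows by a duality/density argument and Part (1) providing the comparison with the standard weighted space. First I would establish Part (1). The function $q_1$ is a finite sum of squared moduli of matrix coefficients of the fundamental representations minus the identity, hence is manifestly smooth and non-negative. To see it vanishes only at $e_G$: if $q_1(x)=0$ then $\varphi(x)=\id_{\cH_\varphi}$ for every $\varphi\in\FundG$, and since every $\pi\in\Gh$ occurs in a tensor product of fundamental representations, $\pi(x)=\id$ for all $\pi\in\Gh$; by the Peter--Weyl theorem the matrix coefficients separate points of $G$, so $x=e_G$. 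For the equivalence with $|x|^2$ near $e_G$, I would Taylor-expand: $\varphi(\exp Y)-\id = \varphi(Y) + O(|Y|^2)$ as $Y\to 0$ in $\fg$, so $\|\varphi(x)-\id\|_{HS}^2 = \|\varphi(Y)\|_{HS}^2 + O(|Y|^3)$, and $\sum_{\varphi}\|\varphi(Y)\|_{HS}^2$ is a positive-definite quadratic form on $\fg$ (positive-definiteness again because the fundamental representations generate, so the joint infinitesimal action is faithful on $\fg$, using that $G$ is compact and the $\varphi(Y)$ are skew-adjoint). Hence $q_1(\exp Y)\asymp |Y|^2$ near $0$; away from $e_G$, both $q_1$ and $|x|^2$ (with $|x|$ a fixed smooth Riemannian distance-type weight) are continuous and strictly positive on the compact complement of a neighbourhood of $e_G$, giving the global comparison and thus $L^2(q_1)=L^2(|x|^2\,dx)$ with equivalent norms.

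Next, Part (2): this is a direct Plancherel computation. For $f\in\cD(G)$, the group Fourier transform of $x\mapsto(\varphi(x)-\id)f(x)$ at $\pi$ is, by the convolution/multiplication rule, essentially $\Delta_\varphi\widehat f(\pi)$ up to identifying $\cH_\pi\otimes\cH_\varphi$ appropriately --- more precisely, $\widehat{(\varphi_{i,j}-\delta_{ij})f}$ assembles into the entries of $\Delta_\varphi\widehat f$; I would verify this by writing $\cF_G((\varphi\otimes\pi)(x)^* )$ against $f$ and comparing with $\widehat f(\varphi\otimes\pi)-\widehat f(\id_\varphi\otimes\pi)$, which is exactly \eqref{eq_def_Deltatau}. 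Then apply the Plancherel formula to the vector-valued function $x\mapsto (\varphi(x)-\id)\otimes\cdots$ componentwise and sum over $\varphi\in\FundG$: the left side is $\sum_\varphi\int_G \|(\varphi(x)-\id)\|^2_{HS}|f(x)|^2\,dx=\|f\|_{L^2(q_1)}^2$, and the right side is $\sum_\varphi\sum_\pi d_\pi\|\Delta_\varphi\widehat f(\pi)\|_{HS}^2=\|\widehat f\|_{\dot H^1(\Gh)}^2$. Care is needed to keep the Hilbert--Schmidt normalisations and the factors $d_\pi$ consistent, but this is bookkeeping rather than a genuine obstacle.

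Finally, Part (3): the forward direction ($f\in\cD'(G)$ with $\widehat f\in\dot H^1(\Gh)$ $\Rightarrow$ $f\in L^2(|x|^2)$) and its converse follow from Part (2) by density. The map $f\mapsto\widehat f$ is an isometry from $(\cD(G),\|\cdot\|_{L^2(q_1)})$ into $\dot H^1(\Gh)$ by Part (2); since $\cD(G)$ is dense in $L^2(q_1)=L^2(|x|^2\,dx)$ (it contains $L^2_{fin}(G)$ and smooth functions are dense in any such weighted $L^2$), this isometry extends to all of $L^2(q_1)$, and conversely any $\sigma\in\dot H^1(\Gh)$ is approximated in the $\dot H^1$-seminorm by $\widehat\phi_k$, $\phi_k\in\cD(G)$ Cauchy in $L^2(q_1)$, whose limit $f$ satisfies the stated identity $(\phi,f)_{L^2(q_1)}=(\widehat\phi,\sigma)_{\dot H^1(\Gh)}$ obtained by passing to the limit in the Plancherel identity of Part (2). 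Uniqueness of $f$ as a function locally integrable on $G\setminus\{e_G\}$ follows because $q_1$ is bounded below on compact subsets of $G\setminus\{e_G\}$, so $L^2(q_1)$ embeds in $L^1_{loc}(G\setminus\{e_G\})$ and the pairing with $\cD(G)$-functions supported away from $e_G$ determines $f$ a.e. there; the kernel of the seminorm being $\bC\id$ (Proposition \ref{prop_annilation}, case $s=1$) matches the fact that changing $f$ by anything supported at $\{e_G\}$, i.e. a multiple of $\delta_{e_G}$, does not affect membership in $L^2(q_1)$.

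\textbf{Main obstacle.} The one step requiring genuine care rather than routine verification is the precise identification, with correct normalisations, of the matrix entries of $\Delta_\varphi\widehat f(\pi)$ with the Fourier coefficients of $(\varphi_{i,j}-\delta_{ij})f$ --- i.e. showing that the multiplication operator by $q_1^{1/2}$ on the group side corresponds exactly, entry by entry and with the right $d_\pi$ weights, to the assembled difference operator $\bigoplus_\varphi\Delta_\varphi$ on the dual side. Everything else (smoothness, positivity, the quadratic-form comparison, the density arguments) is standard once this correspondence is pinned down.
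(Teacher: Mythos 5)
Your proposal is correct and follows essentially the same route as the paper: Part (1) reduces to the positive definiteness of the quadratic form $v\mapsto\sum_{\varphi}\|\varphi(\sum_\ell v_\ell X_\ell)\|_{HS}^2$ (the paper computes this as the Hessian of $q_1$ at $e_G$, you obtain it by Taylor expansion — same object), Part (2) is the Plancherel identity applied entrywise to $[\varphi-\id]_{i,j}f$, and Part (3) is the standard density/duality extension with the truncation sequence giving equality of norms. The step you flag as the main obstacle is indeed the one the paper treats as the core observation, and your outline of it is the paper's argument.
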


The ideas of the proof of Proposition \ref{prop_q1} Part (1)
are essentially the same as in 
\cite[Lemma 5.11]{monJFA}.
\begin{proof}[Proof of Proposition \ref{prop_q1} Part (1)]
Clearly $q_1$ is a non-negative smooth function.
As the $q^{(\tau)}_{i,j}$'s vanish at $e_G$ so does $q_1$.
Moreover we have
$$
q_1(x_0)=0
\quad \Longleftrightarrow  \quad
\forall \varphi\in \FundG  \quad \varphi(x_0) =\id_\varphi
\quad \Longleftrightarrow  \quad
\forall \tau\in \RepG  \quad \tau(x_0) =\id_\tau, 
$$
since any representation on $G$ occurs in the decomposition of tensor of fundamental representation.
This together with the inversion formula shows that a zero $x_0$ of $q_1$ must satisfy $f(x_0)=f(e_G)$
for any $f\in \cD(G)$,  
and therefore  $x_0=e_G$.
In other words, $e_G$ is the only zero of $q_1$.

Having fixed an orthonormal basis $\{X_1,\ldots, X_n\}$ of $\fg$, 
we compute easily
$$
X_k q_1 =\sum_{\tau\in \FundG}
\sum_{1\leq i,j\leq d_\tau} 
q^{(\tau)}_{i,j} \, X_k \bar q^{(\tau)}_{i,j}
+
\bar q^{(\tau)}_{i,j}\, X_k q^{(\tau)}_{i,j},
$$
so that $X_k q_1 (e_G)=0$, 
and 
$$
X_\ell X_k q_1 (e_G) =
2\Re \sum_{\tau\in \FundG}
\sum_{1\leq i,j\leq d_\tau} 
 \tau_{i,j}  (X_\ell) \,  \bar \tau_{i,j}(X_k).
$$
Let us show that the Hessian matrix ${\rm Hess} (q_1)(e_G) = [X_\ell X_k q_1 (e_G)]_{1\leq \ell ,k\leq n}$ is positive definite. 
If $v=(v_1,\ldots,v_n)^t\in \bR^n$, 
then 
\begin{eqnarray*}
({\rm Hess} (q_1)(e_G) v) \cdot v
&=&
\sum_{1\leq \ell ,k\leq n}X_\ell X_k q_1 (e_G) v_\ell v_k
\\&=&
2\Re \sum_{\tau\in \FundG}
\sum_{1\leq i,j\leq d_\tau} 
\sum_{1\leq \ell ,k\leq n}
 \tau_{i,j}  (X_\ell) \,  \bar \tau_{i,j}(X_k)
\, v_\ell v_k
\\&=&
2\Re \sum_{\tau\in \FundG}
\sum_{1\leq i,j\leq d_\tau} 
|\sum_{1\leq \ell \leq n} v_\ell \tau_{i,j}  (X_\ell )|^2
\\&=&
 \sum_{\tau\in \FundG}
\|\tau(\sum_{1\leq \ell \leq n} v_\ell X_\ell)\|^2_{HS(\cH_\tau)}.
\end{eqnarray*}
So this quantity is non-negative. 
Furthermore if it is zero for some $v\in \bR^n$, then 
the vector
$\sum_{1\leq \ell \leq n} v_\ell X_\ell \in \fg$ 
is in the kernel of every infinitesimal fundamental representation, 
therefore of any representation of the reductive Lie algebra $\fg$;
hence this vector has to be trivial and $v=0$.
This shows that ${\rm Hess} (q_1)(e_G)$ is positive definite and 
concludes the proof.
\end{proof}

\begin{proof}[Proof of Proposition \ref{prop_q1} Part (2)]
For any $f\in \cD(G)$, $\varphi\in \FundG$ and $1\leq i,j\leq d_\varphi$, 
we observe that $\cF_G([\varphi-\id]_{i,j} f)$ is the $i,j$-component of $ \Delta_\varphi \widehat f (\pi)$ in the $\cH_\varphi$-part of the tensor.
So the Plancherel formula implies
$$
\sum_{1\leq i,j\leq d_\varphi} 
\|[\varphi-\id]_{i,j} f\|_{L^2(G)}^2
=\sum_{\pi\in \Gh} d_\pi 
\| \Delta_\varphi \widehat f (\pi)\|_{HS(\cH_\varphi\otimes\cH_\pi)}^2
.
$$
Summing over $\varphi\in \FundG$, one obtains
$\|f\|_{L^2(q_1)}^2$ on the left hand-side
and $\|\widehat f\|_{\dot H^1(\Gh)}$ on the right.
\end{proof}

\begin{proof}[Proof of Proposition \ref{prop_q1} Part (3)]
Part (2) implies that the map $\phi\mapsto(\widehat \phi,\sigma)_{\dot H^1(\Gh)}$ 
is linear on $\cD(G)$ and satisfies
$$
|(\widehat \phi,\sigma)_{\dot H^1(\Gh)}|\leq
\|\widehat \phi\|_{\dot H^1(\Gh)}
\|\sigma\|_{\dot H^1(\Gh)}
=
 \|\phi\|_{L^2(q_1)}
 \|\sigma\|_{\dot H^1(\Gh)}.
$$
Therefore this linear form extends continuously on the Hilbert space 
$L^2(q)$
and is given by $\phi\mapsto (\phi,f)_{L^2(q_1)}$
for a unique $f\in L^2(q_1)$.
Moreover $\|f\|_{L^2(q_1)}$ is equal to the norm of this linear form, 
which is $\leq \|\sigma\|_{\dot H^1(\Gh)}$.
One shows that $\|f\|_{L^2(q_1)}= \|\sigma\|_{\dot H^1(\Gh)}$
by  considering the sequence of functions 
$\phi_\ell$ 
such that $\widehat \phi_\ell (\pi)= \sigma (\pi)$
for $\lambda_\pi\leq \ell$ and 0 otherwise
and let $\ell\in \bN$ tends to $+\infty$.
The rest of the proof is given by routine arguments.
\end{proof}

\begin{remark}
\label{remCW}
Note that in the case of $SU(2)$, 
there is only one fundamental representation $\varphi$;
it is the linear action on $\bC^2$, or in other word $\varphi(g)=g$ for all $g\in SU(2)$.
It is then easy to see that 
the $L^2(\Gh:\Sigma)$ norm of the difference operator $\Delta^2$  defined in \cite[Ch. IV \S3 and Ch. V]{coifman+weiss} applied to a symbol equals the norm $\dot H^1(\Gh)$ of the symbol.
\end{remark}

As in the case $s=1$, we prove easily:
\begin{lemma}
\label{lem_dotHs_L2}
Let $s\in \bN$.
\begin{enumerate}
\item 
The function $q_s$ is smooth, non-negative, vanishes only at $e_G$ and is equivalent to $q_1^s$ and to $|x|^{2s}$.
Consequently, $L^2(q_s(x) dx)=L^2(q_1^s(x) dx)=L^2(|x|^{2s} dx)$ for any $s\in \bR$.

\item	
For every $f\in \cD(G)$, we have:
$$
\|f\|_{L^2(q_s)}^2=\|\widehat f\|_{\dot H^s(\Gh)}^2.
$$
\item 
For  every $\sigma\in \dot H^s(\Gh)$
there exists a unique function $f$ locally integrable on $G\backslash\{e_G\}$ satisfying $f\in L^2(q_1^s(x)dx)$ 
with $\|f\|_{L^2(q_1^s(x)dx)}=\|\sigma\|_{\dot H^s(\Gh)}$ 
and
$$
\forall \phi\in \cD(G)
\qquad
(\phi,f)_{L^2(q_1^s(x)dx)} = 
(\widehat \phi,\sigma)_{\dot H^s(\Gh)}.
$$
\end{enumerate}
\end{lemma}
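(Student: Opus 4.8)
The plan is to prove Lemma \ref{lem_dotHs_L2} by following exactly the same pattern that established Proposition \ref{prop_q1}, with the extra bookkeeping that comes from iterating the fundamental difference operators $s$ times. First, for Part (1), I would observe that $q_s$ is a finite sum of squared moduli of products of matrix coefficients of $\varphi_\ell(x)-\id$, hence smooth and non-negative, and that the same argument as in Proposition \ref{prop_q1} Part (1) shows $q_s$ vanishes exactly at $e_G$ (a zero of $q_s$ forces $\varphi_\ell(x_0)=\id$ for every $\ell$, hence $\tau(x_0)=\id$ for all $\tau\in\RepG$, hence $x_0=e_G$). For the equivalence $q_s\asymp q_1^s\asymp|x|^{2s}$, the point is that near $e_G$ each factor $[\varphi_\ell(x)-\id]_{i,j}$ vanishes to first order, so each summand of $q_s$ vanishes to order $2s$; combined with positivity away from $e_G$ and compactness of $G$, a Taylor-expansion argument at $e_G$ (using that the Hessian of $q_1$ is positive definite, already proved) gives the two-sided bound. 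Then $L^2(q_s)=L^2(q_1^s)=L^2(|x|^{2s})$ is immediate from the equivalence of weights.

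For Part (2) the key computational identity is that, for $f\in\cD(G)$ and a multi-index $\alpha\in\bN_0^f$ with $|\alpha|=s$, the component of $\Delta^\alpha\widehat f(\pi)$ in a fixed basis of the tensor space $\cH_{\varphi^{\otimes\alpha}}\otimes\cH_\pi$ is exactly the group Fourier transform of $f$ multiplied by the corresponding product of coefficient functions $[\varphi_\ell(x)-\id]_{i,j}$. This is the iterated version of the observation used in the proof of Proposition \ref{prop_q1} Part (2): since $\cF_G([\varphi-\id]_{i,j} f)$ realises one application of $\Delta_\varphi$ componentwise, applying this repeatedly (the difference operators commute up to reordering tensor legs) yields that each scalar entry of $\Delta^\alpha\widehat f(\pi)$ is $\cF_G$ of $f$ times the appropriate monomial in the entries of $\varphi_\ell(x)-\id$. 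Summing the Plancherel identity over all these entries and over all $\alpha$ with $|\alpha|=s$ produces $\|f\|_{L^2(q_s)}^2$ on one side and $\|\widehat f\|_{\dot H^s(\Gh)}^2$ on the other, by the very definitions of $q_s$ and of the homogeneous Sobolev norm in Definition \ref{def_dotHsinN}.

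For Part (3) I would argue verbatim as in Proposition \ref{prop_q1} Part (3): Part (2) shows that $\phi\mapsto(\widehat\phi,\sigma)_{\dot H^s(\Gh)}$ is a linear functional on $\cD(G)$ bounded by $\|\phi\|_{L^2(q_s)}\|\sigma\|_{\dot H^s(\Gh)}$, hence extends to $L^2(q_s)=L^2(q_1^s(x)dx)$ and is represented by a unique $f$ in that space with norm $\le\|\sigma\|_{\dot H^s(\Gh)}$; local integrability of $f$ on $G\setminus\{e_G\}$ follows since there $q_1^s$ is bounded below. Equality of norms is obtained by the same truncation device: take $\phi_\ell$ with $\widehat\phi_\ell(\pi)=\sigma(\pi)$ for $\lambda_\pi\le\ell$ and $0$ otherwise, and let $\ell\to\infty$, using Part (2) and monotone convergence.

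The main obstacle I expect is purely notational rather than conceptual: carefully writing out the iterated difference operator $\Delta^\alpha\widehat f(\pi)$ in coordinates and matching its entries with the monomials appearing in the definition of $q_s$, keeping track of which tensor leg each $\varphi_\ell(x)-\id$ acts on and in what multiplicity $\alpha_\ell$. Once the coordinate identity $\big[\Delta^\alpha\widehat f(\pi)\big]_{\text{(multi-entry)}}=\cF_G\!\big(f\cdot\prod_\ell[\varphi_\ell-\id]_{i_{1,\ell},j_{1,\ell}}\cdots[\varphi_\ell-\id]_{i_{\alpha_\ell,\ell},j_{\alpha_\ell,\ell}}\big)(\pi)$ is in place, everything reduces to Plancherel and to Part (1), and there is no genuine analytic difficulty beyond what was already handled for $s=1$.
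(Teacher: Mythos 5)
Your proposal is correct and follows essentially the same route as the paper: the paper's proof simply notes that Part (1) is easy, establishes Part (2) by the same componentwise identification of $\Delta^\alpha\widehat f(\pi)$ with Fourier transforms of $f$ times monomials in the entries of $\varphi_\ell-\id$ followed by Plancherel, and concludes Part (3) exactly as in Proposition \ref{prop_q1}. Your write-up merely fleshes out these same steps (your Taylor/Hessian argument for $q_s\asymp q_1^s$ is slightly more roundabout than the direct multinomial expansion of $q_1^s$, but it works).
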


\begin{proof}
Part (1) is easily proved. For Part (2), 
the same arguments as in the proof of Proposition \ref{prop_q1} Part (2) show:
$$
\sum_{|\alpha|=s}\sum_{i,j} 
\Pi_{\ell=1}^f |[\varphi_\ell(x)-\id]_{i_{1,\ell},j_{1,\ell}}\ldots 
[\varphi_\ell(x)-\id]_{i_{\alpha_\ell,\ell},j_{\alpha_\ell,\ell}} |^2
=\sum_{|\alpha|=s}\sum_{\pi\in \Gh} d_\pi 
\| \Delta^\alpha \widehat f (\pi)\|_{HS(\cH_{\varphi^{\otimes\alpha}}\otimes\cH_\pi)}^2
.
$$
We recognise
$\|f\|_{L^2(q_1)}^2$ on the left hand-side
and $\|\widehat f\|_{\dot H^s(\Gh)}$ on the right.
We conclude in the same way as in the proof of Proposition \ref{prop_q1}.
\end{proof}

This finishes the proof of Theorem \ref{thm_dotHs_q1}.

In Parts (3) of Proposition \ref{prop_q1} and Lemma \ref{lem_dotHs_L2}, we may abuse the notation 
and set
$$
f:=\cF_G^{-1}\sigma.
$$
This allows us to say that the isometry used in the statement is given by the Fourier transform and its extension.

\subsection{Definition and properties of $\dot H^s(\Gh)$, $s\in \bR$}
From the results 
in Section \ref{subsec_prop_sint}, we see that the space $\dot H^s(\Gh)$ are growing with $s\in \bN$, 
and that there is a natural extension of the definition for $s\in \bR$:

\begin{definition}
\label{def_dotHs}
Let $s\in \bR$.
The homogeneous Sobolev space $\dot H^s(\Gh)$
is defined as 
\begin{itemize}
\item the set of symbol $\sigma\in L^2(\Gh:\Sigma)$
such that $\cF_G^{-1}\sigma\in L^2(|x|^s dx)$  if $s\leq 0$,
\item in Definition \ref{def_dotHsinN} if $s> 0$ and $s\in \bN$, 
\item the set of symbol $\sigma\in \dot H^{[s]+1}(\Gh)$ such that 
$\cF_G^{-1}\sigma\in L^2(|x|^s dx)$
if $s>0$ and $s\not \in \bN$.
\end{itemize}

In all cases, we can define the non-definite norm as
$$
\|\sigma\|_{\dot H^s(\Gh)}:= \|\cF_G^{-1}\sigma\|_{L^2(q_1^s(x)dx)}.
$$
\end{definition}

Here $[s]$ denotes the integer part of $s>0$, that is, the largest integer $s_0\geq 0$ such that $s_0\leq s$.

\begin{remark}
\label{rem_def_dotHs}	
In the cases $s=2,3,\ldots$,    in Definitions \ref{def_dotHsinN} and \ref{def_dotHs}, we use 
the same notation
$\|\cdot\|_{\dot H^s(\Gh)}$ for two generally different semi-norms;
we allow ourselves this abuse of notation  since the two semi-norms are equivalent by Lemma \ref{lem_dotHs_L2}. 
\end{remark}

These homogeneous Sobolev spaces $\dot H^s(\Gh)$ enjoy the following properties:
\begin{proposition}
\label{prop_dotHs}
Let $s\in \bR$.

\begin{enumerate}
\item 
The quotient of 
$\dot H^s(\Gh)$ by the kernel of its norm 
is a Hilbert space isometrically isomorphic to 
$L^2(q_1^s)$,
and we have
$L^2(q_1^s)=L^2(|x|^{2s} dx)$.
The kernel of its norm is 0 if $s\leq 0$, 
$\cF_G(\fU_{s-1}(\fg_\bC))$ if $s\in \bN$, 
and 
$\cF_G(\fU_{[s]-1}(\fg_\bC))$ if $s\in (0,+\infty)\backslash\bN$.
\item
\label{item_prop_dotHs_subset} 
For all $s_2 \geq s_1$, 
we have $\dot H^{s_2}(\Gh)
\subset 
\dot H^{s_1}(\Gh)$
and there exists $C>0$
such that
$$
\forall \sigma\in \dot H^{s_2}(\Gh) 
\qquad
\|\sigma\|_{\dot H^{s_1}(\Gh)}
\leq C \|\sigma\|_{\dot H^{s_2}(\Gh)}.
$$
\item
If $s<n/2$ and $\sigma\in \dot H^s(\Gh)$
then $\cF_G^{-1} \sigma \in L^1(G)$
with 
$\|\cF_G^{-1} \sigma \|_{L^1(G)}\leq C_s \|\sigma\|_{\dot H^s(\Gh)}$.
\item 
\label{item_interpolation_prop_dotHs}
Let $s_1,s_2\in \bR$.
For any $\sigma\in \dot H^{s_1}(\Gh) \cap \dot H^{s_2}(\Gh)$, 
we have
 $\sigma\in \dot H^{s}(\Gh)$
for any $s\in [s_1,s_2]$ and if we write $s$ as 
$s= \theta s_1 +(1-\theta)s_2$
with $\theta\in [0,1]$ then we have the inequality:
$$
\|\sigma\|_{\dot H^{s}(\Gh)}
\leq
\|\sigma\|_{\dot H^{s_1}(\Gh)}^\theta
\|\sigma\|_{\dot H^{s_2}(\Gh)}^{1-\theta}.
$$\end{enumerate}
\end{proposition}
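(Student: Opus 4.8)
The plan is to reduce every claim in Proposition \ref{prop_dotHs} to the corresponding statement about weighted $L^2$-spaces on $G$ via the Fourier transform, using the identification $\dot H^s(\Gh)\cong L^2(q_1^s(x)dx)$ that is built into Definition \ref{def_dotHs} and established for $s\in\bN$ in Theorem \ref{thm_dotHs_q1} and Lemma \ref{lem_dotHs_L2}. The point is that the weights $q_1^s$ are, by Proposition \ref{prop_q1}(1) and Lemma \ref{lem_dotHs_L2}(1), all equivalent to $|x|^{2s}$, so the entire proposition becomes a collection of elementary facts about the spaces $L^2(|x|^{2s}dx)$ on the compact manifold $G$, where $|x|$ denotes Riemannian (or coordinate) distance to $e_G$ and is bounded above by a constant.

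For Part (1), I would note that Definition \ref{def_dotHs} already sets $\|\sigma\|_{\dot H^s(\Gh)}=\|\cF_G^{-1}\sigma\|_{L^2(q_1^s)}$, so the completeness and the isometry follow once one checks that the range of $\cF_G$ restricted to the relevant symbol space is exactly $L^2(q_1^s)$; for $s\le 0$ this is immediate from the definition, for $s\in\bN$ it is Theorem \ref{thm_dotHs_q1} (mod $\cF_G(\fU_{s-1}(\fg_\bC))$), and for $s\in(0,\infty)\setminus\bN$ it follows by combining the $s=[s]+1$ case with the weighted-$L^2$ constraint and identifying the kernel as $\cF_G(\fU_{[s]-1}(\fg_\bC))$ (the functions whose Fourier transform is a polynomial in the $\pi(X)$, which by Example \ref{ex_Diffsigma=piXbeta} / Proposition \ref{prop_annilation} are annihilated by enough difference operators). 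For Part (2), since $G$ is compact there is $R>0$ with $|x|\le R$ for all $x$, hence $|x|^{2s_2}\le R^{2(s_2-s_1)}|x|^{2s_1}$ when $s_2\ge s_1$, giving the pointwise domination of weights and therefore the continuous inclusion $L^2(|x|^{2s_2}dx)\hookrightarrow L^2(|x|^{2s_1}dx)$; transported back, this is exactly the claimed inclusion $\dot H^{s_2}(\Gh)\subset\dot H^{s_1}(\Gh)$ with the stated norm inequality. (A small bookkeeping point: when $s_1\le 0<s_2$ one must also check $L^2(\Gh:\Sigma)$-membership is inherited, which it is since $\dot H^{s_2}$ already sits inside $L^2(\Gh:\Sigma)$ by definition for $s_2>0$ and trivially otherwise.)

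For Part (3), the idea is Cauchy--Schwarz: if $s<n/2$ then $|x|^{-s}\in L^2(G)$ near $e_G$ because $\int_{|x|<1}|x|^{-2s}\,dx<\infty$ exactly when $2s<n$, and away from $e_G$ the weight is bounded below, so $|x|^{-s}\in L^2(G)$ globally on the compact $G$; then for $f=\cF_G^{-1}\sigma$ one writes $\|f\|_{L^1(G)}=\int_G |x|^{-s}\,|x|^{s}|f(x)|\,dx\le \||x|^{-s}\|_{L^2(G)}\,\||x|^s f\|_{L^2(G)}=C_s\|\sigma\|_{\dot H^s(\Gh)}$, which is the assertion. For Part (4), the interpolation inequality, I would again pass to $f=\cF_G^{-1}\sigma$ and apply H\"older's inequality with exponents $1/\theta$ and $1/(1-\theta)$ to the splitting $|x|^{2s}|f(x)|^2=\big(|x|^{2s_1}|f(x)|^2\big)^\theta\big(|x|^{2s_2}|f(x)|^2\big)^{1-\theta}$, valid because $s=\theta s_1+(1-\theta)s_2$; integrating gives $\|\sigma\|_{\dot H^s(\Gh)}^2\le \|\sigma\|_{\dot H^{s_1}(\Gh)}^{2\theta}\|\sigma\|_{\dot H^{s_2}(\Gh)}^{2(1-\theta)}$, and one must separately verify the structural membership $\sigma\in\dot H^s(\Gh)$ (i.e. $\sigma\in\dot H^{[s]+1}$ when $s>0$ is non-integral, which follows from Part (2) applied between $s$ and $\max(s_1,s_2)$).

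The main obstacle, as usual in this kind of statement, is not any single inequality but the careful handling of the three regimes $s\le 0$, $s\in\bN$, and $s\in(0,\infty)\setminus\bN$ in the definition, and in particular pinning down the kernel of the seminorm in the non-integer positive range and checking that the two a priori different definitions of $\dot H^s$ for $s\in\bN$ (Definitions \ref{def_dotHsinN} and \ref{def_dotHs}) are compatibly used; this is precisely the content of Remark \ref{rem_def_dotHs} and Lemma \ref{lem_dotHs_L2}, so once those equivalences are invoked the rest is the routine weighted-$L^2$ functional analysis sketched above.
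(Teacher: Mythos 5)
Your overall strategy---transporting every claim to the weighted spaces $L^2(|x|^{2s}dx)$ via the isometry built into Definition \ref{def_dotHs} and Lemma \ref{lem_dotHs_L2}, then using boundedness of $|x|$ on the compact group for the inclusions, Cauchy--Schwarz together with $\||\cdot|^{-s}\|_{L^2(G)}<\infty$ for $s<n/2$ for Part (3), and H\"older with exponents $1/\theta$ and $1/(1-\theta)$ for Part (4)---is exactly the paper's (very terse) proof, and your treatment of Parts (1), (3) and (4) is correct.

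Part (2), however, contains a genuine non sequitur. From your (correct) pointwise inequality $|x|^{2s_2}\le R^{2(s_2-s_1)}|x|^{2s_1}$ for $s_2\ge s_1$ you may conclude $\int_G|f|^2|x|^{2s_2}dx\le R^{2(s_2-s_1)}\int_G|f|^2|x|^{2s_1}dx$, i.e.\ the continuous inclusion $L^2(|x|^{2s_1}dx)\hookrightarrow L^2(|x|^{2s_2}dx)$ --- the \emph{opposite} of the inclusion you wrote down: domination of the weight $w_2$ by $w_1$ makes $L^2(w_1)$ the smaller space, not the larger one. The inclusion you assert (and which the proposition, as printed, asserts) is in fact false: on a three-dimensional group, $f=|x|^{-1}$ near $e_G$ lies in $L^2(dx)$ but not in $L^2(|x|^{-2}dx)$, so $\dot H^{0}(\Gh)\not\subset\dot H^{-1}(\Gh)$. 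The statement of Part \eqref{item_prop_dotHs_subset} has the roles of $s_1$ and $s_2$ interchanged: the correct assertion, which is the one the paper's own proof records (``the inclusion $L^2(|x|^{2s_2}dx)\subset L^2(|x|^{2s_1}dx)$ holds and is continuous when $s_1\ge s_2$'') and the one actually invoked later (Lemma \ref{lem_r0} bounds $\|\cdot\|_{\dot H^{s}(\Gh)}$ by $\|\cdot\|_{\dot H^{[s]}(\Gh)}$ with $[s]\le s$), is that $\dot H^{s_1}(\Gh)\subset\dot H^{s_2}(\Gh)$ with $\|\sigma\|_{\dot H^{s_2}(\Gh)}\le C\|\sigma\|_{\dot H^{s_1}(\Gh)}$ whenever $s_1\le s_2$; that is, the spaces grow and the norms decrease as the exponent increases. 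Your pointwise weight estimate proves precisely this corrected statement; you only need to reverse your final inclusion (and accordingly drop the parenthetical about $L^2(\Gh:\Sigma)$-membership, which rests on the same reversal).
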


\begin{proof}
The properties follow readily from Section \ref{subsec_def_dotHs}, 
and 
\begin{itemize}
\item 
the inclusion $L^2(|x|^{2s_2}dx)\subset L^2(|x|^{2s_1}dx)$
holds and is continuous when $s_1\geq s_2$, 
\item 
the Cauchy-Schwartz inequality 
on $G$ implies that 
$$
\forall s_1,s_2\in \bR,  \ \theta\in [0,1]\qquad
\|f\|_{L^2(|x|^{2(\theta s_1 +(1-\theta)s_2)}dx)}
\leq
\|f\|_{L^2(|x|^{2s_1}dx)}^\theta
\|f\|_{L^2(|x|^{2s_2}dx)}^{1-\theta},
$$
and that 
$$
\|f\|_{L^1(G)}\leq \||\cdot|^{-s}\|_{L^2} \||\cdot|^{s} f_{L^2}.
$$
\end{itemize}
\end{proof}

\subsection{The spaces $\dot L^\infty_s(\Gh:\Sigma)$}
\label{subsec_dotLpsinN}

In this section
we define and study  subspaces $\dot L^\infty_s(\Gh:\Sigma)$ of $\Sigma$
which will be useful in the study of Leibniz-type estimates in Section
\ref{subsec_weak_Leib}.

\begin{definition}
\label{def_dotLpsinN}
Let $s\in \bN$.
The space $\dot L^\infty_s(\Gh:\Sigma)$
is the set of symbol $\sigma\in \Sigma$ such that
$\Delta^\alpha \sigma \in L^\infty(\Gh, \Sigma_{\varphi^{\otimes \alpha}})$ for every $\alpha\in \bN_0^f$, $|\alpha|=s$.
In this case, we define the quantity
$$
\|\sigma\|_{\dot L^\infty_s(\Gh:\Sigma)} := 
\max_{|\alpha|=s} \|\Delta^\alpha \sigma \|_{L^\infty(\Gh, \Sigma_{\varphi^{\otimes \alpha}})},
$$
\end{definition}

The map $\sigma\mapsto \|\sigma\|_{\dot L^\infty_s(\Gh:\Sigma)}$ is a semi-norm 
or in other words a non-definite norm
since by Proposition \ref{prop_annilation},
we have for any $\sigma\in \Sigma$, 
$$
\|\sigma\|_{L^\infty_s(\Gh)}=0
\quad\Longleftrightarrow \quad
\sigma\in \cF_G(\fU_{s-1}(\fg_\bC)).
$$

We define the case $s=0$ as coinciding with $L^\infty(\Gh:\Sigma)$:
$$
\dot L^\infty_0(\Gh:\Sigma):= L^\infty(\Gh:\Sigma) 
\quad\mbox{and}\quad
\|\sigma\|_{\dot L^\infty_0(\Gh)}:=\|\sigma\|_{L^\infty(\Gh:\Sigma)}.
$$

The following shows that the Fourier transform of integrable functions are in 
 $L^\infty_s(\Gh)$:
\begin{lemma}
\label{lem_dotLinftys_L1G}
For any $s\in \bN_0$, there exists $C>0$ such that
$$
\forall\kappa\in L^1(G)
\qquad
\|\widehat \kappa\|_{\dot L^\infty_s(\Gh:\Sigma)}
\leq C \| \kappa\|_{L^1(|x|^s dx)}.
$$
\end{lemma}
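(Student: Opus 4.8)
The plan is to reduce the lemma to the $L^1$-estimate $\|\cF_G\kappa(\pi)\|_{\sL(\cH_\pi)}\le \|\kappa\|_{L^1(G)}$ from \eqref{eq_cF_L1}, applied not to $\kappa$ itself but to $\kappa$ multiplied by the smooth matrix-coefficient functions that implement the difference operators. Recall from the proof of Proposition~\ref{prop_q1}~Part~(2) that for $f\in\cD(G)$, $\varphi\in\FundG$ and $1\le i,j\le d_\varphi$, the operator $\cF_G([\varphi-\id]_{i,j}f)$ is the $(i,j)$-component in the $\cH_\varphi$-slot of $\Delta_\varphi\widehat f(\pi)$. Iterating this, for any $\alpha\in\bN_0^f$ with $|\alpha|=s$ one has
$$
\Delta^\alpha \widehat\kappa (\pi) = \cF_G\big( e_\alpha(\cdot)\,\kappa\big)(\pi),
$$
understood componentwise, where $e_\alpha$ runs over the finite family of products of the form $\prod_{\ell=1}^f \prod_{k=1}^{\alpha_\ell} [\varphi_\ell-\id]_{i_{k,\ell},j_{k,\ell}}$. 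First I would make this identity precise by induction on $s$, using the definition \eqref{eq_def_Deltatau} of $\Delta_\tau$ together with $(\pi_1\otimes\pi_2)(x)=\pi_1(x)\otimes\pi_2(x)$ and the Fourier inversion formula; this is the same bookkeeping already carried out for $\widehat f$, now applied to a general distribution $\kappa\in L^1(G)$, for which the computation is still licit since the matrix coefficients are smooth and bounded.

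Granting the identity, each entry of $\Delta^\alpha\widehat\kappa(\pi)$ is $\cF_G(e_\alpha\kappa)(\pi)$ for a fixed bounded smooth function $e_\alpha$, so by \eqref{eq_cF_L1},
$$
\|\Delta^\alpha\widehat\kappa(\pi)\|_{\sL(\cH_{\varphi^{\otimes\alpha}}\otimes\cH_\pi)}
\le C_\alpha \sup_{x\in G}\frac{|e_\alpha(x)|}{|x|^s}\,\|\kappa\|_{L^1(|x|^s dx)},
$$
where I use that $|e_\alpha(x)|\lesssim |x|^s$ near $e_G$ — indeed each factor $[\varphi_\ell-\id]_{i,j}$ vanishes at $e_G$ and is smooth, hence is $O(|x|)$, so the product of $s$ such factors is $O(|x|^s)$ uniformly on the compact group $G$. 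Taking the supremum over $\pi\in\Gh$ and the maximum over the finitely many $\alpha$ with $|\alpha|=s$ gives the claimed bound with a constant $C$ depending only on $G$ and $s$. The case $s=0$ is exactly \eqref{eq_Tkappa_sup}/\eqref{eq_cF_L1}.

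I do not expect a serious obstacle here; the lemma is essentially the ``$L^\infty$ side'' of Proposition~\ref{prop_q1}~Part~(2), with the Plancherel identity replaced by the trivial $L^1\to\sL(\cH_\pi)$ bound. The only point requiring a little care is justifying the intertwining identity $\Delta^\alpha\widehat\kappa=\cF_G(e_\alpha\kappa)$ for $\kappa$ merely integrable rather than smooth — but since only smooth bounded functions are multiplied in and the group Fourier transform of an $L^1$ function is defined by an absolutely convergent integral, this is routine. Everything else is the elementary estimate $|[\varphi_\ell(x)-\id]_{i,j}|\le C|x|$ on the compact manifold $G$, which follows from smoothness and vanishing at $e_G$.
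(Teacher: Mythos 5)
Your proposal is correct and follows essentially the same route as the paper: both identify the entries of $\Delta^\alpha\widehat\kappa(\pi)$ as group Fourier transforms of $\kappa$ multiplied by products of the matrix coefficients $[\varphi_\ell-\id]_{i,j}$, apply the trivial bound \eqref{eq_cF_L1}, and use $|[\varphi_\ell(x)-\id]_{i,j}|\lesssim|x|$ (equivalently $q_1(x)\asymp|x|^2$). The only cosmetic difference is that the paper estimates the operator norm of the integrand $\kappa(x)\,(\varphi(x)-\id)^*\otimes\pi(x)$ directly rather than componentwise, which amounts to the same finite-dimensional norm equivalence you invoke.
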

\begin{proof}
Let $\kappa\in L^1(G)$. 
We have
$$
\Delta_\varphi \widehat \kappa (\pi)
=
\widehat \kappa (\varphi\otimes\pi)
-\widehat \kappa (\id_\varphi\otimes\pi)
=
\int_G\kappa(x) \left(\varphi(x)-\id\right)^*\otimes \pi(x) dx,
$$
so
\begin{align*}
\|\Delta_\varphi \widehat \kappa (\pi)\|_{\sL(\cH_\pi\otimes \cH_\varphi)}
&\leq
\int_G|\kappa(x)|
\|
 \left(\varphi(x)-\id\right)^*\otimes \pi(x)\|_{\sL(\cH_\pi\otimes \cH_\varphi)}
 dx
 \\&\leq
\int_G|\kappa(x)|
\|\varphi(x)-\id\|_{\sL(\cH_\varphi)} dx.
\end{align*}
Now all the operator norms 
on the finite dimensional space $\cH_\varphi$
are equivalent, so we may replace the $\sL$-operator norm 
with the Hilbert Schmidt norm:
$$
\|\varphi(x)-\id\|_{\sL(\cH_\varphi)}
\asymp
\|\varphi(x)-\id\|_{HS(\cH_\varphi)},
$$
so that 
$$
\|\Delta_\varphi \widehat \kappa (\pi)\|_{\sL(\cH_\pi\otimes \cH_\varphi)}
\lesssim 
\int_G|\kappa(x)|
\|\varphi(x)-\id\|_{HS(\cH_\varphi)} dx.
$$
Taking the maximum over the finite number of fundamental representations, we obtain:
$$
\|\widehat \kappa (\pi)\|_{\dot L^\infty_1(\Gh,\Sigma)}
\lesssim 
\int_G|\kappa(x)| \tilde q(x)dx,
$$
with $\tilde q(x):=\max_{\varphi\in \FundG} \|\varphi(x)-\id\|_{HS(\cH_\varphi)}$. 
As there are only a finite number of fundamental representations, $\tilde q(x)\asymp \sqrt{q_1(x)} \asymp |x|$, see Section \ref{subsec_prop_sint}.
This shows the case $s=1$.

The general case is proved in a similar way.
\end{proof}

\begin{lemma}
For any $s_1,s_2\in \bN_0$ with $s_1\leq s_2$
the inclusion $\dot L^\infty_{s_1}(\Gh : \Sigma) \subset \dot L^\infty_{s_2}(\Gh)$
holds and there exists $C>0$ such that
$$
\forall \sigma \in \dot L^\infty_{s_1}(\Gh : \Sigma)
\quad
\|\sigma\|_{\dot L^\infty_{s_2}(\Gh)}
\leq
C
\|\sigma\|_{\dot L^\infty_{s_1}(\Gh : \Sigma)}.
$$
\end{lemma}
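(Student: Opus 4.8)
The plan is to reduce the inclusion to a pointwise comparison between the functions controlling the seminorms, exactly mirroring the structure of Lemma \ref{lem_dotHs_L2} Part (1) but in the $L^\infty$ rather than the $L^2$ setting. First I would observe that, by the Leibniz property \eqref{eq_Leibniz} applied recursively, for any $\alpha$ with $|\alpha|=s_2$ and any $\beta$ with $|\beta| = s_1 \leq s_2$ one can write $\Delta^\alpha = \Delta^{\alpha-\beta}\Delta^\beta$ (up to reordering of the tensor legs, which does not affect operator norms), so that $\Delta^\alpha\sigma$ is obtained from $\Delta^\beta\sigma$ by applying $s_2-s_1$ further fundamental difference operators. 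Thus it suffices to bound $\|\Delta_\varphi\omega\|_{L^\infty(\Gh:\Sigma_{\varphi\otimes\psi})}$ by a constant times $\|\omega\|_{L^\infty(\Gh:\Sigma_\psi)}$ for a single fundamental $\varphi$ and $\omega\in\Sigma_\psi$, and then iterate.

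The key single step is immediate from the definition \eqref{eq_def_Deltatau}: for $\omega\in\Sigma_\psi$,
\begin{equation*}
\Delta_\varphi\omega(\pi) = \omega(\varphi\otimes\pi) - \omega(\id_\varphi\otimes\pi),
\end{equation*}
and both terms on the right are, up to the harmless identification of $\cH_{\varphi\otimes\pi}\otimes\cH_\psi$ with a subspace (respectively a $d_\varphi$-fold copy) of the spaces on which $\omega$ is already defined, values of $\omega$ at representations in $\RepG$. Hence by the triangle inequality
\begin{equation*}
\|\Delta_\varphi\omega(\pi)\|_{\sL(\cH_{\varphi\otimes\pi}\otimes\cH_\psi)}
\leq \|\omega(\varphi\otimes\pi)\|_{\sL} + \|\omega(\id_\varphi\otimes\pi)\|_{\sL}
\leq 2\sup_{\pi'\in\RepG}\|\omega(\pi')\|_{\sL}
= 2\|\omega\|_{L^\infty(\Gh:\Sigma_\psi)},
\end{equation*}
where the last equality uses that the $L^\infty$-norm over $\Gh$ equals the one over $\RepG$ (the supremum over direct sums is attained on irreducibles), exactly as recorded for the scalar case in Section \ref{subsec_Fmult}. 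Iterating this bound $s_2-s_1$ times gives $\|\Delta^\alpha\sigma\|_{L^\infty}\leq 2^{s_2-s_1}\|\Delta^\beta\sigma\|_{L^\infty}$ for a suitable $\beta$ with $|\beta|=s_1$, and taking first the maximum over $\beta$ and then over $\alpha$ yields the claimed inequality with $C = 2^{s_2-s_1}$ (any finite constant works). In particular $\sigma\in\dot L^\infty_{s_1}(\Gh:\Sigma)$ forces all $\Delta^\alpha\sigma$, $|\alpha|=s_2$, to lie in the respective $L^\infty(\Gh:\Sigma_{\varphi^{\otimes\alpha}})$, so $\sigma\in\dot L^\infty_{s_2}(\Gh:\Sigma)$.

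I do not expect any genuine obstacle here; the only point requiring a little care is the bookkeeping of the tensor legs when one writes $\Delta^\alpha = \Delta^{\alpha-\beta}\Delta^\beta$ and the identification of $\omega$ evaluated at $\varphi\otimes\pi$ and at $\id_\varphi\otimes\pi$ with operators on the correct Hilbert spaces — but this is precisely the content of the extension-by-linearity convention \eqref{eq_identification_sigma_Gh_RepG} and the remark that difference operators commute up to reordering, both already established. One could alternatively deduce the statement from Lemma \ref{lem_dotLinftys_L1G} together with a density argument, but the direct estimate above is cleaner and does not even require $\sigma$ to be a Fourier transform of an $L^1$ function, so it is the route I would take.
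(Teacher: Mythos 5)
Your proof is correct and follows essentially the same route as the paper: the paper's own argument is exactly the single-step triangle-inequality bound $\|\Delta_\varphi\sigma(\pi)\|_{\sL}\leq\|\sigma(\varphi\otimes\pi)\|_{\sL}+\|\id_\varphi\otimes\sigma(\pi)\|_{\sL}\leq 2\|\sigma\|_{L^\infty(\Gh:\Sigma)}$, followed by "the general case is proved using similar considerations," i.e. the iteration you spell out. The only cosmetic point is that the factorisation $\Delta^\alpha=\Delta^{\alpha-\beta}\Delta^\beta$ is just composition (and commutation up to reordering of tensor legs) of difference operators, not an instance of the Leibniz property \eqref{eq_Leibniz}.
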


\begin{proof}
We observe that 
$$
\|\Delta_\varphi \sigma(\pi)\|_{\sL(\cH_\varphi\otimes \cH_\pi)}
\leq
\|\sigma(\varphi\otimes \pi)\|_{\sL(\cH_\varphi\otimes \cH_\pi)}
+
\|\id_\varphi\otimes \sigma(\pi)\|_{\sL(\cH_\varphi\otimes \cH_\pi)}
\leq
2 \|\sigma\|_{L^\infty(\Gh:\Sigma)}.
$$
This implies the case $s_1=0$, $s_2=1$.

The general case is proved using similar considerations.
\end{proof}

Thanks to the introduction of $\dot L^\infty_s(\Gh:\Sigma)$,
we can now  state and prove Leibniz-type estimates.

\subsection{Weak Leibniz estimates}
\label{subsec_weak_Leib}

Here we collect some results 
about  estimates in homogeneous Sobolev norms of the products of two symbols.
We will use the space $\dot L^\infty_s(\Gh:\Sigma)$, $s\in \bN_0$
defined in Section \ref{subsec_dotLpsinN}.
It will be convenient to modify slightly our previous notation for the homogeneous Sobolev spaces
$$
\dot L^2_s(\Gh : \Sigma) := \dot H^s(\Gh)
\ \mbox{for any}\ s\in \bN_0.
$$

\begin{proposition}
\label{prop_Leibniz_estimate}
For any $s\in \bN_0$
and $p=2,\infty$, 
there exists $C=C_{p,s}$ such that for any $\sigma_1,\sigma_2\in \Sigma$
$$
\|\sigma_1\sigma_2\|_{\dot L^p_s(\Gh : \Sigma)}
\leq
C\sum_{s_1+s_2=s}
\|\sigma_1\|_{\dot L^\infty_{s_1}(\Gh : \Sigma)}
\|\sigma_2\|_{\dot L^p_{s_2}(\Gh : \Sigma)},
$$
in the sense that if the right-hand side is finite then the left-hand side is finite and the inequality holds.
\end{proposition}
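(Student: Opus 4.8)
The natural approach is to iterate the Leibniz property \eqref{eq_Leibniz} for the difference operators $\Delta_\varphi$, $\varphi\in\FundG$, and then control the resulting terms in the appropriate $L^p(\Gh:\Sigma_\tau)$ norm. The plan is as follows. First I would record the ``higher-order Leibniz formula'': applying \eqref{eq_Leibniz} repeatedly, for any $\alpha\in\bN_0^f$ with $|\alpha|=s$ one obtains an expression of the form
$$
\Delta^\alpha(\sigma_1\sigma_2)(\pi)
=
\sum_{\beta\leq\alpha} \binom{\alpha}{\beta}\,
\big(\text{(a shift of) }\Delta^\beta\sigma_1\big)\cdot\big(\text{(a shift of) }\Delta^{\alpha-\beta}\sigma_2\big),
$$
where each factor is evaluated after tensoring by the appropriate fundamental representations and where ``$\cdot$'' denotes the module actions introduced in Section \ref{subsec_structure}; the precise bookkeeping is routine (it is the exact analogue of the classical multi-index Leibniz rule, and mirrors \cite[Section 4.1]{monJFA}). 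The only point worth noting carefully is that shifting the argument $\pi\mapsto\varphi^{\otimes\gamma}\otimes\pi$ does not change operator norms, since the relevant representations are unitary, and does not change Hilbert--Schmidt norms either after summing over $\Gh$ with the Plancherel weights, because such a shift permutes (with multiplicity) the summation over $\Gh$ — this is precisely the content of Lemma \ref{lem_lambdapi} being unnecessary here, as we only need that $\{\tau\otimes\pi:\pi\in\Gh\}$, counted with multiplicity and dimension, reproduces $L^2(G)\otimes\cH_\tau$.

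Next, for the $p=\infty$ case I would simply take operator norms in the higher Leibniz formula: the module actions are contractive for the operator norm (composition of operators, tensoring with identities), so
$$
\|\Delta^\alpha(\sigma_1\sigma_2)(\pi)\|_{\sL}
\leq
\sum_{\beta\leq\alpha}\binom{\alpha}{\beta}
\|\Delta^\beta\sigma_1\|_{L^\infty}\,\|\Delta^{\alpha-\beta}\sigma_2\|_{L^\infty},
$$
and taking the supremum over $\pi$ and the maximum over $|\alpha|=s$ gives the claim with $s_i=|\beta|,|\alpha-\beta|$. For the $p=2$ case the same computation applies but one keeps one factor in operator norm and the other in Hilbert--Schmidt norm, using the submultiplicativity $\|AB\|_{HS}\leq\|A\|_{\sL}\|B\|_{HS}$ and $\|A\otimes\id\|$-type bounds; then summing $\sum_{\pi\in\Gh}d_\pi(\cdots)$ and using the shift-invariance noted above converts the $\|\Delta^{\alpha-\beta}\sigma_2(\cdot)\|_{HS}^2$ sum into $\|\Delta^{\alpha-\beta}\sigma_2\|_{\dot L^2_{s_2}}^2$ while the operator-norm factor comes out as $\|\Delta^\beta\sigma_1\|_{L^\infty}$. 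Cauchy--Schwarz (or just the finiteness of the number of multi-indices $\alpha$ with $|\alpha|=s$ and of splittings $\beta\leq\alpha$) absorbs the combinatorial constants into $C_{p,s}$.

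The main obstacle — really the only delicate point — is the bookkeeping in the higher-order Leibniz formula: one must check that iterating \eqref{eq_Leibniz} across the \emph{non-commuting} difference operators $\Delta_{\varphi_1},\dots,\Delta_{\varphi_f}$ produces, up to the fixed reordering of tensor legs, the claimed sum over $\beta\leq\alpha$, and that the ``shifts'' of the arguments that appear are exactly of the harmless form $\pi\mapsto\varphi^{\otimes\gamma}\otimes\pi$. Since difference operators commute up to permutation of tensor factors (as remarked after Example \ref{ex_Diffsigma=piX}), this reordering is legitimate and only costs us unitary conjugations, which are invisible to both the $\sL$- and $HS$-norms. I would state this as a preliminary lemma, prove it by induction on $s$ using a single application of \eqref{eq_Leibniz} at each step, and then the two norm estimates fall out immediately as above. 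The statement ``if the right-hand side is finite then the left-hand side is finite and the inequality holds'' is then automatic, since all manipulations are term-by-term in $\pi$ and the final inequality is between (possibly infinite) suprema/sums of non-negative quantities.
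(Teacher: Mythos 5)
Your proposal is correct and follows essentially the same route as the paper: a single application of the Leibniz formula \eqref{eq_Leibniz}, operator norms on the $\sigma_1$ factors and $\sL$- or $HS$-norms on the $\sigma_2$ factors, proved explicitly for $s=1$ and iterated for higher $s$. You are in fact more careful than the paper on the one point it glosses over, namely that for $s\geq 2$ the iteration produces $HS$-norms of $\Delta^{\alpha-\beta}\sigma_2$ at shifted arguments $\varphi^{\otimes\gamma}\otimes\pi$, which after summing $\sum_\pi d_\pi(\cdots)$ only costs a fixed dimensional constant by the reciprocity/dimension count you indicate.
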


\begin{proof}
Using the Leibniz-type formula \eqref{eq_Leibniz}, 
we obtain easily with 
$\|\cdot\|=\|\cdot\|_{HS(\cH_\varphi\otimes \cH_\pi)}$
or 
$\|\cdot\|=\|\cdot\|_{\sL(\cH_\varphi\otimes \cH_\pi)}$:
\begin{align*}
\|\Delta_\varphi (\sigma_1\sigma_2)(\pi)\|
&\leq
\| \Delta_\varphi (\sigma_1) (\pi)\|_{\sL(\cH_\varphi\otimes \cH_\pi)}
\|\sigma_2(\id_\varphi\otimes\pi)\|
+
\|\sigma_1 (\varphi\otimes\pi)\|_{\sL(\cH_\varphi\otimes \cH_\pi)}
\|\Delta_\varphi \sigma_2(\pi)\|
\\
&\leq
\|\sigma_1\|_{\dot L^\infty_1(\Gh : \Sigma)}
\|\sigma_2(\id_\varphi\otimes\pi)\|
+
\|\sigma_1\|_{L^\infty(\Gh:\Sigma)}
\|\Delta_\varphi \sigma_2(\pi)\|.
\end{align*}
This implies for $p=2$ or $\infty$
$$
\|\sigma_1\sigma_2\|_{\dot L^p_1(\Gh : \Sigma)}
\lesssim 
\|\sigma_1\|_{\dot L^\infty_1(\Gh : \Sigma)}
\|\sigma_2\|_{L^p(\Gh : \Sigma)}
+
\|\sigma_1\|_{L^\infty(\Gh:\Sigma)}
\|\sigma_2\|_{\dot L^p_1(\Gh : \Sigma)}.
$$
This shows the case $s=1$. 
The cases $s>1$ are proved in a similar way.
\end{proof}

We remark that a different type of Leibniz estimate can be obtained using the isometry of $\dot H^s$,  modulo the kernel of its norm,
with $L^2(q^s_2(x)dx) =L^2(|x|^{2s}dx) $ and the following lemma.
However, this requires to know that the symbols are the Fourier transform of e.g. integrable functions: 

\begin{lemma}
\label{lem_veryweakLeib}
Let $s>0$. Then there exists $C_s>0$ such that 
for any  two locally integrable functions $f_1,f_2:G\rightarrow \bC$
we have
$$
\|f_1*f_2\| _{L^2(|x|^{2s}dx)}
\leq C_s\left(
\|f_1\| _{L^2(|x|^{2s}dx)}
\|f_2\| _{L^1(G)}
+
\|f_1\| _{L^2(G)}
\|f_2\| _{L^1(|x|^{s}dx)}\right),
$$
and
$$
\|f_1*f_2\| _{L^2(|x|^{2s}dx)}
\leq C_s
\left(
\|f_1\|_{L^1(G)}
 \|f_2\| _{L^2(|x|^{2s}dx)}
+
\|f_1\|_{L^1(|x|^{s}dx)}
\|f_2\|_{L^2(G)}\right).
$$

\end{lemma}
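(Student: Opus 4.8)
The plan is to prove the two inequalities by a weighted Young-type argument, exploiting the elementary fact that on the compact group $G$ the distance function satisfies a quasi-triangle inequality $|xy^{-1}|\lesssim |x|+|y|$, equivalently $|x|\lesssim |xy|+|y|$, so that for any $s>0$ one has the pointwise bound
\begin{equation*}
|x|^{s}\leq C_s\left(|y|^{s}+|y^{-1}x|^{s}\right),\qquad x,y\in G.
\end{equation*}
The first displayed inequality and the second are symmetric to each other under the substitution $(f_1,f_2)\mapsto(\check f_2,\check f_1)$ (using $\|f_1*f_2\|_{L^2(|x|^{2s})}$ is unchanged under taking adjoints/reflections up to the weight being even, which holds since $|x|=|x^{-1}|$), so it suffices to treat the first one.

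First I would write, for $f=f_1*f_2$,
\begin{equation*}
|x|^{s}|f(x)|\leq \int_G |x|^{s}|f_1(y)||f_2(y^{-1}x)|\,dy
\leq C_s\int_G\left(|y|^{s}+|y^{-1}x|^{s}\right)|f_1(y)||f_2(y^{-1}x)|\,dy,
\end{equation*}
which splits $|x|^s|f(x)|$ into a sum of two convolutions:
\begin{equation*}
|x|^{s}|f(x)|\leq C_s\Big( (|\cdot|^{s}|f_1|)*|f_2|\,(x) + |f_1|*(|\cdot|^{s}|f_2|)\,(x)\Big).
\end{equation*}
Taking $L^2(G)$-norms in $x$ and applying Young's inequality $\|g*h\|_{L^2}\leq \|g\|_{L^2}\|h\|_{L^1}$ to each term (with the weighted factor placed, respectively, in the $L^2$ slot and then in the $L^1$ slot for the first term, and the reverse for the second) gives exactly
\begin{equation*}
\|f_1*f_2\|_{L^2(|x|^{2s}dx)}\leq C_s\left(\|f_1\|_{L^2(|x|^{2s}dx)}\|f_2\|_{L^1(G)}+\|f_1\|_{L^2(G)}\|f_2\|_{L^1(|x|^{s}dx)}\right),
\end{equation*}
as desired. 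The second inequality follows by the same computation after interchanging the roles of $f_1$ and $f_2$, using that $|x|=|x^{-1}|$ so that the weight behaves symmetrically under the reflection $y\mapsto y^{-1}x$.

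The only genuine point requiring care — the ``main obstacle'', though it is mild — is justifying the quasi-triangle inequality for $|\cdot|$ near $e_G$ and its global validity on the compact manifold $G$; since $G$ is compact this is standard (the Riemannian distance is a genuine metric, and $|x|\asymp$ distance to $e_G$), but one should remark that the constant $C_s$ then also absorbs the ratio between $|x|^s$ and $\mathrm{dist}(x,e_G)^s$ and the comparison of $\max$ with sum, both finite by compactness. I would also note that the finiteness-hypothesis convention (if the right-hand side is finite the left-hand side is) is automatic here, since every step is a pointwise inequality between non-negative quantities followed by monotone norms.
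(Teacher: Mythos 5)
Your proof is correct and follows essentially the same route as the paper's: the triangle inequality for the left-invariant distance $|x|\leq |y|+|y^{-1}x|$ combined with $(a+b)^s\leq C_s(a^s+b^s)$ to split $|x|^s(|f_1|*|f_2|)$ into $(|\cdot|^s|f_1|)*|f_2| + |f_1|*(|\cdot|^s|f_2|)$, followed by Young's convolution inequality with the $L^1$ and $L^2$ slots placed appropriately for each of the two stated bounds. The point you flag as the main obstacle is in fact immediate here, since $|x|$ is an exact left-invariant metric distance, so only the power-$s$ step needs the constant $C_s$.
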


\begin{proof}
Using the triangle inequality for $|\cdot|$ and 
the estimate
$$
\forall s> 0
\qquad
\exists C=C_s>0
\qquad
\forall a,b\geq 0
 \qquad
 (a+b)^s\leq C (a^s +b^s), 
 $$
we obtain
\begin{align*}
\|f_1*f_2\| _{L^2(|x|^{2s}dx)}
&\leq
\| |\cdot|^s (|f_1|*|f_2|)\|_{L^2(G)}
\\&\leq
C\left( \| (|\cdot|^s |f_1|)*|f_2|\|_{L^2(G)}
+
\| |f_1|*(|\cdot|^s |f_2|)\|_{L^2(G)}\right),
\end{align*}
and we conclude with Young's convolution inequalities.
\end{proof}

In the proof of  Lemma \ref{lem_technical_lem_prop_f(L)}, 
we will need to estimate the Sobolev norm of $k$-product of the same symbol
and obtain an estimate with a precise dependence in $k$.
We will use the following technical lemma:

\begin{lemma}
\label{lem_sigmak}
Let $\sigma\in \Sigma$ be such that
for any $\varphi\in \FundG$ and any $\pi\in\RepG$,
$\sigma(\varphi\otimes \pi)$ commutes with 
$\sigma(\id_\varphi\otimes \pi)$.
Let $k\in \bN$.

\begin{enumerate}
\item For any $\varphi\in \FundG$ and any $\pi\in\RepG$,
we have
$$
\Delta_\varphi\sigma^k(\pi)
=
\Delta_\varphi\sigma(\pi)
\sum_{j=0}^{k-1}
\sigma^{j}(\varphi\otimes \pi)
\sigma^{k-1-j}(\id_\varphi\otimes \pi).
$$
\item 
For $p=2$ or $\infty$, we have:
$$
\|\sigma^k\|_{\dot L^p_1(\Gh : \Sigma)}
\leq
\|\sigma\|_{\dot L^p_1(\Gh : \Sigma)}
k \|\sigma\|_{\dot L^\infty(\Gh:\Sigma)}^{k-1}.
$$
\item 
Then for $p=2$ or $\infty$, 
the norm $\|\sigma^k\|_{\dot L^p_s(\Gh : \Sigma)}$ is bounded 
up to a constant of $s=2,3\ldots$
by
\begin{align*}
\|\sigma\|_{\dot L^p_s(\Gh : \Sigma)}
k \|\sigma\|_{\dot L^\infty(\Gh:\Sigma)}^{k-1}
+
\|\sigma\|_{\dot L^p_{s-1}(\Gh : \Sigma)}
\sum_{\substack{s_1+s_2=1\\ j=0,\ldots,k-1}}
\|\sigma^j\|_{\dot L^\infty_{s_1}(\Gh : \Sigma)}
\|\sigma^{k-1-j}\|_{\dot L^\infty_{s_2}(\Gh : \Sigma)}
+\ldots\\
+
\|\sigma\|_{\dot L^p_{s-\ell}(\Gh : \Sigma)}
\sum_{\substack{s_1+s_2=\ell \\ j=0,\ldots,k-1}}
\|\sigma^j\|_{\dot L^\infty_{s_1}(\Gh : \Sigma)}
\|\sigma^{k-1-j}\|_{\dot L^\infty_{s_2}(\Gh : \Sigma)}
+\ldots\\
+
\|\sigma\|_{\dot L^p_1(\Gh : \Sigma)}
\sum_{\substack{s_1+s_2=s-1\\ j=0,\ldots,k-1}}
\|\sigma^j\|_{\dot L^\infty_{s_1}(\Gh : \Sigma)}
\|\sigma^{k-1-j}\|_{\dot L^\infty_{s_2}(\Gh : \Sigma)}.
\end{align*}
\end{enumerate}
\end{lemma}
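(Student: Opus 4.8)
The three parts build on one another, so I would prove them in order, with Part (1) as the combinatorial heart of the argument.

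For Part (1), the plan is a telescoping sum. Write $\sigma^k(\varphi\otimes\pi) - \sigma^k(\id_\varphi\otimes\pi)$ and insert intermediate terms of the form $\sigma^{j}(\varphi\otimes\pi)\,\sigma^{k-j}(\id_\varphi\otimes\pi)$ for $j=0,\dots,k$. The successive differences are
$$
\sigma^{j}(\varphi\otimes\pi)\,\sigma^{k-j}(\id_\varphi\otimes\pi)
-\sigma^{j+1}(\varphi\otimes\pi)\,\sigma^{k-1-j}(\id_\varphi\otimes\pi)
=\sigma^{j}(\varphi\otimes\pi)\bigl(\sigma(\id_\varphi\otimes\pi)-\sigma(\varphi\otimes\pi)\bigr)\sigma^{k-1-j}(\id_\varphi\otimes\pi),
$$
which is $-\sigma^{j}(\varphi\otimes\pi)\,\Delta_\varphi\sigma(\pi)\,\sigma^{k-1-j}(\id_\varphi\otimes\pi)$. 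This already gives a Leibniz-type expansion without any commutativity hypothesis; the point of assuming that $\sigma(\varphi\otimes\pi)$ and $\sigma(\id_\varphi\otimes\pi)$ commute (hence so do all their powers, and so does $\Delta_\varphi\sigma(\pi)=\sigma(\varphi\otimes\pi)-\sigma(\id_\varphi\otimes\pi)$ with each of them) is precisely to pull the factor $\Delta_\varphi\sigma(\pi)$ out to the front and collect the remaining product as $\sigma^{j}(\varphi\otimes\pi)\,\sigma^{k-1-j}(\id_\varphi\otimes\pi)$, yielding the stated formula. I expect this commutation bookkeeping to be the main (though not deep) obstacle: one must be careful that the hypothesis, stated for fundamental $\varphi$ and $\pi\in\RepG$, genuinely propagates to all the powers and to $\Delta_\varphi\sigma(\pi)$ appearing in the sum.

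For Part (2), take the formula from Part (1), apply the submultiplicative operator norm $\|\cdot\|_{\sL(\cH_\varphi\otimes\cH_\pi)}$ (or $\|\cdot\|_{HS}$ on the left-most factor and $\|\cdot\|_{\sL}$ elsewhere, as in the proof of Proposition \ref{prop_Leibniz_estimate}), and use $\|\sigma^{j}(\varphi\otimes\pi)\|_{\sL}\le\|\sigma\|_{L^\infty(\Gh:\Sigma)}^{j}$ together with the analogous bound at $\id_\varphi\otimes\pi$. Each of the $k$ terms in the sum contributes a factor $\|\sigma\|_{L^\infty(\Gh:\Sigma)}^{k-1}$, so after taking $d_\pi\|\cdot\|^2_{HS}$, summing over $\pi\in\Gh$ (for $p=2$) or taking the supremum over $\pi$ (for $p=\infty$), and maximising over $\varphi\in\FundG$, one arrives at $\|\sigma^k\|_{\dot L^p_1}\le k\,\|\sigma\|_{\dot L^\infty(\Gh:\Sigma)}^{k-1}\,\|\sigma\|_{\dot L^p_1(\Gh:\Sigma)}$, as claimed. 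Here I would write $\|\sigma\|_{\dot L^\infty(\Gh:\Sigma)}$ as shorthand for $\|\sigma\|_{L^\infty(\Gh:\Sigma)}=\|\sigma\|_{\dot L^\infty_0(\Gh:\Sigma)}$, consistently with the notation introduced after Definition \ref{def_dotLpsinN}.

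For Part (3), iterate: apply a fundamental difference operator $\Delta^\alpha$ with $|\alpha|=s$ by first applying $\Delta_{\varphi_{\ell_1}}$ via the Part (1) identity, which expresses $\Delta_{\varphi_{\ell_1}}\sigma^k$ as $\Delta_{\varphi_{\ell_1}}\sigma$ times a sum of products $\sigma^{j}(\varphi_{\ell_1}\otimes\cdot)\,\sigma^{k-1-j}(\id\otimes\cdot)$, and then distribute the remaining $s-1$ difference operators across this product using the Leibniz rule \eqref{eq_Leibniz} recursively. Each distribution splits the available $s-1$ derivatives between the factor carrying $\Delta_{\varphi_{\ell_1}}\sigma$ (contributing $\dot L^p_{s-\ell}$ with $\ell$ of the derivatives landing there, so $\|\sigma\|_{\dot L^p_{s-\ell}}$ after accounting for the extra $\Delta_{\varphi_{\ell_1}}$... more precisely one rewrites so the Sobolev order on that factor ranges over $1,\dots,s$) and the two power-factors (contributing $\|\sigma^{j}\|_{\dot L^\infty_{s_1}}\|\sigma^{k-1-j}\|_{\dot L^\infty_{s_2}}$ with $s_1+s_2=\ell$). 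Bounding in $\dot L^p_s$ norm, summing over the finitely many fundamental representations and the finitely many ways to route the derivatives, and absorbing the combinatorial count into the implied constant depending only on $s$, gives exactly the displayed estimate. The recursion is routine once Part (1) is in hand; the only care needed is to keep the index ranges $s_1+s_2=\ell$ and $j=0,\dots,k-1$ honest through each application of \eqref{eq_Leibniz}, and to note that the commutativity hypothesis is inherited by $\sigma^{j}$ and $\sigma^{k-1-j}$ so that Part (1) can be re-invoked at every stage.
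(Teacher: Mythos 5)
Your proposal is correct and takes essentially the same route as the paper: your telescoping in Part (1) is just the derivation of the factorisation $a^k-b^k=(a-b)\sum_{j=0}^{k-1}a^jb^{k-1-j}$ for commuting $a=\sigma(\varphi\otimes\pi)$, $b=\sigma(\id_\varphi\otimes\pi)$, which the paper invokes directly. Parts (2) and (3) likewise match the paper's argument of applying submultiplicative norms to the Part (1) identity and then iterating the Leibniz rule \eqref{eq_Leibniz} as in Proposition \ref{prop_Leibniz_estimate}.
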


\begin{proof}
Let $\varphi\in \FundG$ and $\pi\in \RepG$. 
Then 
\begin{align*}
\Delta_\varphi\sigma^k(\pi)
&=
\sigma^k(\varphi\otimes \pi)
-
\sigma^k(\id_\varphi\otimes \pi)
\\&=
(\sigma(\varphi\otimes \pi)
-
\sigma(\id_\varphi\otimes \pi))
\sum_{j=0}^{k-1}
\sigma^{j}(\varphi\otimes \pi)
\sigma^{k-1-j}(\id_\varphi\otimes \pi),
\end{align*}
and this yields Part (1).

In the formula of Part (1), 
applying the norm 
$\|\cdot\|=\|\cdot\|_{HS(\cH_\varphi\otimes \cH_\pi)}$
or 
$\|\cdot\|=\|\cdot\|_{\sL(\cH_\varphi\otimes \cH_\pi)}$, 
we obtain:
$$
\|\Delta_\varphi\sigma^k(\pi)\|
\leq
\|\Delta_\varphi\sigma(\pi)\|\
\|\sum_{j=0}^{k-1}
\sigma^{j}(\varphi\otimes \pi)
\sigma^{k-1-j}(\id_\varphi\otimes \pi)\|_{\sL(\cH_\varphi\otimes \cH_\pi)}
$$
and we easily check that 
$$
\|\sum_{j=0}^{k-1}
\sigma^{j}(\varphi\otimes \pi)
\sigma^{k-1-j}(\id_\varphi\otimes \pi)\|_{\sL(\cH_\varphi\otimes \cH_\pi)}
\leq
\sum_{j=0}^{k-1}
\|\sigma(\varphi\otimes \pi)\|_{\sL(\cH_\varphi\otimes \cH_\pi)}^{j}
\|\sigma(\pi)\|_{\sL(\cH_\pi)}^{k-1-j}
$$
is bounded by $ k \|\sigma\|_{L^\infty(\Gh:\Sigma)}$.
This yields Part (2).

Using Part (1) and the Leibniz type formula  \eqref{eq_Leibniz}
and proceeding as in Part (2) and in the proof of Proposition \ref{prop_Leibniz_estimate}, 
Part (3) follows.
\end{proof}

\begin{remark}
\label{rem_lem_sigmak}
The hypotheses on $\sigma\in \Sigma$ in Lemma \ref{lem_sigmak}, 
namely that 
$\sigma(\varphi\otimes \pi)$ commutes with 
$\sigma(\id_\varphi\otimes \pi)$ for any $\varphi\in \FundG$ and any $\pi\in\RepG$,
are satisfied when $\sigma$ is central but also when $\sigma$ is a function of the group Fourier transform of a left-invariant sub-Laplacian on $G$.
In any case, they are satisfied when $\sigma$ is  
a function of the group Fourier transform of the Laplace-beltrami operator on $G$.
\end{remark}

\section{Function of $\widehat\cL$ in $\dot H^s(\Gh)$}
\label{sec_f(L)indotHs}

In this section, we show that certain types of functions of $\widehat\cL$
are in $\dot H^s(\Gh)$. 
The methods and ideas presented here are  classical, 
see e.g. \cite[Section 1]{mauceri+meda}.

\subsection{Statements}

We will give two types of conditions on the functions:  a first one of H\"ormander type and the second one using Euclidean Sobolev spaces.
More precisely, we will show the following two propositions:
\begin{proposition}
\label{prop_f(L)_sup}
Let $G$ be a connected compact Lie group of dimension $n$.
\begin{enumerate}
\item Let $s>0$. 
There exist $C=C_{s,G}>0$ and $d\in \bN_0$ such that 
for any continuous function  $f:\bR\to \bC$ supported in $[0,1]$ 
we have
$$
\forall t\in (0,1)\qquad
\|f (t\widehat{\cL} )\|_{\dot H^s(\Gh)}
\leq C
t^{\frac 12 (s-\frac n2)} 
\sup_{\lambda\geq 0, \ell =0,\ldots,d}
|f^{(\ell)}(\lambda)|,
$$
in the sense that if the left-hand side is finite then $f (t\widehat{\cL} )\in \dot H^s(\Gh)$ for any $t\in (0,1)$ and the inequality holds.
\item Let $s>n/2$.
There exist $C=C_{s,G}>0$ and $d\in \bN_0$ such that 
for any continuous function  $f:\bR\to \bC$
we have
$$
\|f (\widehat{\cL} )\|_{\dot H^s(\Gh)}
\leq C
\sup_{\lambda\geq 0, \ell =0,\ldots,d}(1+\lambda)^\ell
|f^{(\ell)}(\lambda)|,
$$
in the sense that if the left-hand side is finite then $f (\widehat{\cL} )\in \dot H^s(\Gh)$ and the inequality holds.
\end{enumerate}
\end{proposition}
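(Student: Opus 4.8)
The plan is to pass to the convolution kernels $\kappa_t:=\cF_G^{-1}\big(f(t\widehat\cL)\big)=f(t\cL)\delta_{e_G}$ and to use classical pointwise bounds for compactly supported spectral multipliers of $\cL$. For Part~(1): if $f$ is continuous with $\supp f\subset[0,1]$ and $t\in(0,1)$, then $f(t\widehat\cL)=\{f(t\lambda_\pi)\id_\pi:\pi\in\Gh\}$ is finitely supported, since only the finitely many $\pi$ with $\lambda_\pi\le 1/t$ contribute; hence $\kappa_t\in L^2_{fin}(G)\subset\cD(G)$. By Theorem~\ref{thm_dotHs_q1}, Lemma~\ref{lem_dotHs_L2} and Definition~\ref{def_dotHs}, together with $q_1^s(x)\asymp|x|^{2s}$, one has $\|f(t\widehat\cL)\|_{\dot H^s(\Gh)}\asymp\||x|^s\kappa_t\|_{L^2(G)}$ for every $s>0$, with a constant independent of $t$; thus it suffices to prove $\int_G|x|^{2s}|\kappa_t(x)|^2\,dx\lesssim t^{\,s-n/2}\big(\sup_{\lambda\ge 0,\,\ell\le d}|f^{(\ell)}(\lambda)|\big)^2$.

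The key ingredient, classical and of the type used in \cite[Section~1]{mauceri+meda}, is the pointwise estimate: for every $N\in\bN$ there exist $d_N\in\bN_0$ and $C_N>0$ such that $|\kappa_t(x)|\le C_N\, t^{-n/2}\big(1+|x|^2/t\big)^{-N}\sup_{\lambda\ge0,\,\ell\le d_N}|f^{(\ell)}(\lambda)|$ for all $x\in G$ and $t\in(0,1)$. This follows, for instance, from the finite propagation speed of the wave propagator $\cos(\xi\sqrt{t\cL})$, whose convolution kernel is supported in $\{|x|\le|\xi|\sqrt t\}$, combined with the Fourier representation $f(\mu)=\tfrac1\pi\int_0^\infty\widehat h(\xi)\cos(\xi\sqrt\mu)\,d\xi$, $h(\xi)=f(\xi^2)$, integrating by parts $d_N$ times to turn derivatives of $f$ into decay of $\widehat h$; alternatively, from heat-kernel and subordination estimates. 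Granting this, fix $N$ with $4N>2s+n$ and put $d:=d_N$. Since $G$ is compact and the Haar measure is comparable to Lebesgue measure near $e_G$, the substitution $r=\sqrt t\,\rho$ yields $\int_G|x|^{2s}\big(1+|x|^2/t\big)^{-2N}\,dx\lesssim t^{\,s+n/2}\int_0^\infty\rho^{2s+n-1}(1+\rho^2)^{-2N}\,d\rho\lesssim t^{\,s+n/2}$, the last integral being finite by the choice of $N$. Hence $\int_G|x|^{2s}|\kappa_t(x)|^2\,dx\lesssim t^{-n}\,t^{\,s+n/2}\big(\sup_{\ell\le d}|f^{(\ell)}|\big)^2=t^{\,s-n/2}\big(\sup_{\ell\le d}|f^{(\ell)}|\big)^2$ up to a constant, and taking square roots proves Part~(1).

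For Part~(2), write $f=\sum_{j\ge0}f_j$ via a dyadic partition of unity $1=\eta_0(\lambda)+\sum_{j\ge1}\eta(2^{-j}\lambda)$ on $[0,\infty)$ with $\eta_0\in C_c^\infty([0,2))$ and $\eta\in C_c^\infty((\tfrac12,2))$, so that $f_0=\eta_0 f$ and $f_j=\eta(2^{-j}\cdot)f$ for $j\ge1$. For $j\ge1$ one has $f_j(\lambda)=g_j(2^{-j}\lambda)$ with $g_j(\mu)=\eta(\mu)f(2^j\mu)$ supported in $(\tfrac12,2)$, and the Leibniz rule together with $2^{j(\ell-k)}|f^{(\ell-k)}(2^j\mu)|\le\mu^{-(\ell-k)}\big(1+2^j\mu\big)^{\ell-k}|f^{(\ell-k)}(2^j\mu)|$ on $\supp\eta$ gives $\sup_{\mu\ge0,\,\ell\le d}|g_j^{(\ell)}(\mu)|\lesssim A$ uniformly in $j$, where $A:=\sup_{\lambda\ge0,\,\ell\le d}(1+\lambda)^\ell|f^{(\ell)}(\lambda)|$; likewise $\sup_{\ell\le d}|f_0^{(\ell)}|\lesssim A$. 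Applying Part~(1), with the same $d$, to $\mu\mapsto g_j(2\mu)$ (supported in $[0,1]$) at $t=2^{-j-1}$ and to $\mu\mapsto f_0(2\mu)$ at $t=\tfrac12$, we get $\|f_j(\widehat\cL)\|_{\dot H^s(\Gh)}\lesssim 2^{-\frac j2(s-n/2)}A$ for $j\ge1$ and $\|f_0(\widehat\cL)\|_{\dot H^s(\Gh)}\lesssim A$. Since $s>n/2$ these exponents are summable, so the kernels $\sum_{j\le J}\cF_G^{-1}(f_j(\widehat\cL))$ form a Cauchy sequence in $L^2(|x|^{2s}\,dx)$ while converging in $\cD'(G)$ to $\cF_G^{-1}(f(\widehat\cL))$ (because $\sum_{j\le J}f_j\to f$ boundedly and pointwise and $f$ is bounded); hence $\cF_G^{-1}(f(\widehat\cL))$ agrees on $G\setminus\{e_G\}$ with an $L^2(|x|^{2s}\,dx)$-function of norm $\lesssim A$, which by Theorem~\ref{thm_dotHs_q1} gives $\|f(\widehat\cL)\|_{\dot H^s(\Gh)}\lesssim A$.

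The only genuinely nontrivial point is the pointwise kernel bound; everything else is the isometry of Theorem~\ref{thm_dotHs_q1}, an elementary scaling integral, and a Littlewood--Paley summation. In writing out that bound, the things that need care are keeping all constants dependent on only finitely many (namely $d_N$) derivatives of $f$ and, if one argues through the wave equation, justifying the finite propagation speed for the non-local operator $\sqrt{t\cL}$ on the compact group $G$; both are standard.
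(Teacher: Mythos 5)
Your argument is correct, and for Part (2) it is essentially the paper's own proof: the same dyadic decomposition in the spectrum of $\cL$, the same rescaling $f_j(\lambda)=g_j(2^{-j}\lambda)$ with the Leibniz bookkeeping that converts $2^{j\ell}|f^{(\ell)}(2^j\mu)|$ into $(1+\lambda)^{\ell}|f^{(\ell)}(\lambda)|$, an application of Part (1) at $t\asymp 2^{-j}$, and summation of the geometric series for $s>n/2$ (the paper isolates the value at $\lambda=0$ as $f(0)\delta_{1_{\Gh}}$ where you absorb it into a low-frequency piece $\eta_0 f$; both are fine). The difference is in Part (1). The paper reduces, exactly as you do, to the weighted estimate $\||x|^s f(t\cL)\delta_{e_G}\|_{L^2(G)}\lesssim \sqrt t^{\,s-n/2}\sup_{\ell\le d}|f^{(\ell)}|$ via the isometry $\dot H^s(\Gh)\simeq L^2(|x|^{2s}dx)$, but then simply quotes this estimate as Theorem \ref{thm_kernelf(L)} Part \eqref{item_thm_kernelf(L)_Lpest_multintL}, whose proof is delegated to heat-kernel/Alexopoulos-type arguments from \cite{monJFA} and \cite{varo}. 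You instead re-derive it from a pointwise bound $|\kappa_t(x)|\lesssim t^{-n/2}(1+|x|^2/t)^{-N}\sup_{\ell\le d_N}|f^{(\ell)}|$ obtained by finite propagation speed. This makes the proof more self-contained, at the cost of proving a stronger intermediate statement than is needed.

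One caveat on that intermediate step: finite propagation speed for $\cos(\xi\sqrt{\cL})$ combined with the Fourier representation of $f$ and Plancherel naturally yields \emph{off-diagonal $L^2$} bounds, $\|f(t\cL)\|_{L^2(B)\to L^2(G\setminus B_r)}\lesssim (1+r/\sqrt t)^{-d_N}\sup_{\ell\le d_N}|f^{(\ell)}|$, not directly a pointwise bound on the kernel (the wave kernel $\cos(\xi\sqrt\cL)\delta_{e_G}$ is only a distribution, so one cannot integrate it pointwise against $\widehat h_t$). To upgrade to a pointwise bound you need an extra on-diagonal $L^2\to L^\infty$ ingredient, e.g.\ factoring $f(t\cL)=\bigl(f(t\cL)e^{t\cL/2}\bigr)e^{-t\cL/2}$ and composing the off-diagonal $L^2$ bounds with the Gaussian heat-kernel bounds of Theorem \ref{thm_kernelf(L)} Part (1). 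This is standard but should be said. Alternatively, and more economically, you can skip the pointwise bound altogether: decomposing $\int_G|x|^{2s}|\kappa_t|^2$ over the annuli $|x|\asymp 2^k\sqrt t$ and applying the off-diagonal $L^2$ bounds with $\delta_{e_G}$ replaced by $e^{-t\cL}\delta_{e_G}$ (writing $f=Fe^{-\cdot}$) gives the weighted $L^2$ estimate directly, which is all that Part (1) requires.
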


The proof of Proposition \ref{prop_f(L)_sup} relies on the properties of the $\dot H^s(G)$ and of the kernel of $f(t\cL)$; 
it is given in Section \ref{subsec_pf_prop_f(L)_sup}.

\begin{proposition}
\label{prop_f(L)}
Let $G$ be a compact Lie group of dimension $n$.
Let $s'>s>\frac n2$.
There exist a constant $C>0$ and a function $\eta\in \cD(0,\infty)$
such that for every continuous function $f:\bR\to \bC$, 
we have
$$
\|f (\widehat{\cL} )\|_{\dot H^s(\Gh)}
\leq C  \sup_{r> 0} \|f (r \, \cdot\, ) \eta\|_{H^{s'}(\bR)},
$$
in the sense that if the left-hand side is finite then $f (\widehat{\cL} )\in \dot H^s(\Gh)$ and the inequality holds.
\end{proposition}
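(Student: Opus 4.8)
The plan is to deduce this statement from Proposition \ref{prop_f(L)_sup} by a dyadic decomposition of $f$ and a careful bookkeeping of the scaling parameter. First I would fix a Littlewood--Paley type partition of unity on $(0,\infty)$: choose $\eta_0\in\cD(0,\infty)$ supported in, say, $[1/2,2]$ with $\sum_{j\in\bZ}\eta_0(2^{-j}\lambda)=1$ for all $\lambda>0$, and set $\eta(\lambda):=\eta_0(\lambda)$. Writing $f_j:=f\,\eta_0(2^{-j}\cdot)$ we have $f=\sum_{j}f_j$ on $(0,\infty)$, and since $\widehat\cL$ has nonnegative spectrum (with $\lambda_{1_\Gh}=0$ contributing only a constant, killed by the seminorm) it suffices to estimate $\|f_j(\widehat\cL)\|_{\dot H^s(\Gh)}$ and sum in $j$. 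For each $j$, $f_j$ is supported in $[2^{j-1},2^{j+1}]$; rescaling, the function $g_j(\lambda):=f_j(2^{j+1}\lambda)$ is supported in $[0,1]$ and $f_j(t^{-1}\widehat\cL)=g_j(t\,2^{-(j+1)}\widehat\cL)$, so applying Proposition \ref{prop_f(L)_sup}(1) with $t=2^{-(j+1)}\wedge\tfrac12$ (handling the finitely many small $j$ separately, or using part (2) there) gives
\begin{equation*}
\|f_j(\widehat\cL)\|_{\dot H^s(\Gh)}
\le C\, 2^{-\frac12(j+1)(s-\frac n2)}
\sup_{\lambda\ge 0,\ \ell\le d}|g_j^{(\ell)}(\lambda)|.
\end{equation*}

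The next step is to bound the derivative supremum of $g_j$ by the rescaled Sobolev norm appearing in the statement. By the Sobolev embedding $H^{s'}(\bR)\hookrightarrow C^d(\bR)$ for $s'>d+\tfrac12$ (and $s'>s>n/2$ allows us to choose $d$ with $n/2<d+\tfrac12<s'$, so in particular $d\le s$ after adjusting constants), we have
\begin{equation*}
\sup_{\lambda\ge 0,\ \ell\le d}|g_j^{(\ell)}(\lambda)|
\lesssim \|g_j\|_{H^{s'}(\bR)}
= \|f(2^{j+1}\,\cdot\,)\,\eta_0(2^{j+1}\cdot\,2^{-j-1})\|_{H^{s'}(\bR)}
= \|f(2^{j+1}\,\cdot\,)\,\eta\|_{H^{s'}(\bR)}
\le \sup_{r>0}\|f(r\,\cdot\,)\eta\|_{H^{s'}(\bR)}.
\end{equation*}
Combining the two displays, $\|f_j(\widehat\cL)\|_{\dot H^s(\Gh)}\le C\,2^{-\frac12(j+1)(s-\frac n2)}\sup_{r>0}\|f(r\,\cdot\,)\eta\|_{H^{s'}(\bR)}$ for $j\ge j_0$, and the geometric series $\sum_{j\ge j_0}2^{-\frac12(j+1)(s-\frac n2)}$ converges since $s>n/2$. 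For the finitely many remaining $j$ (those with $2^{-(j+1)}\ge\tfrac12$, i.e. $j$ bounded below), I would instead invoke Proposition \ref{prop_f(L)_sup}(2), noting that on the compact spectral window $[0,1]$ the weighted supremum $\sup_{\ell\le d}(1+\lambda)^\ell|f_j^{(\ell)}(\lambda)|$ is again controlled by $\|f(r\,\cdot\,)\eta\|_{H^{s'}}$ for an appropriate $r$; these contribute only a bounded number of terms.

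The main obstacle I anticipate is the interface between the two scalings — the spectral rescaling $t\widehat\cL$ inside Proposition \ref{prop_f(L)_sup} versus the multiplicative rescaling $f(r\,\cdot\,)$ in the hypothesis of Proposition \ref{prop_f(L)} — and in particular making sure the exponent $\tfrac12(s-\tfrac n2)>0$ is genuinely positive (this is exactly where $s>n/2$ is used, and it is why the statement requires strict inequality) so that the dyadic sum converges. A secondary technical point is the low-frequency regime $j\to-\infty$: there $\eta_0(2^{-j}\cdot)$ is supported near $0$, but since $\Gh$ is discrete the only eigenvalue in $[0,\lambda_{\min})$ besides $0$ is none, so for $j$ sufficiently negative $f_j(\widehat\cL)=f_j(0)\id$ is annihilated by the $\dot H^s$-seminorm; thus only finitely many $j<0$ matter, and the sum is in fact over $j$ bounded below by a constant depending only on $G$. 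Once these scaling normalisations are pinned down, the rest is the routine geometric-summation and Sobolev-embedding estimates indicated above.
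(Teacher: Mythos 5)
Your dyadic decomposition and the bookkeeping of the scaling parameter match the paper's overall strategy (the same reduction appears in the proof of Proposition \ref{prop_f(L)_sup}, Part (2), and is reused here), but there is a genuine gap at the central step: you bound $\sup_{\lambda,\,\ell\le d}|g_j^{(\ell)}(\lambda)|$ by $\|g_j\|_{H^{s'}(\bR)}$ via the Sobolev embedding $H^{s'}(\bR)\hookrightarrow C^d(\bR)$, which requires $s'>d+\tfrac12$. The integer $d$ is not yours to choose: it is an output of Proposition \ref{prop_f(L)_sup} (ultimately of Theorem \ref{thm_kernelf(L)}, Part (3), i.e.\ the Alexopoulos-type spectral multiplier machinery) and depends on $s$ and $G$; there is no reason it satisfies $d+\tfrac12<s'$ when $s'$ is only slightly larger than $s$, with $s$ close to $n/2$. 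Your argument therefore proves the estimate only for $s'$ large (roughly $s'>d+\tfrac12$), whereas the whole content of Proposition \ref{prop_f(L)} --- and what makes the sharpness discussion for $\cL^{i\alpha}$ in Section \ref{subsec_multG} work --- is that $s'>s$ suffices. Passing from a $C^d$-type hypothesis to an $L^2$-Sobolev hypothesis of low order is precisely the nontrivial part, and it cannot be done by Sobolev embedding alone.

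The paper closes this gap with two ingredients you are missing. First, Lemma \ref{lem_prop_f(L)}, Part (1), gives the bound $\|f(t\widehat\cL)\|_{\dot H^{s}(\Gh)}\le C\,t^{\frac12(s-\frac n2)}\|f\|_{H^{s'}(\bR)}$ directly in terms of the $H^{s'}$-norm for $s'>s+\tfrac12$; this is proved by writing $f(\lambda)=g(e^{-\lambda})$, expanding $g$ in Fourier series $\sum_k a_k e^{ik\mu}$, and estimating each oscillatory piece via $\|e^{ike^{-t\widehat\cL}}\|_{\dot H^s(\Gh)}\lesssim \sqrt t^{\,s-\frac n2}|k|^s$ (Lemma \ref{lem_technical_lem_prop_f(L)}, itself a nontrivial induction using the Leibniz estimates of Lemma \ref{lem_sigmak} and the heat kernel bounds). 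Second, since that lemma still loses $\tfrac12$ a derivative, the paper interpolates the map $\phi\mapsto(\phi\eta_1)(t\cL)\delta_{e_G}$ between $H^{s'}_0(\Omega)\to L^2(q_1^s(x)dx)$ and $W^{\epsilon,\infty}_0(\Omega)\to L^2(dx)$, and then uses one-dimensional Sobolev embeddings, to bring the requirement down to any $s'>s$. If you want to salvage your approach, you would need to replace the appeal to Proposition \ref{prop_f(L)_sup}(1) by an estimate of the localised pieces in terms of $\|g_j\|_{H^{s'}}$ with $s'$ arbitrarily close to $s$; some substitute for these two steps is unavoidable.
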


The proof of Proposition \ref{prop_f(L)}  will be given in Section \ref{subsec_pf_prop_f(L)}.

\subsection{Proof of Proposition \ref{prop_f(L)_sup}}
\label{subsec_pf_prop_f(L)_sup}

Before proving Proposition \ref{prop_f(L)_sup}, 
we recall some well known estimates for the heat kernel 
and other kernels of spectral multipliers of the Laplace-Beltrami operator $\cL$:

\begin{theorem}
\label{thm_kernelf(L)}
\begin{enumerate}
\item 
For each $t>0$, the heat kernel $p_t:=e^{-t\cL}\delta_{e_G}$
is a positive smooth function on $G$ which satisfies 
$$
\forall s,t>0\qquad
\int_G p_t(x) dx=1,
\quad p_t(x^{-1})=p_t(x),
\quad\mbox{and}\quad p_t*p_s=p_{t+s}.
$$
and there exists $C>0$ such that
$$
\forall x\in G, \ t>0\qquad
0<p_t(x)\leq C V(\sqrt t)^{-1} e^{-\frac{|x|^2}{Ct}}.
$$
\item
For any $s\geq 0$and any $\alpha\in \bN_0^n$, 
there exists a constant $C>0$
such that 
$$
\forall t>0\qquad 
\| |x|^s p_t\|_{L^1(G)}
\leq C \sqrt{t}^{s}
\quad\mbox{and}\quad 
\| |x|^s p_t\|_{L^2(G)}
\leq C \sqrt{t}^{s -\frac n2}. 
$$

\item
\label{item_thm_kernelf(L)_Lpest_multintL}
For any $s\geq 0$, and any $\alpha\in \bN_0^n$, 
there exists a constant $C>0$ and $d\in \bN_0$ 
such that
for any $f:\bR\to \bC$ continuous and supported in $[0,1]$, we have
$$
\forall t\in (0,1]\qquad
\| |x|^s X^\alpha f(t\cL)\delta_{e_G}\|_{L^1(G)}
\leq C \sqrt{t}^{s-|\alpha|} 
\sup_{\substack {\lambda\geq 0\\ \ell =0,\ldots,d}}
|f^{(\ell)}(\lambda)|,
$$
and 
$$
\forall t\in (0,1]\qquad
\| |x|^s f(t\cL)\delta_{e_G}\|_{L^2(G)}
\leq C \sqrt{t}^{s-\frac n2} 
\sup_{\substack {\lambda\geq 0\\ \ell =0,\ldots,d}}
|f^{(\ell)}(\lambda)|,
$$
in the sense that if the right-hand sides above are finite, 
then the left-hand sides are also finite and  the inequalities hold.
\end{enumerate}
\end{theorem}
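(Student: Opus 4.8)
The plan is to collect these well-known heat-kernel and spectral-multiplier estimates from the literature, reducing everything to the Gaussian bound in Part (1) and standard facts about the functional calculus of $\cL$ on a compact manifold. First I would recall that $\cL$ is a positive self-adjoint elliptic operator on the compact manifold $G$, so $e^{-t\cL}$ is a well-defined contraction semigroup whose kernel $p_t$ is smooth and strictly positive by the parabolic maximum principle; the identities $\int_G p_t=1$, $p_t(x^{-1})=p_t(x)$ and the semigroup property $p_t*p_s=p_{t+s}$ follow respectively from $\cL 1=0$ (so $e^{-t\cL}$ fixes constants), the bi-invariance of $\cL$ together with the unimodularity of $G$, and $e^{-t\cL}e^{-s\cL}=e^{-(t+s)\cL}$ under the identification of left-invariant operators with right convolutions (Example \ref{ex_symbol_op}). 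The Gaussian upper bound $p_t(x)\le C V(\sqrt t)^{-1}e^{-|x|^2/(Ct)}$ is the classical Li--Yau / Cheeger--Gromov--Taylor estimate for heat kernels on complete manifolds with Ricci curvature bounded below, which applies here since $G$ is compact; I would simply cite this. Note that for $t\in(0,1]$ the volume factor $V(\sqrt t)\asymp t^{n/2}$ since $G$ has dimension $n$.

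For Part (2), I would integrate the Gaussian bound against $|x|^s$. Splitting $G$ into a fixed small ball around $e_G$ on which $|x|$ is comparable to the Riemannian distance and its complement (where $|x|$ is bounded and $p_t$ decays like $e^{-c/t}$), the main contribution is
\[
\int_{|x|\le 1}|x|^s\, t^{-n/2} e^{-|x|^2/(Ct)}\,dx
\lesssim t^{-n/2}\int_0^\infty r^{s}\, e^{-r^2/(Ct)}\, r^{n-1}\,dr
\lesssim t^{s/2}
\]
after the substitution $r=\sqrt t\,u$; this gives the $L^1$ bound $\||x|^s p_t\|_{L^1}\lesssim \sqrt t^{\,s}$. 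For the $L^2$ bound one uses $\|p_t\|_{L^\infty}\lesssim t^{-n/2}$ together with the $L^1$ estimate, or integrates $|x|^{2s}p_t^2$ directly: $\||x|^s p_t\|_{L^2}^2\lesssim t^{-n}\int|x|^{2s}e^{-2|x|^2/(Ct)}dx\lesssim t^{s-n/2}$. (The parameter $\alpha$ plays no role in Part (2) as stated and can be ignored, or absorbed by noting $X^\alpha p_t$ satisfies analogous Gaussian bounds with an extra factor $t^{-|\alpha|/2}$.)

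For Part (3), the idea is to write $f(t\cL)$ as a superposition of heat operators. Using the subordination/Fourier inversion trick, write $f(t\lambda)=\int_0^\infty e^{-ut\lambda}\,g_t(u)\,du$ or, more robustly, exploit that $g(\lambda):=f(\lambda)e^{\lambda}$ is continuous, compactly supported in $[0,1]$, and controlled by finitely many derivatives $f^{(\ell)}$; then $f(t\cL)=g(t\cL)e^{-t\cL}$, and one estimates $\||x|^s X^\alpha f(t\cL)\delta_{e_G}\|_{L^1}$ by distributing $|x|^s$ and $X^\alpha$ across the convolution $g(t\cL)\delta_{e_G} * X^\alpha p_t$ (or $\partial$'s of $p_t$) via Lemma \ref{lem_veryweakLeib} and Part (2), while the multiplier $g(t\cL)$ is bounded on $L^1$ with norm $\lesssim \sup_{\ell\le d}\|g^{(\ell)}\|_\infty\lesssim \sup_{\ell\le d}\|f^{(\ell)}\|_\infty$ by a standard Mihlin-type argument on $G$ (e.g.\ bounding $\|g(t\cL)\delta_{e_G}\|_{L^1}$ through its $L^2$-Sobolev norm and Cauchy--Schwarz, as in \cite{mauceri+meda}). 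The scaling $\sqrt t^{\,s-|\alpha|}$ and $\sqrt t^{\,s-n/2}$ then comes directly from the corresponding factors in Part (2). The main obstacle is making the $L^1$-boundedness of the auxiliary multiplier $g(t\cL)$ uniform in $t\in(0,1]$ with the stated dependence on finitely many $\sup$-norms of derivatives of $f$; this is where one needs the finite-propagation-speed / Sobolev-embedding machinery on the compact group, and it is exactly the content one borrows from the cited references rather than reproving here.
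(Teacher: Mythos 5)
Your proposal is correct and follows essentially the same route as the paper, which likewise cites the literature (\cite{varo}) for Part (1), obtains Part (2) by integrating the Gaussian bound (via a dyadic decomposition over annuli $|x|\sim 2^j\sqrt t$ rather than your direct substitution $r=\sqrt t\,u$, an immaterial difference), and for Part (3) defers to the Alexopoulos-type argument of \cite[Lemma Appendix A.6]{monJFA} — exactly the factorisation $f(t\cL)=g(t\cL)e^{-t\cL}$ with $g(\lambda)=f(\lambda)e^{\lambda}$ that you sketch. The one point you rightly flag as the "main obstacle" (uniform control of the auxiliary multiplier by finitely many sup-norms of derivatives of $f$) is precisely the content the paper also borrows from the cited reference rather than reproving.
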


Above, $|x|=d(x,e_G)$ denotes the Riemannian distance on the Riemann between $x$ and the neutral element $e_G$
as it is always possible to define a left-invariant Riemannian distance on $G$, denoted by $d(\cdot,\cdot)$.
We also denote by $B(r):=\{|x|<r\}$ the ball about $e_G$ of radius $r>0$. 
The volume $V(r)$ of the ball $B(r)$.
It may be estimated via
$V(r):= |B(r)|
 \sim r^n $ for $r$ small.

\begin{proof}[Proof of Theorem \ref{thm_kernelf(L)}]
For Part (1) see  \cite{varo}.

For Part (2), with $p=1,2$, we decompose the integral $\int (|x|^{s} p_t)^p$
as $\int_{|x|\leq \sqrt t}+\sum_{j=0}^\infty \sum_{|x|\sim 2^j \sqrt t}$.
Using the estimates of Part (1) together with 
 $\int_G e^{-\frac{|x|^2}{Ct}} dx
\leq C V(\sqrt t)$, see  \cite[p.111]{varo}, 
we obtain the bound.

Part (3) was proved in \cite[Lemma Appendix A.6]{monJFA}
for the $L^1$-norms using the estimates of the heat kernel and its derivatives which can be deduced from \cite{varo}.
The methods were classical and mainly due to Alexopoulos, see \cite{alexo}.
They can be easily adapted to the case of  the $L^2$-norms.
\end{proof}

We can now prove Proposition \ref{prop_f(L)_sup}.
Part (1) follows from Proposition \ref{prop_dotHs} and
Theorem \ref{thm_kernelf(L)} Part \eqref{item_thm_kernelf(L)_Lpest_multintL}.
Let us prove Part (2).
We fix a dyadic decomposition, 
that is, a function $\eta_0\in \cD(0,\infty)$ valued in $[0,1]$ 
such that 
$$
\sum_{j\in \bZ}\eta_j=1\quad \mbox{on}\ (0,\infty), \quad
\mbox{where}\ \eta_j(\lambda):=\eta_0(2^{-j}\lambda), 
\ \lambda\in \bR, \ j\in \bZ.
$$
As the spectrum of $\cL$ is a discrete subset of $[0,\infty)$
with no accumulation point, 
there exists $j_0$ depending on $\eta_0$ and $G$, such that $\eta_j(\widehat\cL)=0$ for all $j<j_0$.
Furthermore the 0-eigenspace is $\bC 1_G$, 
where $1_G$ denotes the  function on $G$ constantly equal to 1.
Recall $\widehat 1_G= \delta_{1_{\Gh}}$.
We have:
$$
f(\widehat\cL)=f(0) \delta_{1_{\Gh}} + \sum_{j\in \bZ} (f\eta _j)(\widehat \cL).
$$
The sum over $j\in \bZ$ is in fact over $j>j_0$ and 
we have
$$
\|\delta_{1_{\Gh}} \|_{\dot H^s(\Gh)}=\|1_G\|_{L^2(q_1^s(x)dx)}
=\|q_1^s\|_{L^1(G)}<\infty.
$$
Hence we have obtained 
\begin{equation}
\label{eq_pf_prop_f(L)}
\|f(\widehat \cL)\|_{\dot H^s(\Gh)}
\leq
|f(0)|\|q_1^s\|_{L^1(G)}
+
 \sum_{j>j_0} \|(f(2^j\cdot) \eta _1)(2^{-j}\widehat \cL)\|_{\dot H^s(\Gh)}.
\end{equation}
We estimate each term in the sum using  Part (1):
\begin{align*}
\|(f\eta _j)(\widehat \cL)\|_{\dot H^s(\Gh)}
&=
\|(f(2^j\cdot) \eta _1)(2^{-j}\widehat \cL)\|_{\dot H^s(\Gh)}
\lesssim 
2^{-\frac j2 (s-\frac n2)} 
\sup_{\lambda\geq 0, \ell =0,\ldots,d}
|\partial_\lambda^{\ell}\{(f(2^j\lambda) \eta _1 (\lambda)\}|,
\\
&\lesssim 
2^{-\frac j2 (s-\frac n2)} 
\sup_{\lambda\geq 0, \ell_1 =0,\ldots,d}
(1+\lambda)^{\ell_1}
|f^{(\ell_1)}(\lambda) |.
\end{align*}
Therefore 
$$
\|f(\widehat \cL)\|_{\dot H^s(\Gh)}
\lesssim |f(0)| + \sum_{j>j_0}2^{-\frac j2 (s-\frac n2)} 
\sup_{\lambda\geq 0, \ell_1 =0,\ldots,d}
(1+\lambda)^{\ell_1}
|f^{(\ell_1)}(\lambda) |,
$$
and the conclusion follows.

\subsection{Proof of Proposition \ref{prop_f(L)}}
\label{subsec_pf_prop_f(L)}

The main technical point in the proof  of Proposition \ref{prop_f(L)} is the following estimates:
\begin{lemma}
\label{lem_prop_f(L)}
Let $G$ be a compact Lie group of dimension $n$.
\begin{enumerate}
\item 
Let $s,s'\in \bR$ with
$s'>s+\frac 12$ and $s\geq 1$.
There exists a constant $C>0$
such that for every $f\in H^s(\bR)$ 
supported in $[0,1]$
and every $t\in (0,1)$ we have
$$
\|f (t\widehat{\cL} )\|_{\dot H^{s}(\Gh)}
\leq C
t^{\frac 12 (s-\frac n2)} 
\|f\|_{H^{s'}(\bR)}.
$$
\item 
There exists a constant $C>0$
such that for every function $f:\bR\to \bC$ continuous 
and  
supported in $[0,1]$,
and every $t\in (0,1)$ we have
$$
\|f (t\widehat{\cL} )\|_{L^2(\Gh:\Sigma)}
\leq C
t^{-\frac n4} 
\sup_{\lambda\geq 0} |f(\lambda)|.
$$
\end{enumerate}
\end{lemma}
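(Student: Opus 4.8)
The plan is to reduce everything to the kernel estimates of Theorem \ref{thm_kernelf(L)}. For Part (2), recall that $\|f(t\widehat\cL)\|_{L^2(\Gh:\Sigma)}=\|f(t\cL)\delta_{e_G}\|_{L^2(G)}$ by the Plancherel identity of the Peter--Weyl theorem (Section \ref{subsec_PWthm}), so the claim is exactly the second inequality of Theorem \ref{thm_kernelf(L)} Part \eqref{item_thm_kernelf(L)_Lpest_multintL} with $s=0$, except that here we want the bound in terms of $\sup|f|$ rather than a supremum of derivatives. To get this cheaper bound I would argue directly: writing $f(t\cL)\delta_{e_G}$ via the spectral decomposition and Plancherel, $\|f(t\cL)\delta_{e_G}\|_{L^2(G)}^2=\sum_{\pi\in\Gh}d_\pi^2|f(t\lambda_\pi)|^2\leq (\sup|f|)^2\,\#\{\pi:\lambda_\pi\leq 1/t\}\cdot\max d_\pi^2$ on the support; and the counting function $\#\{\pi:\lambda_\pi\leq R\}$ weighted by $d_\pi^2$ is comparable to $V(1/\sqrt R)^{-1}\asymp R^{n/2}$ by Weyl-type asymptotics (equivalently, by comparing with $\|p_{1/R}\|_{L^2}^2\asymp V(1/\sqrt R)^{-1}$ from Theorem \ref{thm_kernelf(L)} Part (2) and $e^{-\lambda_\pi/R}\asymp 1$ for $\lambda_\pi\leq R$). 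This gives the factor $t^{-n/4}$.

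For Part (1), the strategy is to expand $f$ in a dyadic (or Fourier) decomposition adapted to $H^{s'}(\bR)$ and to use Part (2) of \emph{this} lemma for the $L^2(\Gh:\Sigma)$-piece together with the structure of $\dot H^s(\Gh)$. Concretely, since $s\geq 1$ is an integer case we can first treat $s\in\bN$ using Definition \ref{def_dotHsinN}: by the isometry $\dot H^s(\Gh)\simeq L^2(q_1^s\,dx)\simeq L^2(|x|^{2s}dx)$ of Theorem \ref{thm_dotHs_q1} / Proposition \ref{prop_dotHs}, we have $\|f(t\widehat\cL)\|_{\dot H^s(\Gh)}\asymp\||x|^s f(t\cL)\delta_{e_G}\|_{L^2(G)}$. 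Now I would write $f=\sum_{k}f_k$ with $f_k$ a Littlewood--Paley piece of $f$ at frequency $2^k$, so that $f_k(t\cL)\delta_{e_G}=(f_k)(t\cL)\delta_{e_G}$ is essentially supported where $|x|\lesssim 2^k$ (the uncertainty principle: a symbol oscillating at frequency $2^k$ in $\lambda=$ spectral parameter produces a kernel spread at scale $2^k$ in the ``time'' variable, hence at Riemannian scale $\sqrt{t}\,2^{k}$ after the dilation — this is where one uses finite propagation speed for the wave equation associated to $\cL$, or simply the heat-kernel based estimates of Theorem \ref{thm_kernelf(L)}). Summing the resulting geometric series in $k$ against $\|f_k\|_{L^\infty}\lesssim 2^{-ks'}\|f\|_{H^{s'}}$ and the volume growth $2^{kn/2}$ from Part (2) converges precisely because $s'>s+\tfrac12>\tfrac n2$ in the relevant regime; the power $t^{\frac12(s-\frac n2)}$ comes from tracking the dilation $t$ through the kernel estimates exactly as in Theorem \ref{thm_kernelf(L)} Parts (2)--(3).

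I expect the main obstacle to be Part (1): making the Littlewood--Paley/finite-propagation-speed argument precise in a way that honestly produces the $t^{\frac12(s-\frac n2)}$ power \emph{and} only costs $s'>s+\tfrac12$ derivatives (rather than $s'>s+\tfrac n2$, which a crude summation would give). The trick is to use that each $f_k$ is supported in an interval of length $\asymp 2^k$, so the relevant volume factor is that of an annulus rather than a ball, and to interpolate the $L^1$ and $L^2$ kernel bounds of Theorem \ref{thm_kernelf(L)} Part \eqref{item_thm_kernelf(L)_Lpest_multintL}; after the dilation $\lambda\mapsto t\lambda$ this yields a bound for the $\dot H^s$-norm of the $k$-th piece of the form $C\,2^{k(s+\frac12-s')}\,t^{\frac12(s-\frac n2)}\|f\|_{H^{s'}}$ (the half-derivative gain $2^{k/2}$ being exactly the square root of the annulus-to-ball volume ratio, i.e. a one-dimensional Sobolev embedding $H^{s'}(\bR)\hookrightarrow C^{s}$ up to $\tfrac12+$), and then $\sum_k 2^{k(s+\frac12-s')}<\infty$. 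Finally, for non-integer $s$ in the stated range one passes from the integer case by the interpolation inequality of Proposition \ref{prop_dotHs} Part \eqref{item_interpolation_prop_dotHs}.
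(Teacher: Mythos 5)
Your Part (2) is fine, and arguably more self-contained than the paper's: by Plancherel the claim reduces to $\sum_{\lambda_\pi\le 1/t} d_\pi^2\lesssim t^{-n/2}$, which follows exactly as you say by comparison with $\|p_t\|_{L^2(G)}^2$ from Theorem \ref{thm_kernelf(L)} Part (2). (The paper simply cites Theorem \ref{thm_kernelf(L)} Part \eqref{item_thm_kernelf(L)_Lpest_multintL}, whose stated bound involves derivatives of $f$; your direct counting argument is the honest way to get a bound by $\sup|f|$ alone.)

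Part (1) is where you genuinely diverge from the paper, and where your plan has a real gap. The paper avoids the wave equation entirely: it substitutes $\mu=e^{-\lambda}$, writes $f(\lambda)=g(e^{-\lambda})=\sum_k a_k e^{ike^{-\lambda}}$ as a Fourier series with $\sum_k|a_k|^2(1+k^2)^{s'}\asymp\|f\|_{H^{s'}}^2$, and proves by induction on $k$ (Lemma \ref{lem_technical_lem_prop_f(L)}, using the Leibniz-type estimates of Lemma \ref{lem_sigmak} together with the heat-kernel bounds) that $\|e^{ike^{-t\widehat{\cL}}}\|_{\dot H^s(\Gh)}\lesssim t^{\frac12(s-\frac n2)}|k|^s$; Cauchy--Schwarz then produces the threshold $s'>s+\frac12$. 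Your Littlewood--Paley route is a standard alternative and your final per-piece bookkeeping $2^{k(s+\frac12-s')}t^{\frac12(s-\frac n2)}\|f\|_{H^{s'}}$ is correct, but the localization step ``the kernel of $f_k(t\cL)$ lives in $|x|\lesssim 2^k\sqrt t$'' is not justified by anything available in the paper. In particular the fallback ``or simply the heat-kernel based estimates of Theorem \ref{thm_kernelf(L)}'' does not work: those estimates cost $d$ derivatives of the multiplier, hence a divergent factor $2^{kd}$ on the $k$-th piece. You genuinely need finite propagation speed for $\cos(\tau\sqrt{\cL})$, and then the frequency decomposition must be performed on $F(\mu):=f(\mu^2)$ — the variable dual to $\sqrt\lambda$ is the propagation distance — not on $f$ as a function of $\lambda$, since a decomposition in $\lambda$ yields a superposition of Schr\"odinger propagators, which are not spatially localized. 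If you import finite propagation speed (true and standard on a compact Riemannian manifold, but not established here) and decompose in the right variable, your argument closes with the same exponent $s'>s+\frac12$; what the paper's $e^{ike^{-\lambda}}$ trick buys is precisely a substitute for that input using only heat-kernel estimates.
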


Admitting this lemma, the proof of Proposition \ref{prop_f(L)} follows easily:
\begin{proof}[Proof of Proposition \ref{prop_f(L)}]
Proceeding as in the proof of Proposition \ref{prop_f(L)_sup} Part (2),
we obtain the inequality
\eqref{eq_pf_prop_f(L)}.
Note that 
$$
|f(0)|\leq \sup_{\lambda\geq 0} |f(\lambda)|
\lesssim \sup_{r>0} \|f (r \, \cdot\, ) \eta_1\|_{H^{s_1}},
$$
for any $s_1>1/2$ by the Sobolev embeddings on $\bR$.

To estimate each term of the sum in \eqref{eq_pf_prop_f(L)}, 
we proceed as follows.
Thanks to Lemma \ref{lem_prop_f(L)},
we can interpolate 
the operator $\phi\mapsto (\phi\eta_1 )(t\cL)\delta_{e_G}$
between $H^{s'}_0(\Omega)=W^{s',2}_0(\Omega)\to L^2(q_1^s(x)dx)$
and 
$W^{\epsilon,\infty}_0(\Omega)\to L^2(dx)$, $\epsilon>0$;
here $\Omega$ denotes an open interval of $(0,\infty)$ containing the support of $\eta_1$, and the subscript 0 means that the Sobolev spaces are obtained by density of the smooth functions with compact support in $\Omega$.
This yields for any $ s>s'>0$:
$$
\exists C>0 \qquad \forall t\in (0,1)\qquad
\|(\phi\eta _1)(t\widehat \cL)\|_{\dot H^s(\Gh)}
\lesssim 
t^{\frac 12(s-\frac n2)} \|\phi \|_{W^{s',p}_0(\Omega)},
$$
where $p>2$ is such that $s\geq \theta$, $s'-s\geq \theta \epsilon$, $\epsilon>\theta/2$, 
with $\epsilon\in (0,1)$ arbitrarily small  and $\theta:=2/p$.
The Sobolev embeddings gives
$\|\phi \|_{W^{s',p}_0(\Omega)}
\lesssim 
\|\phi \|_{W^{\tilde s',2}_0(\Omega)}$
whenever $\tilde s'-s'\geq \frac 12 -\frac 1p$.
Therefore, choosing $p$ large enough, we have obtained:
$$
\forall s'>s>0\qquad
 \forall t\in (0,1)\qquad
\|(\phi\eta _1)(t\widehat \cL)\|_{\dot H^s(\Gh)}
\lesssim_{s',s} 
t^{\frac 12(s-\frac n2)} \|\phi \|_{H^{s'}_0(\Omega)}.
$$
We then apply this to $t=2^{-j}$ and $\phi=f(2^j \, \cdot) \tilde \eta_1$, 
where $\tilde \eta_1\in \cD(\Omega)$ is such that $\tilde \eta_1\equiv 1$ on the support of $\eta_1$.
We have therefore obtained for any $s'>s>0$ and any $s_1>n/2$:
$$
\|f(\widehat \cL)\|_{\dot H^s(\Gh)}
\lesssim_{s',s,s_1}  
\sup_{r>0} \|f (r \, \cdot\, ) \eta_1\|_{H^{s_1}}
+
\sum_{j>j_0} 2^{-\frac j2(s-\frac n2)} \| f(2^j \, \cdot) \tilde \eta_1\|_{H^{s'}_0(\Omega)},
$$
and the conclusion follows. 
\end{proof}

It remains to show Lemma \ref{lem_prop_f(L)}.

\begin{proof}[Proof  of Lemma \ref{lem_prop_f(L)}]
Part (2) follows 
by functional calculus and  Theorem \ref{thm_kernelf(L)} Part  \eqref{item_thm_kernelf(L)_Lpest_multintL}.
Let us prove Part (1).
Let $f\in \cD(\bR)$ supported in $[0,1]$.
We write $f(\lambda)$ as $g(e^{-\lambda})$
with $g\in H^{s'}(\bR)$ supported in $[e^{-1}, 1]$.
Decomposing $g$ into Fourier series, we have
$g(\mu)=\sum_{k\in \bZ} a_k e^{ik\mu}$ for all $\mu\in (-\pi,\pi)$.
Note that 
\begin{equation}
\label{eq_sumkak}
\sum_{k\in \bZ} |a_k|^2 (1+k^2)^{s'}
\asymp \|g\|_{H^{s'}}^2
\asymp \|f\|_{H^{s'}}^2
\end{equation}
is finite.
We see 
$f(\lambda)=\sum_{k\in \bZ} a_k e^{ike^{-\lambda}}$
for any $\lambda\geq0$,
and by functional calculus, 
\begin{align*}
\| f(t\widehat \cL)\|_{\dot H^{s}(\Gh)}
&\leq
\sum_{k\in \bZ} |a_k| \|e^{ike^{-t\widehat \cL}} \|_{\dot H^{s}(\Gh)}
\\&\leq
\left(\sum_{k\in \bZ} |a_k|^2 (1+k^2)^{s'}\right)^{\frac 12}
\left(\sum_{k\in \bZ} (1+k^2)^{-s'}
\|e^{ike^{-t\widehat \cL}} \|_{\dot H^{s}(\Gh)}^2\right)^{\frac 12},
\end{align*}
by Cauchy-Schwartz' inequality.

The conclusion follows from \eqref{eq_sumkak} and 
 Lemma \ref{lem_technical_lem_prop_f(L)} below.
\end{proof}

\begin{lemma}
\label{lem_technical_lem_prop_f(L)} 
Let $G$ be a compact Lie group of dimension $n$.
For any $s\geq 1$
there exists $C=C_{s,p}>0$ such that
$$
\forall t\in (0,1), \quad\forall k\in \bZ\qquad
\|e^{ike^{-t\widehat \cL}} \|_{\dot H^s(\Gh)}
\leq C \sqrt t^{s-\frac n2} |k|^s.
$$
\end{lemma}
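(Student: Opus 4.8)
The plan is to estimate $\|e^{ike^{-t\widehat{\cL}}}\|_{\dot H^s(\Gh)}$ by exploiting the isometry from Theorem \ref{thm_dotHs_q1}: modulo the kernel of its seminorm, $\dot H^s(\Gh)$ is isometric to $L^2(q_1^s(x)dx)=L^2(|x|^{2s}dx)$ via the inverse Fourier transform. So the task reduces to bounding $\||x|^s \kappa_{k,t}\|_{L^2(G)}$, where $\kappa_{k,t}:=\cF_G^{-1}\{e^{ike^{-t\widehat\cL}}\}$ is the convolution kernel of the operator $\exp(ike^{-t\cL})$. The key observation is that $e^{ike^{-t\cL}}$ can be expanded as a power series in the heat semigroup: writing $g_k(u)=e^{iku}=\sum_{m\geq 0}\frac{(ik)^m}{m!}u^m$ and evaluating at $u=e^{-t\cL}$ gives $\kappa_{k,t}=\sum_{m\geq 0}\frac{(ik)^m}{m!} p_{mt}$, since $(e^{-t\cL})^m=e^{-mt\cL}$ has kernel $p_{mt}=p_t^{*m}$. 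Then by the triangle inequality and Theorem \ref{thm_kernelf(L)} Part (2), which gives $\||x|^s p_{mt}\|_{L^2(G)}\leq C\sqrt{mt}^{\,s-n/2}$,
$$
\||x|^s\kappa_{k,t}\|_{L^2(G)}\leq \sum_{m\geq 1}\frac{|k|^m}{m!}\||x|^s p_{mt}\|_{L^2(G)} + \||x|^s p_0\|\;\mbox{-type term},
$$
but this naive bound diverges badly in $|k|$ (it produces $e^{|k|}$), so a more careful grouping is needed.

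The better route is to fix the heat time at scale $t$ itself and use the semigroup/Leibniz structure more cleverly. Concretely, I would write $e^{ike^{-t\cL}} = \left(e^{ike^{-t\cL}/N}\right)^N$ is not obviously helpful; instead, the intended argument is surely the one flagged by Remark \ref{rem_lem_sigmak} and Lemma \ref{lem_sigmak}: the symbol $\sigma_t:=e^{-t\widehat\cL}$ is a function of $\widehat\cL$, hence satisfies the commutation hypothesis of Lemma \ref{lem_sigmak}, and moreover $\|\sigma_t\|_{\dot L^\infty(\Gh:\Sigma)}\leq 1$. One first estimates $\|\sigma_t\|_{\dot L^p_s(\Gh:\Sigma)}$ for $p=2$ from Theorem \ref{thm_kernelf(L)} Part (2) (giving $\sqrt t^{\,s-n/2}$ for $p=2$) and for $p=\infty$ from Part (2)/(3) (giving $\sqrt t^{\,s}$, since the $L^1$ estimate has no loss of $n/2$). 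Then one writes $e^{iku}=\sum_{m} \frac{(ik)^m}{m!}u^m$ and applies Lemma \ref{lem_sigmak} Parts (2)--(3) to $\sigma_t^m$: Part (2) gives $\|\sigma_t^m\|_{\dot L^p_1}\leq \|\sigma_t\|_{\dot L^p_1}\, m\,\|\sigma_t\|_{\dot L^\infty}^{m-1}\leq m\,\sqrt t^{\,1-n/2\cdot[p=2]}$, and Part (3) iterates this to give, for each $s\in\bN$, a bound of the form $\|\sigma_t^m\|_{\dot L^p_s}\leq C_s\, m^s\, \sqrt t^{\,s - (n/2)[p=2]}$ (the powers of $t$ coming from the $\dot L^p_j$ factors for $j\geq 1$, and the polynomial-in-$m$ growth from the combinatorics of the $k$-fold Leibniz expansion with all $\dot L^\infty$ factors having norm $\leq 1$). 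Summing, $\|e^{ike^{-t\widehat\cL}}\|_{\dot H^s(\Gh)}=\|\sigma_t^{\bullet}\text{-series}\|_{\dot L^2_s}\leq \sum_{m\geq 0}\frac{|k|^m}{m!}C_s m^s \sqrt t^{\,s-n/2}$. The series $\sum_m \frac{|k|^m}{m!}m^s$ is comparable to $|k|^s e^{|k|}$, which still carries the fatal factor $e^{|k|}$.

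To kill the $e^{|k|}$ one uses that $|e^{iku}|=1$: the operator $e^{ike^{-t\cL}}$ is \emph{unitary} on $L^2(G)$, equivalently $\|\sigma_t^{\bullet}\text{-series}\|_{\dot L^\infty_0(\Gh:\Sigma)}=1$, so one should not expand the full exponential but only the part responsible for the Sobolev gain. The clean way: set $h(u):=e^{iku}$ and note $h$ is smooth with $\|h^{(\ell)}\|_{L^\infty}\leq |k|^\ell$. Apply Part (3) of Lemma \ref{lem_sigmak} directly with $\sigma=\sigma_t$ and the functional-calculus identity $h(\sigma_t)=e^{ike^{-t\widehat\cL}}$: the difference operators $\Delta^\alpha$ of $h(\sigma_t)$ are, by the Leibniz rule and the chain-rule structure for functions of a symbol, sums of terms of the form $h^{(\ell)}(\cdot)$ times products of $\Delta^{\beta_i}\sigma_t$ with $\sum|\beta_i|=s$, $\ell\leq s$; each $\Delta^{\beta_i}\sigma_t$ contributes an $L^\infty$ or $L^2$ norm bounded by $\sqrt t^{\,|\beta_i|}$ or $\sqrt t^{\,|\beta_i|-n/2}$, and the $h^{(\ell)}$ factor is bounded by $|k|^\ell\leq |k|^s$. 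Collecting one $L^2$ factor (the one carrying the $-n/2$) and bounding all remaining factors in $L^\infty$ gives exactly $\|e^{ike^{-t\widehat\cL}}\|_{\dot H^s(\Gh)}\leq C_s\,\sqrt t^{\,s-n/2}\,|k|^s$. The main obstacle, and the step requiring genuine care, is precisely this combinatorial chain-rule bookkeeping for $\Delta^\alpha$ of a (functional-calculus) function of a symbol — making rigorous that exactly $s$ derivatives of $h$ appear at worst, that the $t$-powers add up to $s-n/2$ with a single $L^2$ factor, and that the commutation hypothesis of Lemma \ref{lem_sigmak} (valid since $\sigma_t=e^{-t\widehat\cL}$ is central) is what allows treating $\sigma_t(\varphi\otimes\pi)$ and $\sigma_t(\id_\varphi\otimes\pi)$ as commuting scalars so the discrete chain rule closes. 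Everything else ($p=2$ versus $p=\infty$ base estimates, summation) is routine given Theorem \ref{thm_kernelf(L)} and Lemma \ref{lem_sigmak}.
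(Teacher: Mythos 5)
Your final route (a discrete chain rule for $\Delta^\alpha$ applied to $h(\sigma_t)$ with $h(u)=e^{iku}$ and $\sigma_t=e^{-t\widehat\cL}$) is genuinely different from the paper's, and the step you yourself flag as ``requiring genuine care'' is exactly the unproven hard part. A Fa\`a di Bruno--type formula for the difference operators $\Delta^\alpha$ of a function of a symbol is nowhere available in the paper and is not routine: for difference (as opposed to differential) operators one must replace $h^{(\ell)}$ by iterated divided differences of $h$ evaluated at the various eigenvalues $e^{-t\lambda_\rho}$ arising from the decompositions of $\varphi^{\otimes\beta}\otimes\pi$, verify that these divided differences are controlled by $\|h^{(\ell)}\|_\infty\leq |k|^\ell$, and track that exactly one factor in each term is measured in $L^2$ (carrying the $-n/2$) while all others are in $L^\infty$. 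This is plausible for the central symbol $\sigma_t$, but as written it is a gap, not a proof; your first two attempts you correctly discard yourself since both produce the fatal $e^{|k|}$.

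The paper's mechanism exploits unitarity in a different and simpler way: it writes $e^{ike^{-t\widehat\cL}}=\bigl(e^{ie^{-t\widehat\cL}}\bigr)^k$, i.e.\ the $k$-th power of a single $k$-\emph{independent} symbol $\sigma=e^{ie^{-t\widehat\cL}}$ with $\|\sigma\|_{L^\infty(\Gh:\Sigma)}=1$, and then inducts on $k$ using the already-established Leibniz formula for powers (Lemma \ref{lem_sigmak} Part (3)); because every $\dot L^\infty_0$ factor equals $1$, the $k$-fold Leibniz expansion contributes only the combinatorial count $S\lesssim k^s$, which is the desired polynomial growth. The base case $k=1$ is then handled by expanding $e^{iu}=\sum_\ell \frac{i^\ell}{\ell!}u^\ell$ at $u=e^{-t\cL}$ and summing the heat-kernel bounds $\|e^{-t\ell\widehat\cL}\|_{\dot L^p_s}\lesssim \sqrt{t\ell}^{\,-n/p}\min(1,\sqrt{t\ell}^{\,s})$; here the series converges thanks to the $1/\ell!$ (no $k$ appears), with the tail $t\ell>1$ controlled by a Taylor-remainder/Stirling estimate. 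Non-integer $s\geq1$ is recovered by interpolation (Proposition \ref{prop_dotHs}\eqref{item_interpolation_prop_dotHs}), $k=0$ is trivial and $k<0$ follows by conjugation. If you want to salvage your approach you would need to state and prove the discrete chain-rule lemma for central symbols; otherwise the power-of-a-unitary induction is the cleaner path.
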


\begin{proof}[Proof of Lemma \ref{lem_technical_lem_prop_f(L)}]
First let us see that Lemma \ref{lem_technical_lem_prop_f(L)} is implied by the following property: for any $s\in \bN$ and $p=2,\infty$,
there exists $C=C_{s,p}>0$ such that
\begin{equation}
\label{eq_lem_technical_lem_prop_f(L)}
\forall t\in (0,1), \quad\forall k\in \bN\qquad
\|e^{ike^{-t\widehat \cL}} \|_{\dot L^p_s(\Gh : \Sigma)}
\leq C \sqrt t^{s-\frac np} k^s.
\end{equation}
By convention, 
$1/p=0$ if $p=\infty$.
The spaces $\dot L^p_s$ have been defined in Section \ref{subsec_dotLpsinN}.

The case $p=2$ will imply  Lemma \ref{lem_technical_lem_prop_f(L)}
for $s\in \bN$ and $k\in \bN$.
By interpolation
(see Proposition \ref{prop_dotHs} \eqref{item_interpolation_prop_dotHs}), 
this will imply the cases $s\geq 1$.
The case $k=0$ in  Lemma \ref{lem_technical_lem_prop_f(L)}
 is trivial, see Example \ref{ex_Diffsigma=1}.
 The case $k<0$ follows from the case $k>0$ by complex conjugation.

Hence establishing \eqref{eq_lem_technical_lem_prop_f(L)} will indeed imply Lemma \ref{lem_technical_lem_prop_f(L)}.

\smallskip

We observe that the symbol $\sigma=e^{ie^{-t\widehat \cL}}$ satisfies the hypotheses of Lemma \ref{lem_sigmak}, see Remark \ref{rem_lem_sigmak}.
Using Part (3) of this lemma
and assuming that
the estimates in \eqref{eq_lem_technical_lem_prop_f(L)} hold
for $k=1,2,\ldots,k_0$, 
we obtain:
$$
\|e^{i(k_0+1)e^{-t\widehat \cL}} \|_{\dot L^p_s(\Gh : \Sigma)}
\leq C \sqrt t^{s-\frac np} S,
$$
where 
\begin{align*}
S&:=\sum_{\ell=0}^{s-1} \sum_{s_1+s_2=\ell} \sum_{j=0}^{k_0-1}
j^{s_1} (k_0-1-j)^{s_2}\\
&\leq
\sum_{\ell=0}^{s-1} \sum_{s_1+s_2=\ell} \sum_{j=0}^{k_0-1}
k_0^{\ell+1} \int_{j/k_0}^{(j+1)/k_0} x^{s_1}(1-x)^{s_2}dx
\\ 
&\leq 
\left(\sum_{\substack{s_1,s_2\in \bN_0\\ s_1+s_2\leq s-1}} 
 \int_0^1 x^{s_1}(1-x)^{s_2}dx\right)
\sum_{\ell=0}^{s-1} k_0^{\ell+1}
\ \lesssim_s  k_0^s;
 \end{align*}
this is  the same estimate in \eqref{eq_lem_technical_lem_prop_f(L)}
for $k_0+1$.
This will show \eqref{eq_lem_technical_lem_prop_f(L)}
once we have established this estimate for $k=1$; this is what we now do.
We want to estimate
$$
\|e^{ie^{-t\widehat \cL}} \|_{\dot L^p_s(\Gh : \Sigma)}
=
\|\sum_{\ell=0}^\infty \frac{i^\ell}{\ell!}
e^{-t\ell\widehat \cL} \|_{\dot L^p_s(\Gh : \Sigma)}
\leq
\sum_{\ell=1}^\infty \frac{1}{\ell!}
\|e^{-t\ell\widehat \cL} \|_{\dot L^p_s(\Gh : \Sigma)}.
$$
By Proposition \ref{prop_dotHs} Part (1)
and Lemma  \ref{lem_dotLinftys_L1G}, 
we have
$$
\|e^{-t\widehat\cL}\|_{\dot H^s(\Gh)}
\asymp
\|p_t\|_{L^2(|x|^{2s}dx)}
\quad\mbox{and}\quad
\|e^{-t\widehat\cL}\|_{\dot L^\infty_s(\Gh:\Sigma)}
\lesssim
\|p_t\|_{L^1(|x|^sdx)}, 
$$
so that the estimates on the heat kernel $p_t=e^{-t\cL}\delta_0$
given in Theorem \ref{thm_kernelf(L)}
yields
$$
\exists C=C_s\quad \forall t>0, \ p=2,\infty\qquad
\|e^{-t\widehat\cL}\|_{\dot L^\infty_s(\Gh:\Sigma)}
\leq C_s\sqrt t^{-\frac np} \min (1,\sqrt t^s), 
$$
with the convention that $\frac np=0$ if $p=\infty$.
We use these estimates in 
\begin{align*}
\|e^{ie^{-t\widehat \cL}} \|_{\dot L^p_s(\Gh : \Sigma)}
&\leq
\sum_{\ell=1}^\infty \frac{1}{\ell!}
\|e^{-t\ell\widehat \cL} \|_{\dot L^p_s(\Gh : \Sigma)}
=
\sum_{\ell:t\ell\leq 1}+
\sum_{\ell:t\ell> 1}
\lesssim 
\sum_{\ell:t\ell\leq 1}
\frac{1}{\ell!} 
(t\ell)^{\frac 12 (s-\frac np)}
+
\sum_{\ell:t\ell> 1}
\frac{1}{\ell!} 
(t\ell)^{-\frac n{2p}}
\\
&\lesssim 
C_1 t^{\frac 12 (s-\frac np)} 
+
t^{-\frac n{2p}}\sum_{\ell:t\ell> 1}
\frac{1}{\ell!} ,
\end{align*}
where $C_1$ is the finite constant $\sum_{\ell=0}^\infty\frac{1}{\ell!} 
\ell^{\frac 12 (s-\frac np)}$.
The last term above is a sum over $\ell>t^{-1}$ which can be viewed as the Taylor remainder of the exponential function on $\bR$ between 0 and 1
with order $[t^{-1}]$. 
Therefore it is bounded by $e^1 / [t^{-1}]! \lesssim t^{\frac 12 +\frac 1t}$ by the Stirling formula. 
This implies \eqref{eq_lem_technical_lem_prop_f(L)} for $k=1$.

This concludes the proofs of \eqref{eq_lem_technical_lem_prop_f(L)} for any $k\in \bN$, thus of Lemma \ref{lem_technical_lem_prop_f(L)}.
\end{proof}

This concludes the proof of Proposition \ref{prop_f(L)}.

\section{ Multiplier theorems}
\label{sec_mult_thm}

In this section we state and prove our multiplier theorems.
We start by giving some historical perspectives on the subject 
in Section \ref{subsec_mult_thm_Rn}.
This  leads us to defining the space $\cM_s$ of symbols  of compact Lie groups  in Section \ref{subsec_Sigmas}.
The membership in $\cM_s$ is a `H\"ormander condition'
and we show in  Section \ref{subsec_multG}
that this implies $L^p$-boundedness of the corresponding multiplier operator. 
We also discuss there  the sharpness of the result
and a Mihlin-type theorem.
The proof of the H\"ormander version is given in Section \ref{subsec_pf_thm_hormander}.
A Marcinkiewicz-type theorem is discussed in the last section of this paper, 
recovering as a particular case the historical theorem given at the beginning of the introduction.

\subsection{Historical perspectives}
\label{subsec_mult_thm_Rn}

The very first multiplier theorem  was on the one-dimensional torus $\bT$ and its statement (see theorem below)   strikingly suggests the use of discrete derivatives and differential structure on the dual of $\bT$: 
\begin{theorem}
\label{thm_marcinkiewicz}
[Marcinkiewicz multiplier theorem (1939) \cite{marc}]
If $\sigma :\bZ \to \bC$ is a sequence such that the  suprema
$$
\sup_{\ell}|\sigma(\ell)|<\infty
\qquad\mbox{and}\qquad
\sup_{j\in \bN_0} \sum_{2^j\leq \ell^2\leq 2^{j+1}} |\sigma(\ell) -\sigma(\ell+1)| 
$$
are finite, 
then the corresponding Fourier multiplier is bounded on $L^p(\bT)$, $1<p<\infty$.
\end{theorem}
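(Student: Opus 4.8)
The plan is to run the classical Littlewood--Paley argument. Since the $L^2(\bT)$-adjoint of $\Op(\sigma)$ is $\Op(\bar\sigma)$ and $\bar\sigma$ satisfies the same two hypotheses, by duality it suffices to treat $2\le p<\infty$; as the case $p=2$ is immediate from Plancherel and $B:=\sup_\ell|\sigma(\ell)|<\infty$, I fix $2<p<\infty$ and set $A:=\sup_{j\in\bN_0}\sum_{2^j\le\ell^2\le 2^{j+1}}|\sigma(\ell)-\sigma(\ell+1)|<\infty$.

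The first step is to pass from the blocks $\{2^j\le\ell^2\le 2^{j+1}\}$ to a genuine dyadic decomposition of $\bZ$. Put $I_0:=\{-1,0,1\}$ and, for $k\ge1$, $I_k^{\pm}:=\{\ell\in\bZ:\ 2^k\le\pm\ell<2^{k+1}\}$; these intervals are pairwise disjoint and cover $\bZ$, and for $k\ge1$ each $I_k^{\pm}$ lies inside $\{\ell:\ 2^{2k}\le\ell^2<2^{2k+2}\}$, hence inside the union of two consecutive blocks occurring in the definition of $A$, so that $\sum_{\ell\in I_k^{\pm}}|\sigma(\ell)-\sigma(\ell+1)|\le 2A$ (while $I_0$ contributes only three terms, each $\le 2B$). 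Relabelling these intervals as $\{I_k\}_{k\ge0}$, writing $I_k=\{a_k,a_k+1,\dots,b_k\}$, letting $S_k$ be the partial-sum projection onto the frequencies in $I_k$, and denoting by $\sigma_k$ the sequence equal to $\sigma$ on $I_k$ and $0$ elsewhere, one has $S_k\Op(\sigma)=\Op(\sigma_k)S_k$. I will use the Littlewood--Paley inequality on $\bT$,
$$
\|h\|_{L^p(\bT)}\ \asymp\ \Big\|\big(\sum_k|S_kh|^2\big)^{1/2}\Big\|_{L^p(\bT)},\qquad 1<p<\infty,
$$
which rests on M.~Riesz's theorem for the conjugate function.

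The heart of the argument is a summation by parts inside each block: for $\ell\in I_k$ one has $\sigma(\ell)=\sigma(b_k)+\sum_{m=\ell}^{b_k-1}\big(\sigma(m)-\sigma(m+1)\big)$, so that for every $g$ with frequencies in $I_k$,
$$
\Op(\sigma_k)g=\sigma(b_k)\,g+\sum_{m=a_k}^{b_k-1}\big(\sigma(m)-\sigma(m+1)\big)\,P_{[a_k,m]}\,g,
$$
where $P_{[a_k,m]}$ denotes the partial-sum projection onto $\{a_k,\dots,m\}$. Applying this with $g=S_kf$, estimating pointwise, and using Cauchy--Schwarz together with $\sum_{m=a_k}^{b_k-1}|\sigma(m)-\sigma(m+1)|\le 2A$ gives
$$
|S_k\Op(\sigma)f|^2\ \lesssim\ B^2|S_kf|^2\ +\ A\sum_{m=a_k}^{b_k-1}|\sigma(m)-\sigma(m+1)|\,\big|P_{[a_k,m]}S_kf\big|^2.
$$
Summing over $k$ and taking $L^{p/2}(\bT)$-norms, the first term is controlled by $B^2\|f\|_p^2$ through Littlewood--Paley. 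For the second I put $g_{k,m}:=|\sigma(m)-\sigma(m+1)|^{1/2}S_kf$ and invoke the $\ell^2$-valued boundedness on $L^p(\bT)$ of the family of partial-sum operators applied to an arbitrary family of functions (which reduces, via $P_{[a,m]}=P_{[a,\infty)}-P_{[m+1,\infty)}$ and modulation invariance, to the $\ell^2$-valued boundedness of the Riesz projection, a standard consequence of vector-valued Calder\'on--Zygmund theory):
\begin{align*}
\Big\|\big(\sum_{k}\sum_{m=a_k}^{b_k-1}|P_{[a_k,m]}g_{k,m}|^2\big)^{1/2}\Big\|_p
&\ \lesssim\ \Big\|\big(\sum_{k}\sum_{m=a_k}^{b_k-1}|g_{k,m}|^2\big)^{1/2}\Big\|_p\\
&\ \le\ \sqrt{2A}\,\Big\|\big(\sum_k|S_kf|^2\big)^{1/2}\Big\|_p\ \lesssim\ \sqrt A\,\|f\|_p,
\end{align*}
so the $L^{p/2}$-norm of the second term is $\lesssim A^2\|f\|_p^2$. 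Combining and applying Littlewood--Paley once more yields $\|\Op(\sigma)f\|_{L^p(\bT)}\lesssim(A+B)\|f\|_{L^p(\bT)}$, and duality then covers $1<p<2$.

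The main obstacle is not the bookkeeping above but the two harmonic-analysis facts fed into the scheme: the Littlewood--Paley inequality with sharp dyadic cut-offs on $\bT$, and the $\ell^2$-valued boundedness of the partial-sum (Riesz) operators; the latter supplies the square-function estimate that lets everything close and carries the real content. An alternative route I would keep in mind is transference: by de Leeuw's theorem it would suffice to prove the one-dimensional continuous Marcinkiewicz multiplier theorem for a piecewise-linear extension of $\sigma$ to $\bR$, the difficulty then shifting to constructing an extension whose derivative inherits the uniform dyadic $L^1$-bound encoded in $A$. Finally, once the constructions of the present paper are available, Theorem~\ref{thm_marcinkiewicz} is recovered as the abelian special case $G=\bT$ of the Marcinkiewicz-type theorem proved in Section~\ref{subsec_marcinkiewicz}, using Example~\ref{ex_diff_op_torus} to identify the difference operator on $\widehat\bT=\bZ$ with the ordinary finite difference.
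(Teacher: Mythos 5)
Your argument is correct, but it is not the route the paper takes. The paper never gives a direct proof of Theorem \ref{thm_marcinkiewicz}: it is recovered at the very end of Section \ref{subsec_marcinkiewicz} as the special case $G=\bT$, $s_0=n=1$ of the Marcinkiewicz-type Theorem \ref{thm_marc-type}, which in turn is deduced from the H\"ormander-type result (Proposition \ref{prop_thm_hormander}) via Corollary \ref{cor_lem_trace}; the analytic engine there is the Calder\'on--Zygmund theory on spaces of homogeneous type applied to the dyadically localised kernels $\kappa_j$, with the weighted $L^2$-estimates supplied by the homogeneous Sobolev spaces $\dot H^s(\Gh)$. You instead give the classical one-dimensional Littlewood--Paley proof: sharp dyadic projections $S_k$, Abel summation of $\sigma$ inside each block, Cauchy--Schwarz, and the $\ell^2$-valued boundedness of the partial-sum (Riesz) projections. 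Your reduction of the paper's blocks $\{2^j\le\ell^2\le 2^{j+1}\}$ to standard dyadic intervals (each $I_k^{\pm}$ meets at most two consecutive blocks, so its variation is at most $2A$), the identity $\Op(\sigma_k)g=\sigma(b_k)g+\sum_m(\sigma(m)-\sigma(m+1))P_{[a_k,m]}g$, and the duality reduction to $p\ge 2$ are all sound, and the final bound $\lesssim(A+B)\|f\|_p$ is the expected one. What each approach buys: yours is self-contained for $\bT$, quantitatively transparent, and rests only on two classical square-function inequalities, but it exploits the commutativity of $\widehat\bT$ (exact frequency projections and Abel summation have no direct analogue when the $\sigma(\pi)$ are non-commuting matrices); the paper's derivation is longer but uniform over all compact Lie groups, and exhibiting the 1939 theorem as its abelian shadow is precisely the point of including the statement. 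You correctly flag this second route in your closing remark.
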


The next known result on Fourier multipliers is on $\bR^n $:
\begin{theorem}[Mihlin's multiplier theorem (1956) \cite{mihlin}]
If a function $\sigma$ 
defined on $\bR^n\backslash\{0\}$ 
has at least $[n/2]+1$ continuous derivatives that satisfy 
\begin{equation}
\label{eq_mihlin_cond}
\forall \alpha\in \bN_0^n, \ |\alpha|\leq [n/2]+1,
\qquad
|\partial^\alpha\sigma (\xi)|\leq C_\alpha|\xi|^{-|\alpha|},
\end{equation}
(where $[t]$ is the integer part of $t$), 
then the Fourier multiplier operator $\Op(\sigma)$
associated with $\sigma$, 
initially defined on Schwartz functions via
\begin{equation}
\label{eq_T_sigma}
\Op(\sigma) \phi := \cF^{-1} \{\sigma \widehat \phi\},
\end{equation}
(where $\cF\phi=\widehat\phi$ denotes the Euclidean Fourier transform)
 admits a bounded extension on $L^p(\bR^n)$ for all $1<p<\infty$;
 for $p=1$, $\Op(\sigma)$ is bounded from $L^1$ to weak-$L^1$.
\end{theorem}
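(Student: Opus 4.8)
The plan is to prove Mihlin's theorem by the classical Calder\'on--Zygmund route: show that $\Op(\sigma)$ is bounded on $L^2(\bR^n)$, show that its convolution kernel satisfies the H\"ormander integral regularity condition, and then invoke the Calder\'on--Zygmund theorem together with a duality argument to reach the full range $1<p<\infty$ and the weak-$(1,1)$ endpoint.

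The $L^2$ bound is immediate: \eqref{eq_mihlin_cond} with $\alpha=0$ gives $\|\sigma\|_{L^\infty}\le C_0$, so by Plancherel $\|\Op(\sigma)\phi\|_{L^2}=\|\sigma\widehat\phi\|_{L^2}\le C_0\|\phi\|_{L^2}$. For the kernel I would use a Littlewood--Paley decomposition. Fix $\psi\in\cD(\bR^n)$ supported in $\{\tfrac12\le|\xi|\le2\}$ with $\sum_{j\in\bZ}\psi(2^{-j}\xi)=1$ for $\xi\ne0$, put $\sigma_j(\xi):=\sigma(\xi)\psi(2^{-j}\xi)$ and $K_j:=\cF^{-1}\sigma_j$. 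The rescaled symbols $\widetilde\sigma_j(\xi):=\sigma(2^j\xi)\psi(\xi)$ form, by the dilation-invariance of \eqref{eq_mihlin_cond}, a family bounded in $C^{[n/2]+1}$ with support in a fixed annulus. From this I would extract, uniformly in $j\in\bZ$ and $\rho\ge2^{-j}$, the three estimates
$$
\int_{\bR^n}|K_j(x)|\,dx\le C,\qquad
\int_{\bR^n}|\nabla K_j(x)|\,dx\le C\,2^{j},\qquad
\int_{|x|>\rho}|K_j(x)|\,dx\le C\,(2^{j}\rho)^{-\delta},
$$
where $\delta:=[n/2]+1-\tfrac n2>0$. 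Each follows by splitting $\bR^n$ into the ball of radius $\sim2^{-j}$ about the origin, where one uses Plancherel to bound $\|K_j\|_{L^2}\lesssim2^{jn/2}$, and its complement, where one applies Cauchy--Schwarz against the weight $|x|^{-([n/2]+1)}$, which is square-integrable on $\{|x|\ge2^{-j}\}$ exactly because $2([n/2]+1)>n$. The partial sums $\sum_{|j|\le N}K_j$ then converge in $L^1_{\mathrm{loc}}(\bR^n\setminus\{0\})$ --- the low-frequency tail over a fixed compact set away from the origin being $O(2^{jn/2})$ as $j\to-\infty$ --- to a function $K$ which, since $\sum_j\sigma_j=\sigma$ in $\mathcal S'(\bR^n)$, agrees with $\cF^{-1}\sigma$ away from the origin.

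Next I would bound $\int_{|x|>2|y|}|K(x-y)-K(x)|\,dx$ uniformly in $y\ne0$. Writing $K=\sum_jK_j$ and splitting the sum at $2^{j}|y|\sim1$: for $2^{j}|y|\le1$ one expands $K_j(x-y)-K_j(x)=\int_0^1 y\cdot\nabla K_j(x-sy)\,ds$ and integrates to get the bound $|y|\int_{\bR^n}|\nabla K_j|\le C|y|2^{j}$, summing over $2^{j}\le|y|^{-1}$ to a convergent geometric series; for $2^{j}|y|>1$ one uses the triangle inequality and the third estimate above (note that $|x|>2|y|$ forces $|x-y|>|y|\ge2^{-j}$), getting the bound $C(2^{j}|y|)^{-\delta}$, again summable over $2^{j}>|y|^{-1}$. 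This gives the H\"ormander condition with constant independent of $y$. The Calder\'on--Zygmund theorem then yields that $\Op(\sigma)$ is bounded from $L^1$ to weak-$L^1$; Marcinkiewicz interpolation with the $L^2$ bound gives $L^p$-boundedness for $1<p\le2$; and since $\Op(\sigma)^*=\Op(\overline\sigma)$ with $\overline\sigma$ again a Mihlin multiplier, duality extends this to $2\le p<\infty$.

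The main obstacle is carrying out the kernel estimate with exactly $[n/2]+1$ derivatives: a direct Cauchy--Schwarz bound for $\int_{\bR^n}|K_j|$ or $\int_{\bR^n}|\nabla K_j|$ loses a logarithm when $n$ is even, and one genuinely needs to isolate the ball of radius $\sim2^{-j}$ (equivalently, to use that $2([n/2]+1)>n$ is strict for every $n$) in order to close the argument at the sharp level of smoothness.
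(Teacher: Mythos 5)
Your proof is correct and is the standard Calder\'on--Zygmund argument for Mihlin's theorem; the paper states this classical result without proof (it is cited as background in Section \ref{subsec_mult_thm_Rn}), so there is no proof in the paper to compare against line by line. Note, however, that your scheme --- dyadic decomposition of the symbol, $L^1$ and weighted $L^2$/Cauchy--Schwarz estimates on the localised kernels exploiting that $2([n/2]+1)>n$ is strict, verification of the H\"ormander integral condition by splitting the sum at $2^j|y|\sim 1$, then the Calder\'on--Zygmund theorem, interpolation and duality --- is precisely the scheme the paper transplants to compact Lie groups in Section \ref{subsec_pf_thm_hormander} (Proposition \ref{prop_thm_hormander}), with the spectral cut-offs $\eta_j(\widehat\cL)$ playing the role of your $\psi(2^{-j}\xi)$ and the $L^2(|x|^{2s}dx)$ norms of the kernels playing the role of your weighted Cauchy--Schwarz step.
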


Shortly afterwards,  H\"ormander  gave a further generalisation and simplification of Mihlin's results:
\begin{theorem}[Hormander's multiplier theorem (1960)]
	If $\sigma$ is 
locally uniformly
in a Sobolev space $H^s(\bR^n)$ for some $s>n/2$, 
that is, requiring that the quantity
\begin{equation}
\label{eq_luHsRn}
\|\sigma\|_{l.u.H^s(\bR^n),\eta}:=
\sup_{r>0} \|\sigma (r\, \cdot)\ \eta \|_{H^s(\bR^n)}
\end{equation}
is finite for some  non-zero smooth function $\eta\in \cD(0,\infty)$, then 
the Fourier multiplier operator $\Op(\sigma)$
admits a bounded extension on $L^p(\bR^n)$ for all $1<p<\infty$;
 for $p=1$, $\Op(\sigma)$ is bounded from $L^1$ to weak-$L^1$.
\end{theorem}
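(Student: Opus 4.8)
The plan is to reduce the theorem to the two classical ingredients of Calder\'on--Zygmund theory: an $L^2$ bound for $\Op(\sigma)$, and a H\"ormander-type integral estimate on its convolution kernel $K:=\cF^{-1}\sigma$. Once both are in place, the weak $(1,1)$ bound follows from the Calder\'on--Zygmund decomposition, and then Marcinkiewicz interpolation together with a duality argument yields boundedness on $L^p$ for all $1<p<\infty$.

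First I would note that, by Plancherel, $\Op(\sigma)$ is bounded on $L^2(\bR^n)$ with norm $\|\sigma\|_{L^\infty(\bR^n)}$; and since $s>n/2$, the Sobolev embedding $H^s(\bR^n)\hookrightarrow L^\infty(\bR^n)$ gives $\|\sigma(r\,\cdot)\eta\|_{L^\infty}\lesssim\|\sigma(r\,\cdot)\eta\|_{H^s}\le\|\sigma\|_{l.u.H^s(\bR^n),\eta}$ uniformly in $r>0$. Choosing for each point $\xi_0\neq0$ a scale $r$ for which $\eta$ is bounded away from $0$ near $r^{-1}\xi_0$, one concludes $\|\sigma\|_{L^\infty}\lesssim\|\sigma\|_{l.u.H^s(\bR^n),\eta}$, hence an $L^2$ bound with the right constant. (Here one also uses that finiteness of the local uniform Sobolev norm for one nonzero annular $\eta$ implies it for any other, up to constants.)

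The core of the proof is the kernel estimate. I would fix a dyadic resolution of unity $1=\sum_{j\in\bZ}\chi(2^{-j}\xi)$ on $\bR^n\setminus\{0\}$, with $\chi$ smooth and supported in an annulus, and set $\sigma_j(\xi):=\sigma(\xi)\chi(2^{-j}\xi)$, $K_j:=\cF^{-1}\sigma_j$, so that $K=\sum_j K_j$ in $\cS'(\bR^n)$. A rescaling $\xi\mapsto 2^j\xi$ and Plancherel give
\[
\int_{\bR^n}|K_j(x)|^2\,(1+|2^jx|^2)^{s}\,dx\ \lesssim\ 2^{jn}\,\|\sigma(2^j\,\cdot)\chi\|_{H^s}^2\ \le\ 2^{jn}\,\|\sigma\|_{l.u.H^s(\bR^n),\eta}^2,
\]
and since $\widehat{\nabla K_j}(\xi)=2\pi i\xi\,\sigma_j(\xi)$ with $|\xi|\asymp2^j$ on $\supp\sigma_j$, the same computation bounds the left-hand side with $\nabla K_j$ in place of $K_j$ by $2^{j(n+2)}\|\sigma\|_{l.u.H^s(\bR^n),\eta}^2$. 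Splitting $\bR^n$ into dyadic shells $\{|x|\sim R\}$ and applying the Cauchy--Schwarz inequality on each shell, these weighted $L^2$ estimates turn into $L^1$ estimates: for $|y|=\delta$ one uses the gradient bound on $\{|x|>2\delta\}$ when $2^j\delta\le1$, and the trivial bound $|K_j(x-y)-K_j(x)|\le|K_j(x-y)|+|K_j(x)|$ when $2^j\delta>1$. Summing over the shells $R$ and then over $j$ --- this is where the strict inequality $s>n/2$ makes the geometric-type series converge --- yields the H\"ormander condition
\[
\sup_{y\neq0}\ \int_{|x|>2|y|}|K(x-y)-K(x)|\,dx\ \lesssim\ \|\sigma\|_{l.u.H^s(\bR^n),\eta}.
\]

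With the $L^2$ bound and this kernel estimate, I would finish by the textbook Calder\'on--Zygmund argument. Given $f\in L^1\cap L^2$ and $\lambda>0$, form the Calder\'on--Zygmund decomposition of $f$ at height $\lambda$, writing $f=g+b$ with $b=\sum_Q b_Q$ where the $b_Q$ have mean zero and are supported on pairwise disjoint cubes $Q$ with $\sum|Q|\lesssim\lambda^{-1}\|f\|_{L^1}$; the good part $g\in L^2$, with $\|g\|_{L^2}^2\lesssim\lambda\|f\|_{L^1}$, is controlled by Chebyshev and the $L^2$ bound, while on the complement of $\bigcup 2Q$ the bad part is controlled by the kernel estimate via the cancellation $\int b_Q=0$. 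This gives the weak $(1,1)$ inequality; Marcinkiewicz interpolation with the $L^2$ bound yields $L^p$ for $1<p\le2$, and duality (the transpose multiplier has symbol $\sigma(-\,\cdot)$, which satisfies the same hypothesis) gives $2\le p<\infty$. The main obstacle is the kernel estimate above: one must bookkeep the double dyadic sum over the spatial scale $R$ and the frequency scale $2^j$ carefully, switching between the gradient estimate and the trivial estimate at the threshold $2^j|y|\sim1$, and it is precisely there that the hypothesis $s>n/2$ is indispensable.
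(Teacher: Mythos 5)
Your argument is correct and is the standard proof of H\"ormander's theorem (dyadic decomposition of the symbol, weighted $L^2$ estimates on the kernel pieces obtained by rescaling and Plancherel, summation of the two regimes $2^j|y|\le 1$ and $2^j|y|>1$ to get the H\"ormander kernel condition, then Calder\'on--Zygmund decomposition, interpolation and duality). Note that the paper itself states this classical result without proof (citing \cite{hormander}), but the proof it gives of its own compact-group analogue (Proposition \ref{prop_thm_hormander}, hence Theorem \ref{thm_hormander}) follows exactly the blueprint you describe, with the spectral localisations $\eta(2^{-j}\widehat\cL)$ playing the role of your $\chi(2^{-j}\xi)$ and the Coifman--Weiss singular-integral theorem replacing the Euclidean Calder\'on--Zygmund decomposition; so your approach is essentially the same as the one underlying the paper.
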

Note that the proof only requires a supremum over $r=2^{j/2}$, $j\in \bZ$.

Multipliers problems have been extensively studied in various contexts.
In his textbook on singular integrals \cite{stein_sing},
E. Stein presented results and related problems in the Euclidean context.
Fourier multipliers  have been explored in completely abstract functional settings, see the 1971 textbook by Larsen \cite{larsen}. 
However in this paper we restrict our attention to Mihlin-H\"ormander type theorems.
On any compact manifold, 
general (and often sharp) results for  Fourier integral or pseudo-differential operators have been  known for nearly thirty years,
see e.g. \cite{seeger+sogge+stein,seeger+sogge}.
In the context of Lie groups, 
the focus of the problem `in the H\"ormander sense' has been on 
spectral multipliers 
in operators such as  sub-Laplacians:
see for instance
\cite{alexo,mauceri+meda,seeger+sogge,sikora+wright,
stein_topics} 
to cite only the results or methods mentioned in this paper.
The list of references (known to the author at the time of writing) on Fourier multipliers 
`in the H\"ormander sense' is much shorter:
for compact Lie groups
\cite{coifman+weiss,coifman+weiss1,coifman+weiss2,ruzhansky+wirth}
and \cite{clerc,stanton,strichartz,vretare, weiss, weiss_SU}
with an emphasis on central multipliers,
and for non-compact Lie groups
\cite{rubin,dM+M,fischer+ruzhansky}, 
see the discussion of these matters in the introduction (Section \ref{sec_intro}).

The multiplier problems continue to be an active subject of research in  non-commutative harmonic and functional analysis, see e.g. \cite{junge+mei+parcet}.

\subsubsection*{The H\"ormander condition}

It is a routine exercise to check that
the quantity in \eqref{eq_luHsRn} is finite independently of a choice of $\eta$, in the sense that if $\eta_1,\eta_2\in \cD(0,\infty)$ are non-zero, 
then $\|\sigma\|_{l.u.H^s(\bR^n),\eta_1}\asymp \|\sigma\|_{l.u.H^s(\bR^n),\eta_2}$.
The Sobolev embeddings imply  that for $s>n/2$, 
if the quantity in \eqref{eq_luHsRn} is finite then $\sigma$ is continuous and bounded with 
\begin{equation}
\label{eq_supRn_luHs}
\sup_{\bR^n} |\sigma|\lesssim \|\sigma\|_{l.u.H^s(\bR^n),\eta}.
\end{equation}
Furthermore
\begin{equation}
\label{eq_luHsRn_equiv}
s>n/2\ \Longrightarrow\
\|\sigma\|_{l.u.H^s(\bR^n),\eta}
\asymp
\sup_{\bR^n} |\sigma|+
\sup_{r>0} r^{s-n/2}
\|\sigma \ \eta(r^{-1} \cdot) \|_{\dot H^s(\bR^n)},
\end{equation}
having denoted $\dot H^s(\bR^n)$ the homogeneous Sobolev space on $\bR^n$.
H\"ormander's multipliers theorem is sharp in the range of $s$, 
even when restricted to  radial multipliers.
In fact the sharpness may be obtained by considering the imaginary power 
$\Op(|\xi|^{2i\alpha})=\Delta^{i\alpha}$
of the Laplace operator $\Delta=-\partial_1^2-\ldots-\partial_n^2$,
see \cite[p.1746]{sikora+wright} and \cite[p.51-52]{stein_sing}.

The H\"ormander condition with $s$ near enough $n/2$ implies Mihlin's condition in
\eqref{eq_mihlin_cond}
since we check easily
\begin{equation}
\label{eq_luHsinN}
s\in \bN\ \Longrightarrow\
\|\sigma\|_{l.u.H^s(\bR^n),\eta}
\lesssim \max_{\alpha\in \bN_0^n, |\alpha|\leq s} \sup_{\xi\in \bR^n} |\xi|^\alpha |\partial^\alpha \sigma(\xi)|.
\end{equation}

\medskip

In the next section, we will define a H\"ormander condition 
thanks to the equivalence \eqref{eq_luHsRn_equiv}.
The main issue is to define a relevant cut-off function $\eta$ that can be dilated.
On $\bR^n$, 
we can restrict ourselves to radial cut-offs, i.e. $\eta=\eta_1(|\cdot|^2)$, 
which can be viewed as the Fourier transform of the spectral multiplier
$\eta_1(\Delta)$ of the Laplace operator on $\bR^n$.
An analogue cut-off function as spectral multiplier in $\cL$ can be defined  in our context; it can also be  dilated.

\subsection{The space $\cM_s$}
\label{subsec_Sigmas}

The previous section leads us to define the space $\cM_s$ of symbol satisfying the following condition of H\"ormander type:

\begin{definition}
Let $s>n/2$ and $\eta\in \cD(0,\infty)$.
We denote by $\cM_{s,\eta}$ the space of bounded symbols $\sigma\in L^\infty(\Gh:\Sigma)$ such that  the following quantity is finite:
$$
\sup_{r>0}r^{s-\frac n2}
\| \sigma \ \eta (r^{-2} \widehat\cL)\|_{\dot H^s(\Gh)}.
$$

We then set
$$
\|\sigma\|_{\cM_s,\eta}
:=
\|\sigma\|_{L^\infty(\Gh:\Sigma)}+
\sup_{r>0}r^{s-\frac n2}
\| \sigma \ \eta (r^{-2} \widehat\cL)\|_{\dot H^s(\Gh)}.
$$
\end{definition}

In dealing with these spaces we will often use the fact that in the supremum above we may assume $r$ large:
\begin{lemma}
\label{lem_r0}
Let $s>n/2$ and $\eta\in \cD(0,\infty)$.
For any $r_0>0$, there exists $C>0$ depending only on $r_0,s,\eta$ and the structure of the group $G$ such that 
we have for any $\sigma\in \Sigma$:
$$
\|\sigma\|_{\cM_s,\eta}
\leq C
\left(
\|\sigma\|_{L^\infty(\Gh:\Sigma)}+
\sup_{r>r_0}r^{s-\frac n2}
\| \sigma \ \eta (r^{-2} \widehat\cL)\|_{\dot H^s(\Gh)}
\right),
$$
in the sense that if the right-hand side is finite then the left-hand side is finite and the  inequality holds.
\end{lemma}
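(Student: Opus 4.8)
The plan is to split the defining supremum in $\|\sigma\|_{\cM_s,\eta}$ at $r=r_0$: by definition
$$
\|\sigma\|_{\cM_s,\eta}
=\|\sigma\|_{L^\infty(\Gh:\Sigma)}
+\sup_{0<r\le r_0}r^{s-\frac n2}\|\sigma\,\eta(r^{-2}\widehat\cL)\|_{\dot H^s(\Gh)}
+\sup_{r>r_0}r^{s-\frac n2}\|\sigma\,\eta(r^{-2}\widehat\cL)\|_{\dot H^s(\Gh)},
$$
and the first and third terms already appear on the right-hand side of the claimed inequality. So everything reduces to bounding the middle term by $C\|\sigma\|_{L^\infty(\Gh:\Sigma)}$. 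First I would fix $0<a<b<\infty$ with $\supp\eta\subset[a,b]$ and observe that, for $0<r\le r_0$, the symbol $\tau_r:=\sigma\,\eta(r^{-2}\widehat\cL)$, whose value at $\pi$ is the scalar multiple $\eta(r^{-2}\lambda_\pi)\sigma(\pi)$, is supported in $F:=\{\pi\in\Gh:0<\lambda_\pi\le b r_0^2\}$, with $\|\tau_r(\pi)\|_{\sL(\cH_\pi)}\le\|\eta\|_{L^\infty}\|\sigma\|_{L^\infty(\Gh:\Sigma)}$ for $\pi\in F$.

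The key point is that $F$ is a \emph{finite} set depending only on $r_0,\eta,G$ --- this is exactly the discreteness of $\spec(\cL)$ together with the finiteness of its multiplicities (Theorem \ref{thm_PW}) --- and, crucially, it is independent of $r\in(0,r_0]$. Consequently $\cF_G^{-1}\tau_r\in L^2_{fin}(G)\subset\cD(G)$, and on the finite-dimensional space of symbols supported in $F$ all reasonable norms are comparable. Concretely I would estimate, using Definition \ref{def_dotHs} (equivalently Definition \ref{def_dotHsinN} up to a constant when $s\in\bN$, by Remark \ref{rem_def_dotHs}), the boundedness of the continuous function $q_1^s$ on the compact group $G$, the Plancherel formula, and $\|\tau_r(\pi)\|_{HS(\cH_\pi)}\le\sqrt{d_\pi}\,\|\tau_r(\pi)\|_{\sL(\cH_\pi)}$:
$$
\|\tau_r\|_{\dot H^s(\Gh)}
=\|\cF_G^{-1}\tau_r\|_{L^2(q_1^s(x)dx)}
\lesssim_{s,G}\Big(\sum_{\pi\in F}d_\pi\|\tau_r(\pi)\|_{HS(\cH_\pi)}^2\Big)^{1/2}
\le\Big(\sum_{\pi\in F}d_\pi^2\Big)^{1/2}\|\eta\|_{L^\infty}\|\sigma\|_{L^\infty(\Gh:\Sigma)}.
$$
This bounds $\|\tau_r\|_{\dot H^s(\Gh)}$ by $C_1\|\sigma\|_{L^\infty(\Gh:\Sigma)}$ uniformly in $r\in(0,r_0]$, with a constant $C_1=C_1(r_0,s,\eta,G)$.

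Finally, because $s>n/2$ the exponent $s-\tfrac n2$ is positive, so $r^{s-\frac n2}\le r_0^{s-\frac n2}$ for $0<r\le r_0$ and the weight is harmless: $\sup_{0<r\le r_0}r^{s-\frac n2}\|\tau_r\|_{\dot H^s(\Gh)}\le r_0^{s-\frac n2}C_1\|\sigma\|_{L^\infty(\Gh:\Sigma)}$. Combining the two ranges of $r$ together with the term $\|\sigma\|_{L^\infty(\Gh:\Sigma)}$ yields the stated inequality. I do not expect any genuine obstacle here: the only substantive input is that the set of representations $\pi$ for which $\eta(r^{-2}\widehat\cL)(\pi)\ne 0$ stays inside one fixed finite set as $r$ ranges over $(0,r_0]$, which is precisely the discreteness of $\spec(\cL)$; the remainder is the bookkeeping observation that $s-\tfrac n2>0$ makes small $r$ the easy regime.
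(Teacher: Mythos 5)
Your proof is correct, and the essential observation is the same as in the paper's own argument: for $0<r\le r_0$ the symbol $\sigma\,\eta(r^{-2}\widehat\cL)$ only involves the finitely many $\pi\in\Gh$ with $\lambda_\pi\le b\,r_0^2$ (discreteness of $\spec(\cL)$), each contribution is crudely bounded by $\|\sigma\|_{L^\infty(\Gh:\Sigma)}$, and the weight $r^{s-\frac n2}$ is harmless on $(0,r_0]$ because $s>\tfrac n2$. The only difference is in execution: the paper works with the difference-operator form of the $\dot H^s$ norm, reducing to an integer exponent and invoking Lemma \ref{lem_lambdapi} Part (4) to see that the sum over $\pi$ is finite, whereas you use the $L^2(q_1^s\,dx)$ characterisation from Proposition \ref{prop_dotHs} together with Plancherel and the boundedness of $q_1^s$ on the compact group. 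Your route is slightly more economical (it handles non-integer $s$ directly and needs no information about how $\lambda_\pi$ behaves under tensoring with fundamental representations), while the paper's version stays entirely on the dual side; both are valid.
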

\begin{proof}
Let $\eta\in \cD(0,\infty)$ with $\supp \, \eta \subset (a_0,b_0)\subset (0,\infty)$.
By Proposition \ref{prop_dotHs} Part \eqref{item_prop_dotHs_subset},
$$
\| \sigma \ \eta (r^{-2} \widehat\cL)\|_{\dot H^s(\Gh)}\lesssim 
\| \sigma \ \eta (r^{-2} \widehat\cL)\|_{\dot H^{[s]}(\Gh)}
=\sqrt{ 
\sum_{|\alpha|=s}
\|\Delta^\alpha  \sigma \ \eta (r^{-2} \widehat\cL)(\pi) \|_{L^2(\Gh, \Sigma_{\varphi^{\otimes \alpha}})}^2}.
$$
By Lemma \ref{lem_lambdapi} Part \eqref{item_lem_lambdapi_eq_lambdarhos},
if $r\leq r_0$
the summation over $\pi\in \Gh$
is in fact over a finite number of $\pi\in \Gh$ 
since they will satisfy $\lambda_\pi  < C + r_0^2b_0$.
Using 
$$
\|\Delta^\alpha  \sigma \ \eta (r^{-2} \widehat\cL)(\pi) \|_{L^2(\Gh, \Sigma_{\varphi^{\otimes \alpha}})}
 \leq C_{\pi,\alpha} \|\sigma\|_{L^\infty(\Gh:\Sigma)}, 
 $$
 we obtain 
$$
r\leq r_0
\Longrightarrow 
\| \sigma \ \eta (r^{-2} \widehat\cL)\|_{\dot H^{[s]}(\Gh)}
\leq 
C'_{[s],G,C,a_0,r_0} \|\sigma\|_{L^\infty(\Gh:\Sigma)}.
 $$
 The statement follows.
\end{proof}

The definition of $\cM_{s,\eta}$ is independent of any (non trivial) $\eta$ and 
$\|\sigma\|_{\cM_s,\eta}
\asymp\|\sigma\|_{\Sigma_\zeta, \zeta}$
for any two non-trivial $\eta,\zeta\in \cD(0,\infty)$
since we have:

\begin{lemma}
\label{lem_Sigmas_indepeta}
Let $s>n/2$ and let $\eta,\zeta \in \cD(0,\infty)$.
We assume $\eta\not\equiv 0$.
Then we have
$$
\|\sigma\|_{\cM_s, \zeta}
\leq C\|\sigma\|_{\cM_s,\eta},
$$
where $C=C_{s,G,\eta,\zeta}>0$ is a constant independent of $\sigma\in \Sigma$,
in the sense that if the left-hand side is finite, 
then the right-hand side is finite and the inequality holds.
\end{lemma}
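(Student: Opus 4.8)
The plan is to reduce matters to a covering/partition‑of‑unity argument and then estimate on the group side, using the isometry $\dot H^s(\Gh)\cong L^2(|x|^{2s}dx)$ (Proposition \ref{prop_dotHs}(1), Definition \ref{def_dotHs}) together with the kernel bounds of Theorem \ref{thm_kernelf(L)} and the weak Leibniz inequality of Lemma \ref{lem_veryweakLeib}. First I would construct a dilation‑adapted partition of unity \emph{built from $\eta$}: since $\eta\not\equiv 0$ is smooth, choose a compact $[a,b]\subset(0,\infty)$ and $\delta>0$ with $|\eta|\ge\delta$ on $[a,b]$, and fix $\rho\in(1,b/a)$. Because consecutive dilates $\rho^{j}(a,b)$ then overlap, the intervals $\{\rho^{j}(a,b)\}_{j\in\bZ}$ cover $(0,\infty)$, so $m(\lambda):=\sum_{j\in\bZ}|\eta(\rho^{-j}\lambda)|^2$ is smooth, $\log\rho$‑periodic in $\log\lambda$, and bounded above and below by positive constants; hence $1/m$ is smooth and bounded. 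Setting $\theta_j:=\zeta\,\overline{\eta(\rho^{-j}\cdot)}/m\in\cD(0,\infty)$ gives $\zeta=\sum_{j\in\bZ}\theta_j\,\eta(\rho^{-j}\cdot)$, and since $\supp\zeta$ is compact only finitely many indices (a set $J$ depending only on $\supp\zeta,\supp\eta,\rho$) contribute, so $\zeta=\sum_{j\in J}\theta_j\,\eta(\rho^{-j}\cdot)$.

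\textbf{Decomposition.} Because $\widehat\cL$ acts by a scalar on each $\cH_\pi$, the multiplier $\theta_j(r^{-2}\widehat\cL)$ is central; writing $r_j:=\rho^{j/2}r$ (so $\rho^{-j}r^{-2}=r_j^{-2}$) we obtain, for each $r>0$,
$$
\sigma\,\zeta(r^{-2}\widehat\cL)=\sum_{j\in J}\theta_j(r^{-2}\widehat\cL)\,\big(\sigma\,\eta(r_j^{-2}\widehat\cL)\big).
$$
Since $\eta$ and the $\theta_j$ are compactly supported in $(0,\infty)$, all the symbols involved lie in $L^2_{fin}(\Gh:\Sigma)$, and taking inverse Fourier transforms this reads
$\cF_G^{-1}\!\big(\sigma\,\zeta(r^{-2}\widehat\cL)\big)=\sum_{j\in J}\kappa_{r_j}*\mu_{r,j}$, where $\kappa_r:=\cF_G^{-1}(\sigma\,\eta(r^{-2}\widehat\cL))\in\cD(G)$ and $\mu_{r,j}:=\cF_G^{-1}(\theta_j(r^{-2}\widehat\cL))\in\cD(G)$. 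Thus $\|\sigma\,\zeta(r^{-2}\widehat\cL)\|_{\dot H^s(\Gh)}\asymp\sum_{j\in J}\|\kappa_{r_j}*\mu_{r,j}\|_{L^2(|x|^{2s}dx)}$, and Lemma \ref{lem_veryweakLeib} bounds each summand by
$$
\|\kappa_{r_j}\|_{L^2(|x|^{2s}dx)}\|\mu_{r,j}\|_{L^1(G)}+\|\kappa_{r_j}\|_{L^2(G)}\|\mu_{r,j}\|_{L^1(|x|^{s}dx)}.
$$

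\textbf{Estimates.} By definition of $\|\cdot\|_{\cM_s,\eta}$ one has $\|\kappa_{r_j}\|_{L^2(|x|^{2s}dx)}\asymp\|\sigma\,\eta(r_j^{-2}\widehat\cL)\|_{\dot H^s(\Gh)}\le r_j^{-(s-n/2)}\|\sigma\|_{\cM_s,\eta}$. Rescaling $\eta$ and each $\theta_j$ so as to be supported in $[0,1]$ (legitimate once $r\ge r_0$ for a fixed $r_0=r_0(\eta,\zeta,\rho,G)$, which makes every relevant dilation parameter lie in $(0,1)$), Lemma \ref{lem_prop_f(L)}(2) together with $\|\sigma\,\eta(r_j^{-2}\widehat\cL)\|_{L^2(\Gh:\Sigma)}\le\|\sigma\|_{L^\infty(\Gh:\Sigma)}\|\eta(r_j^{-2}\widehat\cL)\|_{L^2(\Gh:\Sigma)}$ gives $\|\kappa_{r_j}\|_{L^2(G)}\lesssim r_j^{n/2}\|\sigma\|_{L^\infty(\Gh:\Sigma)}$, while Theorem \ref{thm_kernelf(L)}(3) (with $\alpha=0$) gives $\|\mu_{r,j}\|_{L^1(G)}\lesssim 1$ and $\|\mu_{r,j}\|_{L^1(|x|^{s}dx)}\lesssim r^{-s}$, all uniformly over the finite set $J$. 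Since $r_j\asymp r$, multiplying through by $r^{s-n/2}$ gives $r^{s-n/2}\|\sigma\,\zeta(r^{-2}\widehat\cL)\|_{\dot H^s(\Gh)}\lesssim\|\sigma\|_{\cM_s,\eta}+\|\sigma\|_{L^\infty(\Gh:\Sigma)}\lesssim\|\sigma\|_{\cM_s,\eta}$ for all $r\ge r_0$. Applying Lemma \ref{lem_r0} to pass from "$r\ge r_0$" to the full supremum, and adding $\|\sigma\|_{L^\infty(\Gh:\Sigma)}\le\|\sigma\|_{\cM_s,\eta}$, yields $\|\sigma\|_{\cM_s,\zeta}\le C\|\sigma\|_{\cM_s,\eta}$. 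The only genuinely delicate point is forcing the partition of unity to be a \emph{finite} sum — this is why one dilates by a factor $\rho$ slightly larger than $1$ rather than by powers of $2$ — and then keeping precise track of the powers of $r$ (in particular the exact cancellation $r^{s-n/2}\cdot r_j^{n/2}\cdot r^{-s}\asymp 1$); the remaining ingredients are direct quotations of the kernel estimates already proved.
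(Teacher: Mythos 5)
Your proposal is correct and follows essentially the same route as the paper: a dilation-adapted partition of unity built from $\eta$ (the paper uses $\alpha(\lambda)=\sum_j\eta(2^{-c_oj}\lambda)^2$ with $c_o$ small enough that consecutive dilates of $\supp\eta$ overlap, which is exactly your $m$ and $\rho$), followed by the weak Leibniz estimate of Lemma \ref{lem_veryweakLeib} on the kernel side and the $L^1$/$L^2$ kernel bounds of Theorem \ref{thm_kernelf(L)}(3). Your explicit reduction to $r\ge r_0$ via Lemma \ref{lem_r0} is a minor extra precaution the paper leaves implicit; otherwise the two arguments coincide.
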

\begin{proof}
Let $\eta$ and $\zeta$ as in the statement.
We may assume $\eta$ real valued
(otherwise we consider separately its real and imaginary parts).
Let $c_o>0$ such that  $2^{c_o}I$ intersects $I$
where  $I$ is an open interval inside the support of $\eta$.
For $\lambda\in \bR$ and $j\in \bZ$, we set
$$
\eta_j(\lambda) =\eta(2^{-c_o j} \lambda)
\quad\mbox{and}\quad
\alpha(\lambda):=\sum_{j\in \bZ} \eta_j^2(\lambda).
$$
We check easily that 
$\alpha\equiv 0$ on $(-\infty,0]$
and that on $(0,\infty)$, $\alpha$ is a smooth positive function which does not vanish.
Furthermore
$$
\forall \lambda\in \bR, \ j\in \bZ\quad
\alpha(2^{jc_o} \lambda)=\alpha(\lambda)\, ,
\qquad\mbox{thus}\qquad
\forall \lambda>0\quad
\sum_{j\in \bZ} \frac{\eta_j^2}\alpha (\lambda)
=
1 \, .
$$
Hence for any $\pi\in \Gh\backslash\{1_{\Gh}\}$, 
we have
$\id_{\cH_\pi}=\sum_{j\in \bZ} \frac{\eta_j^2}\alpha (r^{-2}\pi(\cL))$
 in $\sL(\cH_\pi)$.
Inserting the sum, we obtain
$$
\|\sigma \ \zeta(r^{-2} \widehat\cL)\|_{\dot H^s(\Gh)}
\leq
\sum_{j\in \bZ} 
\|\sigma \ (\frac{\eta_j^2}\alpha \zeta)
(r^{-2} \widehat\cL)\|_{\dot H^s(\Gh)}.
$$
In fact the sum is over $j$ such that 
$2^{jc_o}\supp\ \eta$ and $\supp\ \zeta$ 
 have a non-empty intersection, 
so this summation is finite and independent of $r$ and $\sigma$.
Proposition \ref{prop_dotHs} Part (1)
and Lemma \ref{lem_veryweakLeib} yield
\begin{align*}
\|\sigma \ (\frac{\eta_j^2}\alpha \zeta)
(r^{-2} \widehat\cL)\|_{\dot H^s(\Gh)}
&\lesssim 
\|\sigma \eta_j(r^{-2} \widehat\cL)\|_{\dot H^s(\Gh)}
\|(\frac{\eta_j}\alpha \zeta)
(r^{-2} \cL)\delta_{e_G}\|_{L^1(G)}
\\&\qquad+
\|\sigma \eta_j(r^{-2} \widehat\cL)\|_{L^2(\Gh:\Sigma)}
\|(\frac{\eta_j}\alpha \zeta)
(r^{-2} \cL)\delta_{e_G}\|_{L^1(|x|^sdx)}.
\end{align*}
By the Peter Weyl theorem, 
\begin{equation}
\label{eq_sigmaetaL2}
\|\sigma\ \eta_j(r^{-2} \widehat\cL)\|_{L^2(\Gh:\Sigma)}
\leq
\|\sigma\|_{L^\infty(\Gh:\Sigma)}
\|\eta_j(r^{-2} \widehat\cL)\|_{L^2(\Gh:\Sigma)}
\lesssim \|\sigma\|_{L^\infty(\Gh:\Sigma)} 
(2^{j}r^2)^{\frac n4}.
\end{equation}
by Theorem \ref{thm_kernelf(L)} Part (3), 
which  also provides estimates for the $L^1$-norms in $(\frac{\eta_j}\alpha \zeta)
(r^{-2} \cL)\delta_{e_G}$.
We obtain
$$
\|\sigma \ \zeta(r^{-2} \widehat\cL)\|_{\dot H^s(\Gh)}
\lesssim 
\|\sigma\|_{\cM_s,\eta} r^{-(s-\frac n2)}.
$$
This implies 
$\|\sigma\|_{\cM_s, \zeta}
\lesssim\|\sigma\|_{\cM_s,\eta}$
and
concludes the proof.
\end{proof}

Hence  we may write 
$$
\cM_s:=\cM_{s,\eta}
\quad\mbox{for any non-trivial} \
\eta\in \cD(0,\infty).
$$

A first criteria of membership of $\cM_s$ independent of a choice of a $\sigma\in \cD(0,\infty)$ is given by the following property:
\begin{lemma}
\label{lem_Sigmasa0b0}
For any $0<a_0<b_0$, $s>n/2$ and any $\eta\in \cD(0,\infty)$, 
there exists $C=C_{s,a_0,b_0,\eta}>0$
such that 
we have for any $\sigma\in \Sigma$:
$$
\|\sigma\|_{\cM_s,\eta}
\leq C\left(
\|\sigma\|_{L^\infty(\Gh:\Sigma)}+
\sup_{r>0}r^{s-\frac n2}
\| \sigma1_{[r^2a_0,r^2b_0]}(\widehat \cL) \|_{\dot H^s(\Gh)}\right).
$$
\end{lemma}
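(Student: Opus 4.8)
The strategy is to compare the sharp cutoff $1_{[r^2a_0,r^2b_0]}(\widehat\cL)$ with a smooth cutoff $\eta(r^{-2}\widehat\cL)$ by inserting a smooth function $\tilde\eta\in\cD(0,\infty)$ that equals $1$ on the support of $\eta$, and then exploiting the fact that multiplying a symbol supported (in the $\widehat\cL$-variable) in a fixed dyadic annulus by a smooth spectral multiplier is controlled in $\dot H^s(\Gh)$. First I would fix a non-trivial $\eta\in\cD(0,\infty)$, say with $\supp\,\eta\subset(a_1,b_1)\subset(0,\infty)$, and choose $\tilde\eta\in\cD(0,\infty)$ with $\tilde\eta\equiv 1$ on $(a_1,b_1)$ and $\supp\,\tilde\eta\subset(a_0',b_0')$ for suitable $0<a_0'<b_0'$. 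By enlarging the interval $(a_0,b_0)$ in the right-hand side (which only weakens the hypothesis, using Lemma \ref{lem_lambdapi} to absorb the finitely many extra representations into the $L^\infty$ term exactly as in the proof of Lemma \ref{lem_r0}), I may assume $(a_0',b_0')\subseteq(a_0,b_0)$. Then for each $r>0$,
$$
\sigma\,\eta(r^{-2}\widehat\cL)
=\big(\sigma\,1_{[r^2a_0,r^2b_0]}(\widehat\cL)\big)\,\eta(r^{-2}\widehat\cL),
$$
since $1_{[r^2a_0,r^2b_0]}(\lambda_\pi)=1$ whenever $\eta(r^{-2}\lambda_\pi)\neq 0$.

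Next I would estimate the $\dot H^s(\Gh)$-norm of this product. Using the isometry of Proposition \ref{prop_dotHs} Part (1) (identifying $\dot H^s(\Gh)$ modulo the kernel of its norm with $L^2(q_1^s)=L^2(|x|^{2s}dx)$) together with the weak Leibniz estimate of Lemma \ref{lem_veryweakLeib}, I get
\begin{align*}
\|\sigma\,\eta(r^{-2}\widehat\cL)\|_{\dot H^s(\Gh)}
&\lesssim
\|\sigma\,1_{[r^2a_0,r^2b_0]}(\widehat\cL)\|_{\dot H^s(\Gh)}\,
\|\eta(r^{-2}\cL)\delta_{e_G}\|_{L^1(G)}
\\&\qquad
+\|\sigma\,1_{[r^2a_0,r^2b_0]}(\widehat\cL)\|_{L^2(\Gh:\Sigma)}\,
\|\eta(r^{-2}\cL)\delta_{e_G}\|_{L^1(|x|^s dx)}.
\end{align*}
By Theorem \ref{thm_kernelf(L)} Part (3) applied with the dilated function $\lambda\mapsto\eta(r^{-2}\lambda)$ (after a rescaling $\lambda\mapsto r^{-2}\lambda$ to reduce to a function supported in a fixed interval), the first $L^1$-norm is $O(1)$ and the second is $O(r^{-s})$; for the $L^2$-term I bound $\|\sigma\,1_{[r^2a_0,r^2b_0]}(\widehat\cL)\|_{L^2(\Gh:\Sigma)}\leq\|\sigma\|_{L^\infty(\Gh:\Sigma)}\,\|1_{[r^2a_0,r^2b_0]}(\widehat\cL)\|_{L^2(\Gh:\Sigma)}$, and this Plancherel norm is $\lesssim(r^2 b_0)^{n/4}$ since the number of $\pi\in\Gh$ with $\lambda_\pi\le r^2b_0$ weighted by $d_\pi^2$ is controlled by $r^n$ (Weyl-type counting; this is exactly the estimate \eqref{eq_sigmaetaL2} used in the proof of Lemma \ref{lem_Sigmas_indepeta}). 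Collecting terms,
$$
r^{s-\frac n2}\|\sigma\,\eta(r^{-2}\widehat\cL)\|_{\dot H^s(\Gh)}
\lesssim
\sup_{r>0}r^{s-\frac n2}\|\sigma\,1_{[r^2a_0,r^2b_0]}(\widehat\cL)\|_{\dot H^s(\Gh)}
+\|\sigma\|_{L^\infty(\Gh:\Sigma)},
$$
and taking the supremum over $r>0$ and adding $\|\sigma\|_{L^\infty(\Gh:\Sigma)}$ gives the bound on $\|\sigma\|_{\cM_s,\eta}=\|\sigma\|_{\cM_s}$.

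The main obstacle I anticipate is the rescaling bookkeeping in applying Theorem \ref{thm_kernelf(L)} Part (3): that result is stated for functions supported in $[0,1]$, whereas $\lambda\mapsto\eta(r^{-2}\lambda)$ is supported in $[r^2a_1,r^2b_1]$, which for $r$ large is not contained in $[0,1]$. This is handled by the standard dilation trick — writing $f_r(\lambda):=\eta(r^{-2}\lambda)$, the kernel $f_r(\cL)\delta_{e_G}$ is the $r^{-2}$-rescaling (in the heat-semigroup sense) of the kernel of $\eta(\cL/(r^{-2})^{-1})$... more precisely one uses that $f_r(\cL)=g(r^{-2}\cL)$ with $g=\eta$ supported in $(a_1,b_1)$, which after translating $(a_1,b_1)$ into $(0,1)$ by scaling the variable is covered by the theorem, and the powers of $\sqrt{t}=r^{-1}$ produced are precisely what yields the $O(1)$ and $O(r^{-s})$ bounds above. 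A secondary technical point, already flagged, is the reduction $(a_0',b_0')\subseteq(a_0,b_0)$: if the given interval $(a_0,b_0)$ is too narrow one first replaces it by a larger one, controlling the difference — a sum of finitely many dyadic pieces $\sigma\,1_{[r^2a,r^2b]}(\widehat\cL)$ with $(a,b)$ ranging over a finite cover — again by the $L^\infty$-norm via Lemma \ref{lem_lambdapi} Part \eqref{item_lem_lambdapi_eq_lambdarhos} when $r$ is bounded, and by translation-invariance of the dyadic structure when $r$ is large.
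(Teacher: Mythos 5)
Your proof is correct and follows essentially the same route as the paper's: factor $\sigma\,\eta(r^{-2}\widehat\cL)$ through the sharp cutoff $\sigma\,1_{[r^2a_0,r^2b_0]}(\widehat\cL)$, apply the weak Leibniz estimate of Lemma \ref{lem_veryweakLeib}, bound the $L^1$-norms via Theorem \ref{thm_kernelf(L)} Part (3) and the $L^2$-norm via the Plancherel/Weyl counting of \eqref{eq_sigmaetaL2}, and conclude by the $\eta$-independence of Lemma \ref{lem_Sigmas_indepeta}. The only (harmless) deviation is your interval-enlargement step, which is not needed: one simply chooses the auxiliary smooth cutoff supported inside the given $(a_0,b_0)$ and lets Lemma \ref{lem_Sigmas_indepeta} handle the passage to an arbitrary $\eta$.
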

\begin{proof}
First let us fix  $0<a_0<b_0$ and consider $\eta\in \cD(0,\infty)$
with $\eta\equiv1$ on $[a_0,b_0]$.
Then $\sigma \ \eta(r^{-2}\widehat \cL) =\sigma_{r,a_0,b_0} \ \eta(r^{-2}\widehat \cL)$ and we have by Lemma \ref{lem_veryweakLeib}
\begin{align*}
&\|\sigma \ \eta(r^{-2}\widehat \cL)\|_{\dot H^s(\Gh)} 
=\| \sigma_{r,a_0,b_0} \ \eta(r^{-2}\widehat \cL)\|_{\dot H^s(\Gh)}
\\&\qquad\lesssim
\|\sigma_{r,a_0,b_0} \|_{\dot H^s(\Gh)} \|\eta(r^{-2}\cL)\delta_{e_G}\|_{L^1(G)}
+
\|\sigma_{r,a_0,b_0} \|_{\dot H^0(\Gh)} \|\eta(r^{-2}\cL)\delta_{e_G}\|_{L^1(|x|^s dx)}.
\end{align*}
Using Theorem \ref{thm_kernelf(L)} Part (3) to estimate the $L^1$-norms,  
and proceeding as in \eqref{eq_sigmaetaL2} for the $\dot H^0(\Gh)=L^2(\Gh:\Sigma)$-norm, we obtain:
$$
\|\sigma \ \eta(r^{-2}\widehat \cL)\|_{\dot H^s(\Gh)}
\lesssim
\|\sigma_{r,a_0,b_0} \|_{\dot H^s(\Gh)}+
\|\sigma\|_{L^\infty(\Gh:\Sigma)}r^{-s+\frac n2}.
$$
This together with Lemma \ref{lem_Sigmas_indepeta} implies the statement.
\end{proof}

\medskip

A criteria for membership in $\cM_s$ analogue to \eqref{eq_luHsinN} is the following:
\begin{lemma}
\label{lem_Sigmas}
Let $s\in \bN$, and let $\eta \in \cD(0,\infty)$.
There exists a constant $C=C_{s,G,\eta}>0$ such that 
for any multiplier $\sigma$ we have for any $r>0$
$$
\|\sigma\|_{\cM_s,\eta}
\leq C 
\max_{\substack{
\alpha\in \bN_0^f\\ |\alpha|\leq s}}
\sup_{\pi\in \Gh}
(1+\lambda_\pi)^{\frac{|\alpha|}2} 
\|\Delta^\alpha \sigma(\pi)\|_{\sL(\cH_{\varphi^{\otimes \alpha}} \otimes \cH_\pi)},
$$
in the sense that if the left-hand side is finite then 
$\eta (r^{-2} \widehat\cL)\sigma\in \dot H^s(\Gh)$ is finite for any $r>0$ and the inequality holds.
\end{lemma}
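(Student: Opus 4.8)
The plan is to transplant to $\Gh$ the classical computation showing that a Mihlin-type condition implies a H\"ormander-type one on $\bR^n$, compare \eqref{eq_luHsinN}. Write $M$ for the right-hand side of the asserted inequality and assume $M<\infty$. The choice $\alpha=0$ already gives $\|\sigma\|_{L^\infty(\Gh:\Sigma)}\leq M$, so by the very definition of $\|\cdot\|_{\cM_s,\eta}$ it remains only to bound $r^{s-\frac n2}\|\sigma\,\eta(r^{-2}\widehat\cL)\|_{\dot H^s(\Gh)}$, and by Lemma \ref{lem_r0} it suffices to do so for $r\geq r_0$, with $r_0$ as large as convenient. Fix $0<a_0<b_0$ with $\supp\eta\subset[a_0,b_0]$.

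The first ingredient will be the expansion of $\Delta^\alpha(\sigma\,\eta(r^{-2}\widehat\cL))(\pi)$ for $|\alpha|=s$ obtained by iterating the Leibniz rule \eqref{eq_Leibniz}, exactly as in the proofs of Proposition \ref{prop_Leibniz_estimate} and Lemma \ref{lem_sigmak}: it is a finite sum, with bounded coefficients, of products $(\Delta^\beta\sigma)(\tau\otimes\pi)\,(\Delta^\gamma\eta(r^{-2}\widehat\cL))(\pi)$ with $\beta+\gamma=\alpha$ and $\tau$ a tensor product of at most $s$ fundamental representations (with, possibly, harmless identity tensor factors of bounded dimension). Using $\|XY\|_{HS}\leq\|X\|_{\sL}\|Y\|_{HS}$ this yields
$$
\|\Delta^\alpha(\sigma\,\eta(r^{-2}\widehat\cL))(\pi)\|_{HS}
\ \lesssim\ \sum_{\beta+\gamma=\alpha}\ \sum_{\tau}
\|(\Delta^\beta\sigma)(\tau\otimes\pi)\|_{\sL}\,
\|(\Delta^\gamma\eta(r^{-2}\widehat\cL))(\pi)\|_{HS}.
$$
For the first factor, the definition of $M$ gives $\|(\Delta^\beta\sigma)(\rho)\|_{\sL}\leq M(1+\lambda_\rho)^{-|\beta|/2}$ for every $\rho\in\Gh$, and decomposing $\tau\otimes\pi$ into irreducibles and invoking Lemma \ref{lem_lambdapi} Part \eqref{item_lem_lambdapi_eq_lambdarhos} (legitimate since $\tau$ carries at most $s$ fundamental factors) we get $\|(\Delta^\beta\sigma)(\tau\otimes\pi)\|_{\sL}\lesssim M(1+\lambda_\pi)^{-|\beta|/2}$. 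The crucial point is the localisation of the second factor: $(\Delta^\gamma\eta(r^{-2}\widehat\cL))(\pi)$ is a signed combination of the scalar fields $\eta(r^{-2}\widehat\cL)$ evaluated at representations $\varphi^{\otimes\gamma'}\otimes\pi$ ($\gamma'\leq\gamma$) tensored by identities, so it vanishes unless some irreducible $\rho\subset\varphi^{\otimes\gamma'}\otimes\pi$ satisfies $\lambda_\rho\in[a_0r^2,b_0r^2]$; by Lemma \ref{lem_lambdapi} Part \eqref{item_lem_lambdapi_eq_lambdarhos} this forces $\lambda_\pi\asymp r^2$, so that on the support of $\Delta^\gamma\eta(r^{-2}\widehat\cL)$ one has $(1+\lambda_\pi)^{-|\beta|/2}\asymp r^{-|\beta|}$.

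The second ingredient is the size of the cut-off itself. By Proposition \ref{prop_dotHs} Part (1), $\|\eta(r^{-2}\widehat\cL)\|_{\dot H^{g}(\Gh)}\asymp\||x|^{g}\,\eta(r^{-2}\cL)\delta_{e_G}\|_{L^2(G)}$, and writing $\eta(r^{-2}\lambda)=\tilde\eta(t\lambda)$ with $\tilde\eta:=\eta(b_0\,\cdot\,)$ supported in $[0,1]$ and $t:=(b_0r^2)^{-1}\in(0,1]$ (valid once $r\geq r_0\geq b_0^{-1/2}$), the $L^2$ estimate of Theorem \ref{thm_kernelf(L)} Part \eqref{item_thm_kernelf(L)_Lpest_multintL} gives $\|\eta(r^{-2}\widehat\cL)\|_{\dot H^{g}(\Gh)}\lesssim\sqrt t^{\,g-\frac n2}\asymp r^{\frac n2-g}$ uniformly in $r\geq r_0$. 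Substituting both ingredients into the displayed inequality, squaring, and summing $\sum_\pi d_\pi\|\cdot\|_{HS}^2$ over $\pi$ (using $\sum_\pi d_\pi\|(\Delta^\gamma\eta(r^{-2}\widehat\cL))(\pi)\|_{HS}^2\leq\|\eta(r^{-2}\widehat\cL)\|_{\dot H^{|\gamma|}(\Gh)}^2$) gives
$$
\|\Delta^\alpha(\sigma\,\eta(r^{-2}\widehat\cL))\|_{L^2(\Gh:\Sigma_{\varphi^{\otimes\alpha}})}^2
\lesssim M^2\sum_{\beta+\gamma=\alpha} r^{-2|\beta|}\,r^{\,n-2|\gamma|}
= M^2\sum_{\beta+\gamma=\alpha} r^{\,n-2s}\lesssim_s M^2\,r^{\,n-2s}.
$$
Summing over $|\alpha|=s$ yields $\|\sigma\,\eta(r^{-2}\widehat\cL)\|_{\dot H^s(\Gh)}\lesssim M\,r^{\frac n2-s}$, hence $r^{s-\frac n2}\|\sigma\,\eta(r^{-2}\widehat\cL)\|_{\dot H^s(\Gh)}\lesssim M$ for $r\geq r_0$, and Lemma \ref{lem_r0} completes the argument.

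The only real work, and the step I would take most care over, is the iterated Leibniz expansion together with tracking precisely which representation each factor $\Delta^\beta\sigma$ and $\Delta^\gamma\eta(r^{-2}\widehat\cL)$ is evaluated at, since that is exactly what makes Lemma \ref{lem_lambdapi} Part \eqref{item_lem_lambdapi_eq_lambdarhos} applicable on both factors and, in particular, what produces the support condition $\lambda_\pi\asymp r^2$. Once that is pinned down the rest is an exponent count, the whole mechanism resting on the cancellation $r^{\,s-\frac n2}\cdot r^{-|\beta|}\cdot r^{\,\frac n2-|\gamma|}=r^{\,s-|\beta|-|\gamma|}=r^0$; no analytic input beyond the already-cited heat-kernel bounds of Theorem \ref{thm_kernelf(L)} enters.
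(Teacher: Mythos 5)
Your proof is correct and follows essentially the same route as the paper's: reduction to $r\geq r_0$ via Lemma \ref{lem_r0}, iterated Leibniz expansion, spectral localisation of $\eta(r^{-2}\widehat\cL)$ combined with Lemma \ref{lem_lambdapi} to convert $(1+\lambda_\pi)^{-|\beta|/2}$ into $r^{-|\beta|}$, and the $L^2$ kernel bounds of Theorem \ref{thm_kernelf(L)} for $\|\eta(r^{-2}\widehat\cL)\|_{\dot H^{|\gamma|}(\Gh)}\lesssim r^{\frac n2-|\gamma|}$. If anything, you are slightly more careful than the paper in tracking which representation each Leibniz factor is evaluated at and in getting the exponent signs right.
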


\begin{proof}
Let $s\in \bN$ with $s>n/2$.
Let $\eta\in \cD(0,\infty)$.
By Lemmata \ref{lem_r0} and \ref{lem_Sigmas_indepeta}, 
we may assume 
$\supp \ \eta = [\frac 12,2]$.

The Leibniz formula \eqref{eq_Leibniz} implies
\begin{align*}
&\sum_{|\alpha|=s}
\|\Delta^\alpha (\sigma \ \eta(r^{-2}\widehat \cL) )(\pi)\|_{HS(\cH_{\varphi^{\otimes \alpha}}\otimes  \cH_\pi)}
\\&\quad\lesssim_s \sum_{|\alpha_1|+\alpha_2|=s}
\|\Delta^{\alpha_1}\sigma(\pi)\|_{\sL(\cH_{\varphi^{\otimes {\alpha_1}}}\otimes  \cH_\pi)}
\|\Delta^{\alpha_2}\sigma(\pi)\|_{HS(\cH_{\varphi^{\otimes {\alpha_2}}}\otimes  \cH_\pi)}
\end{align*}

By Lemma \ref{lem_lambdapi} Part \eqref{item_lem_lambdapi_eq_lambdarhos}, 
 the supremum $C_0:=\sup |\lambda_\rho-\lambda_\pi| $
 over $\pi,\rho\in \Gh$ and 
 $\alpha_2\in \bN_0^f$, $|\alpha_2|\leq s$
 such that
 $\rho\subset \varphi^{\otimes \alpha_2}\otimes\pi$
is finite.
Hence we have
$$
\Delta^{\alpha_2} \eta(r^{-2}\widehat \cL) (\pi)
=
0
\ \mbox{when} \ \lambda_\pi\not\in [\frac {r^2}2-C_0, 2r^2 +C_0].
$$
We fix $r_0>0$ such that $1< \frac {r^2}2-C_0<2r^2 +C_0$ for all $r>r_0$.
Assuming that 
$$
M_{s'}:=
\max_{|\alpha_1|= s'}
\sup_{\pi\in \Gh}
(1+\lambda_\pi)^{\frac{s'}2} 
\|\Delta^{\alpha_1} \sigma(\pi)\|_{\sL(\cH_{\varphi^{\otimes \alpha_1}} \otimes \cH_\pi)},
$$
is finite for $s'=0,\ldots,s$, 
we have for every $r>r_0$:
\begin{align*}
&\sum_{|\alpha|=s}
\|\Delta^\alpha (\sigma \ \eta(r^{-2}\widehat \cL) )(\pi)\|_{HS(\cH_{\varphi^{\otimes \alpha}}\otimes  \cH_\pi)}
\lesssim_s 
\sum_{|\alpha_2| +s'=s}
M_{s'} r^{s'}
\|\Delta^{\alpha_2}\sigma(\pi)\|_{HS(\cH_{\varphi^{\otimes {\alpha_2}}}\otimes  \cH_\pi)}
\end{align*}
and
\begin{align*}
&\|\sigma \ \eta(r^{-2}\widehat \cL)\|_{\dot H^s(\Gh)}^2
=
\sum_{|\alpha|=s}
\sum_{\pi\in \Gh} d_\pi
\|\Delta^{\alpha} (\sigma \ \eta(r^{-2}\widehat \cL) )(\pi)\|_{HS(\cH_{\varphi}\otimes \ldots\otimes \cH_{\tau_{s}}\otimes \cH_\pi)}^2
\\&\qquad\lesssim_s
\max_{s'=0,\ldots,s}
M_{s'}^2 r^{2s'} \|\eta(r^{-2}\widehat \cL)\|_{\dot H^{s-s'}(\Gh)}^2.
\end{align*}
By Lemma \ref{lem_dotHs_L2} and  Theorem \ref{thm_kernelf(L)} Part (3), 
we have
\begin{align*}
\|\eta(r^{-2}\widehat \cL)\|_{\dot H^{s-s'}(\Gh)}
=
\|\eta(r^{-2}\cL)\delta_{e_G}\|_{L^2(q_1^{s-s'})}
\asymp
\|\eta(r^{-2} \cL)\delta_{e_G}\|_{L^2(|x|^{2(s-s')}dx)}
\lesssim r^{s-s'-\frac n2}.
\end{align*}
We have obtained for any $r>r_0$
$$
\|\sigma \ \eta(r^{-2}\widehat \cL)\|_{\dot H^s(\Gh)}
\lesssim_s
\max_{s'=0,\ldots,s}
M_{s'} r^{s'}r^{s-s'-\frac n2}
=
\max_{s'=0,\ldots,s}
M_{s'} r^{s-\frac n2}.
$$
By Lemma \ref{lem_r0},
this concludes the proof.
\end{proof}

The spectral multiplier in the Laplace-Beltrami operator provides examples 
of multipliers  in $\cM_s$:
\begin{proposition}
\label{prop_cq_prop_f(L)}
Let $G$ be a compact Lie group of dimension $n$.
Let $s'>s>\frac n2$.
For every $f$ locally uniformly in $H^{s'}(\bR)$, 
the spectral multiplier $f (\widehat{\cL} )$
is in $\cM_s$.
Moreover for every non-trivial $\eta,\eta_1\in \cD(0,\infty)$ there exists a constant $C>0$ independent of $f$ such that
$$
\|f (\widehat{\cL} )\|_{\cM_s,\eta}
\leq C
\|f\|_{l.u. H^{s'}(\bR), \eta_1}.
$$
\end{proposition}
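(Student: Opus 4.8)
The plan is to control the two pieces making up $\|f(\widehat{\cL})\|_{\cM_s,\eta}$, namely $\|f(\widehat{\cL})\|_{L^\infty(\Gh:\Sigma)}$ and $\sup_{r>0}r^{s-\frac n2}\|f(\widehat{\cL})\,\eta(r^{-2}\widehat{\cL})\|_{\dot H^s(\Gh)}$, separately. The first is immediate: since $\pi(\cL)=\lambda_\pi\,\id_\pi$ with $\lambda_\pi\geq0$, we have $\|f(\widehat{\cL})\|_{L^\infty(\Gh:\Sigma)}=\sup_{\pi\in\Gh}|f(\lambda_\pi)|\leq\sup_{\lambda\geq0}|f(\lambda)|$, and by the one-dimensional Sobolev embedding applied to the dilates $f(\lambda_0\,\cdot)$ (this is \eqref{eq_supRn_luHs} with $n=1$, valid because $s'>s>\frac n2\geq\frac12$) together with the continuity of $f$ at $0$, this is $\lesssim\|f\|_{l.u. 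H^{s'}(\bR),\eta_1}$; the choice of cut-off is irrelevant by the $\eta$-independence of the locally uniform norm.

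For the second piece I would first reduce the problem via the two normalisation lemmata: by Lemma \ref{lem_Sigmas_indepeta} we may work with any convenient $\eta\in\cD(0,\infty)$, and by Lemma \ref{lem_r0} it suffices to bound $r^{s-\frac n2}\|f(\widehat{\cL})\,\eta(r^{-2}\widehat{\cL})\|_{\dot H^s(\Gh)}$ for $r>r_0$ with $r_0>1$ fixed. I then localise dyadically. Fix $\{\eta_j\}_{j\in\bZ}$ with $\eta_j=\eta_0(2^{-j}\,\cdot)$, $\sum_j\eta_j\equiv1$ on $(0,\infty)$ and $\supp\eta_0$ a fixed compact subinterval of $(0,\infty)$. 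Since the symbol of $\eta(r^{-2}\widehat{\cL})$ is supported on $\{\pi:\lambda_\pi\in r^2\supp\eta\}$ and that of $\eta_j(\widehat{\cL})$ on $\{\pi:\lambda_\pi\in 2^j\supp\eta_0\}$, the product $\eta_j(\widehat{\cL})\,\eta(r^{-2}\widehat{\cL})$ vanishes unless $2^j\asymp r^2$, i.e.\ unless $j$ lies in a window $J_r\subset\bZ$ whose cardinality is bounded independently of $r$. Hence $f(\widehat{\cL})\,\eta(r^{-2}\widehat{\cL})=\sum_{j\in J_r}\bigl(f\,\eta_j\,\eta(r^{-2}\,\cdot)\bigr)(\widehat{\cL})$, a sum of boundedly many terms.

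For each $j\in J_r$ I write $g_{r,j}(\mu):=f(2^j\mu)\,\eta_0(\mu)\,\eta(2^jr^{-2}\mu)$, so that $\bigl(f\,\eta_j\,\eta(r^{-2}\,\cdot)\bigr)(\widehat{\cL})=g_{r,j}(2^{-j}\widehat{\cL})$ with $g_{r,j}$ supported in the fixed compact set $\supp\eta_0$; after an affine change of spectral variable we may assume the rescaled symbol is supported in $(0,1)$ and evaluated at $t\widehat{\cL}$ with $t\asymp r^{-2}\in(0,1)$. The key input is then the scaled homogeneous Sobolev estimate for compactly supported multipliers of $\cL$, namely $\|g(t\widehat{\cL})\|_{\dot H^s(\Gh)}\lesssim t^{\frac12(s-\frac n2)}\|g\|_{H^{s'}(\bR)}$ for $g$ supported in $[0,1]$ and $t\in(0,1)$; for $s'>s>\frac n2$ this is exactly the estimate established in the proof of Proposition \ref{prop_f(L)} (obtained there from Lemma \ref{lem_prop_f(L)} Part (1) by relaxing the Sobolev index through interpolation with the trivial $\dot H^0=L^2(\Gh:\Sigma)$ bound of Lemma \ref{lem_prop_f(L)} Part (2), using Proposition \ref{prop_dotHs}\eqref{item_interpolation_prop_dotHs}). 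This gives $\|g_{r,j}(2^{-j}\widehat{\cL})\|_{\dot H^s(\Gh)}\lesssim r^{-(s-\frac n2)}\|g_{r,j}\|_{H^{s'}(\bR)}$. Finally, for $j\in J_r$ and $r>r_0$ the cut-off factor $\mu\mapsto\eta_0(\mu)\,\eta(2^jr^{-2}\mu)$ ranges over a bounded subset of $\cD(0,\infty)$ (fixed compact support, uniformly bounded derivatives), so multiplication by it is bounded on $H^{s'}(\bR)$ uniformly, whence $\|g_{r,j}\|_{H^{s'}(\bR)}\lesssim\|f(2^j\,\cdot)\,\chi\|_{H^{s'}(\bR)}\leq\|f\|_{l.u. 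H^{s'}(\bR),\chi}\asymp\|f\|_{l.u. H^{s'}(\bR),\eta_1}$ for a fixed $\chi\in\cD(0,\infty)$ equal to $1$ on $\supp\eta_0$. Summing over the boundedly many $j\in J_r$ and multiplying by $r^{s-\frac n2}$ yields the required bound, uniformly in $r>r_0$.

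The main obstacle is producing the decay factor $r^{-(s-\frac n2)}$ \emph{together with} the sharp Sobolev threshold $s'>s$: Proposition \ref{prop_f(L)} and Proposition \ref{prop_f(L)_sup} Part (2) are scale-free and carry no decay, while Proposition \ref{prop_f(L)_sup} Part (1) and Lemma \ref{lem_prop_f(L)} Part (1) carry the decay but only for symbols supported in $[0,1]$ and, for the latter, with a lossy index $s'>s+\frac12$. The dyadic reduction to compactly supported blocks cures the support restriction, and the interpolation between Lemma \ref{lem_prop_f(L)} Parts (1) and (2) (as in the proof of Proposition \ref{prop_f(L)}) cures the index; once these are available, all remaining points — the boundedness of the number of active dyadic blocks, the uniform harmlessness of the smooth cut-off factors, and the $\eta$-independence of both norms — are routine and uniform in $r$.
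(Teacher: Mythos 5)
Your proof is correct and follows essentially the same route as the paper: the $L^\infty$ part via the one‑dimensional Sobolev embedding, and the localised $\dot H^s$ part by feeding $f\cdot\eta(r^{-2}\cdot)$ into the machinery behind Proposition \ref{prop_f(L)}, with the smooth cut‑off factors absorbed by continuity of multiplication on $H^{s'}(\bR)$. The one point where you are more explicit than the paper is actually needed: the paper cites Proposition \ref{prop_f(L)} as stated, whose right-hand side carries no $r$-decay, and leaves implicit that the weight $r^{s-\frac n2}$ is cancelled because the dyadic sum inside that proof collapses to the scales $2^j\asymp r^2$; your extraction of the scaled estimate $\|g(t\widehat\cL)\|_{\dot H^s(\Gh)}\lesssim t^{\frac12(s-\frac n2)}\|g\|_{H^{s'}(\bR)}$ for $g$ supported in $[0,1]$ makes that cancellation visible.
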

The norm $\|\cdot\|_{l.u. H^{s'}(\bR), \eta}$
was defined via
\eqref{eq_luHsRn} on $\bR^n$.

\begin{proof}
The properties of the functional calculus  and \eqref{eq_supRn_luHs}
 imply
$$
\|f (\widehat{\cL} )\|_{L^\infty(\Gh:\Sigma)}\leq \|f\|_{L^\infty(\bR)}\lesssim 
\|f\|_{l.u. H^{s'}(\bR), \eta}.
$$
Let us apply Proposition \ref{prop_f(L)} with a function $\eta_1\in \cD(0,\infty)$ to the function $\lambda\mapsto f(\lambda)\eta(r^{-2}\lambda)$:
$$
\| f (\widehat{\cL} ) \ \eta (r^{-2} \widehat\cL)\|_{\dot H^s(\Gh)}
\lesssim 
\sup_{r_1>0}
\|f (r_1\ \cdot) \eta (r^{-2}r_1 \ \cdot) \eta_1\|_{H^{s'}(\bR)}.
$$
Since the mapping $\psi \mapsto \psi \chi$ is continuous on $H^{s'}(\bR)$ for any function $\chi\in \cD(\bR)$, 
we obtain easily
$$
\sup_{r>0}
\| f (\widehat{\cL} ) \ \eta (r^{-2} \widehat\cL)\|_{\dot H^s(\Gh)}
\lesssim 
\sup_{r,r_1>0}
\|f (r_1\ \cdot) \eta (r^{-2}r_1 \ \cdot) \eta_1\|_{H^{s'}(\bR)}
\lesssim 
\sup_{r>0}
\|f (r_1\ \cdot) \eta_1\|_{H^{s'}(\bR)}.
$$
Therefore, we have: 
$$
\|f (\widehat{\cL} )\|_{\cM_s,\eta}
\leq C
\|f\|_{l.u. H^{s'}(\bR), \eta_1}.
$$
We conclude with the equivalence of two norms $\|\cdot\|_{l.u. H^{s'}(\bR), \eta}$ for two non trivial functions $\eta\in \cD(0,\infty)$.
\end{proof}

\subsection{Multiplier theorems on $G$}
\label{subsec_multG}

The main result of the paper are  the analogues of both Mihlin and H\"ormander-type conditions
for the Fourier multiplier theorem.

We start with  the analogue of the H\"ormander-type condition.
This will use the space $\cM_s$ defined in Section \ref{subsec_Sigmas}: 
\begin{theorem}[Hormander-type multiplier theorem]
\label{thm_hormander}
Let $G$ be a compact Lie group of dimension $n$.
If the multiplier  $\sigma=\{\sigma(\pi),\pi\in \Gh\}$ 
is in $\cM_s$ for some $s>n/2$
then 
the Fourier multiplier operator $\Op(\sigma)$ is bounded on $L^p(G)$ for any $1<p<\infty$, and $L^1-L^{1,\infty}$ for $p=1$.
Furthermore, for $p\in (1,\infty)$,
$$
\|\Op(\sigma)\|_{\sL(L^p(G))}
\leq C_{\eta,p,G} \|\sigma\|_{\cM_{s,\eta} }
$$
for some non-zero $\eta\in \cD(0,\infty)$,
where the constant $C_{\eta,p,G}$ depends on $\eta$, $p$ and the structure of $G$
but not on $\sigma$.
We have a similar bound for $p=1$.
\end{theorem}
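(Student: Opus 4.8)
The plan is to follow the classical Calderón--Zygmund scheme adapted to the space of homogeneous type $(G, d, dx)$, exactly as in Coifman--Weiss \cite{coifman+weiss}. Since $\Op(\sigma)$ is left-invariant and $\sigma \in L^\infty(\Gh:\Sigma)$, equation \eqref{eq_L2bdd_LinftyGh} already gives boundedness on $L^2(G)$ with norm $\|\sigma\|_{L^\infty(\Gh:\Sigma)}\le\|\sigma\|_{\cM_s}$; moreover $\Op(\sigma)=T_\kappa$ is right-convolution with a kernel $\kappa=\cF_G^{-1}\sigma\in\cD'(G)$. By the Calderón--Zygmund machinery on spaces of homogeneous type, it suffices to establish the H\"ormander integral condition on the kernel away from the singularity, namely
\begin{equation}
\label{eq_plan_CZ}
\sup_{y\in G}\ \sup_{R>0}\ \int_{|x|\ge c R}\ |\kappa(y^{-1}x) - \kappa(x)|\, dx \le C\,\|\sigma\|_{\cM_s},
\qquad |y|<R,
\end{equation}
for a suitable geometric constant $c>1$; from \eqref{eq_plan_CZ} together with $L^2$-boundedness one deduces weak $(1,1)$ boundedness and then, by interpolation and duality, $L^p$-boundedness for all $1<p<\infty$.

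The core of the argument is a dyadic (Littlewood--Paley) decomposition of $\sigma$ relative to the Laplace--Beltrami operator. Fix $\eta\in\cD(0,\infty)$ with $\sum_{j\in\bZ}\eta(2^{-j}\lambda)=1$ on $(0,\infty)$, set $\eta_j=\eta(2^{-j}\cdot)$ and write $\sigma = \sigma(1_\Gh)\delta_{1_\Gh} + \sum_{j>j_0}\sigma_j$ with $\sigma_j = \sigma\,\eta_j(\widehat\cL)$, each $\sigma_j$ being the Fourier transform of a smooth function $\kappa_j = \cF_G^{-1}\sigma_j$ supported spectrally in dyadic annuli. The membership $\sigma\in\cM_{s,\eta}$ means precisely $\|\sigma_j\|_{\dot H^s(\Gh)} \lesssim 2^{-j(s-n/2)/2}\|\sigma\|_{\cM_s}$ after rescaling $r^2\asymp 2^j$. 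By Theorem \ref{thm_dotHs_q1} (or Lemma \ref{lem_dotHs_L2}) this translates into the weighted $L^2$-estimate $\||x|^s \kappa_j\|_{L^2(G)} \lesssim 2^{-j(s-n/2)/2}\|\sigma\|_{\cM_s}$, while the trivial bound $\|\sigma_j\|_{L^2(\Gh:\Sigma)}\le\|\sigma\|_{L^\infty(\Gh:\Sigma)}\|\eta_j(\widehat\cL)\|_{L^2(\Gh:\Sigma)}\lesssim \|\sigma\|_{\cM_s}\,2^{jn/4}$ gives $\|\kappa_j\|_{L^2(G)}\lesssim 2^{jn/4}\|\sigma\|_{\cM_s}$. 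Interpolating these two, Cauchy--Schwarz over the ball $B(2^{-j/2})$ and over dyadic annuli outside it yields, for $s>n/2$, both $\|\kappa_j\|_{L^1(G)}\lesssim\|\sigma\|_{\cM_s}$ (uniformly in $j$) and the quantitative tail bound
\begin{equation}
\label{eq_plan_tail}
\int_{|x|\ge 2R}|\kappa_j(x)|\,dx \lesssim \|\sigma\|_{\cM_s}\,\min\!\big(1,\ (2^{j/2}R)^{-(s-n/2)}\big).
\end{equation}
For the smoothness/regularity half of \eqref{eq_plan_CZ} one needs in addition a gradient estimate: applying the same reasoning to $X\sigma_j$ — equivalently using that $\|\,|x|^{s}\, X\kappa_j\|_{L^2}$ gains a factor $2^{j/2}$ — one gets $\int_{|x|\ge 2R}|\nabla\kappa_j(x)|\,dx\lesssim\|\sigma\|_{\cM_s}\,2^{j/2}\min(1,(2^{j/2}R)^{-(s-n/2)})$, and then $|\kappa_j(y^{-1}x)-\kappa_j(x)|\lesssim |y|\sup_{|z|\le|y|}|\nabla\kappa_j(z^{-1}x)|$ for $|y|<R\le |x|/2$.

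Summing \eqref{eq_plan_tail} and its gradient analogue over $j>j_0$ and splitting the range at $2^{j/2}\sim R^{-1}$ gives a convergent geometric series (here $s>n/2$ is exactly what is needed for convergence), producing \eqref{eq_plan_CZ} with constant $C\|\sigma\|_{\cM_s}$; the $\delta_{1_\Gh}$ term contributes a smooth bounded kernel and is harmless. This verifies the H\"ormander kernel condition; the abstract Calderón--Zygmund theorem of \cite{coifman+weiss} on the space of homogeneous type $G$ then delivers the weak $(1,1)$ bound, and Marcinkiewicz interpolation with the $L^2$ bound gives $L^p$ for $1<p\le 2$, with the range $2<p<\infty$ following by duality (the transpose multiplier $\sigma^*$ satisfies the same hypotheses). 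The main obstacle I anticipate is the bookkeeping in the interpolation step that converts the two $L^2$-type bounds on $\kappa_j$ (one weighted, one plain) into the uniform $L^1$ bound and the decaying tail estimate \eqref{eq_plan_tail} with the correct powers — this is where the hypothesis $s>n/2$ is consumed and where one must be careful that all constants depend only on $\eta$, $p$ and the structure of $G$, not on $\sigma$. A secondary technical point is justifying the gradient estimates, i.e.\ that $\cM_s$ control of $\sigma$ indeed controls $\|\,|x|^s X\kappa_j\|_{L^2}$ with the expected $2^{j/2}$ loss; this uses the Leibniz-type formula \eqref{eq_Leibniz} together with Lemma \ref{lem_lambdapi} to commute $X$ past the spectral cutoff $\eta_j(\widehat\cL)$ up to bounded error.
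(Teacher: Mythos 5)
Your proposal is correct and follows essentially the same route as the paper: a dyadic decomposition $\sigma_j=\sigma\,\eta_j(\widehat\cL)$, translation of the $\cM_s$ hypothesis into the weighted bound $\|\,|x|^s\kappa_j\|_{L^2}\lesssim 2^{-\frac j2(s-\frac n2)}$ via the isometry of $\dot H^s(\Gh)$ with $L^2(|x|^{2s}dx)$, the trivial bound $\|\kappa_j\|_{L^2}\lesssim 2^{jn/4}$, Cauchy--Schwarz for the far tails, a gradient/Taylor estimate for the near regime (which the paper implements by writing $\kappa_j=\kappa_j*\tilde\eta_j(\cL)\delta_{e_G}$ and invoking the weak Leibniz convolution lemma together with the heat-kernel estimates), and finally the Coifman--Weiss singular-integral theorem with interpolation and duality. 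The two technical points you flag are exactly the ones the paper resolves with Lemma \ref{lem_veryweakLeib} and Theorem \ref{thm_kernelf(L)}, so no gap.
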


As in the case of Fourier multipliers on $\bR^n$,
Theorem \ref{thm_hormander} is sharp in $s$
for Fourier multipliers even when restricted to spectral multipliers in $\cL$.
Indeed  by Theorem \ref{thm_hormander} and 
Proposition \ref{prop_cq_prop_f(L)} we have
 $$
\forall s'>n/2, \ \forall \alpha\in \bR\qquad
 \|\cL^{i\alpha}\|_{\sL(L^1(G), L^{1,\infty}(G))} 
\lesssim_{s',G}
\|\lambda^{i\alpha}\|_{l.u.H^{s'}(\bR),\eta}
 \lesssim_{s',G} (1+|\alpha|)^{s'}
$$
whereas the arguments of Sikora and Wright \cite[Theorem 1]{sikora+wright} show that  
$$
\forall \alpha\in \bR\qquad
 \|\cL^{i\alpha}\|_{\sL(L^1(G), L^{1,\infty}(G))}  
\gtrsim_{G} (1+|\alpha|)^{n/2}.
$$

Note that Theorem \ref{thm_hormander} and 
Proposition \ref{prop_cq_prop_f(L)} yield 
the H\"ormander theorem for spectral multipliers of a Laplace-beltrami $\cL$ on $G$, 
but this result is a particular case of the celebrated result of Seeger and Sogge valid for a much larger class of operators on any compact manifold  \cite{seeger+sogge}.

\medskip

We now give the Mihlin-type Fourier multiplier theorem
which as in the case of $\bR^n$
can be obtained from 
the H\"ormander version:
indeed  Theorem \ref{thm_mihlin} follows readily from Theorem \ref{thm_hormander} together with 
Lemma \ref{lem_Sigmas}:

\begin{theorem}[Mihlin-type multiplier theorem]
\label{thm_mihlin}
Let $G$ be a compact Lie group of dimension $n$.
Let $\sigma\in L^\infty(\Gh,\Sigma)$. 
If there exists a constant $M\geq 0$ 
 such that 
$$
\forall \pi\in \Gh\, ,\quad
\forall \alpha \in \bN_0^f, \ |\alpha|\leq [n/2]+1
\qquad
\|\Delta^\alpha \sigma(\pi)\|_{\sL(\cH_{\varphi^{\otimes \alpha}}\otimes\cH_\pi)}
\leq M (1+\lambda_\pi)^{-\frac{|\alpha|}2} ,
$$
then  the Fourier multiplier operator $\Op(\sigma)$ is bounded on $L^p(G)$ for any $1<p<\infty$, and $L^1-L^{1,\infty}$ for $p=1$.
Furthermore, for $p\in (1,\infty)$,
$$
\|\Op(\sigma)\|_{\sL(L^p(G))}
\leq C_{p,G} \left(\|\sigma\|_{L^\infty(\Gh:\Sigma) }+ \max_{s'=1,\ldots,s}M_{s'}
\right).
$$
where the constant $C_{p,G}$ depends on $p$ and the structure of $G$
but not on $\sigma$.
We have a similar bound for $p=1$.
\end{theorem}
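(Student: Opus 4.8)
The plan is to obtain Theorem~\ref{thm_mihlin} as an immediate consequence of the H\"ormander-type Theorem~\ref{thm_hormander}: all that has to be done is to recognise the pointwise hypothesis as the statement that $\sigma$ lies in $\cM_s$ for a suitable integer $s>n/2$, and this is exactly what Lemma~\ref{lem_Sigmas} provides. So the first step is to set $s:=[n/2]+1$. Then $s\in\bN$, and checking the two cases according to the parity of $n$ shows $s>n/2$; hence the space $\cM_s=\cM_{s,\eta}$ of Section~\ref{subsec_Sigmas} is well defined and both Lemma~\ref{lem_Sigmas} (which requires $s\in\bN$) and Theorem~\ref{thm_hormander} (which requires $s>n/2$) are available for this value of $s$.

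The second step is to verify that $\sigma\in\cM_s$. With the notation $M_{s'}:=\max_{|\alpha|=s'}\sup_{\pi\in\Gh}(1+\lambda_\pi)^{s'/2}\|\Delta^\alpha\sigma(\pi)\|_{\sL(\cH_{\varphi^{\otimes\alpha}}\otimes\cH_\pi)}$ from the proof of Lemma~\ref{lem_Sigmas}, the assumed inequalities give $M_{s'}\le M$ for $1\le s'\le s$, while $M_0=\sup_{\pi\in\Gh}\|\sigma(\pi)\|_{\sL(\cH_\pi)}=\|\sigma\|_{L^\infty(\Gh:\Sigma)}$ is finite by hypothesis. Thus the maximum on the right-hand side of Lemma~\ref{lem_Sigmas} is finite and bounded by $\|\sigma\|_{L^\infty(\Gh:\Sigma)}+\max_{1\le s'\le s}M_{s'}$, so Lemma~\ref{lem_Sigmas} yields $\sigma\in\cM_{s,\eta}$ together with
\[
\|\sigma\|_{\cM_s,\eta}\ \le\ C_{s,G,\eta}\Bigl(\|\sigma\|_{L^\infty(\Gh:\Sigma)}+\max_{s'=1,\ldots,s}M_{s'}\Bigr).
\]

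The third step is to invoke Theorem~\ref{thm_hormander}: since $s>n/2$ and $\sigma\in\cM_s$, the operator $\Op(\sigma)$ is bounded on $L^p(G)$ for every $1<p<\infty$ and of weak type $(1,1)$, with $\|\Op(\sigma)\|_{\sL(L^p(G))}\le C_{\eta,p,G}\|\sigma\|_{\cM_s,\eta}$ and a similar bound for $p=1$. Combining this with the inequality of the second step gives the stated estimate, with $C_{p,G}$ absorbing the dependence on a fixed choice of $\eta$.

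No serious obstacle is expected here: the Calder\'on--Zygmund machinery in a space of homogeneous type and the kernel estimates for functions of $\cL$ have all been packaged into Theorem~\ref{thm_hormander} and Lemma~\ref{lem_Sigmas}, and the equivalence of the norms $\|\cdot\|_{\cM_s,\eta}$ for different $\eta$ is already available (Lemma~\ref{lem_Sigmas_indepeta}), so no particular $\eta$ needs to be tracked. The only point demanding a little care is the compatibility of the two constraints on $s$ — integrality, so that the finite family $\{\Delta^\alpha:\ |\alpha|\le s\}$ occurring in the hypothesis makes sense and Lemma~\ref{lem_Sigmas} can be applied, versus $s>n/2$, needed for Theorem~\ref{thm_hormander} — and the value $s=[n/2]+1$ is precisely the smallest integer meeting both.
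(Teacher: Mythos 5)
Your proposal is correct and follows exactly the route the paper indicates: the paper states that Theorem~\ref{thm_mihlin} "follows readily from Theorem~\ref{thm_hormander} together with Lemma~\ref{lem_Sigmas}", and your argument — taking $s=[n/2]+1$, bounding $\|\sigma\|_{\cM_s,\eta}$ via Lemma~\ref{lem_Sigmas} by $\|\sigma\|_{L^\infty(\Gh:\Sigma)}+\max_{s'}M_{s'}$, and then invoking the H\"ormander-type theorem — is precisely that deduction, with the compatibility of the constraints $s\in\bN$ and $s>n/2$ correctly checked.
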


Note that Theorem \ref{thm_mihlin} is sharper than the main result in  \cite{ruzhansky+wirth}
of  Ruzhansky and Wirth who obtain a similar result 
but with $s'$  being an even integer strictly greater than $n/2$.
Furthermore the results of \cite{ruzhansky+wirth} applied to the case of the torus $\bT$ do not recover the original 1939 result of Marcinkiewicz
(Theorem \ref{thm_marcinkiewicz}), 
whereas it will be the case for the  Marcinkiewicz type theorem
proved   in Section \ref{subsec_marcinkiewicz}, 
as a consequence of (the proof of)
Theorem \ref{thm_hormander}.

\subsection{Proof of Theorem \ref{thm_hormander} }
\label{subsec_pf_thm_hormander}

Again as in the case of $\bR^n$, 
to show the H\"ormander version of our Fourier multiplier theorem 
(Theorem \ref{thm_hormander}),
it suffices to prove the case `$r=2^{j/2}$', 
that is:

\begin{proposition}
\label{prop_thm_hormander}
Let $G$ be a compact Lie group of dimension $n$.
Let $\sigma=\{\sigma(\pi),\pi\in \Gh\}\in L^\infty(\Gh:\Sigma)$ be a bounded multiplier. 
If there exists a constant $C_0>0$ satisfying
$$
\forall j\in \bZ \qquad
\| \sigma \ \eta (2^{-j} \widehat\cL) \|_{\dot H^s(\Gh)}
\leq C_0 2^{\frac j2 (\frac n2 -s)},
$$
for some $s>n/2$ and some non-zero $\eta\in \cD(0,\infty)$,
then 
the Fourier multiplier operator $\Op(\sigma)$ is bounded on $L^p(G)$ for any $1<p<\infty$, and $L^1-L^{1,\infty}$ for $p=1$.
Furthermore, for $p\in (1,\infty)$,
$$
\|\Op(\sigma)\|_{\sL(L^p(G))}
\leq C_{p,G} \left(\|\sigma\|_{L^\infty(\Gh:\Sigma) }+ C_0
\right).
$$
where the constant $C_{p,G}$ depends on $p$ and the structure of $G$
but not on $\sigma$.
We have a similar bound for $p=1$.
\end{proposition}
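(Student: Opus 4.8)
The strategy is the classical Calderón–Zygmund argument adapted to the space of homogeneous type $(G,d,dx)$, exactly as in Coifman–Weiss. Since $\Op(\sigma)$ is bounded on $L^2(G)$ (because $\sigma\in L^\infty(\Gh:\Sigma)$, by \eqref{eq_L2bdd_LinftyGh}), it suffices to prove the weak $(1,1)$ estimate $\|\Op(\sigma)\|_{\sL(L^1(G),L^{1,\infty}(G))}\lesssim \|\sigma\|_{L^\infty(\Gh:\Sigma)}+C_0$; interpolation then gives $L^p$-boundedness for $1<p\le 2$, and duality (the adjoint of $\Op(\sigma)$ has symbol $\sigma(\pi)^*$, with the same $\cM_s$-norm) gives the range $2\le p<\infty$. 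The operator $\Op(\sigma)$ is a right-convolution operator $T_\kappa\phi=\phi*\kappa$ with $\kappa=\cF_G^{-1}\sigma\in\cD'(G)$, so the task reduces to verifying a Hörmander-type integral condition on the kernel $\kappa$ away from the diagonal.

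\textbf{Dyadic decomposition of the kernel.} First I would fix a dyadic partition of unity $\sum_{j\in\bZ}\eta_j=1$ on $(0,\infty)$ with $\eta_j=\eta(2^{-j}\cdot)$, $\eta\in\cD(0,\infty)$, together with the low-frequency piece corresponding to the trivial representation; since $\spec(\cL)$ has no accumulation point, only $j\ge j_0$ contribute. Write $\sigma=\sum_{j}\sigma_j$ with $\sigma_j:=\sigma\,\eta(2^{-j}\widehat\cL)$ and let $\kappa_j:=\cF_G^{-1}\sigma_j$, so $\kappa=\kappa_{-\infty}+\sum_{j\ge j_0}\kappa_j$ where $\kappa_{-\infty}$ is a smooth function coming from the finite-dimensional $0$-eigenspace contribution. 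The hypothesis says $\|\sigma_j\|_{\dot H^s(\Gh)}\le C_0\,2^{\frac j2(\frac n2-s)}$, and by Theorem~\ref{thm_dotHs_q1} (or Proposition~\ref{prop_dotHs}(1)) this is $\|\,|x|^s\kappa_j\|_{L^2(G)}\le C_0\,2^{\frac j2(\frac n2-s)}$ up to a constant. Likewise $\|\kappa_j\|_{L^2(G)}=\|\sigma_j\|_{L^2(\Gh:\Sigma)}\le\|\sigma\|_{L^\infty(\Gh:\Sigma)}\|\eta(2^{-j}\widehat\cL)\|_{L^2(\Gh:\Sigma)}\lesssim\|\sigma\|_{L^\infty(\Gh:\Sigma)}2^{jn/4}$ by Theorem~\ref{thm_kernelf(L)}(3). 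These two bounds, combined by Cauchy–Schwartz on the annulus $\{|x|\sim 2^{-j/2}\}$ and on its complement, give the $L^1$ control of $\kappa_j$ on dyadic shells: for each $j$,
\begin{equation*}
\int_{|x|>2R}|\kappa_j(x)|\,dx\ \lesssim\ \big(\|\sigma\|_{L^\infty(\Gh:\Sigma)}+C_0\big)\,\min\!\big(1,(2^{j/2}R)^{-(s-n/2)}\big),
\end{equation*}
and summing over $j\ge j_0$ yields $\int_{|x|>2R}|\kappa(x)-\kappa_{-\infty}(x)|\,dx\lesssim \|\sigma\|_{L^\infty}+C_0$, with geometric decay that will absorb the translation. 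This is the point where $s>n/2$ is used: the exponent $s-n/2$ must be strictly positive for the series over the ``far'' regime to converge.

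\textbf{The regularity (Hörmander) estimate and conclusion.} The decisive estimate is the cancellation condition: there is $A>0$ (depending only on $G,s,\eta$) with
\begin{equation*}
\sup_{y\in G}\ \int_{d(x,e_G)>A\,d(y,e_G)}\big|\kappa(y^{-1}x)-\kappa(x)\big|\,dx\ \le\ A\big(\|\sigma\|_{L^\infty(\Gh:\Sigma)}+C_0\big).
\end{equation*}
I would prove this shell by shell for the $\kappa_j$: on the shell $|x|\sim 2^k$ one uses the mean value / fundamental theorem of calculus on $G$ to write $\kappa_j(y^{-1}x)-\kappa_j(x)$ as an integral of a left-invariant derivative $X\kappa_j$ along a path from $x$ to $y^{-1}x$ of length $\lesssim d(y,e_G)$, and then estimates $\|\,|x|^{s}X\kappa_j\|_{L^2}$. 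Here the key is that $X\kappa_j$ is again (essentially) a dyadic piece — its symbol is $\cF_G(X)\sigma_j$, and since $\cF_G(X)\eta(2^{-j}\widehat\cL)$ has symbol supported where $\lambda_\pi\sim 2^j$ with operator norm $\lesssim 2^{j/2}$ (because $\|\pi(X)\|\lesssim(1+\lambda_\pi)^{1/2}$, cf. \eqref{eq_bdsigma_(1+lambda)} and Lemma~\ref{lem_lambdapi}), one gains a factor $2^{j/2}$ in the derivative estimate exactly matched by the length $d(y,e_G)$ of the path when $d(y,e_G)\lesssim 2^{-j/2}$; for $d(y,e_G)\gtrsim 2^{-j/2}$ one just uses the triangle inequality $|\kappa_j(y^{-1}x)-\kappa_j(x)|\le|\kappa_j(y^{-1}x)|+|\kappa_j(x)|$ and the $L^1$-shell bounds above. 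Summing the resulting geometric series in $j$ (over both regimes, split at $2^{-j/2}\sim d(y,e_G)$) gives the claim; the smooth remainder $\kappa_{-\infty}$ is harmless since it is a fixed smooth function. With the $L^2$-boundedness and this Hörmander integral condition in hand, the Calderón–Zygmund machinery on the space of homogeneous type $(G,d,dx)$ (Coifman–Weiss) delivers the weak $(1,1)$ bound, hence by Marcinkiewicz interpolation the $L^p$ bound for $1<p\le 2$, and by duality for $2\le p<\infty$.

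\textbf{Main obstacle.} The routine parts are the dyadic bookkeeping and the interpolation/duality; the delicate point is the regularity estimate — carrying out the ``fundamental theorem of calculus on $G$'' bound cleanly and, in particular, showing that differentiating a dyadic piece $\kappa_j$ costs exactly the factor $2^{j/2}$ one needs, uniformly in $j$. This rests on the structural facts about $\lambda_\pi$ from Lemma~\ref{lem_lambdapi} (that tensoring with fundamental representations perturbs $\lambda_\pi$ boundedly, so that Leibniz-type manipulations of $\Delta^\alpha(\sigma\,\eta(2^{-j}\widehat\cL))$ stay localized at frequency $2^j$) together with the heat-kernel estimates of Theorem~\ref{thm_kernelf(L)} to control the relevant $L^1$ and $L^2$ norms of $|x|^s\kappa_j$ and $|x|^sX\kappa_j$. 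Once those localization-and-derivative estimates are set up, summing the geometric series and invoking \cite{coifman+weiss} is standard.
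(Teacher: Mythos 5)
Your proposal follows essentially the same route as the paper's proof: reduction to the Coifman--Weiss Hörmander integral condition via $L^2$-boundedness, the dyadic decomposition $\sigma_j=\sigma\,\eta(2^{-j}\widehat\cL)$ with kernels $\kappa_j$, the split at the scale $2^{-j/2}\sim|h|$ with triangle inequality plus weighted Cauchy--Schwarz in the far regime and a mean-value/gradient estimate costing a factor $2^{j/2}$ in the near regime. You also correctly identify the one genuinely delicate step, namely controlling $\|\,|x|^s\,\nabla\kappa_j\|_{L^2}$ uniformly in $j$, which the paper carries out by writing $\kappa_j=\kappa_j*\tilde\eta_j(\cL)\delta_{e_G}$ and invoking Lemma \ref{lem_veryweakLeib} together with the kernel estimates of Theorem \ref{thm_kernelf(L)}.
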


In this section, we prove Proposition \ref{prop_thm_hormander}, 
thereby proving the main result of this paper, i.e. Theorem \ref{thm_hormander}.
The method is classical, and we allow ourselves to sketch the ideas rather than dwelling on technical details.

We choose the function $\eta\in \cD(0,\infty)$ yielding a dyadic decomposition:
 $$
\supp\, \eta\subset (1,2)\quad\mbox{and}\quad
\forall \lambda>0\qquad \sum_{j\in \bZ} \eta(2^{-j}\lambda) =1,
$$
and we set $\eta_j(\lambda):=\eta(2^{-j}\lambda)$. 
Note that since the spectrum of $\cL$ is a discrete subset of $[0,\infty)$
with no accumulation point, 
there exists $j_0$ depending on $\eta$ and $G$, such that $\eta_j(\widehat\cL)=0$
for all $j<j_0$.

Note that if $\sigma=\delta_{\pi=1_{\Gh}}$ in the sense that 
$\sigma(\pi)=0$ unless $\pi=1_{\Gh}$ and $\sigma(1_{\Gh})=1$, 
then the inversion formula yields
$\Op(\delta_{\pi=1_{\Gh}})\phi\equiv\int_G \phi$
for any  $\phi\in \cD(G)$
so that $\|\Op(\delta_{\pi=1_{\Gh}})\|_{\sL(L^p(G))} \leq 1$ for any $p\in [1,\infty)$.
Therefore it suffices to prove Proposition \ref{prop_thm_hormander} 
for $\sigma\in L^\infty(\Gh:\Sigma)$ 
satisfying $\sigma(1_{\Gh})=0$.

Let $\sigma\in L^\infty(\Gh:\Sigma)$ such that $\sigma(1_{\Gh})=0$.
We assume that there exist $s>n/2$ and $C_0>0$ such that
$$
\forall j\in \bZ\qquad \|\sigma_j\|_{\dot H^s(\Gh)}
\leq C_0 2^{-\frac j2 (s-\frac n2)},
$$
where 
$\sigma_j :=  \sigma \ \eta_j(\widehat\cL)$.
The sum $\sum_j \Op(\sigma_j)$ converges to $\Op(\sigma)$ 
in the strong operator topology of $\sL(L^2(G))$.

Let $\kappa, \kappa_j\in \cD'(G)$ be such that 
$\sigma=\widehat\kappa$
and 
$\sigma_j=\widehat\kappa_j$.
The sum $\sum_j \kappa_j$ converges to $\kappa$ in $\cD'(G)$.
Since $\kappa_j\in L^2_{fin}(\Gh:\Sigma)$ is smooth,
by hypothesis and Proposition \ref{prop_dotHs} Part (1),  $\kappa_j\in L^2(|x|^{2s}dx)$ with
\begin{equation}
\label{eq_kappaj_hyp}
\|\kappa_j\|_{L^2(|x|^{2s}dx)}
\asymp\|\sigma_j\|_{\dot H^s(\Gh)}
\leq C_0 2^{\frac j2 (\frac n2 -s)}.
\end{equation}
So $\sum_j \|\kappa_j\|_{L^2(|x|^{2s}dx)}<\infty$
and $\kappa$ coincides with a locally integrable function on $G\backslash\{e_G\}$ in  $L^2(|x|^{2s}dx)$, 
see Proposition \ref{prop_dotHs} Part (1).
The sum $\sum_j \kappa_j$ converges to $\kappa\in L^2(|x|^{2s}dx)$ in  $L^2(|x|^{2s}dx)$.

By the fundamental theorem of singular integral
\cite[Ch. III]{coifman+weiss},
it suffices to show that 
$$
\int_{|y^{-1}y'|\lesssim |y^{-1}x| }
|\kappa(y^{-1}x) - \kappa(y^{-1}x)| dx
$$
is finite independently of $y\not=y'\in G$, and similarly for $\kappa^*: x\mapsto \bar \kappa(z^{-1})$.
We will give the proof for $\kappa$ only, 
the proof for $\kappa^*$ being similar.

It suffices to show that the sum
$$
\sum_{j=j_0}^\infty I_j(h),
\quad\mbox{where}\quad
I_j(h):=
\int_{|h|\lesssim |z| }
|\kappa_j(z) - \kappa_j(hz)| dz,
$$
is bounded independently of $h\in G\backslash\{e_G\}$.

We see that
$$
I_j(h)
\leq 
2\int_{|h|\lesssim |z| }
|\kappa_j(z)| dz
\leq 2\|\kappa_j\|_{L^2(|x|^{2s}dx)}
\|1_{|h|\lesssim |z| }\|_{L^2(|z|^{-2s}dz)},
$$
by the Cauchy-Schwartz inequality.
We compute easily
$$
\|1_{|h|\lesssim |z| }\|_{L^2(|z|^{-2s}dz)}^2
\asymp
\int_{r\sim |h|}^1 r^{-2s+ n-1}dr
\asymp 1+|h|^{- 2s+n}.
$$
Using \eqref{eq_kappaj_hyp}, we obtain:
$$
I_j(h)
\lesssim C_0 2^{-\frac j2 (s-\frac n2)}(1+|h|^{- s+\frac n 2}),  
\quad\mbox{and}\quad 
\sum_{j:|h|^{-1}\lesssim 2^{j/2}}
\int_{|h|\lesssim |z| }
|\kappa_j(z) - \kappa_j(hz)| dz
\lesssim C_0, 
$$
independently of $h$.

Hence the main problem is the sum  for $j\geq j_0$ such that $2^{j/2} \lesssim |h|^{-1}$. We may also assume that $|h|$ is small, 
and thus that $h$ is in a fixed chart of $e_G$ provided by the exponential mapping. 
We have
$$
I_j(h) \leq 
\sum_{\ell=0}^m  I_{j,\ell}(h), 
\quad\mbox{where}\quad
I_{j,\ell}(h):=
\int_{|z|\asymp 2^{-\ell}}
|\kappa_j(z) -\kappa_j(hz)| dz,
$$
and $ m\in \bN_0$ is such that
$|h|\asymp 2^{-m}$.

The  estimate for the Taylor reminder of order 1 then yields:
$$
I_{j,\ell}(h)\lesssim
|h| 
\int_{|z|\asymp 2^{-\ell}}
|\nabla \kappa_j(z)| dz
\quad\mbox{if}\ \ell\lesssim m.
$$
The Cauchy-Schwartz inequality implies
$$
\int_{|z|\asymp 2^{-\ell}}
|\nabla \kappa_j(z)| dz
\leq
\|\nabla \kappa_j\|_{L^2(G)}
\|1_{|z|\asymp 2^{-\ell}}\|_{L^2(G)}
$$
We compute $\|1_{|z|\asymp 2^{-\ell}}\|_{L^2(G)}\asymp 2^{-\ell n/2}$.
The functional calculus of $\cL$ yields
$\|\nabla \kappa_j\|_{L^2(G)}\asymp 2^{j / 2} \|\kappa_j\|_{L^2(G)}$
and we have
\begin{equation}
\label{eq_L2kappaj}
\| \kappa_j\|_{L^2(G)}=\|\sigma_j\|_{L^2(\Gh:\Sigma)}
\leq \|\sigma\|_{L^\infty(\Gh:\Sigma)} \|\eta_j(\widehat \cL)\|_{\dot H^0(\Gh)}
\lesssim \|\sigma\|_{L^\infty(\Gh:\Sigma)}  2^{j \frac n 4},
\end{equation}
by Proposition \ref{prop_f(L)_sup}.
So we have obtained:
$$
I_{j,\ell}(h)\lesssim |h| 2^{\frac j2} \|\sigma\|_{L^\infty(\Gh:\Sigma)}2^{j \frac n4}
2^{-\ell \frac n 2}
$$
and the sum $\sum I_{j,\ell}(h)$ over $j, \ell\in \bZ$
satisfying $0\leq \ell \leq m$ and $j_0\leq  j\leq 2\ell$ is finite independently of $|h|$.

To sum over $\ell<j/2$, we go back to
$$
\int_{|z|\asymp 2^{-\ell}}
|\nabla \kappa_j(z)| dz
\leq
\| |\cdot|^s |\nabla \kappa_j|\|_{L^2(G)}
\||\cdot|^{-s}1_{|z|\asymp 2^{-\ell}}\|_{L^2(G)}.
$$
We compute $\||\cdot|^{-s}1_{|z|\asymp 2^{-\ell}}\|_{L^2(G)}\asymp 2^{\ell(s- n/2)}$.
We fix $\tilde \eta\in \cD(0,\infty)$ such that $\tilde \eta \equiv 1$ on $\supp\ \eta$. We set $\tilde \eta_j(\lambda) =\tilde \eta(2^{-j}\lambda)$.
We have $\kappa_j = \tilde \eta_j (\cL) \kappa_j=\kappa_j * \tilde \eta_j (\cL)\delta_{e_G}$
and  Lemma \ref{lem_veryweakLeib} yields:
$$
\| |\cdot|^s |\nabla \kappa_j|\|_{L^2(G)}
\lesssim
\||\cdot|^s|\kappa_j|\|_{L^2(G)} 
\||\nabla\tilde \eta_j (\cL)\delta_{e_G}|\|_{L^1(G)}
 + 
\|\kappa_j\|_{L^2(G)}  
\||\cdot|^s |\nabla\tilde \eta_j (\cL)\delta_{e_G}|)\|_{L^1(G)}.
$$
Using \eqref{eq_kappaj_hyp} and \eqref{eq_L2kappaj} 
for the $L^2$-norm in $\kappa_j$, 
and Theorem \ref{thm_kernelf(L)} Part   \eqref{item_thm_kernelf(L)_Lpest_multintL}
 for the $L^1$-norm with $\eta$ yields:
$$
\| |\cdot|^s |\nabla \kappa_j|\|_{L^2(G)}
\lesssim 
C_0 2^{\frac j2 (\frac n2-s)} (2^{-j})^{-\frac 12}
+ \|\sigma\|_{L^\infty(\Gh:\Sigma)} 2^{jn/4} (2^{-j})^{\frac{s- 1}2}
=(C_0+ \|\sigma\|_{L^\infty(\Gh:\Sigma)})2^{\frac j2 (\frac n2-s+1)}.
$$
Hence we have obtained
$$
I_{j,\ell}(h)\lesssim |h| (C_0+ \|\sigma\|_{L^\infty(\Gh:\Sigma)})2^{\frac j2 (\frac n2-s+1)}2^{\ell(s- \frac n 2)},
$$
and the sum $\sum I_{j,\ell}(h)$ over $j, \ell\in \bZ$
satisfying $0\leq \ell \leq j/2$ and $j_0\leq  j\leq 2m$ is finite independently of $h$.

This concludes the proof of Proposition \ref{prop_thm_hormander}, 
and of Theorem \ref{thm_hormander}.

\subsection{A Marcinkiewicz type condition}
\label{subsec_marcinkiewicz}

Our main result Theorem \ref{thm_hormander}, that is, the multiplier theorem of H\"ormander type, 
is in term of the space $\cM_s$
and
 can be viewed as a decay of $L^2$-weighted norms of the kernel localised in frequency.
As a corollary, we obtained a Mihlin version in terms of decay of derivatives of the symbol, see Theorem \ref{thm_mihlin}.
We now present a Marcinkiewicz-type theorem 
in the sense that the condition is expressed with the partial sum of derivatives being finite.

We will need the following notation: 
if $s_0\in \bN$ and $\sigma\in \Sigma$, we set
$$
\|\sigma\|_{\dot L^1_{s_0}(\Gh : \Sigma)}
:=
\sum_{|\alpha|=s_0}
\sum_{\pi\in \Gh} d_\pi
\tr_{\cH_{\varphi^{\otimes \alpha}} \otimes  \cH_\pi}
|\Delta^\alpha \sigma(\pi)|.
$$
This quantity may be finite or infinite.
\begin{remark}
\label{rem_lem_trace}
Because of the equivalence of matrix norms, 
we can replace the trace above with any norm of 
the finite dimensional space of endomorphism in $\cH_{\varphi^{\otimes \alpha}}$ valued in the matrix space over $\cH_\pi$ equipped with the Schatten norm
$$
\|M\|_{S_1(\cH_\pi)}:=\tr |M| = \tr (M^*M)^{\frac12}.
$$
 The constants involved in this equivalence depend on the spaces $\cH_{\varphi}$, $\varphi\in \FundG$, so on the structure of $G$, 
and not on $\pi\in \Gh$.
So for instance we have:
\begin{align*}
\|\sigma\|_{\dot L^1_{s_0}(\Gh : \Sigma)}
&\asymp
\sum_{|\alpha|=s_0}
\sum_{\pi\in \Gh} d_\pi
\|\Delta^\alpha \sigma(\pi)\|_{\sL(\cH_{\varphi^{\otimes \alpha}} \otimes, S_1( \cH_\pi))}
\\&\asymp
\sum_{|\alpha|=s_0}
\sum_{\pi\in \Gh} d_\pi
\sum_{i_1,j_1,\ldots,i_{s_0},j_{s_0}}
\|[\Delta^\alpha\sigma(\pi)]_{i_1,,j_1,\ldots,i_{s_0},j_{s_0}}\|_{S_1( \cH_\pi)}.
\end{align*}
\end{remark}

This quantity enables us to obtain the following $L^\infty$-weighted estimates:
\begin{lemma}
\label{lem_trace}
For any $s_0\in \bN$, 
there exists a constant $C=C_{G,s_0}>0$
such that we have for all $\phi\in \cD(G)$:
$$
\max_{x\in G} |x|^{s_0} |\phi(x)|
\leq C \|\widehat \phi\|_{\dot L^1_{s_0}(\Gh : \Sigma)}.
$$
\end{lemma}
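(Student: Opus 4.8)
The plan is to express the weighted pointwise bound as a Fourier-side statement and then reduce it to a telescoping identity that recovers $\phi(x)$ from the iterated difference operators $\Delta^\alpha\widehat\phi$. The starting point is the Fourier inversion formula: for $\phi\in\cD(G)$ and $x\in G$,
$$
\phi(x)=\sum_{\pi\in\Gh} d_\pi\,\tr\bigl(\pi(x)\,\widehat\phi(\pi)\bigr).
$$
First I would use the function $q_1$ from Proposition \ref{prop_q1} and Lemma \ref{lem_dotHs_L2}: since $q_1^{s_0/2}\asymp|x|^{s_0}$, it suffices to bound $q_1(x)^{s_0/2}|\phi(x)|$, or more conveniently to bound $|q_s(x)^{1/2}\phi(x)|$ pointwise for a suitable polynomial-type expression $q_s$ in the entries of the $\varphi_\ell(x)-\id$, mirroring the structure of $\|\cdot\|_{\dot L^1_{s_0}(\Gh:\Sigma)}$ in Remark \ref{rem_lem_trace}. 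Concretely, for a single fundamental $\varphi$ and scalar entries, $[\varphi(x)-\id]_{i,j}\,\pi(x)$ is (up to the known Peter--Weyl bookkeeping) a matrix entry of $(\varphi\otimes\pi)(x)-(\id_\varphi\otimes\pi)(x)$, and its Fourier coefficient is the corresponding entry of $\Delta_\varphi\widehat\phi(\pi)$. Iterating this $s_0$ times, I would obtain an identity of the schematic shape
$$
[\varphi_{\ell_1}(x)-\id]_{i_1,j_1}\cdots[\varphi_{\ell_{s_0}}(x)-\id]_{i_{s_0},j_{s_0}}\,\phi(x)
=\sum_{\pi\in\Gh} d_\pi\,\tr\Bigl((\varphi^{\otimes\alpha}\otimes\pi)(x)\,[\Delta^\alpha\widehat\phi(\pi)]_{\text{entries}}\Bigr),
$$
where $\alpha$ records the multiplicities of the $\varphi_\ell$ appearing.

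Next I would take absolute values and estimate crudely: since each $\varphi_\ell(x)$ is unitary and $\pi(x)$ is unitary, the operator $(\varphi^{\otimes\alpha}\otimes\pi)(x)$ has operator norm $1$, so
$$
\bigl|\tr\bigl((\varphi^{\otimes\alpha}\otimes\pi)(x)\,M\bigr)\bigr|\le \|M\|_{S_1(\cH_{\varphi^{\otimes\alpha}}\otimes\cH_\pi)}=\tr|M|
$$
for $M=\Delta^\alpha\widehat\phi(\pi)$ (and likewise for its scalar-entry components). Summing the resulting bound over $\pi\in\Gh$ with the Plancherel weight $d_\pi$ gives exactly $\|\widehat\phi\|_{\dot L^1_{|\alpha|}(\Gh:\Sigma)}$ up to the structural constants noted in Remark \ref{rem_lem_trace}. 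Finally, summing over the finitely many choices of fundamental representations $\varphi_{\ell_1},\dots,\varphi_{\ell_{s_0}}$ and matrix indices $i_1,j_1,\dots,i_{s_0},j_{s_0}$, the left-hand side becomes (a constant times) $\sqrt{q_{s_0}(x)}\,|\phi(x)|\asymp |x|^{s_0}|\phi(x)|$ by Lemma \ref{lem_dotHs_L2} Part (1), and the right-hand side is $\lesssim \|\widehat\phi\|_{\dot L^1_{s_0}(\Gh:\Sigma)}$. Taking the maximum over $x\in G$ concludes the proof.

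\textbf{Main obstacle.} The genuinely delicate point is the bookkeeping in the first step: making precise the claim that the Fourier coefficient of $[\varphi(x)-\id]_{i,j}\,\phi(x)$ is the appropriate entry of $\Delta_\varphi\widehat\phi(\pi)$, and, crucially, that iterating this $s_0$ times produces $\Delta^\alpha\widehat\phi$ and not some other ordering or mixture of iterated differences. This requires carefully tracking the tensor-factor placement (which tensor slot each new $\varphi_\ell$ occupies) and using that difference operators commute up to reordering of tensors, together with the decomposition of $\varphi^{\otimes\alpha}\otimes\pi$ into irreducibles so that one may sum over $\Gh$ rather than $\RepG$ — exactly the identification \eqref{eq_identification_sigma_Gh_RepG}. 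A clean way to organise this is to prove the case $s_0=1$ as in the proof of Proposition \ref{prop_q1} Part (2) (where $\cF_G([\varphi-\id]_{i,j}f)$ is identified with a component of $\Delta_\varphi\widehat f$), and then induct on $s_0$, at each stage applying the $s_0=1$ identity to the function $[\varphi_{\ell_1}(x)-\id]_{i_1,j_1}\cdots[\varphi_{\ell_{s_0-1}}(x)-\id]_{i_{s_0-1},j_{s_0-1}}\phi(x)$, which is again in $\cD(G)$. Once the identity is set up, the estimates themselves are routine (unitarity plus $|\tr(UM)|\le\|M\|_{S_1}$ plus Plancherel counting).
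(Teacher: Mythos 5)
Your proposal is correct and follows essentially the same route as the paper: Fourier inversion, the identification of $\cF_G([\varphi-\id]_{i,j}\phi)$ with the corresponding entry of $\Delta_\varphi\widehat\phi$ (as in the proof of Proposition \ref{prop_q1} Part (2)), the unitarity bound $|\tr(UM)|\le\tr|M|$, summation over $\pi$ with weight $d_\pi$, and the equivalence $\sqrt{q_{s_0}}\asymp|x|^{s_0}$, with the case $s_0>1$ handled by iteration exactly as you describe. Your explicit attention to the tensor-slot bookkeeping in the induction is a welcome elaboration of the step the paper leaves to the reader.
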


\begin{proof}[Proof of Lemma \ref{lem_trace}]
Let $\phi\in \cD(G)$.
The Fourier inversion formula 
 and the definition of the difference operators (see Section \ref{subsec_my_diff_op})
   imply
$$
[\varphi(x)-\id]_{i,j}\phi(x)=\sum_{\pi\in \Gh} d_\pi 
\tr \left[\Delta_\varphi\widehat \phi(\pi)\right]_{i,j},
\quad\mbox{so}\quad
|[\varphi(x)-\id]_{i,j}\phi(x)|\leq
\sum_{\pi\in \Gh} d_\pi 
\tr \left|\left[\Delta_\varphi\widehat \phi(\pi)\right]_{i,j}\right|.
$$
By Proposition \ref{prop_q1} Part (1), we have
$$
\max_{x\in G} |x||\phi(x)|
\asymp
\max_{G} \sqrt q_1 |\phi|
\asymp
\sum_{\varphi\in \FundG}
\sum_{i,j} 
\max_{x\in G} |[\varphi(x)-\id]_{i,j}\phi(x)|,
$$
and the computation above then yields:
$$
\max_{x\in G} |x||\phi(x)|
\lesssim 
\sum_{\varphi\in \FundG}
\sum_{\pi\in \Gh} d_\pi 
\sum_{i,j} 
\tr \left|\left[\Delta_\varphi\widehat \phi(\pi)\right]_{i,j}\right|
$$
By Remark \ref{rem_lem_trace}, this implies the result for $s_0=1$. 
The proof for $s_0>1$ is similar and is left to the reader.
\end{proof}

The $L^\infty$-weighted estimates in Lemma \ref{lem_trace}
easily yields some $L^2$-weighted estimates of localised kernels,
and we obtain: 

\begin{corollary}
\label{cor_lem_trace}
\begin{enumerate}
\item 
For any $\eta\in \cD(0,\infty)$, $s>0$ and any $s_0\in \bN$, 
there exists a constant $C=C_{s,G,s,s_0,\eta}>0$ such that
for any $\sigma\in \Sigma$ and $r\geq 1$, we have
$$
\|\sigma\, \eta(r^{-2} \widehat\cL)\|_{\dot H^s(\Gh)}
\leq C 
\left(
\|\sigma\, \eta(r^{-2} \widehat\cL)\|_{\dot L^1_{s_0}(\Gh : \Sigma)}
r^{-s+s_0-\frac n2}
+ \|\sigma\|_{L^\infty(\Gh:\Sigma)} r^{\frac n2 -s}\right), 
$$
in the sense that if 
$\sigma\|_{L^\infty(\Gh:\Sigma)}$ and  $\|\sigma\|_{\dot L^1_{s_0}(\Gh : \Sigma)}$ are finite, then the inequality holds.
\item 
If $\sigma\in \Sigma$ is such that 
$\sigma\|_{L^\infty(\Gh:\Sigma)}$ and  $\|\sigma\|_{\dot L^1_{s_0}(\Gh : \Sigma)}$
are finite with $s_0\in \bN$, $s_0\leq n$, 
then $\sigma\in \Sigma_{s}$ for any $s>\frac n2$.
Moreover 
we have
$$
\|\sigma\|_{\cM_{s,\eta}}\leq C \left(
 \|\sigma\|_{L^\infty(\Gh:\Sigma)}
 +
 \sup_{r\geq 1}\|\sigma\, \eta(r^{-2} \widehat\cL)\|_{\dot L^1_{s_0}(\Gh : \Sigma)}
\right), 
$$
where $\eta\in \cD(0,\infty)$ is a fixed non-trivial function, 
and  $C=C_{s,G,s,s_0,\eta}>0$ is a constant, both independent of $\sigma$.
\end{enumerate}
\end{corollary}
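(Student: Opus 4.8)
The plan is to derive Part (1) as a weighted $L^2$-interpolation-type estimate built on Lemma \ref{lem_trace}, and then to obtain Part (2) by summing a geometric series over dyadic scales, using Lemma \ref{lem_r0} to reduce to $r$ large. First I would pass from symbols to kernels: for $\sigma$ with $\|\sigma\|_{L^\infty(\Gh:\Sigma)}$ and $\|\sigma\|_{\dot L^1_{s_0}(\Gh:\Sigma)}$ finite, write $\kappa := \cF_G^{-1}(\sigma\,\eta(r^{-2}\widehat\cL))$, which is smooth since $\eta(r^{-2}\widehat\cL)$ has finite rank, and note by Proposition \ref{prop_dotHs} Part (1) that $\|\sigma\,\eta(r^{-2}\widehat\cL)\|_{\dot H^s(\Gh)}\asymp \||x|^s\kappa\|_{L^2(G)}$. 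The key pointwise bound is Lemma \ref{lem_trace} applied to $\phi=\cF_G^{-1}(\sigma\,\eta(r^{-2}\widehat\cL))$, giving
$$
\max_{x\in G}|x|^{s_0}|\kappa(x)| \leq C\,\|\sigma\,\eta(r^{-2}\widehat\cL)\|_{\dot L^1_{s_0}(\Gh:\Sigma)}.
$$
I would then split $\||x|^s\kappa\|_{L^2(G)}^2$ into the region $\{|x|\gtrsim r^{-1}\}$ and $\{|x|\lesssim r^{-1}\}$. On the far region I use $|x|^s=|x|^{s_0}|x|^{s-s_0}\le |x|^{s_0}|x|^{s-s_0}$ together with the pointwise bound and $|x|^{2(s-s_0)}$ integrated against $dx$ over $|x|\ge r^{-1}$, which (since the volume growth is Euclidean, $V(\rho)\sim\rho^n$) contributes $\asymp r^{-2(s-s_0)-n}$ when $s-s_0>-n/2$, i.e. the term $\|\sigma\,\eta(r^{-2}\widehat\cL)\|_{\dot L^1_{s_0}}\,r^{-s+s_0-n/2}$; if $s-s_0\le -n/2$ one instead bounds $\||x|^s\kappa\|_{L^2(\{|x|\gtrsim r^{-1}\})}$ directly by $r^{-s}\||x|^{s_0}\kappa\|_{L^\infty}\|1\|_{L^2}$ picking up the better power, and in all cases the claimed exponent $r^{-s+s_0-n/2}$ for $r\ge1$ is dominated. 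On the near region $\{|x|\lesssim r^{-1}\}$ I bound $|x|^s\le r^{-s+n/2}|x|^{n/2}$ crudely is too weak; instead use $|x|^{2s}\le r^{-2s}\cdot(r|x|)^{2s}$ and bound $|\kappa(x)|\le \|\kappa\|_{L^\infty}\le \|\cF_G^{-1}(\eta(r^{-2}\widehat\cL))\|_{L^1}\|\sigma\|_{L^\infty}$, or more cleanly use $\|\kappa\|_{L^2(G)}=\|\sigma\,\eta(r^{-2}\widehat\cL)\|_{L^2(\Gh:\Sigma)}\le \|\sigma\|_{L^\infty(\Gh:\Sigma)}\|\eta(r^{-2}\widehat\cL)\|_{L^2(\Gh:\Sigma)}\lesssim \|\sigma\|_{L^\infty}\,r^{n/2}$ by Theorem \ref{thm_kernelf(L)} Part (3), and then on $\{|x|\lesssim r^{-1}\}$ write $\||x|^s\kappa\|_{L^2}\le r^{-s}\|\kappa\|_{L^2}\lesssim \|\sigma\|_{L^\infty}\,r^{n/2-s}$. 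Adding the two regions gives Part (1).

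For Part (2), fix a dyadic cutoff $\eta\in\cD(0,\infty)$ and apply Lemma \ref{lem_Sigmas_indepeta} to reduce to this $\eta$, then Lemma \ref{lem_r0} to restrict to $r\ge r_0$ for some convenient $r_0\ge1$. By Part (1),
$$
r^{s-\frac n2}\|\sigma\,\eta(r^{-2}\widehat\cL)\|_{\dot H^s(\Gh)}
\leq C\left(\sup_{r\ge1}\|\sigma\,\eta(r^{-2}\widehat\cL)\|_{\dot L^1_{s_0}(\Gh:\Sigma)}\;r^{s_0-n}\;r^{?}+\|\sigma\|_{L^\infty(\Gh:\Sigma)}\right);
$$
here the exponent on $r$ attached to the $\dot L^1_{s_0}$-term is $s-\frac n2 + (-s+s_0-\frac n2)=s_0-n\le 0$ precisely because of the hypothesis $s_0\le n$, so it is bounded (by its value at $r=1$) uniformly in $r\ge1$. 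Taking the supremum over $r\ge 1$ and recalling the definition of $\|\cdot\|_{\cM_{s,\eta}}$ together with Lemma \ref{lem_r0} yields the stated inequality, and in particular $\sigma\in\cM_s$ whenever the right-hand side is finite, i.e. $\sigma\in\Sigma_s$ in the notation of the statement. Finiteness of $\sup_{r\ge1}\|\sigma\,\eta(r^{-2}\widehat\cL)\|_{\dot L^1_{s_0}}$ in turn follows from finiteness of $\|\sigma\|_{L^\infty}$ and $\|\sigma\|_{\dot L^1_{s_0}(\Gh:\Sigma)}$ via the Leibniz formula \eqref{eq_Leibniz} applied to the product $\sigma\cdot\eta(r^{-2}\widehat\cL)$ exactly as in Lemma \ref{lem_Sigmas}, using that $\Delta^{\alpha_2}\eta(r^{-2}\widehat\cL)$ is supported (in $\pi$) where $\lambda_\pi\asymp r^2$ and bounded there with controlled weights by Theorem \ref{thm_kernelf(L)} Part (3) and Lemma \ref{lem_lambdapi} Part \eqref{item_lem_lambdapi_eq_lambdarhos}.

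The main obstacle I anticipate is bookkeeping the various ranges of $s$ relative to $s_0$ and $n/2$ in the far-region estimate of Part (1): the integral $\int_{|x|\gtrsim r^{-1}}|x|^{2(s-s_0)}\,dx$ behaves differently according to the sign of $2(s-s_0)+n$, and one must check that in every case the resulting power of $r$ is $\le r^{-2(s-s_0)-n}$ for $r\ge 1$ (which is why the hypothesis is stated for $r\ge1$ and why only $s_0\le n$ is needed in Part (2), not a matching lower bound). A secondary technical point is making the kernel manipulations rigorous for $\sigma$ merely in $\Sigma$ rather than a priori a Fourier transform of a nice function; this is handled exactly as in the proofs of Proposition \ref{prop_q1} Part (3) and Lemma \ref{lem_dotHs_L2}, since $\sigma\,\eta(r^{-2}\widehat\cL)\in L^2_{fin}(\Gh:\Sigma)$ is the Fourier transform of a smooth function, so no density argument beyond what is already in Section \ref{subsec_prop_sint} is required.
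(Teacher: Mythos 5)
Your overall strategy for Part (1) is the paper's: pass to the kernel $\kappa_r=\cF_G^{-1}\bigl(\sigma\,\eta(r^{-2}\widehat\cL)\bigr)$, split $\|\,|x|^s\kappa_r\|_{L^2}$ at the scale $|x|\asymp r^{-1}$, feed the weighted sup bound of Lemma \ref{lem_trace} into one piece and the bound $\|\kappa_r\|_{L^2(G)}\lesssim\|\sigma\|_{L^\infty(\Gh:\Sigma)}r^{n/2}$ from Theorem \ref{thm_kernelf(L)} into the other, then sum dyadically for Part (2). However, you have attached the weighted-sup estimate to the wrong region, and this opens a genuine gap in the term carrying the $\dot L^1_{s_0}$-norm. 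On a compact group the integral $\int_{|x|\gtrsim r^{-1}}|x|^{2(s-s_0)}\,dx$ is \emph{not} $\asymp r^{-2(s-s_0)-n}$ when $2(s-s_0)+n>0$: it is $\asymp 1$, dominated by the outer end $|x|\asymp\mathrm{diam}(G)$ of the range. The decay $r^{-2(s-s_0)-n}$ is what one gets by integrating the weight over the \emph{small ball} $\{|x|\leq r^{-1}\}$, and that is where the paper applies Lemma \ref{lem_trace}. Moreover $2(s-s_0)+n>0$ is exactly the regime forced by the hypotheses of Part (2), since $s>\frac n2$ and $s_0\leq n$ give $s_0<s+\frac n2$; so your ``main case'' computation fails precisely where it is needed, while your fallback for $s-s_0\leq-\frac n2$ treats the case that was never problematic. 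Concretely, your far-region estimate only yields $\|\sigma\,\eta(r^{-2}\widehat\cL)\|_{\dot L^1_{s_0}(\Gh : \Sigma)}\cdot O(1)$ instead of $\|\sigma\,\eta(r^{-2}\widehat\cL)\|_{\dot L^1_{s_0}(\Gh : \Sigma)}\,r^{-s+s_0-\frac n2}$, and without that extra decay the geometric series in Part (2) — and in Theorem \ref{thm_marc-type}, where the localised $\dot L^1_{s_0}$-norms are allowed to grow like $2^{\frac j2(n-s_0)}$ — does not close.

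The repair is to interchange the roles of the two regions for that term: apply $|\kappa_r(x)|\leq|x|^{-s_0}\|\,|\cdot|^{s_0}\kappa_r\|_{L^\infty}\lesssim|x|^{-s_0}\|\sigma\,\eta(r^{-2}\widehat\cL)\|_{\dot L^1_{s_0}(\Gh : \Sigma)}$ on $\{|x|\leq r^{-1}\}$, where
$\bigl(\int_{|x|\leq r^{-1}}|x|^{2(s-s_0)}\,dx\bigr)^{1/2}\asymp r^{-(s-s_0+\frac n2)}$
for $2(s-s_0)+n>0$, and reserve the complementary region $\{|x|>r^{-1}\}$ for the term in $\|\sigma\|_{L^\infty(\Gh:\Sigma)}$, as the paper does. (Your handling of the $\|\sigma\|_{L^\infty}$-term on the small ball via $|x|^s\leq r^{-s}$ is itself correct, but once the ball is used for the $\dot L^1_{s_0}$-term the $L^\infty$-term must be extracted from the complement.) Part (2) — the exponent count $s-\frac n2+(-s+s_0-\frac n2)=s_0-n\leq0$, the reduction to $r$ large, and the Leibniz argument for the finiteness of the localised $\dot L^1_{s_0}$-norms — is consistent with the paper and unaffected, but it stands or falls with the corrected Part (1).
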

\begin{proof}
Let $\sigma\in \Sigma$ and $\eta\in \cD(0,\infty)$.
We set $\kappa_r:=\cF_G^{-1}\sigma\, \eta(r^{-2} \widehat\cL)\in \cD(G)$ for any $r>0$.
By Proposition \ref{prop_dotHs} Part (1),
$$
\|\sigma\, \eta(r^{-2} \widehat\cL)\|_{\dot H^s(\Gh)}
\asymp
\|\kappa_r \|_{L^2(|x|^{2s}dx)}
\leq
\|\kappa_r 1_{|x|\leq r^{-1}} \|_{L^2(|x|^{2s}dx)}
+
\|\kappa_r 1_{|x|> r^{-1}} \|_{L^2(|x|^{2s}dx)}.
$$
By Lemma \ref{lem_trace}, 
we have:
$$
\||\cdot |^{s_0}\kappa_r \|_\infty\lesssim \|\sigma\, \eta(r^{-2} \widehat\cL)\|_{\dot L^1_{s_0}(\Gh : \Sigma)},
$$
so for any $r\geq 1$
$$
 \|\kappa_r 1_{|x|\leq r^{-1}} \|_{L^2(|x|^{2s}dx)}
 \lesssim
 \|\sigma\, \eta(r^{-2} \widehat\cL)\|_{\dot L^1_{s_0}(\Gh : \Sigma)}
\sqrt{\int_{|x|\leq r^{-1}} |x|^{2s-2s_0} dx}
\lesssim 
 \|\sigma\, \eta(r^{-2} \widehat\cL)\|_{\dot L^1_{s_0}(\Gh : \Sigma)}
 r^{-(s-s_0+\frac n2)}.
$$
For the second $L^2$-norm, we have
$$
\|\kappa_r 1_{|x|> r^{-1}} \|_{L^2(|x|^{2s}dx)}
\leq 
r^{-s}
\|\kappa_r \|_{L^2(G)},
$$
and by functional calculus  and  Theorem \ref{thm_kernelf(L)} Part \eqref{item_thm_kernelf(L)_Lpest_multintL},
$$
\|\kappa_r \|_{L^2(G)}\leq \|\sigma\|_{L^\infty(\Gh:\Sigma)}
\|\eta(r^{-2}\cL)\delta_{e_G}\|_{L^2(G)}
\lesssim \|\sigma\|_{L^\infty(\Gh:\Sigma)}
r^{\frac n2}.
$$
Collecting the various estimates proves Part (1).

Part (2) follows easily from Part (1) and the properties of $\cM_s$, see Section \ref{subsec_Sigmas}.
\end{proof}

Corollary \ref{cor_lem_trace} Part (2) gives a sufficient condition for the membership of the symbol in $\cM_s$, $s>n/2$, 
therefore a sufficient condition for $L^p$-boundedness of the corresponding operator using H\"ormander-type hypotheses in Theorem \ref{thm_hormander}.

Here we choose to combine Proposition \ref{prop_thm_hormander} 
with Corollary \ref{cor_lem_trace} Part (1) 
to obtain a sufficient condition `with $r=2^{j/2}$', 
which we view as a Marcinkiewicz-type property:

\begin{theorem}[Marcinkiewicz-type multiplier theorem]
\label{thm_marc-type}
Let $G$ be a compact Lie group of dimension $n$.
If the symbol  $\sigma=\{\sigma(\pi),\pi\in \Gh\}$ 
is such that
$\sigma\in L^\infty(\Gh:\Sigma)$
and 
$$
\exists s_0\in \bN, s_0\leq n\quad
\exists C_0>0\qquad 
\forall j\geq 0\qquad
\|\sigma\, 1_{[2^{j}, 2^{j+1}]} (\widehat\cL)\|_{\dot L^1_{s_0}(\Gh : \Sigma)}
\leq C_0 2^{\frac j2(n-s_0)}
$$
then 
the Fourier multiplier operator $\Op(\sigma)$ is bounded on $L^p(G)$ for any $1<p<\infty$, and $L^1-L^{1,\infty}$ for $p=1$.
Furthermore, for $p\in (1,\infty)$,
$$
\|\Op(\sigma)\|_{\sL(L^p(G))}
\leq C_{s_0,p,G}
\left( C_0+\|\sigma\|_{L^\infty(\Gh:\Sigma)}\right)
$$
where the constant $C_{s_0,p,G}$ depends on $s_0$, $p$ and the structure of $G$
but not on $\sigma$.
We have a similar bound for $p=1$.
\end{theorem}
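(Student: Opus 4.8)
The plan is to reduce the Marcinkiewicz-type statement to the already-established H\"ormander-type result, namely Proposition \ref{prop_thm_hormander}, via the quantitative estimate in Corollary \ref{cor_lem_trace} Part (1). The point is that the hypothesis controls the weighted $\ell^1$-type norms $\|\sigma\, 1_{[2^j,2^{j+1}]}(\widehat\cL)\|_{\dot L^1_{s_0}(\Gh:\Sigma)}$ dyadically, whereas Proposition \ref{prop_thm_hormander} asks for control of $\|\sigma\, \eta(2^{-j}\widehat\cL)\|_{\dot H^s(\Gh)}$ dyadically; Corollary \ref{cor_lem_trace} Part (1) is precisely the bridge converting one into the other.

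First I would fix $s$ with $n/2 < s < n/2 + (n-s_0)/2$; this is possible since $s_0 \leq n$, and one may harmlessly assume $s_0 < n$ (if $s_0 = n$ the hypothesis forces $\|\sigma\, 1_{[2^j,2^{j+1}]}(\widehat\cL)\|_{\dot L^1_{s_0}}\leq C_0$ uniformly, which is even stronger and handled the same way with any $s\in(n/2,n)$). Next I would fix a dyadic cut-off $\eta\in\cD(0,\infty)$ with $\supp\,\eta\subset(1,2)$ as in Section \ref{subsec_pf_thm_hormander}. For each $j\geq 0$, since $\eta(2^{-j}\cdot)$ is supported in $[2^j,2^{j+1}]$, we have $\sigma\,\eta(2^{-j}\widehat\cL) = \big(\sigma\, 1_{[2^j,2^{j+1}]}(\widehat\cL)\big)\,\eta(2^{-j}\widehat\cL)$, so applying Corollary \ref{cor_lem_trace} Part (1) with $r = 2^{j/2}$ to the symbol $\sigma\,1_{[2^j,2^{j+1}]}(\widehat\cL)$ (whose $\dot L^1_{s_0}$-norm is $\leq C_0 2^{\frac j2 (n-s_0)}$ by hypothesis and whose $L^\infty$-norm is $\leq \|\sigma\|_{L^\infty(\Gh:\Sigma)}$) gives
\begin{align*}
\|\sigma\,\eta(2^{-j}\widehat\cL)\|_{\dot H^s(\Gh)}
&\lesssim C_0\, 2^{\frac j2(n-s_0)}\, 2^{\frac j2(-s+s_0-\frac n2)}
+ \|\sigma\|_{L^\infty(\Gh:\Sigma)}\, 2^{\frac j2(\frac n2 - s)}\\
&= \big(C_0 + \|\sigma\|_{L^\infty(\Gh:\Sigma)}\big)\, 2^{\frac j2(\frac n2 - s)}.
\end{align*}
For $j < 0$ (finitely many $j$, in fact only $j \geq j_0$ contribute since $\eta_j(\widehat\cL) = 0$ for $j$ small), the same conclusion holds trivially since everything is bounded in terms of $\|\sigma\|_{L^\infty(\Gh:\Sigma)}$ there, using Lemma \ref{lem_lambdapi} and the finite-dimensionality as in the proof of Lemma \ref{lem_r0}. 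Thus $\sigma$ satisfies the hypothesis of Proposition \ref{prop_thm_hormander} with this $s > n/2$ and constant $C_0' = C(C_0 + \|\sigma\|_{L^\infty(\Gh:\Sigma)})$, and Proposition \ref{prop_thm_hormander} then yields the $L^p$-boundedness of $\Op(\sigma)$ for $1<p<\infty$ together with the weak-$(1,1)$ bound, with operator norm $\lesssim_{s_0,p,G} \|\sigma\|_{L^\infty(\Gh:\Sigma)} + C_0' \lesssim C_0 + \|\sigma\|_{L^\infty(\Gh:\Sigma)}$, as claimed.

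The main technical point — and the only place requiring care — is checking that Corollary \ref{cor_lem_trace} Part (1) is applicable to $\sigma\,1_{[2^j,2^{j+1}]}(\widehat\cL)$ and that the exponents combine correctly: one needs $r = 2^{j/2} \geq 1$, i.e. $j \geq 0$, which matches the range in the hypothesis, and one needs $s$ chosen so that $\frac n2 - s < 0$ (for summability, though summability is not even needed here since Proposition \ref{prop_thm_hormander} only needs the per-$j$ bound) — so really any $s > n/2$ works. One should also note that $\sigma\,1_{[2^j,2^{j+1}]}(\widehat\cL)$ lies in $L^2_{fin}(\Gh:\Sigma)$ when $\sigma\in L^\infty(\Gh:\Sigma)$ (the spectrum of $\cL$ being discrete with no accumulation point, each dyadic band contains only finitely many $\lambda_\pi$), so all quantities are a priori finite and the formal manipulations are justified. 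No genuine obstacle arises; the theorem is essentially a packaging of Proposition \ref{prop_thm_hormander} and Corollary \ref{cor_lem_trace}.
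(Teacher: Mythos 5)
Your overall strategy---reducing to Proposition \ref{prop_thm_hormander} via Corollary \ref{cor_lem_trace} Part (1)---is exactly the paper's, and the exponent arithmetic and the treatment of the finitely many $j<j_0$ are fine. But the key step is not justified as written. Corollary \ref{cor_lem_trace} Part (1) bounds $\|\tilde\sigma\,\eta(r^{-2}\widehat\cL)\|_{\dot H^s(\Gh)}$ by
$$
\|\tilde\sigma\,\eta(r^{-2}\widehat\cL)\|_{\dot L^1_{s_0}(\Gh:\Sigma)}\,r^{-s+s_0-\frac n2}+\|\tilde\sigma\|_{L^\infty(\Gh:\Sigma)}\,r^{\frac n2-s},
$$
i.e.\ the first term involves the $\dot L^1_{s_0}$-norm of the symbol \emph{after} multiplication by the smooth cutoff, not the $\dot L^1_{s_0}$-norm of the input symbol. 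Applying it to $\tilde\sigma_j=\sigma\,1_{[2^j,2^{j+1}]}(\widehat\cL)$ with $r=2^{j/2}$ therefore produces $\|\sigma\,\eta(2^{-j}\widehat\cL)\|_{\dot L^1_{s_0}(\Gh:\Sigma)}$ on the right, whereas your hypothesis controls $\|\sigma\,1_{[2^j,2^{j+1}]}(\widehat\cL)\|_{\dot L^1_{s_0}(\Gh:\Sigma)}$. These are genuinely different quantities: the $\dot L^1_{s_0}$-norm is built from the difference operators $\Delta^\alpha$, so replacing the smooth cutoff by the sharp one inside that norm is not a pointwise operation; it would require a Leibniz-type estimate for $\dot L^1_{s_0}$ (not established in the paper) together with control of $\|\tilde\sigma_j\|_{\dot L^1_{s_1}}$ for all $s_1\le s_0$, which the hypothesis does not provide.

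The paper performs the cutoff exchange at the level of the $\dot H^s$-norm instead, where the weak Leibniz estimate of Lemma \ref{lem_veryweakLeib} is available through the $L^2(|x|^{2s}dx)$ realisation of $\dot H^s(\Gh)$: first one shows, as in the proof of Lemma \ref{lem_Sigmasa0b0}, that $\|\sigma\,\eta(2^{-j}\widehat\cL)\|_{\dot H^s(\Gh)}\lesssim\|\sigma\,1_{[2^j,2^{j+1}]}(\widehat\cL)\|_{\dot H^s(\Gh)}+2^{-\frac j2(s-\frac n2)}\|\sigma\|_{L^\infty(\Gh:\Sigma)}$; then one applies Corollary \ref{cor_lem_trace} Part (1) with the sharp cutoff $1_{[2^j,2^{j+1}]}$ itself playing the role of $\eta(r^{-2}\cdot)$ (its proof only uses an $L^2$-bound on the localised kernel, which holds for the indicator by the Plancherel formula), so that the $\dot L^1_{s_0}$-term appearing is exactly the quantity in the hypothesis. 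If you insert these two steps in place of your single application of the corollary, the rest of your argument goes through unchanged.
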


\begin{proof}
We fix  $s>n/2$, $\eta\in \cD(0,\infty)$ and $j_0\in \bZ$ as in the proof of Proposition \ref{prop_thm_hormander}.
In particular, 
$ \eta (2^{-j} \widehat\cL) =0$ for $j<j_0$.
Proceeding as in the proof of Lemma \ref{lem_Sigmasa0b0}, 
we have for any $j\in \bZ$: 
$$
\| \sigma \ \eta (2^{-j} \widehat\cL) \|_{\dot H^s(\Gh)}
\lesssim 
\| \sigma \ 1_{[2^{j}, 2^{j+1}]} (\widehat\cL) \|_{\dot H^s(\Gh)}
+ 
2^{-\frac j2 (s-\frac n2)} \|\sigma\|_{L^\infty(\Gh:\Sigma)},
$$
whereas proceeding as in the proof of Lemma \ref{lem_r0}, 
we have for any   $j\in \bZ$, $j<0$, 
$$
\| \sigma \ \eta (2^{-j} \widehat\cL) \|_{\dot H^s(\Gh)}
\lesssim 
 \|\sigma\|_{L^\infty(\Gh:\Sigma)}.
$$

By Corollary \ref{cor_lem_trace} Part (1), 
we have for any $j\in \bN_0$
$$
\| \sigma \ 1_{[2^{j}, 2^{j+1}]} (\widehat\cL) \|_{\dot H^s(\Gh)}
\lesssim 
\|\sigma\, 1_{[2^{j}, 2^{j+1}]} (\widehat\cL) \|_{\dot L^1_{s_0}(\Gh : \Sigma)}
2^{\frac j2 (-s+s_0-\frac n2)}
+ \|\sigma\|_{L^\infty(\Gh:\Sigma)} 2^{\frac j2(\frac n2 -s)}.
$$

Combining all the estimates above together with the hypothesis of the statement implies that $\sigma$ satisfies the hypotheses of Proposition \ref{prop_thm_hormander} and the conclusion follows.
\end{proof}

Applying Theorem \ref{thm_marc-type} to the one-dimensional torus $G=\bT$ with $s_0=n=1$, we recover the historical 1939 theorem 
stated in Theorem \ref{thm_marcinkiewicz}.

\end{document}